\definecolor{antiquefuchsia}{rgb}{0.57, 0.36, 0.51}
\definecolor{azure}{rgb}{0.0, 0.5, 1.0}
\renewcommand*{\backref}[1]{}
\renewcommand*{\backrefalt}[4]{%
    \ifcase #1 (Not cited.)%
    \or        (Cited on page~#2.)%
    \else      (Cited on pages~#2.)%
    \fi}
\def\th@plain{%
	\thm@notefont{}
	\itshape 
}
\def\th@definition{%
	\thm@notefont{}
	\normalfont 
}
\newcommand\res{\mathop{\hbox{\vrule height 7pt width .3pt depth 0pt\vrule height .3pt width 5pt depth 0pt}}\nolimits}
\numberwithin{equation}{section}
\newtheorem{theorem}{Theorem}[section]
\newtheorem{lemma}[theorem]{Lemma}
\newtheorem{proposition}[theorem]{Proposition}
\newtheorem{corollary}[theorem]{Corollary}
\theoremstyle{definition}
\newtheorem{definition}[theorem]{Definition}
\newtheorem{assumption}[theorem]{Assumption}
\theoremstyle{remark}
\newtheorem{remark}[theorem]{Remark}
\newcommand{\N}{\mathbb{N}}
\newcommand{\R}{\mathbb{R}}
\newcommand{\C}{\mathbb{C}}
\newcommand{\mres}{\mathbin{\vrule height 1.6ex depth 0pt width
0.13ex\vrule height 0.13ex depth 0pt width 1.3ex}}
\DeclareMathOperator{\spt}{spt}
\DeclareMathOperator{\dist}{dist}
\DeclareMathOperator{\BV}{BV}
\DeclareMathOperator{\divergence}{div}
\DeclareMathOperator{\Id}{Id}
\DeclareMathOperator{\loc}{loc}
\DeclareMathOperator{\graph}{graph}
\DeclareMathOperator{\Sing}{Sing}
\newcommand{\toweakstar}{\overset{*}\rightharpoonup}
\DeclareMathOperator{\diverg}{div}
\newcommand{\Irm}{\mathrm{I}}
\newcommand{\Acal}{\mathcal{A}}
\newcommand{\Bcal}{\mathcal B}
\newcommand{\Ecal}{\mathcal{E}}
\newcommand{\Fcal}{\mathcal{F}}
\newcommand{\Gcal}{\mathcal{G}}
\newcommand{\Hcal}{\mathcal{H}}
\newcommand{\Lcal}{\mathcal{L}}
\newcommand{\Mcal}{\mathcal{M}}
\newcommand{\Scal}{\mathcal{S}}
\newcommand{\Ffrak}{\mathfrak{F}}
\newcommand{\Sfrak}{\mathfrak{S}}
\newcommand{\Cscr}{\mathscr{C}}
\newcommand{\Pscr}{\mathscr{P}}
\newcommand{\Wscr}{\mathscr{W}}
\newcommand{\Abf}{\mathbf{A}}
\newcommand{\Bbf}{\mathbf{B}}
\newcommand{\Cbf}{\mathbf{C}}
\newcommand{\Dbf}{\mathbf{D}}
\newcommand{\Ebf}{\mathbf{E}}
\newcommand{\Gbf}{\mathbf{G}}
\newcommand{\Hbf}{\mathbf{H}}
\newcommand{\Ibf}{\mathbf{I}}
\newcommand{\Lbf}{\mathbf{L}}
\newcommand{\Sbf}{\mathbf{S}}
\newcommand{\Reg}{\mathrm{Reg}}
\newcommand{\eps}{\varepsilon}
\newcommand{\pbf}{\mathbf{p}}
\newcommand{\mbf}{\boldsymbol{m}}
\newcommand{\bphi}{\boldsymbol{\varphi}}
\newcommand{\bPhi}{\boldsymbol{\Phi}}
\newcommand{\hbf}{\mathbf{h}}
\newcommand{\Err}{\mathrm{Err}}
\newcommand{\spn}{\mathrm{span}}
\newcommand{\proofstep}[1]{\textit{#1}}
\title[Singularities and tangent cones for semicalibrated currents]{Rectifiability of the singular set and uniqueness of tangent cones for semicalibrated currents} 
\author[Paul Minter]{Paul Minter}
\address{Department of Pure Mathematics and Mathematical Statistics, University of Cambridge}
\email{\url{pdtwm2@cam.ac.uk}}
\author[Davide Parise]{Davide Parise}
\address{University of California San Diego, Department of Mathematics, 9500 Gilman Drive \#0112, La Jolla, CA 92093-0112, United States of America \& Simons Laufer Mathematical Institute, 17 Gauss Way, Berkeley, CA 94720-5070, United States of America}
\email{\url{dparise@ucsd.edu}}
\author[Anna Skorobogatova]{Anna Skorobogatova}
\address{Simons Laufer Mathematical Sciences Institute, 17 Gauss Way, Berkeley, CA 94720-5070, United States of America}
\email{\url{as110@princeton.edu}}
\author[Luca Spolaor]{Luca Spolaor}
\address{University of California San Diego, Department of Mathematics, 9500 Gilman Drive \#0112, La Jolla, CA 92093-0112, United States of America}
\email{\url{lspolaor@ucsd.edu}}
\date{\today}
\begin{document}

\begin{abstract}
We prove that the singular set of an $m$-dimensional integral current $T$ in $\mathbb{R}^{n + m}$, semicalibrated by a $C^{2, \kappa_0}$ $m$-form $\omega$ is countably $(m - 2)$-rectifiable. Furthermore, we show that there is a unique tangent cone at $\mathcal{H}^{m - 2}$-a.e. point in the interior singular set of $T$. Our proof adapts techniques that were recently developed in \cite{DLSk1, DLSk2, DMS} for area-minimizing currents to this setting. 
\end{abstract}

\maketitle

\tableofcontents

\section{Introduction} 
In this article, we study the structure of interior singularities of semicalibrated integral currents in $\R^{m+n}$. Let us recall the basic definitions. 
\begin{definition}\label{d:semical}
    Let $m, n \geq 2$ be positive integers. A semicalibration in $\R^{m+n}$ is a $C^1$-regular $m$-form such that $\Vert \omega \Vert_c \leq 1$, where $\Vert \cdot \Vert_c$ denotes the comass norm on $\Lambda^m(\R^{m+n})$. An $m$-dimensional integral current $T$ in $\R^{m+n}$ (denoted $T\in \Ibf_m(\R^{m+n})$) is \textit{semicalibrated} by $\omega$ if $\omega(\vec{T}) = 1$ at $\Vert T \Vert$-a.e. point, where $\vec T = \frac{d T}{d\|T\|}$ denotes the polar of the canonical vector measure associated to $T$ (also denoted by $T$, abusing notation) and $\|T\|$ denotes the canonical mass measure associated to $T$.
\end{definition}

Note that we may assume that the ambient space is Euclidean, equipped with the Euclidean metric, in place of a sufficiently regular Riemannian manifold (as is often assumed when studying the regularity properties of area-minimizing currents). Indeed, this is because the presence of an ambient submanifold of $\R^{m+n}$ in which $T$ is supported may be instead incorporated into the semicalibration; see \cite[Lemma 1.1]{DLSS-uniqueness}. 

We say that $p\in \spt(T)\setminus \spt(\partial T)$ is an interior regular point if there exists a neighborhood of $p$ in which $T$ is, up to multiplicity, a smooth embedded submanifold of $\R^{m+n}$. We denote the interior regular set by $\Reg(T)$, and we refer to its complement in $\spt(T)\setminus \spt(\partial T)$ (which is a relatively closed set) as the interior singular set, denoted by $\Sing(T)$.

The regularity of area-minimizing currents and more specifically calibrated currents (where the {semicalibrating} $m$-form $\omega$ is closed) has been studied extensively \cite{Almgren_regularity,DLS_MAMS,DLS16centermfld,DLS16blowup}. Semicalibrated currents form a natural class of almost area-minimizing currents for which the underlying differential constraint has more flexibility with respect to deformations than that for calibrated currents. Typical examples of these objects are given by almost complex cycles in almost complex manifolds. Semicalibrated currents exhibit much stronger regularity properties than general almost-minimizing currents (see \cite{GS}), and have thus far been shown to share the same interior regularity as area-minimizing integral currents. Indeed, in the series of works \cite{DLSS1,DLSS2,DLSS3} by De Lellis, Spadaro and the fourth author, it was shown that interior singularities of two dimensional semicalibrated currents are isolated, much like those of two dimensional area-minimizing integral currents. It was further shown by the fourth author in \cite{Spolaor_15} that the interior singular set of an $m$-dimensional semicalibrated current has Hausdorff dimension at most $m-2$, which is consistent with Almgren's celebrated dimension estimate on the interior singular set of area-minimizing integral currents. In the case of \textit{special Legendrian cycles}, i.e. when the ambient space is $S^5 \subset \mathbb{C}^3$ and the semicalibration $\omega$ has a specific form inherited from the complex structure, it was already shown by Bellettini and Rivière that the singular set consisted only of isolated singularities \cite{BellettiniRiviere}. To the best of the authors' knowledge, the first instance of the word \textit{semicalibrated} appears in the work \cite{PumbergerRivière} by Pumberger and Rivière. There the authors prove uniqueness of tangent cones for semicalibrated integral 2-cycles using slicing techniques. These ideas were later generalized in \cite{CaniatoRivière} by Caniato and Rivière to the case of pseudo-holomorphic cycles. Furthermore, we mention work of Tian and Rivière \cite{TianRivière} showing uniqueness of tangent cones for positive integral $(1, 1)$-cycles in arbitrary almost Kähler manifolds and that the singular set consists of isolated points. Finally, in \cite{Bellettini} Bellettini proved uniqueness of tangent cone for positive integral $(p,p)$-cycles in arbitrary almost complex manifolds. 

The aim of this article is to further improve on these and establish a structural result for the interior singular set of semicalibrated currents, analogous to that obtained in the works \cite{DLSk1,DLSk2,DMS,KW1,KW2,KW3}. More precisely, our main result is the following.

\begin{theorem}\label{t:main}
    Let $T$ be an $m$-dimensional integral current in $\R^{m+n}$, semicalibrated by a $C^{2,\kappa_0}$ $m$-form $\omega$ for some $\kappa_0 \in (0,1)$. Then $\Sing(T)$ is countably $(m-2)$-rectifiable and there is a unique tangent cone to $T$ at $\Hcal^{m-2}$-a.e. point in $\Sing(T)$.
\end{theorem}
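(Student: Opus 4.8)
The plan is to follow the blueprint established for area-minimizing currents in \cite{DLSk1,DLSk2,DMS}, making the adjustments needed to accommodate the fact that $\omega$ is only semicalibrating rather than calibrating. The backbone of the argument is the construction, due to De Lellis--Spadaro and its semicalibrated analogue in \cite{DLSS1,DLSS2,DLSS3}, of a \emph{center manifold} $\mathcal{M}$ and a multi-valued \emph{normal approximation} $N$ of $T$ near a singular point $p$ of density $Q$, together with the key estimates: the frequency function
\[
\mathbf{I}(r) = \frac{r \int_{\mathcal{B}_r} |DN|^2}{\int_{\partial \mathcal{B}_r} |N|^2}
\]
is almost-monotone, and the excess of $T$ over $\mathcal{M}$ is comparable to the Dirichlet energy of $N$. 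The semicalibration condition enters only through error terms that are, in the relevant regimes, of lower order than the calibrated case treats; in particular the key monotonicity and decay statements continue to hold, because the extra terms coming from $d\omega \neq 0$ carry an additional power of the scale and are absorbed exactly as the mean-curvature-type errors are absorbed in the area-minimizing case treated in \cite{DMS}. This is the step I would set up first: record the center manifold, the normal approximation, and the list of a priori estimates (frequency almost-monotonicity, universal frequency bounds, reverse Sobolev/Poincar\'e on the approximation, comparison between excess and energy), citing \cite{DLSS1,DLSS2,DLSS3} and noting line by line where the semicalibrated error terms appear and why they are harmless.

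Next I would carry out the two main analytic inputs in the order they are used in \cite{DLSk1,DLSk2}. The first is the \emph{$L^2$ no-concentration / monotonicity of a planar frequency} estimate: one must show that, after blowing up the normal approximation at a singular point, the resulting $\mathrm{Dir}$-minimizing (or, in the semicalibrated setting, $\mathrm{Dir}$-almost-minimizing but with vanishing first variation in the limit) multi-valued function has no singular points of density $Q$ concentrating in a region of small dimension. Concretely, one proves a \textbf{quantitative estimate} of the form: if the frequency at $p$ at scale $r$ is close to the frequency of a homogeneous $Q$-valued cone, then the current is close, in a suitable $L^2$-sense, to that cone on $\mathcal{B}_r$, with a power-rate. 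This is the content of the "monotonicity of frequency plus $L^2$ decay" machinery, and it transfers essentially verbatim from \cite{DLSk1}, since the blow-up limit is genuinely $\mathrm{Dir}$-minimizing (the semicalibrated corrections vanish in the limit), so Almgren's blow-up theory applies unchanged. The second input is the \emph{spectral analysis of $\mathrm{Dir}$-minimizing $Q$-valued cones} and the resulting classification of possible frequency values and the structure of the top-dimensional part of the singular set of the blow-up, taken from \cite{DMS}; here nothing depends on the calibration at all.

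With these in hand, the rectifiability and uniqueness statements follow the scheme of \cite{DMS}: one stratifies $\Sing(T)$ according to the spine dimension of tangent cones, shows that the top stratum $\Sing(T) \setminus \Sing_{m-3}(T)$ carries full $\mathcal{H}^{m-2}$-measure (by Almgren's dimension bound \cite{Spolaor_15} together with the stratification), and then runs the \emph{Naber--Valtorta} quantitative-stratification argument: using the almost-monotone frequency as the "pinching quantity", one proves a Reifenberg-type estimate on the measure $\mathcal{H}^{m-2}\res \Sing(T)$ restricted to balls where the frequency drop is small, obtaining that this part of the singular set is countably $(m-2)$-rectifiable and that at $\mathcal{H}^{m-2}$-a.e. such point the frequency has a limit, which by the spectral gap forces the blow-up to be a \emph{unique} homogeneous $Q$-valued cone; a final Lojasiewicz--Simon-type (or, here, a frequency-pinching + uniqueness-of-blow-up) argument upgrades uniqueness of the blow-up of $N$ to uniqueness of the tangent cone to $T$ itself. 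The main obstacle, and the place where the semicalibrated setting genuinely requires care rather than citation, is \textbf{verifying that every error term introduced by $d\omega \neq 0$ is subcritical with respect to the frequency pinching} — i.e. that in the almost-monotonicity formula for $\mathbf{I}(r)$ and in the comparison estimates between $N$, $T$, and the center manifold, the semicalibration errors are bounded by $C r^{\gamma}$ times the natural quantities for some $\gamma > 0$, uniformly as one zooms in along a sequence of scales. This is exactly the kind of bookkeeping carried out in \cite{DLSS2,DLSS3} for the isolated-singularity case in dimension $m=2$, and the task is to check that those estimates are scale-invariant in the form needed to feed the Naber--Valtorta iteration in arbitrary dimension; I expect this to be the technical heart of the paper, but to present no conceptual obstruction beyond careful tracking of powers of the scale.
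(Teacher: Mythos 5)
Your outline correctly identifies the overall architecture (center manifolds and normal approximations from \cite{Spolaor_15,DLSS1}, an almost-monotone frequency, a Naber--Valtorta iteration with frequency pinching, and the need to check that every error produced by $d\omega\neq 0$ is subcritical). But there is a genuine gap: the frequency/Naber--Valtorta machinery only handles the flat singular points whose singularity degree is \emph{strictly larger than} $1$. At points of singularity degree exactly $1$ there is no tilt-excess decay (cf.\ Theorem \ref{t:sing-degree-main}(vii)), the tangent plane need not converge polynomially, and there is no ``spectral gap'' to exploit --- frequency $1$ is attained by linear Dir-minimizers, so nothing forces such points into a lower-dimensional set via pinching. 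These points must instead be shown to be $\Hcal^{m-2}$-negligible by an entirely different mechanism: a Simon-type fine excess decay toward cones in $\Cscr(Q)$ (superpositions of $m$-planes meeting along an $(m-2)$-dimensional spine), under a no-density-gap hypothesis along the spine, together with cone balancing, graphical approximation away from the spine, non-concentration estimates at the spine, and a final blow-up; this is the content of Part \ref{pt:deg1} and is also what yields uniqueness of tangent cones on the lower strata. Your proposal collapses this into the Naber--Valtorta step and a ``Lojasiewicz--Simon-type'' afterthought, so as written the proof of both the rectifiability and the a.e.\ uniqueness statements is incomplete precisely on the set where the hard work lies.

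Two further points where ``the errors are absorbed exactly as in the area-minimizing case'' is not accurate. First, the frequency function itself must be modified: the first variation identity $\delta T(X)=T(d\omega\mres X)$ forces an extra term $\Lbf_N$ into the numerator of $\Ibf_N$ (your displayed formula omits it), and without it the variational errors do not take the form needed for the BV estimate of Theorem \ref{t:BV}. Second, the harmonic approximation of \cite[Theorem 3.1]{DLSS1} requires the smallness $\|d\omega\|_{C^0}^2\leq \eps_{23}\,\Ebf$ rather than $\Abf^2\leq \Ebf^{1/2+2\delta}$; this changes the dichotomy in the key height-decay lemma (Lemma \ref{l:decay}) and is the reason the errors throughout Part \ref{pt:deg1} carry the subquadratic power $\Omega^{2-2\delta_3}$, while the estimates at the spine require a separate absorption trick to recover quadratic dependence on $\Omega$. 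These are not mere bookkeeping: getting the wrong power of $\Omega$ at either of these places breaks the iteration.
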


This result is interesting both from a geometric and an analytic point of view. On the geometric side, calibrated submanifolds have been central objects of study in several areas of differential geometry and mathematical physics since the seminal work of Harvey and Lawson \cite{HaLa} (we refer the reader to \cite[6.1]{MorganGMT} for a brief history of calibrations). Two primary examples are holomorphic subvarieties and special Lagrangians in Calabi-Yau manifolds, which play an important role in string theory (especially regarding mirror symmetry, cf. \cite{Jo,SYZ}), but they also emerge naturally in gauge theory (see \cite{Ti}). Semicalibrations are a natural generalization of calibrations, removing the condition $d\omega = 0$ on the calibrating form {which }is rather rigid and in particular very unstable under deformations. In fact semicalibrations were considered already in \cite{Ti} (cf. Section 6 therein) and around the same time they became rather popular in string theory when several authors directed their attention to non-Calabi-Yau manifolds (the subject is nowadays known as “flux compactification”, cf. \cite{Gr}): in that context the natural notion to consider is indeed a special class of semicalibrating forms (see for instance the works \cite{Gu1,Gu2}, where these are called quasi calibrations). The fine structure of the singular set in the $2$-dimensional case has found applications to the Castelnuovo's bound and the Gopakumar–Vafa finiteness conjecture in the recent works \cite{DoWa, DoWaIo}.

From an analytic point of view, it exhibits a striking difference with the setting of area-minimizing currents regarding notions of frequency function. In the work \cite{KW1}, Krummel and Wickramasekera introduced an intrinsic version of Almgren's frequency function for an area-minimizing current, known as \emph{planar frequency}. Under suitable decay hypotheses, they were able to show that the planar frequency in the area-minimizing setting is almost monotone, which then played a pivotal role in their analysis of interior singularities. However, in the semicalibrated setting one does \emph{not} expect almost monotonicity of the planar frequency function under the same hypotheses as in \cite{KW1}, and indeed in Part \ref{pt:counterex} we provide a simple counterexample demonstrating this. Intuitively, the reason for this is that the semicalibration condition is more flexible. In particular, the graph of any $C^{1,\alpha}$ single-valued function is a semicalibrated current (although with a semicalibrating form less regular than the one in Definition \ref{d:semical}) and, at such a level of generality, these currents are not expected to exhibit unique continuation properties, and consequently an almost monotone planar frequency function. We have been unable to adapt the approach of Krummel and Wickramasekera to the present setting, which is ultimately why we follow the ideas in \cite{DLSk1, DLSk2, DMS}. Whether or not one can prove Theorem \ref{t:main} utilizing the ideas in \cite{KW1}, and in particular finding a suitable semicalibrated `planar frequency', is an interesting question.

Finally we remark that Theorem \ref{t:main} is optimal in light of recent work of Liu \cite{Liu}.

\subsection{Structure of the article and comparison to \cite{DLSk1,DLSk2,DMS}}
In Part \ref{pt:sing-deg}, we recall the singularity degree as introduced in \cite{DLSk1}, and verify that its properties remain valid in the semicalibrated setting. Part \ref{pt:NV} is then dedicated to treating flat singular points of singularity degree strictly larger than 1, for which we may exploit the rectifiable Reifenberg methods of Naber \& Valtorta, {similarly to \cite{DLSk2}}. In Part \ref{pt:deg1}, we then treat points of singularity degree 1 and the lower strata (the latter just for the uniqueness of tangent cones), {following \cite{DMS}}. Finally, in Part \ref{pt:counterex} we present the example that demonstrates the failure of almost-monotonicity for the intrinsic planar frequency as introduced in \cite{KW1}, and draw some comparisons with the area-minimizing setting of \cite{KW1, KW2}. 

Although throughout this article we mostly follow the methods of the works \cite{DLSk1,DLSk2,DMS} of the first and third authors with Camillo De Lellis, there are a number of important differences:
\begin{itemize}
    \item Due to the presence of the semicalibration, the corresponding error term in the first variation of $T$ must be taken into account for all variational estimates. In particular, this creates an additional term in Almgren's frequency function in this setting (see Section \ref{ss:compactness}), which must be taken care of when establishing the BV estimate Theorem \ref{t:BV}. The existence of such variational errors was already taken into consideration in the works \cite{DLSS1,DLSS2,DLSS3,Spolaor_15}.
    \item When taking coarse blow-ups (see Section \ref{s:coarse}), we observe that we may assume that the term $\|d\omega\|^2_{C^0} r^{2-2\delta_3}$ is infinitesimal relative to the tilt excess $\Ebf(T,\Bbf_r)$ for $\delta_3\in (0,\delta_2)$; note that this subquadratic scaling is stronger than having the same assumption with the natural quadratic scaling of $\|d\omega\|^2_{C^0}$. The latter would be the analogue of the corresponding assumption in \cite{DLSk1}, but we require this stronger assumption for Part \ref{pt:deg1} (see Case 2 in the proof of Lemma \ref{l:decay}), and we verify that it indeed holds.
    \item We modify the original construction in \cite{DLSk2} of the intervals of flattening adapted to a given geometric sequence of radii in Part \ref{pt:NV}, instead providing one that avoids requiring the separate treatment of the case of a single center manifold and infinitely many. This modified procedure will further be useful in the forthcoming work \cite{CS}.
    \item When $T$ is merely semicalibrated, extra care needs to be taken when applying the harmonic approximation. Indeed, note that in order to apply \cite[Theorem 3.1]{DLSS1}, we require the stronger hypothesis $\|d\omega\|^2_{C^0} \leq \eps_{23} \Ebf(T,\Cbf_1)$ in place of $\Abf^2 \leq \Ebf(T,\Cbf_1)^{1/2 + 2\delta}$ in the area-minimizing case (this difference was already present in \cite{Spolaor_15}). In particular, this affects the two regimes in the case analysis within the proof of Lemma \ref{l:decay}. In order to maintain the validity of Case 1 therein, we must require that $\|d\omega\|^2_{C^0} \leq \eps_{23} \Ebf(T,\Cbf_1)$, in place of $\Abf^3 \leq \Ebf(T,\Cbf_1)$. This in turn affects the treatment of Case 2 therein; see the second bullet point above.      
    \item In order to obtain quadratic errors in $\|d\omega\|_{C^0}$ in all estimates exploiting the first variation of $T$ in Part \ref{pt:deg1}, we must employ an analogous absorption trick to that pointed out in \cite[Remark 1.10]{Spolaor_15} for area-minimizing currents. This makes arguments in Section \ref{s:nonconc} more delicate.
\end{itemize}

\subsection*{Acknowledgments} 
The authors would like to thank Camillo De Lellis for useful discussions. This research was conducted during the period P.M. was a Clay Research Fellow. D.P. acknowledges the support of the AMS-Simons Travel Grant, furthermore part of this research was performed while D.P. was visiting the Mathematical Sciences Research Institute (MSRI), now becoming the Simons Laufer Mathematical Sciences Institute (SLMath), which is supported by the National Science Foundation (Grant No. DMS-1928930). L.S. acknowledges the support of the NSF Career Grant DMS 2044954. We would like to thank Frank Morgan, and Tristan Rivière for pointing out relevant references. 

\section{Preliminaries and notation}
Let us first introduce some basic notation. $C, C_0, C_1, \dots$ will denote constants which depend only on $m,n, Q$, unless otherwise specified. For $x\in \spt(T)$, the currents $T_{x,r}$ will denote the rescalings $(\iota_{x,r})_\sharp T$, where $\iota_{x,r} (y):= \frac{y-x}{r}$ and $\sharp$ denotes the pushforward. We will typically denote (oriented) $m$-dimensional subspaces of $\R^{m+n}$ (often simply referred to as planes) by $\pi, \varpi$. For $x\in\spt(T)$, $\Bbf_r(x)$ denotes the open $(m+n)$-dimensional Euclidean ball of radius $r$ centered at $p$ in $\R^{m+n}$, while for an $m$-dimensional plane $\pi\subset\R^{m+n}$ passing through $x$, $B_r(x,\pi)$ denotes the open $m$-dimensional disk $\Bbf_r(x)\cap \pi$. $\Cbf_r(x,\pi)$ denotes the $(m+n)$-dimensional cylinder $B_r(x,\pi)\times \pi^\perp$ of radius $r$ centered at $x$. We let $\mathbf{p}_{\pi}: \mathbb R^{m+n}\to \pi$ denote the orthogonal projection onto $\pi$, while $\mathbf{p}_{\pi}^\perp$ denotes the orthogonal projection onto $\pi^\perp$. The plane $\pi$ is omitted if clear from the context; if the center $x$ is omitted, then it is assumed to be the origin. $\omega_m$ denotes the $m$-dimensional Hausdorff measure of the $m$-dimensional unit disk $B_1(\pi)$. The Hausdorff distance between two subsets $A$ and $B$ of $\R^{m+ n}$ will be denoted by $\dist (A,B)$. $\Theta(T,x)$ denotes the $m$-dimensional Hausdorff density of $T$ at $x\in \spt(T)$. For $Q\in \N$, $\Acal_Q(\R^n)$ denotes the metric space of $Q$-tuples of vectors in $\R^n$, equipped with the $L^2$-Wasserstein distance $\Gcal$ (see e.g. \cite{DLS_MAMS}). Given a map $f = \sum_{i=1}^Q \llbracket f_i \rrbracket$ taking values in $\Acal_Q(\R^n)$, we use the notation $\boldsymbol\eta\circ f$ to denote the $\R^n$-valued function $\frac{1}{Q}\sum_{i=1}^Q f_i$.

As for area-minimizing currents, we primarily focus our attention on the \emph{flat singular points} of $T$, namely, those at which there exists a flat tangent cone $Q\llbracket \pi_0 \rrbracket$ for some $m$-dimensional (oriented) plane $\pi_0$. By localizing around a singular point and rescaling, we may without loss of generality work under the following underlying assumption throughout.
\begin{assumption}\label{a:main} 
$m\geq 3$, $n \geq 2$ are integers. $T$ is an $m$-dimensional integral current in $\Bbf_{7\sqrt{m}}$ with $\partial T\mres \Bbf_{7\sqrt{m}} = 0$. There exists a $C^{2,\kappa_0}$ semicalibration $\omega$ on $\R^{m+n}$ such that $T$ is semicalibrated by $\omega$ in $\Bbf_{7\sqrt{m}}$, with
\begin{equation}\label{e:domega}
    \|d\omega\|_{C^{1,\kappa_0}(\Bbf_{7\sqrt{m}})} \leq \bar \eps,
\end{equation}
where $\bar\eps \leq 1$ is a small positive constant which will be specified later.
\end{assumption}

Recall that if $T$ satisfies Assumption \ref{a:main}, then in particular $T$ is $\Omega$-minimial as in \cite[Definition 1.1]{DLSS1} for some $\Omega >0$, namely
\begin{equation} \label{eq: Omega Minimal}
    \mathbf{M}(T) \leq \mathbf{M}(T + \partial S) + \Omega \mathbf{M}(S), 
\end{equation}
for every $S \in \mathbf{I}_{m + 1}(\R^{m+n})$ with compact support, and in particular one can take $\Omega = \Vert d\omega \Vert_{C^0}$. In addition, if $T$ satisfies Assumption \ref{a:main} then we have the first variation identity
\begin{equation} \label{eq: first variation}
    \delta T(X) = T(d\omega \mres X),  
\end{equation}
where $X \in C_{c}^{\infty}(\mathbb{R}^{m+n} \setminus \spt(\partial T); \R^{m+n})$, and where $\delta T$ denotes the first variation of $T$:
\begin{align} \label{first variation}
    \delta T(X) = \int \divergence_{\vec T(q)} X(q) \, d \Vert T \Vert(q)\, ,
\end{align}
where $\vec T(q)$ is the oriented (approximate) tangent plane to $T$ at $q$. 

Recall that the tilt excess $\Ebf(T,\Cbf_r(x,\pi),\varpi)$ relative to an $m$-dimensional plane $\varpi$ in a cylinder $\Cbf_r(x,\pi)$ is defined by
\[
    \Ebf(T,\Cbf_r(x,\pi),\varpi) := \frac{1}{2\omega_m r^{m}}\int_{\Cbf_r(x,\pi)} |\vec{T} - \vec{\varpi}|^2 d\|T\|.
\]
The (optimal) tilt excess in $\Cbf_r(x,\pi)$ is in turn defined by
\[
    \Ebf(T,\Cbf_r(x,\pi)) := \inf_{\text{$m$-planes $\varpi$}} \Ebf(T,\Cbf_r(x,\pi),\varpi).
\]
The quantities $\Ebf(T,\Bbf_r(x),\varpi)$ and $\Ebf(T,\Bbf_r(x))$ are defined analogously.

\subsection{Intervals of flattening and compactness procedure}\label{ss:compactness}
As in \cite{DLS16blowup,Spolaor_15}, we introduce a countable collection of disjoint intervals of radii $(s_j,t_j] \subset (0,1]$, for $j\in \N\cup \{0\}$ and $t_0 =1$, referred to as \emph{intervals of flattening}, such that for $\eps_3 > 0$ fixed as in \cite{Spolaor_15} we have
\[
    \Ebf(T, \Bbf_{6\sqrt{m}r}) \leq \eps_3^2, \qquad \Ebf(T_{0,t_j}, \Bbf_r) \leq C \boldsymbol{m}_{0,j} r^{2-2\delta_2} \qquad \forall r\in \left(\tfrac{s_j}{t_j},3\right],
\]
where
\begin{equation}\label{e:m_0}
    \boldsymbol{m}_{0,j}:=\max\{\Ebf(T,\Bbf_{6\sqrt{m}t_j}), \bar\eps^2 t_j^{2-2\delta_2}\},
\end{equation}
and $\delta_2$ is fixed as in \cite{Spolaor_15}. Observe that this definition of $\mbf_{0,j}$ comes from the observation that if $T$ is $\|d\omega\|_{C^{0}}$-minimal in $\Bbf_{7\sqrt{m}}$, then $T_{0,t_j}$ is $t_j\|d\omega\|_{C^0}$-minimal in $\Bbf_{7\sqrt{m}}$, together with \eqref{e:domega}, the estimates in \cite[Theorem 1.4]{DLSS1} and the observation that 
\begin{equation}\label{e:domega-balls}
    \|\iota_{0,t_j}(d\omega)\|_{C^0(\Bbf_{6\sqrt{m}})} = \|d\omega \circ \iota_{0,t_j}\|_{C^0(\Bbf_{6\sqrt{m}})} = \|d\omega\|_{C^0(\Bbf_{6\sqrt{m}t_j})} \leq C \|d\omega\|_{C^0(\Bbf_{6\sqrt{m}})} t_j^2.
\end{equation}
We will therefore henceforth work under the following assumption, allowing us to indeed iteratively produce the above sequence of intervals.

\begin{assumption}\label{a:main-2}
    $T$ and $\omega$ are as in Assumption \ref{a:main}. The origin is a flat singular point of $T$ and $\Theta(T,0) = Q\in \N_{\geq 2}$. The parameter $\bar \eps$ is chosen small enough to ensure that $\boldsymbol{m}_{0,0} \leq \eps_3^2$.
\end{assumption}

Following the procedure in \cite[Section 6.2]{Spolaor_15} (see also \cite[Section 2]{DLS16blowup}) with this amended choice of $m_{0,j}$, we use the center manifold construction in \cite[Part I, Section 2]{Spolaor_15} to produce a sequence $\Mcal_j$ of center manifolds for the rescalings $T_{0,t_j}$ with corresponding normal approximations $N_j:\Mcal_j \to \Acal_Q(T\Mcal_j^\perp)$, whose multigraphs agree with $T_{0,t_j}$ in $\Bbf_3\setminus \Bbf_{s_j/t_j}$ over an appropriately large proportion of $\Mcal_j\cap(\Bbf_3\setminus \Bbf_{s_j/t_j})$. By a rotation of coordinates, we may assume that the $m$-dimensional planes $\pi_j$ over which we parameterize $\Mcal_j$ are identically equal to the same fixed plane $\pi_0 \equiv \R^m\times \{0\} \subset \R^{m+n}$. We refer the reader to \cite[Proposition 6.5]{Spolaor_15} or \cite[Proposition 2.2]{DLS16blowup} for the basic properties of the intervals of flattening. Given a center manifold $\Mcal$ and a point $x\in \Mcal$, we will let $\Bcal_r(x)$ denote the geodesic ball $\Bbf_r(x)\cap\Mcal$ of radius $r$ in $\Mcal$. It will always be clear from context which particular center manifold we are using for such a ball.

Given a flat singular point of $T$ with density $Q\in \N$ (denoted by $x\in \Ffrak_Q(T)$), we will use the terminology \emph{blow-up sequence of radii around $x$} to refer to a sequence of scales $r_k\downarrow 0$ such that $T_{x,r_k}\mres \Bbf_{6\sqrt{m}}\toweakstar Q\llbracket \pi \rrbracket$ for some $m$-dimensional plane $\pi$. If $x=0$, we will simply call this a blow-up sequence of radii, with no reference to the center. Observe that for any blow-up sequence of radii $r_k$, for each $k$ sufficiently large there exists a unique choice of index $j(k)$ such that $r_k \in (s_{j(k)},t_{j(k)}]$.

Under the validity of Assumption \ref{a:main-2}, given a blow-up sequence of radii $r_k$, we will henceforth adopt the notation
\begin{itemize}
    \item $T_k$ for the rescaled currents $T_{0,t_{j(k)}}\mres \Bbf_{6\sqrt{m}}$;
    \item $\Mcal_k$ and $N_k$ rescpectively for the center manifolds $\Mcal_{j(k)}$ and the normal approximations $N_{j(k)}$;
    \item $\bphi_k$ for the map parameterizing the center manifold $\Mcal_k$ over $B_3(\pi_0)$ (see \cite[Theorem 2.13]{Spolaor_15});
    \item $\pbf$ for the orthogonal projection map to $\Mcal$ (see \cite[Assumption 2.1]{DLS16centermfld}).
\end{itemize}
In addition, let $s=\tfrac{\bar s_k}{t_{j(k)}} \in \left(\frac{3 r_k}{t_{j(k)}}, \frac{3r_k}{t_{j(k)}}\right]$ be the scale at which the reverse Sobolev inequality \cite[Corollary 7.9]{Spolaor_15} (see also \cite[Corollary 5.3]{DLS16blowup}) holds for $r=\frac{r_k}{t_{j(k)}}$. Then let $\bar r_k = \tfrac{2\bar s_k}{3t_{j(k)}}\in \left(\tfrac{r_k}{t_{j(k)}}, \frac{2r_k}{t_{j(k)}}\right]$, and in turn define the corresponding additionally rescaled objects
\[
    \bar T_k = (T_k)_{0,\bar r_k} = (\iota_{0,\bar r_k t_{j(k)}})_\sharp T \mres \Bbf_{6\sqrt{m}\bar{r}_k^{-1}}, \qquad \bar\Mcal_k = \iota_{0,\bar r_k}(\Mcal_k),
\]
together with the maps $\bPhi_k(x) := (x,\bphi_k(\bar r_k x))$ parameterizing the graphs of the rescaled center manifolds and the rescaled normal approximations $\bar N_k:\bar\Mcal_k\to \R^{m+n}$ defined by
\begin{equation}\label{e:rescaled-N}
    \bar N_k(x) := \frac{N_k(\bar r_k x)}{\bar r_k}.
\end{equation}
Consequently we let $u_k: B_3 \equiv B_3(\pi_0)\to \Acal_Q(\R^{m+n})$ be defined by
\[
    u_k := \frac{\bar N_k\circ \mathbf{e}_k}{\|\bar N_k \|_{L^2(\Bcal_{3/2})}},
\]
where $\mathbf{e}_k$ denotes the exponential map from $B_3\subset \pi_0\cong T_{\bar r_k^{-1} \mathbf{\Phi}_k(0)}\bar \Mcal_k$ to  $\bar \Mcal_k$. In light of the reverse Sobolev inequality \cite[Corollary 7.9]{Spolaor_15} implies that the sequence $u_k$ is uniformly bounded in $W^{1,2}(B_{3/2})$. Then, by \cite[Theorem 8.2]{Spolaor_15} (see also \cite[Theorem 6.2]{DLS16blowup}), up to extracting a subsequence, there exists a Dir-minimizer $u\in W^{1,2}(B_{3/2}(\pi_0);\Acal_Q(\pi_0^\perp))$ such that
\begin{itemize}
    \item $\boldsymbol\eta\circ u =0$;
    \item $\|u\|_{L^2(B_{3/2})}=1$;
    \item $u_k \to u$ strongly in $L^2\cap W^{1,2}_{\loc} (B_{3/2})$.
\end{itemize}

Recall that for Dir-minimizers $u: \Omega \subset \R^m \to \Acal_Q(\R^m)$ on an open domain $\Omega$, we may consider a regularized variant of Almgren's frequency function, defined by
\[
    I_u(x,r) := \frac{rD_u(x,r)}{H_u(x,r)}, \qquad r\in (0,\dist(x,\partial \Omega))
\]
where
\[
    H_u(x,r) = -\int \frac{|u(y)|^2}{|y-x|} \phi'\left(\frac{|y-x|}{r}\right)\, dy, \qquad D_u(x,r) = \int |Du(y)|^2 \phi\left(\frac{|y-x|}{r}\right)\, dy,
\]
and $\phi:[0,\infty)\to[0,1]$ is a monotone Lipschitz function that vanishes for all $t$ sufficiently large and is identically equal to $1$ for all $t$ sufficiently small. Similarly to the classical frequency, which formally corresponds to taking $\phi=\mathbf{1}_{[0,1]}$, $r\mapsto I_u(x,r)$ is monotone non-decreasing for each $x\in \Omega$, and takes a constant value $\alpha$ if and only if $u$ is radially $\alpha$-homogeneous about $x$ (see e.g. \cite[Section 3.5]{DLS_MAMS}). In particular, the limit
\[
    I_{u}(x,0) := \lim_{r\downarrow 0} I_u(x,r)
\]
exists and in fact is independent of the choice of $\phi$. We will henceforth fix the following convenient choice of $\phi$:
\begin{equation}\label{e:def_phi}
\phi (t) =
\left\{
\begin{array}{ll}
1 \qquad &\mbox{for $0\leq t \leq \frac{1}{2}$},\\
2-2t \quad &\mbox{for $\frac{1}{2}\leq t \leq 1$},\\
0 &\mbox{otherwise}\, .
\end{array}
\right.
\end{equation}
When $x=0$, we omit the dependency on $x$ for $I$, $H$ and $D$.

Now let us define the natural regularized frequency associated to the graphical approximations for a semicalibrated current $T$ satisfying Assumption \ref{a:main-2}. Given a center manifold $\Mcal\equiv \Mcal_j$ and a corresponding normal approximation $N:\Mcal \to \Acal_Q(\R^{m+n})$, we define the regularized frequency $\Ibf_N$ of $N$ at a given center $x\in\Mcal$ and scale $r>0$ by
\[
    \Ibf_N(x,r) := \frac{r\mathbf{\Gamma}_N(x,r)}{\Hbf_N(x,r)},
\]
where $\mathbf{\Gamma}_N(x,r) = \Dbf_N(x,r) + \Lbf_N(x,r)$ with
\begin{align*}
    \Hbf_N(x,r) &:= {-}\int_\Mcal \frac{|N(y)|^2}{d(y,x)} |\nabla_{y} d(y,x)|^2\phi'\left(\frac{d(y,x)}{r}\right)\, d\Hcal^m(y), \\
    \Dbf_N(x,r) &= \int |DN(y)|^2 \phi\left(\frac{d(y,x)}{r}\right)\, d\Hcal^m(y), \\
    \Lbf_N(x,r) &= \sum_{i=1}^Q \sum_{l=1}^m (-1)^{l+1} \int \langle D_{\xi_l} N_i(y) \wedge \hat\xi_l(y) \wedge N_i(y), d\omega\circ\iota_{0,t_j}(y)\rangle \phi\left(\frac{d(y,x)}{r}\right) d\Hcal^m(y).
\end{align*}
Here, $\hat\xi_l = \xi_1\wedge \cdots \wedge \xi_{l-1}\wedge \xi_{l+1}\wedge\cdots \wedge \xi_m$ for an orthonormal frame $\{\xi_j\}_{j=1}^m$ of $T\Mcal$ and $d$ is the geodesic distance on the center manifold. We will often write $\nabla d(x,y)$ to denote the derivative ${\nabla}_y d(x,y)$. If $x=0$, we will omit the dependency on the center. Recall that the presence of the additional term $\Lbf_N$ in the frequency (in contrast to that for area-minimizing integral currents) is due to the term $T(d\omega \mres X)$ in the first variation $\delta T(X)$ for $T$; see \cite{Spolaor_15} for more details. 
\begin{remark}
    Note in $\Lbf$ above the presence of the scaling $\iota_{0,t_j}$. This is due to the error in the first variation being for $T_{0,t_j}$, and not for $T$. This is consistent with the quadratic scaling we expect for the $\|d\omega\|_0$ term in our definition of $\mbf_0$. 
\end{remark}

\part{Singularity degree of flat singular points}\label{pt:sing-deg}

\section{Main results}
Following \cite{DLSk1}, we define a \emph{fine blow-up} $u$ to be any Dir-minimizer obtained through the compactness procedure in Section \ref{ss:compactness} along a blow-up sequence of radii $r_k$, and we let
\[
    \Fcal(T,0) := \{I_u(0) : \text{$u$ is a fine blow-up along some sequence $r_k \downarrow 0$}\}
\]
denote the set of \emph{frequency values} of $T$ at 0. We further recall the notion of \emph{singularity degree} introduced in \cite{DLSk1}:

\begin{definition}
    The singularity degree of $T$ at 0 is defined as
    \[
        \Irm(T,0) := \inf \Fcal(T,0).
    \]
\end{definition}
Of course, one may analogously define the set of frequency values and the singularity degree at any other point $x\in \Ffrak(T)$ by instead considering fine blow-ups taken around the center point $x$ in place of 0, and thus all of the results in this part clearly hold for any $x\in \Ffrak_Q(T)$ in place of $0$.

Let us now state the main result of this part, which concerns the main properties of the singularity degree.

\begin{theorem}\label{t:sing-degree-main}
    Suppose that $T$ satisfies Assumption \ref{a:main-2}. Then
    \begin{itemize}
        \item[(i)] $\Irm(T,0) \geq 1$ and $\Fcal(T,0) = \{\Irm(T,0)\}$;
        \item[(ii)] All fine blow-ups are radially homogeneous with degree $I(T,0)$;
        \item[(iii)] if $s_{j_0}=0$ for some $j_0\in \N$ then $\lim_{r\downarrow 0} \Ibf_{N_{j_0}}(r) = \Irm(T,0)$;
        \item[(iv)] if, conversely, there are infinitely many intervals of flattening $(s_k,t_k]$, the functions $\Ibf_{N_j}$ converge uniformly to the constant function $\Irm(T,0)$ when $\Irm(T,0)>1$, while when $\Irm(T,0)=1$, $\lim_{k \to \infty} \Ibf_{j(k)}(\tfrac{r_k}{t_{j(k)}}) = \Irm(T,0) =1$ for every blow-up sequence of radii $r_k$;
        \item[(v)] if $\Irm(T,0)>1$ then the rescalings $T_{0,r}$ converge polynomially fast to a unique flat tangent cone $Q\llbracket \pi \rrbracket$ as $r\downarrow 0$;
        \item[(vi)] if additionally $\Irm(T,0)> 2-\delta_2$ then $s_{j_0} = 0$ for some $j_0\in\N$;
        \item[(vii)] if $\Irm(T,0) < 2-\delta_2$ then there are infinitely many intervals of flattening and $\inf_j \frac{s_j}{t_j} > 0$.
    \end{itemize}
\end{theorem}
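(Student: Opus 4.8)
The plan is to follow the architecture of \cite{DLSk1}, adapting each step so that the semicalibration errors coming from the term $T(d\omega\mres X)$ in the first variation \eqref{eq: first variation} are reabsorbed. The backbone of the whole part is an almost-monotonicity (BV) estimate for the regularized frequency $\Ibf_N$ within a single interval of flattening, which I would record as Theorem \ref{t:BV}: one differentiates $r\mapsto\Ibf_N(x,r)$, inserts the inner and outer variation identities for the normal approximation $N$ on the center manifold together with the first variation identity for $T$, and shows that $\tfrac{d}{dr}\log\Ibf_N$ is bounded below by a negative integrable error. The genuinely new feature relative to the area-minimizing case is the term $\Lbf_N$ in $\mathbf{\Gamma}_N$; its $r$-derivative must be controlled against the Dirichlet energy $\Dbf_N$ using \eqref{e:domega}, the estimates of \cite[Theorem 1.4]{DLSS1}, and the \emph{subquadratic} scaling $\|d\omega\|_{C^0}^2 r^{2-2\delta_3}\ll\Ebf(T,\Bbf_r)$ (for $\delta_3\in(0,\delta_2)$) advertised in the introduction, together with the absorption trick of \cite[Remark 1.10]{Spolaor_15} converting the first-variation error into a genuinely quadratic error in $\|d\omega\|_{C^0}$. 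Granting this, $\lim_{r\downarrow 0}\Ibf_{N_{j_0}}(r)$ exists whenever $s_{j_0}=0$, and since for such $j_0$ every fine blow-up is extracted within the $j_0$-th interval and thus realises this limit as $I_u(0)$ (via $u_k\to u$ in $L^2\cap W^{1,2}_{\loc}$), we get (iii) and, in the single-center-manifold case, $\Fcal(T,0)=\{\Irm(T,0)\}$.

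Next I would treat the case of infinitely many intervals of flattening, where one additionally needs a matching of the frequency across consecutive intervals: at the passage from the $j$-th to the $(j+1)$-st interval the center manifolds $\Mcal_j$, $\Mcal_{j+1}$ describe $T$ at comparable absolute scales and both furnish valid normal approximations in the overlap, so $\Ibf_{N_j}(s_j/t_j)$ and $\Ibf_{N_{j+1}}(3)$ differ by an error summable in $j$, using the stopping conditions and the basic properties of the intervals of flattening (\cite[Proposition 6.5]{Spolaor_15}, \cite[Section 2]{DLS16blowup}). Combined with Theorem \ref{t:BV}, the map $r\mapsto\Ibf_N(r)$ then has finite total oscillation on $(0,1]$, hence a limit equal to $I_u(0)$ for every fine blow-up, which yields $\Fcal(T,0)=\{\Irm(T,0)\}$ in general. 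Summability of the \emph{tail} oscillation upgrades this to uniform convergence $\Ibf_{N_j}\to\Irm(T,0)$ when $\Irm(T,0)>1$ (where a uniform gap $\Ibf_N>1+\eta$ is available at small scales), while for $\Irm(T,0)=1$ one obtains only the sequential statement $\Ibf_{j(k)}(r_k/t_{j(k)})\to 1$; this is (iv). The lower bound $\Irm(T,0)\geq 1$ in (i) follows as in \cite{DLSk1}: a fine blow-up is a nonzero Dir-minimizer with $\boldsymbol\eta\circ u=0$, and the stopping/refinement conditions of the intervals of flattening and the center manifold (together with the floor $\bar\eps^2 t_j^{2-2\delta_2}$ built into $\mbf_{0,j}$) rule out fine blow-ups of frequency strictly below $1$. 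Part (ii) is then immediate: $I_u(0,\rho)$ equals the common limit for every $\rho$, so by the rigidity of the frequency for Dir-minimizers recalled in Section \ref{ss:compactness}, $u$ is radially $\Irm(T,0)$-homogeneous.

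For (v), assume $\Irm(T,0)>1$. Then every fine blow-up is homogeneous of degree strictly larger than $1$, which — fed back through the frequency, converting the gap $\Ibf_N>1+\eta$ at scale $\rho$ into decay of $\Hbf_N$ and hence of the excess, and iterating over dyadic scales while patching across intervals of flattening via the matching above — yields a polynomial excess decay $\Ebf(T,\Bbf_r)\leq C\,\mbf_{0,0}\,r^{2(\Irm(T,0)-1)-\eps}$ for all small $r$. The standard telescoping estimate on the optimal approximating planes $\pi_r$ then shows $r\mapsto\pi_r$ is Cauchy as $r\downarrow 0$ and $T_{0,r}\to Q\llbracket\pi\rrbracket$ polynomially fast for a unique $m$-plane $\pi$. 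Finally (vi) and (vii) compare the decay exponent $2(\Irm(T,0)-1)$ with the threshold $2-2\delta_2$ in the stopping condition $\Ebf(T_{0,t_j},\Bbf_r)\leq C\mbf_{0,j}r^{2-2\delta_2}$ defining the intervals of flattening: if $\Irm(T,0)>2-\delta_2$ this inequality holds for all $r\in(s_j/t_j,3]$ once $j$ is large, so no further interval is created and the construction terminates with $s_{j_0}=0$, giving (vi); if $\Irm(T,0)<2-\delta_2$ the same comparison forces the stopping condition to be violated at a scale bounded below by a universal constant in every sufficiently deep interval, so $T_{0,t_j}$ never stabilises to a single center manifold, whence there are infinitely many intervals and $\inf_j s_j/t_j>0$, giving (vii).

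The main obstacle — and essentially the only place where genuinely new analysis (beyond careful bookkeeping) is needed relative to \cite{DLSk1} — is Theorem \ref{t:BV}: controlling the contributions of $\Lbf_N$ and of the first-variation error $T(d\omega\mres X)$ to $\tfrac{d}{dr}\Ibf_N$ so that they remain summable errors. This is precisely where the stronger subquadratic hypothesis on $\|d\omega\|_{C^0}$ and the absorption trick of \cite[Remark 1.10]{Spolaor_15} enter, and it is what propagates into the delicate two-regime case analysis in the decay estimate, Lemma \ref{l:decay}.
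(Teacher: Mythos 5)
Your overall architecture (BV estimate for the frequency within and across intervals of flattening, jump control at the endpoints, excess decay when the frequency exceeds $1$, telescoping for uniqueness of the tangent plane) matches the paper's. But there is a genuine gap in your treatment of the lower bound $\Irm(T,0)\geq 1$ in (i). You assert that the stopping/refinement conditions of the intervals of flattening, together with the floor $\bar\eps^2 t_j^{2-2\delta_2}$ in $\mbf_{0,j}$ and the fact that fine blow-ups are average-free, "rule out fine blow-ups of frequency strictly below $1$". They do not. The stopping criteria force $\Dbf(r)\leq Cr^{m+2-2\delta_2}$ and hence a frequency lower bound of $2-\delta_2$ only in the case of a \emph{single} terminal center manifold ($s_{j_0}=0$); when there are infinitely many intervals of flattening they yield nothing beyond the uniform bound $\Ibf_{N_j}\geq c_0$ of Theorem \ref{t:freqbds}, and $c_0$ is a priori much smaller than $1$. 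The actual mechanism is the \emph{coarse blow-up} apparatus, which is entirely absent from your proposal: one needs the Hardt--Simon type estimate (Theorem \ref{t:coarse}) showing that any non-trivial coarse blow-up $\bar f$ and its average-free part $v$ satisfy $I_{\bar f}(0), I_v(0)\geq 1$ — an argument at the level of the current, using the almost-monotonicity of mass ratios from \cite{DLSS-uniqueness} and the verification of the subquadratic scaling \eqref{e:domega-scaling} — together with Proposition \ref{p:coarse=fine}, which identifies the fine blow-up with a positive multiple of $v$ whenever $\liminf_k s_{j(k)}/r_k>0$ (via the reparameterization and comparison lemmas for the center manifold). The remaining case $s_{j(k)}/r_k\to 0$ is then handled by applying this to the sequence $s_{j(k)}$ itself and propagating the bound via almost-monotonicity. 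Without this chain, (i) is unproved; moreover (vii) also leans on it, since $\inf_j s_j/t_j>0$ requires the \emph{lower} excess decay bound of Corollary \ref{c:coarse}, which comes from the non-triviality and homogeneity of the coarse blow-up when $\Irm(T,0)<2-\delta_2$.

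A second, smaller gap: you deduce $\Fcal(T,0)=\{\Irm(T,0)\}$ "in general" from finite total oscillation of $\Ibf$ on $(0,1]$. The BV estimate controls the negative variation by $C\sum_k\mbf_{0,k}^{\gamma_4}$, and this sum is only known to converge when $\Irm(T,0)>1$, via the geometric decay $\mbf_{0,k}\lesssim 2^{-5\alpha k}$ supplied by Proposition \ref{p:tilt-decay}. When $\Irm(T,0)=1$ no such decay is available, so the limit of $\Ibf(r)$ and the homogeneity of fine blow-ups in that regime require the separate argument of \cite[Sections 8--9]{DLSk1} (which the paper invokes for (ii) and (iv) in the case $\Irm(T,0)=1$); your sketch silently assumes the summability there.
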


In Part 2, we will then follow the arguments in \cite{DLSk2} (based on the seminal work \cite{NV_Annals}) to prove the following.
\begin{theorem}
    Let $T$ and $\omega$ be as in Theorem \ref{t:main}. Then the set $\{x\in \Ffrak(T): \Irm(T,x)>1\}$ is countably $(m-2)$-rectifiable.
\end{theorem}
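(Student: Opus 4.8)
The plan is to adapt the rectifiable Reifenberg scheme of Naber--Valtorta, exactly as in \cite{DLSk2}, to the semicalibrated setting, using the singularity degree $\Irm(T,x)$ as the key monotone quantity that plays the role of Almgren's frequency. By Theorem \ref{t:sing-degree-main}, at every flat singular point $x$ with $\Irm(T,x)>1$ there is a well-defined unique flat tangent cone and the rescalings converge polynomially fast; moreover the fine blow-ups are all homogeneous of degree $\Irm(T,x)$. The strategy is to stratify the set $\Ssf := \{x\in\Ffrak(T):\Irm(T,x)>1\}$ according to the value of the singularity degree, writing $\Ssf = \bigcup_{q\in\Qbb,\, q>1} \Ssf_{\geq q}$ (or rather along a countable partition into pieces where the singularity degree lies in a dyadic band $[q, q+\eta)$ and the relevant center-manifold/frequency estimates hold uniformly at all scales below some fixed $\rho_0$), and prove that each such piece is countably $(m-2)$-rectifiable; a countable union of countably $(m-2)$-rectifiable sets is countably $(m-2)$-rectifiable.

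The core of the argument is a quantitative, $L^2$-type estimate comparing $T$ (more precisely its normal approximation $N_j$ on the center manifold $\Mcal_j$) to the $(m-2)$-invariant homogeneous cone modeled on its tangent at two nearby scales. First I would establish a ``no $(m-1)$-symmetry'' dichotomy: at points of $\Ssf_{\geq q}$ the frequency value $\Irm(T,x)>1$, so no fine blow-up can be $(m-1)$-symmetric (an $(m-1)$-symmetric Dir-minimizer that is also homogeneous of degree $>1$ would have to be a sum of linear functions, forcing $\Irm=1$, contradiction), which is the analogue of \cite[Section 3]{DLSk2}. Then I would run the Naber--Valtorta covering argument: using the almost-monotonicity of $\Ibf_{N_j}$ from Theorem \ref{t:sing-degree-main}(iii)--(iv) together with the center-manifold estimates, one shows that if the frequency drop between scales $r$ and $\lambda r$ at a point $x$ is small, then $T$ is $L^2$-close on $\Bbf_r(x)$ to a $q$-homogeneous cone $\mathbf{C}_x$, and crucially the map $x\mapsto \mathbf{C}_x$ (its spine, suitably defined) satisfies a Reifenberg-type flatness bound controlled by a telescoping sum of frequency drops. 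Feeding this into Naber--Valtorta's discrete and rectifiable Reifenberg theorems yields the $(m-2)$-Minkowski/packing bounds on the ``high-frequency-drop'' points and rectifiability of the remaining set, with the measure-theoretic stratification ensuring the leftover sets are $\Hcal^{m-2}$-null.

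The main obstacle, and the place where the semicalibrated setting genuinely differs from \cite{DLSk2}, is controlling the variational error terms coming from $d\omega$ throughout the Naber--Valtorta machinery. The presence of the extra term $\Lbf_N$ in the frequency $\Ibf_N$ and the inhomogeneous first variation $\delta T(X)=T(d\omega\res X)$ mean that the almost-monotonicity of the frequency and the $L^2$ spatial-variation estimates (the analogue of \cite[Lemma~... ]{DLSk2} bounding $\int |\partial_r N|^2$ or the ``center-of-mass'' differential inequalities) acquire error terms of size $\|d\omega\|_{C^0}^2 r^{2-2\delta_2}$ or similar; one must verify these errors are summable along the dyadic scales and, importantly, that they do not destroy the key property that a \emph{small} frequency drop forces genuine conical approximation at the linear level. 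This is precisely where the strengthened hypothesis $\|d\omega\|_{C^0}^2 r^{2-2\delta_3}\ll \Ebf(T,\Bbf_r)$ with $\delta_3<\delta_2$ (the second bullet in the comparison to \cite{DLSk1,DLSk2,DMS}) is needed, and where the absorption trick of \cite[Remark 1.10]{Spolaor_15} must be deployed to upgrade all first-variation errors to genuinely quadratic order in $\|d\omega\|_{C^0}$. Once these estimates are in place with the correct error bookkeeping, the remaining combinatorial Reifenberg argument is essentially identical to the area-minimizing case; I would therefore organize the proof so that the semicalibrated-specific estimates are isolated into a few lemmas (an almost-monotonicity statement for $\Ibf_N$ with explicit errors, an $L^2$ spatial-variation estimate, and a ``frequency pinching implies cone approximation'' lemma), and then invoke the Naber--Valtorta framework of \cite{NV_Annals} as a black box exactly as in \cite{DLSk2}.
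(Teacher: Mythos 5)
Your proposal follows essentially the same route as the paper: decompose $\{\Irm(T,\cdot)>1\}$ into countably many closed pieces on which the singularity degree is uniformly bounded below and the excess decay of Proposition \ref{p:tilt-decay} holds from a fixed scale, then run the Naber--Valtorta rectifiable Reifenberg scheme of \cite{DLSk2} by showing that frequency pinching (for the center-manifold frequency, including the extra $\Lbf_N$ term) controls closeness to an $(m-2)$-invariant homogeneous profile and hence the Jones $\beta_2$ numbers of $\Hcal^{m-2}\mres\Sfrak$. You also correctly isolate the genuinely semicalibrated issues --- the $d\omega$-errors in the variational identities, the subquadratic scaling $\|d\omega\|_{C^0}^2 r^{2-2\delta_3}$, and the absorption trick --- so the plan matches Part \ref{pt:NV} of the paper in both structure and substance.
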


Recall that by the work \cite{NV_varifolds} of Naber \& Valtorta, for each $k=0,\dots, m$, the \emph{$k$-th stratum} $\Scal^{(k)}(T)$, defined to be the set of all points $x\in \spt(T)\setminus \spt(\partial T)$ such that for any tangent cone $S$ at $x$ we have
\[
    \dim(\{y:(\tau_y)_\sharp S = S\}) \leq k,
\]
is countably $k$-rectifiable. Here, $\tau_y(p) := p+y$ denotes the map that translates by $y$.

In Part 3 we then complete the proof of Theorem \ref{t:main} by showing the following (cf. \cite{DMS}).
\begin{theorem} \label{t: main negligible}
    Let $T$ and $\omega$ be as in Theorem \ref{t:main}. Then the set $\{x\in \Ffrak(T): \Irm(T,x)=1\}$ is $\Hcal^{m-2}$-null. Moreover, the tangent cone is unique at $\Hcal^{m-2}$-a.e. point in $\Scal^{(m-2)}(T)$.
\end{theorem}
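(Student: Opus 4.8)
The plan is to follow the scheme of \cite{DMS}, treating the two assertions separately. Preliminarily, recall the standard decomposition $\Sing(T) = \Ffrak(T)\sqcup\Scal^{(m-2)}(T)$, a consequence of Almgren's stratification together with the area-minimality of tangent cones to $T$: a flat singular point has a tangent cone whose spine is an $m$-plane, so it cannot lie in $\Scal^{(m-2)}(T)$; conversely a non-flat singular point has every tangent cone with spine of dimension at most $m-2$, since a spine of dimension $m-1$ would force the cone to be a multiplicity-$Q$ hyperplane (as $1$-dimensional area-minimizing integral cycles are flat), making the point regular. By the result proved in Part \ref{pt:NV}, $\{x\in\Ffrak(T):\Irm(T,x)>1\}$ is countably $(m-2)$-rectifiable with a unique tangent cone at each of its points by Theorem \ref{t:sing-degree-main}(v); and $\Scal^{(m-2)}(T)$ is countably $(m-2)$-rectifiable by \cite{NV_varifolds}, while $\Scal^{(m-3)}(T)$ is $\Hcal^{m-2}$-null. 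Hence it suffices to prove: (a) $\Sigma:=\{x\in\Ffrak(T):\Irm(T,x)=1\}$ is $\Hcal^{m-2}$-null; and (b) the tangent cone is unique at every point of $\Scal^{(m-2)}(T)\setminus\Scal^{(m-3)}(T)$.

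For (a) I would fix $Q\ge 2$ and show $\Hcal^{m-2}(\Sigma_Q)=0$, where $\Sigma_Q:=\Sigma\cap\Ffrak_Q(T)$. By Theorem \ref{t:sing-degree-main}(i)--(ii), at $x\in\Sigma_Q$ every fine blow-up $u$ is a $1$-homogeneous Dir-minimizer with $\boldsymbol\eta\circ u=0$ and $\|u\|_{L^2(B_{3/2})}=1$, so $u=\sum_{a=1}^{Q}\llbracket L_a\rrbracket$ for linear maps $L_a\colon\pi_0\to\pi_0^\perp$ with $\sum_aL_a=0$. The first key step is the structural lemma $\dim S(u)\le m-2$ for the spine $S(u)$: if $\dim S(u)\ge m-1$ then, $u$ being invariant along $S(u)$ and $1$-homogeneous, it is a cylinder over a nonconstant $1$-homogeneous one-dimensional Dir-minimizer $\tilde u$ with $\boldsymbol\eta\circ\tilde u=0$; but no such $\tilde u$ exists, since an averaging-over-rewirings competitor for the pencil of lines $\tilde u$ (using $\boldsymbol\eta\circ\tilde u=0$) strictly lowers the Dirichlet energy. (This is the same mechanism that prevents two transverse sheets meeting along a hyperplane from being minimizing.) One then stratifies $\Sigma_Q=\bigsqcup_{k=0}^{m-2}\Sigma_Q^{(k)}$, where $x\in\Sigma_Q^{(k)}$ iff the largest spine dimension among fine blow-ups at $x$ equals $k$.

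The strata with $k\le m-3$ are handled by Federer-style dimension reduction: if $\Hcal^{m-2}(\Sigma_Q^{(k)})>0$, choose an $\Hcal^{m-2}$-positive-density point $x_0$ of $\Sigma_Q^{(k)}$ and a blow-up sequence realizing a fine blow-up $u$ with $\dim S(u)=k$; using the semicontinuity of the singularity-degree frequency along blow-ups (a consequence of the monotonicity results underlying Parts \ref{pt:sing-deg}--\ref{pt:NV}), the rescalings of $\Sigma_Q^{(k)}$ accumulate inside $\{y:I_u(y,0)\ge1\}=S(u)$, an affine $k$-plane with $k\le m-3$ which nevertheless inherits positive lower $(m-2)$-density --- impossible. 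The remaining stratum $\Sigma_Q^{(m-2)}$ is the heart of the argument, and is where the $C^{2,\kappa_0}$-regularity of $\omega$ is indispensable (for merely $C^1$ calibrating forms one expects neither a unique-continuation principle nor this negligibility). Assuming $\Hcal^{m-2}(\Sigma_Q^{(m-2)})>0$, at $\Hcal^{m-2}$-a.e.\ such point the approximate tangent plane to $\Sigma_Q^{(m-2)}$ coincides with the $(m-2)$-dimensional spine of the fine blow-up, which is therefore $v\circ\mathbf{p}$ for the orthogonal projection $\mathbf{p}$ onto the $2$-plane $\pi_0\cap S(u)^\perp$ and a planar Dir-minimizer $v$ with isolated singularity; one then performs a second blow-up and invokes the frequency monotonicity and rigidity of Part \ref{pt:sing-deg} --- here $d\omega\in C^{1,\kappa_0}$ and the careful absorption of the semicalibration error terms (cf.\ the bullet points in the introduction) enter --- to conclude either that some fine blow-up is invariant along a hyperplane, contradicting the structural lemma, or that the tangent cone at $x_0$ is not flat, contradicting $x_0\in\Ffrak(T)$. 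Making this dichotomy quantitative while carrying the first-variation errors of the semicalibration through every estimate is the principal difficulty; the overall scheme mirrors \cite{DMS}.

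Finally, for (b): at $x\in\Scal^{(m-2)}(T)\setminus\Scal^{(m-3)}(T)$ every tangent cone has spine of dimension exactly $m-2$ and is thus of the form $\mathbf{C}_0\times V$, with $V$ an $(m-2)$-plane and $\mathbf{C}_0$ a $2$-dimensional area-minimizing cone with an isolated singularity. At $\Hcal^{m-2}$-a.e.\ such $x$ the plane $V$ is the approximate tangent plane to the $(m-2)$-rectifiable set $\Sing(T)$, hence is common to all tangent cones; slicing $T$ by the $2$-planes transverse to $V$ then yields, up to a lower-order error, a $2$-dimensional semicalibrated current with an isolated singularity at $x$, whose tangent cone is unique by the two-dimensional theory of \cite{DLSS1,DLSS2,DLSS3}. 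A cylindrical blow-up, exactly as in \cite{DMS}, then identifies the tangent cone of $T$ at $x$ with the product $\mathbf{C}_0\times V$, uniquely. Together with (a), this establishes Theorem \ref{t: main negligible}, and, combined with Part \ref{pt:NV} and Theorem \ref{t:sing-degree-main}(v), also Theorem \ref{t:main}.
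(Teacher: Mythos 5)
Your outline diverges from the paper's actual strategy at exactly the point where the real work lies, and the step you leave vague is not a technicality but the entire content of Part \ref{pt:deg1}. The paper does not argue by stratifying fine blow-ups by spine dimension and running a Federer dimension reduction with a ``second blow-up'': it proves the Fine Excess Decay Theorem (Theorem \ref{c:decay}) --- a Simon-type decay toward cones in $\Cscr(Q)\setminus\Pscr$ under the no-density-gap hypothesis \eqref{e:no-gaps} --- via the $L^2$--$L^\infty$ height bound (Theorem \ref{thm:main-estimate}), graphical approximations, cone balancing, the non-concentration estimates at the spine, and a final blow-up, and then concludes Theorem \ref{t: main negligible} by the covering procedure of \cite[Section 14]{DMS}, as in \cite{Simon_cylindrical}. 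Your route has concrete gaps: (i) you assert that fine blow-ups at degree-$1$ points are superpositions of linear maps; no such classification of $1$-homogeneous Dir-minimizers is available for $m\geq 3$, and the paper never uses one (your structural lemma $\dim S(u)\leq m-2$ is fine, but linearity is unproven). (ii) Your dimension reduction needs rescalings of $\Sigma_Q^{(k)}$ to accumulate in the spine of the fine blow-up, which you justify by a ``semicontinuity of the singularity-degree frequency along blow-ups.'' At points of singularity degree exactly $1$ this transfer is precisely what fails: by Corollary \ref{c:coarse} the excess decays slower than any positive power, the center manifolds keep changing, and nearby density-$Q$ (in particular branch) points of $T$ need not persist as spine or even singular points of the limiting Dir-minimizer. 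This is the reason the paper works intrinsically with the current and conical excess rather than with linearized blow-ups. (iii) Your treatment of the top stratum (``perform a second blow-up and invoke frequency monotonicity and rigidity to conclude either \dots or \dots'') is a statement of the desired dichotomy, not a proof; making it quantitative is exactly what Theorem \ref{c:decay} and the covering argument accomplish, and nothing in your outline replaces them.

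For the second assertion, your slicing argument does not work: slices of a semicalibrated (or area-minimizing) current by $2$-planes transverse to the spine are in general neither semicalibrated nor almost minimizing, so the two-dimensional theory of \cite{DLSS1,DLSS2,DLSS3} cannot be invoked for them; and even granting good slices, uniqueness of tangent cones for slices would not yield uniqueness for $T$, since one must rule out rotation of the cross-sectional cone along different sequences of scales --- this is exactly what the excess decay toward cylindrical cones provides. In the paper, a.e.\ uniqueness on $\Scal^{(m-2)}(T)$ follows from the same Theorem \ref{c:decay} (whose no-gap hypothesis holds $\Hcal^{m-2}$-a.e.\ on the stratum by its rectifiability) together with the covering scheme of \cite[Section 14]{DMS}, not from a reduction to two-dimensional semicalibrated currents.
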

{Combining these results then gives Theorem \ref{t:main}.}

The starting point for studying the properties of Almgren's frequency function with respect to varying normal approximations is the following result, which provides uniform upper and lower frequency bounds over all the intervals of flattening around a given flat singular point.

\begin{theorem}\label{t:freqbds}
    Suppose that $T$ satisfies Assumption \ref{a:main-2}. Then there exist constants $c_0 = c_0(m,n,Q,T) > 0$, $C=C(m,n,Q,T)>0$ and $\alpha=\alpha(m,n,Q)>0$, such that
    \begin{align}
        \Ibf_{N_j}(r) &\geq c_0 \qquad \forall r\in \left(\frac{s_j}{t_j},3\right], \\
        \Ibf_{N_j}(a) &\leq e^{Cb^\alpha} \Ibf_{N_j}(b) \qquad \forall r\in \left(\frac{s_j}{t_j},3\right]. \label{e: freq-am}
    \end{align}
    Moreover,
    \[
        0< \inf_j \inf_{r\in (\tfrac{s_j}{t_j}, 3]}\Ibf_{N_j}(r) \leq \sup_j \sup_{r\in (\tfrac{s_j}{t_j},3]}\Ibf_{N_j}(r) < +\infty.
    \]
\end{theorem}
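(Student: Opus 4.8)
\textbf{Proof plan for Theorem \ref{t:freqbds}.}

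The plan is to follow the blueprint of \cite{DLSk1}, carefully tracking the extra term $\Lbf_N$ coming from the semicalibration. The starting point is the differential (almost-monotonicity) inequality for the single-interval frequency $\Ibf_{N_j}$. Using the variational estimates for the normal approximation on the center manifold $\Mcal_j$ (the analogues of the estimates in \cite{DLS16blowup}, as adapted to the semicalibrated setting in \cite{Spolaor_15}), together with the first variation identity \eqref{eq: first variation}, I would derive that on the interval $(s_j/t_j, 3]$ one has $\frac{d}{dr}\log \max\{\Ibf_{N_j}(r), 1\} \geq -C r^{\alpha - 1}$ for suitable $\alpha = \alpha(m,n,Q) > 0$ and $C = C(m,n,Q,T)$. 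The key point is that the $\Lbf_N$-contribution to $\frac{d}{dr}\log\Hbf_N$ and to $\frac{d}{dr}\Gbf_N$ is controlled by $\|d\omega\circ \iota_{0,t_j}\|_{C^0}^2$ times lower-order quantities; invoking \eqref{e:domega-balls} (the quadratic scaling $\|d\omega\circ\iota_{0,t_j}\|_{C^0(\Bbf_{6\sqrt m})}\leq C\|d\omega\|_{C^0}t_j^2$) and the fact that $\mbf_{0,j}$ dominates $\bar\eps^2 t_j^{2-2\delta_2}$, these errors are absorbable into the excess-controlled right-hand side, yielding \eqref{e: freq-am} upon integrating. This also requires the nonnegativity/coercivity facts that $\Hbf_{N_j}(r) > 0$ and $\Dbf_{N_j}(r) \geq c\,\Hbf_{N_j}(r)/r$ modulo absorbable errors; I would establish these in the same way, using that for $\bar\eps$ small the $\Lbf_N$ term is a genuine perturbation of the Dirichlet energy (this is precisely the ``absorption trick'' of \cite[Remark 1.10]{Spolaor_15} alluded to in the introduction).

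Next, for the uniform lower bound $\Ibf_{N_j}(r) \geq c_0$: I would argue by contradiction/compactness. If no such $c_0$ existed, one could select intervals $j_k$ and radii $\rho_k \in (s_{j_k}/t_{j_k}, 3]$ with $\Ibf_{N_{j_k}}(\rho_k) \to 0$. Rescaling at scale $\rho_k$ and passing to a fine blow-up (via the compactness procedure of Section \ref{ss:compactness}, whose hypotheses are met since $\rho_k$ lies in an interval of flattening), one obtains a nontrivial $\mathrm{Dir}$-minimizer $u$ with $\boldsymbol\eta\circ u = 0$ and $I_u(0) = 0$; but $I_u(0) \geq 1$ for any such $u$ by the frequency gap for $\mathrm{Dir}$-minimizers with $\boldsymbol\eta\circ u=0$ (as in \cite[Section 3.5]{DLS_MAMS} and \cite{DLSk1}), a contradiction. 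The convergence $\Ibf_{N_{j_k}}(\rho_k) \to I_u(0)$ here is the content of the frequency-convergence statement under blow-up, which I would prove by combining the strong $W^{1,2}_{\loc}$ convergence $u_k \to u$ with the uniform control of the error terms $\Lbf_{N_k}$, which vanish in the limit by the quadratic scaling of $d\omega$ relative to the excess.

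Finally, the uniform upper bound: I would first obtain a bound on $\Ibf_{N_0}$ at a fixed scale, say $\Ibf_{N_0}(3) \leq C(T)$, directly from the excess and mass estimates on $T$ in $\Bbf_{7\sqrt m}$. Combining this with \eqref{e: freq-am} (applied on $(s_0/t_0,3]=(s_0,3]$) gives a uniform bound $\Ibf_{N_0}(r) \leq C(T)$ on that whole interval, and in particular at $r = s_0$ if $s_0 > 0$. The matching of frequencies across consecutive intervals of flattening --- i.e. that $\Ibf_{N_j}$ evaluated near $t_j$ is comparable to $\Ibf_{N_{j-1}}$ evaluated near $s_{j-1}$, up to a factor $1 + C\bar\eps^\alpha$ --- then propagates the bound down all intervals by induction; this comparison follows from the fact that both quantities encode the same geometric frequency of $T$ at comparable physical scales, with discrepancy controlled by the reparametrization between consecutive center manifolds (a quantitatively small perturbation, using \cite[Corollary 7.9]{Spolaor_15} and the splitting-before-tilting estimates). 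The lower bound from the previous paragraph combined with this gives the two-sided uniform bound.

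\textbf{Main obstacle.} I expect the hardest part to be the frequency-matching across intervals of flattening together with the derivation of the differential inequality with the \emph{correct} ($\alpha$-H\"older, excess-absorbable) error: one must ensure that every appearance of $\|d\omega\|_{C^0}$ is genuinely quadratic in the relevant scale (hence dominated by $\mbf_{0,j}$ and ultimately by the tilt excess), which is exactly where the strengthened hypothesis $\|d\omega\|_{C^0}^2 \leq \eps_{23}\Ebf(T,\Cbf_1)$ and the absorption trick enter, and where the semicalibrated case genuinely departs from \cite{DLSk1}.
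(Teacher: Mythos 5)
The paper in fact disposes of this theorem almost entirely by citation: the upper bound and the almost-monotonicity \eqref{e: freq-am} are \cite[Theorem 7.8]{Spolaor_15} (the semicalibrated analogue of \cite[Theorem 5.1]{DLS16blowup}), and the lower bound is \cite[Theorem 7.8]{Sk21}, whose area-minimizing proof carries over verbatim because the preliminary estimates hold and the extra term $\Lbf_N$ in the frequency makes the variational errors take the same form as in the area-minimizing case. Your re-derivation of the differential inequality is consistent with that. The genuine problem is your lower bound. You invoke a ``frequency gap $I_u(0)\geq 1$ for Dir-minimizers with $\boldsymbol\eta\circ u=0$'': no such gap exists for general average-free Dir-minimizers --- a nonzero constant $u=\llbracket P\rrbracket+\llbracket -P\rrbracket$ is average-free, nontrivial, and has frequency $0$, and the two-valued branches of $\sqrt{z}$ have frequency $1/2$. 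The statement $I_u(0)\geq 1$ for \emph{fine blow-ups} is precisely Theorem \ref{t:freq-lb-1} (i.e.\ Theorem \ref{t:sing-degree-main}(i)), whose proof in this paper \emph{uses} Theorem \ref{t:freqbds}; invoking it here is circular. What the compactness argument actually requires is the persistence of the density-$Q$ point at the origin in the blow-up, forcing $u(0)=Q\llbracket 0\rrbracket$ and thereby excluding constant/separated limits; only then does H\"older regularity of Dir-minimizers give a positive (not $\geq 1$) gap, which is all that is needed for $c_0=c_0(m,n,Q,T)>0$. Your proposal never secures this step. (Also, $\Lbf_N$ is \emph{linear}, not quadratic, in $d\omega$; its bound is of the form $C\mbf_{0}^{\gamma}r\,\Dbf_N$, though the errors are still absorbable.)

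The upper bound as you set it up also does not close. Propagating $\Ibf_{N_0}(3)\leq C(T)$ across consecutive intervals of flattening with a multiplicative loss $1+C\bar\eps^\alpha$ per crossing fails when there are infinitely many intervals, since $(1+C\bar\eps^\alpha)^{j}\to\infty$; replacing $\bar\eps$ by $\mbf_{0,k}$ does not help because $\sum_k\mbf_{0,k}^{\gamma}$ is not known to converge at this stage (that only follows from the excess decay of Proposition \ref{p:tilt-decay}, which itself relies on this theorem). Moreover, the ``frequency matching'' across consecutive center manifolds is essentially the jump estimate of the later BV Theorem \ref{t:BV}, which needs the comparison of consecutive center manifolds (Lemma \ref{l:compare-cm}) --- machinery neither available nor needed here. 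The actual argument bounds $\Ibf_{N_j}(3)$ uniformly in $j$ \emph{directly}: $\Dbf_{N_j}(3)\leq C\mbf_{0,j}$ by the normal-approximation estimates, while the stopping condition that terminated the previous interval (a Whitney cube violating the excess/height criteria near the outer scale) yields, via splitting-before-tilting, a matching lower bound on $\Hbf_{N_j}$ at outer scales in terms of $\mbf_{0,j}$; the case $j=0$ is absorbed into the $T$-dependence of the constant. The almost-monotonicity \eqref{e: freq-am} then propagates the bound within each interval separately, with no cross-interval comparison at all.
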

The uniform upper bound of Theorem \ref{t:freqbds} was established in \cite[Theorem 7.8]{Spolaor_15} (see also \cite[Theorem 5.1]{DLS16blowup}). For the uniform lower bound, we refer the reader to \cite[Theorem 7.8]{Sk21}. Although this is only proven therein in the case where $T$ is area-minimizing, one may easily observe that the proof in fact works in exactly the same way when $T$ is merely semicalibrated, since all of the preliminary results required (e.g. \cite[Theorem 1.5]{Spolaor_15} and the estimates \cite[Proposition 7.5]{Spolaor_15}) hold here also. In particular, observe that including the term $\Lbf_N$ in the frequency ensures that the variational error terms for the frequency are of exactly the same form as those in the area-minimizing case.

\begin{remark}
Given Theorem \ref{t:freqbds}, observe that the arguments in \cite{Sk21}, rewritten for semicalibrated currents (namely, replacing all results from \cite{DLS14Lp,DLS16centermfld,DLS16blowup} with their counterparts from \cite{Spolaor_15}), yields a local upper Minkowski dimension estimate of $m-2$ as obtained in \cite{Sk21} for area-minimizing integral currents.
\end{remark}

In order to derive the conclusions of Theorem \ref{t:sing-degree-main}, we wish to sharpen the uniform bounds of Theorem \ref{t:freqbds} to a quantitative control on the radial variations of the frequency function, across uninterrupted strings of intervals of flattening. With this in mind, we recall the following definition of the \emph{universal frequency} from \cite{DLSk1}.

\begin{definition}\label{d:univ-freq}
    Suppose that $T$ is as in Assumption \ref{a:main-2} and let $\{(s_k,t_k]_{k=j_0}^J\}$ be a sequence of intervals of flattening with coinciding endpoints (i.e. $s_k = t_{k+1}$ for $k=j_0,\dots,J-1$), with corresponding center manifolds $\Mcal_k$ and normal approximations $N_k$. For $r\in (s_J, t_{j_0}]$, let
    \begin{align*}
        \Ibf(r) &:= \Ibf_{N_k}\big(\tfrac{r}{t_k}\big)\mathbf{1}_{(s_k,t_k]}(r), \\
        \Hbf(r) &:= \Hbf_{N_k}\big(\tfrac{r}{t_k}\big)\mathbf{1}_{(s_k,t_k]}(r), \\
        \Dbf(r) &:= \Dbf_{N_k}\big(\tfrac{r}{t_k}\big)\mathbf{1}_{(s_k,t_k]}(r), \\
        \Lbf(r) &:= \Lbf_{N_k}\big(\tfrac{r}{t_k}\big) \mathbf{1}_{(s_k,t_k]}(r). 
    \end{align*}
    We refer to $\Ibf$ as the \emph{universal frequency function}, whenever it is well-defined.
\end{definition}

We have the following frequency BV estimate on the universal frequency function, which is not only a crucial tool for the proof of Theorem \ref{t:sing-degree-main}, but will also be useful in its own right in establishing the rectifiability of the points with singularity degree strictly larger than 1 in Part 2.

\begin{theorem}\label{t:BV}
    There exists $\gamma_4=\gamma_4(m,n,Q)>0$ and $C=C(m,n,Q)>0$ such that the following holds. Let $\{(s_k,t_k]_{k=j_0}^J\}$ be a sequence of intervals of flattening with coinciding endpoints. Then $\log(\Ibf +1)\in \BV((s_J,t_{j_0}])$ with the quantitative estimate
    \begin{equation}
        \left|\left[\frac{d\log(\Ibf+1)}{dr}\right]_-\right|\big((s_J,t_{j_0}]\big) \leq C \sum_{k=j_0}^{J} \mbf_{0,k}^{\gamma_4}.
    \end{equation}
    In addition, if $(a,b]\subset (s_k,t_k]$ for some interval of flattening $(s_k,t_k]$, we have
    \begin{equation}
        \left|\left[\frac{d\log(\Ibf+1)}{dr}\right]_-\right|\big((a,b]\big) \leq C \left(\frac{b}{t_k}\right)^{\gamma_4}\mbf_{0,k}^{\gamma_4}.
    \end{equation}
\end{theorem}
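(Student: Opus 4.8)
\textbf{Proof plan for Theorem \ref{t:BV}.}

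The plan is to first establish the estimate on a single interval of flattening $(s_k,t_k]$ and then sum the contributions. Fix an interval of flattening $(s_k,t_k]$ and work with the rescaled current $T_k = T_{0,t_k}$, its center manifold $\Mcal_k$ and normal approximation $N_k$; write $\Ibf(r) = \Ibf_{N_k}(r/t_k)$ on this interval. The key is a differential inequality of the form
\[
    \frac{d}{dr}\log(\Ibf(r)+1) \geq -C\,\Bigl(\tfrac{r}{t_k}\Bigr)^{\gamma_4 - 1}\,\mbf_{0,k}^{\gamma_4}\,,
\]
valid for a.e. $r \in (s_k,t_k]$, obtained by combining the first-variation (outer variation) and inner-variation identities for $N_k$ with the variational error estimates from \cite{Spolaor_15}. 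Concretely, one differentiates $\log \Hbf_{N_k}$ and $\log \mathbf{\Gamma}_{N_k} = \log(\Dbf_{N_k} + \Lbf_{N_k})$ in $r$, uses the almost-monotonicity formula underlying \eqref{e: freq-am} to produce a main nonnegative term (the Cauchy–Schwarz defect measuring the failure of radial homogeneity), and absorbs all the error terms — those coming from the curvature of $\Mcal_k$, the splitting-before-tilting/interpolation estimates controlling $N_k$ minus its center-manifold average, and crucially the term $T(d\omega\mres X)$ entering $\delta T$ — into a quantity bounded by a power of $\mbf_{0,k}$ times a power of $r/t_k$. Here the presence of $\Lbf_{N_k}$ in $\mathbf{\Gamma}_{N_k}$ is exactly what makes the error terms formally identical to the area-minimizing case of \cite{DLSk1}, except that one additionally uses the subquadratic smallness \eqref{e:domega-balls} of $\|d\omega\circ\iota_{0,t_k}\|_{C^0}$ to see that the new contributions from $\Lbf$ and from $T(d\omega \mres X)$ are themselves of size $\mbf_{0,k}^{\gamma_4}(r/t_k)^{\gamma_4}$, which is where the definition \eqref{e:m_0} of $\mbf_{0,j}$ and the absorption trick from \cite[Remark 1.10]{Spolaor_15} are used.

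Granting such a pointwise differential inequality, the estimate on $(a,b]\subset (s_k,t_k]$ follows by integrating: the negative part of $d\log(\Ibf+1)$ is dominated by $C\int_a^b (r/t_k)^{\gamma_4-1} t_k^{-1}\,\mbf_{0,k}^{\gamma_4}\,dr \leq C (b/t_k)^{\gamma_4}\mbf_{0,k}^{\gamma_4}$, which is the second displayed inequality; taking $(a,b] = (s_k,t_k]$ gives the per-interval bound $C\,\mbf_{0,k}^{\gamma_4}$ (using $b = t_k$). One must also check that $\Ibf$ does not have a negative jump across a common endpoint $s_k = t_{k+1}$ larger than an acceptable error: at such an endpoint the two normal approximations $N_k$ and $N_{k+1}$ describe the same current $T$ at comparable scales, so the change-of-center-manifold estimates (as in \cite[Section 2]{DLS16blowup} / \cite{Spolaor_15}) show that $\Ibf_{N_k}(s_k/t_k)$ and $\Ibf_{N_{k+1}}(t_{k+1}/t_{k+1}) = \Ibf_{N_{k+1}}(1)$ differ by at most $C\,\mbf_{0,k}^{\gamma_4}$. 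Summing the per-interval bounds and the jump contributions over $k=j_0,\dots,J$ then yields $\bigl|[\tfrac{d\log(\Ibf+1)}{dr}]_-\bigr|((s_J,t_{j_0}]) \leq C\sum_{k=j_0}^J \mbf_{0,k}^{\gamma_4}$, and in particular $\log(\Ibf+1)\in\BV$ since the total variation is finite (the positive part being controlled because $\Ibf$ is bounded above by Theorem \ref{t:freqbds}).

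The main obstacle, as in \cite{DLSk1}, is the derivation of the differential inequality with the correct (subquadratic, power-of-$\mbf_{0,k}$) error — i.e. showing that every error term arising from differentiating the frequency, including the genuinely new ones produced by the semicalibration (the term $T(d\omega\mres X)$ in \eqref{eq: first variation} and the term $\Lbf_{N_k}$ itself, together with its $r$-derivative), can be estimated by $C\,\mbf_{0,k}^{\gamma_4}(r/t_k)^{\gamma_4}$. This requires carefully tracking the scaling $\iota_{0,t_k}$ in $\Lbf_{N_k}$, invoking \eqref{e:domega-balls} so that $\|d\omega\circ\iota_{0,t_k}\|_{C^0}\lesssim \|d\omega\|_{C^0}t_k^2 \lesssim \mbf_{0,k}^{1/2}$ up to the scale factor, and then combining this with the $L^2$–$L^\infty$ and Dirichlet estimates for $N_k$ from \cite{Spolaor_15} exactly as the analogous terms are treated in \cite{DLSk1}; the absorption trick of \cite[Remark 1.10]{Spolaor_15} is needed to upgrade the a priori linear dependence on $\|d\omega\|_{C^0}$ in some of these error terms to the quadratic dependence matching $\mbf_{0,k}$. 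Once this is in place, the remaining steps (integration, endpoint jumps, summation) are routine and parallel \cite[Section 8]{DLSk1}.
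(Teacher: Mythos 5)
Your proposal is correct and follows essentially the same route as the paper: an almost-monotonicity differential inequality for $\log(1+\Ibf)$ on each interval of flattening derived from the inner/outer variation identities with $\mbf_{0,k}^{\gamma_4}$-errors (the paper's Proposition \ref{p:firstvar} and Corollary \ref{c:freq-mon}), jump estimates at the common endpoints via comparison of consecutive center manifolds and their normal approximations (Lemmas \ref{l:compare-cm}, \ref{l:cm-proj} and Corollary \ref{c:curv-excess}), and summation over $k$. The only cosmetic difference is that the paper's interior differential inequality is not a pure power law in $r$ but contains a term $\Dbf^{\gamma_4-1}\partial_r\Dbf$ that is integrated exactly, which yields the same $(b/t_k)^{\gamma_4}\mbf_{0,k}^{\gamma_4}$ bound you state.
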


\section{Coarse blow-ups}\label{s:coarse}
It will be convenient to consider an alternative type of blow-up to a fine blow-up, avoiding reparameterization to center manifolds; we follow the setup of \cite[Section 3.1]{DLSk1}. Consider a blow-up sequence of radii $r_k$ and the associated sequence $T_{0,r_k}$ of rescaled currents. We may assume without loss of generality that $T_{0,r_k} \toweakstar Q\llbracket \pi_0 \rrbracket$ in $\Bbf_{4}$.
\begin{remark}
    Compared to the set up of \cite{DLSk1}, we are taking $M = 1/2$, which is sufficient for our purposes here. 
\end{remark}

Notice that for $\bar r_k := \frac{r_k}{t_k}$, in light of the stopping condition \cite[(6.10)]{Spolaor_15} for the intervals of flattening, we have $\Bbf_L\subset\Cbf_{2\bar r_k}$ for any Whitney cube $L\in \Wscr^{(j(k))}$ with $L\cap\bar{B}_{\bar r_k}(\pi_0)\neq \emptyset$ (see \cite[Section 2.2]{Spolaor_15}). Let $\varpi_k$ denote a sequence of planes such that $\Ebf(T_{0,r_k},\Bbf_{4\bar r_k},\varpi_k) = \Ebf(T_{0,r_k},\Bbf_{4\bar r_k})$. Observe that for $k$ sufficiently large, the height bound \cite[Theorem 1.5]{Spolaor_15} guarantees that
\[
    \Ebf(T_{0,r_k}, \Cbf_{2},\varpi_k) \leq \Ebf(T_{0,r_k}, \Bbf_4) =: E_k \to 0.
\]
In particular, $\varpi_k \to \pi_0$ (locally in Hausdorff distance). By replacing $T_{0,r_k}$ by its pushforward under a rotation mapping $\varpi_k$ to a plane parallel to $\pi_0$, which is converging to the identity, we may therefore assume that $\varpi_k = \pi_0$.

We may then ensure that for all $k$ sufficiently large we have $E_k < \eps_1$, where $\eps_1>0$ is the threshold of \cite[Theorem 1.4]{DLSS1}, which in turn yields a sequence of Lipschitz approximations 
\begin{equation}\label{e:f_k-coarse}
    f_k:B_{1/2}(\pi_0)\to \Acal_Q(\pi_0^\perp)
\end{equation}
for $T_{0,r_k}$. Define the normalizations
\[
    \bar f_k := \frac{f_k}{E_k^{1/2}}.
\]
We will work under the additional assumption that
\begin{equation}\label{e:domega-scaling}
    \|d\omega\|_{C^{1,\kappa_0}}^2 r_k^{2-2\delta_3} = o(E_k)\,,
\end{equation}
for a fixed choice of parameter $\delta_3 \in (0, \delta_2)$.
\begin{remark}
    Notice that this is a slightly stronger hypothesis that the corresponding assumption \cite[(9)]{DLSk1}. The reason for asking for merely almost-quadratic scaling on the left-hand side of \eqref{e:domega-scaling} will become apparently in Part \ref{pt:deg1}; see Remark \ref{r:error-loss}.
\end{remark}
Note that \eqref{e:domega-scaling} need not necessarily hold in general, but we will only need to consider cases where it is indeed true.

In light of \cite[Theorem 1.4, Theorem 3.1]{DLSS1}, we may thus conclude that up to extracting a subsequence, there exists a Dir-minimizer $\bar f: B_1(\pi_0) \to \Acal_Q(\pi_0^\perp)$ with $\bar f(0) = Q\llbracket 0 \rrbracket$ such that
\[
    \bar f_k \to \bar f \qquad \text{in $W^{1,2}_{\loc}\cap L^2(B_1(\pi_0))$}.
\]
Recalling \cite{DLSk1}, we refer to such a map $\bar f$ as a \emph{coarse blow-up} of $T$ at $0$, and we say that $\bar f$ is non-trivial if it is not identically equal to $Q\llbracket 0 \rrbracket$. We in turn define the average free part
\[
    v(x) := \sum_{i=1}^Q \llbracket \bar f_i(x) - \boldsymbol\eta\circ\bar f(x)\rrbracket
\]
for $\bar f$. As usual, one may analogously define a coarse blow-up and its average-free part at another point $x\in \Ffrak(T)$ under the assumption \eqref{e:domega-scaling}.

Observe that unlike for a fine blow-up, {it could be} that $\bar f \equiv Q\llbracket \boldsymbol\eta \circ \bar f \rrbracket$. Indeed, one may construct examples of such behavior from holomorphic curves in $\C^2$ that are of the form $\{(w,z): w^Q = z^p\}$ for non-integer ratios $p/Q$ larger than 2 (see e.g. \cite[Remark 4.2]{DLSk1}, and \cite{DL-survey-JDG}).

We have the following frequency lower bound for coarse blow-ups, which follows from a Hardt-Simon type estimate (see \cite[Theorem 3.2]{DLSk1}).

\begin{theorem}\label{t:coarse}
    Let $T$ be as in Assumption \ref{a:main-2}. If $\bar f$ is a non-trivial coarse blow-up and $v$ is its average-free part, then $I_{\bar f}(0)\geq 1$ and if $v$ is not identically zero, then $I_v(0)\geq 1$.
\end{theorem}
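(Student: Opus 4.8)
\proofstep{Overall plan.} The plan is to first reduce the two assertions to the single inequality $I_v(0)\ge 1$ (valid whenever $v\not\equiv 0$), and then to obtain the latter by transcribing the proof of the Hardt--Simon type estimate \cite[Theorem 3.2]{DLSk1} to the semicalibrated setting, the only new point being the treatment of the error arising from $d\omega\neq 0$ in the first variation of $T$.

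\proofstep{Step 1: reduction.} Write $\eta:=\boldsymbol\eta\circ\bar f$, so that $\bar f=\eta\oplus v$ with $\eta$ single-valued. Since $\bar f$ is Dir-minimizing, $\eta$ is harmonic and, by the usual truncation argument, the average-free part $v$ is itself Dir-minimizing; moreover $\eta(0)=0$ because $\bar f(0)=Q\llbracket 0\rrbracket$. The orthogonal decomposition $|\bar f(x)|^2=|v(x)|^2+Q|\eta(x)|^2$, together with the same identity for the gradients, gives $H_{\bar f}(0,r)=H_v(0,r)+QH_\eta(0,r)$ and $D_{\bar f}(0,r)=D_v(0,r)+QD_\eta(0,r)$, so that $I_{\bar f}(0,r)$ is a convex combination of $I_v(0,r)$ and $I_\eta(0,r)$ with nonnegative weights. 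Letting $r\downarrow 0$ and using monotonicity of all three regularized frequencies, one gets $I_{\bar f}(0)\ge\min\{I_v(0),I_\eta(0)\}$, with the convention that a term is dropped when the corresponding function vanishes identically. As $\eta$ is a nonzero harmonic function vanishing at the origin (in the case $\eta\not\equiv 0$), its frequency $I_\eta(0)$ is a positive integer (see e.g. \cite[Section 3.5]{DLS_MAMS}); hence both conclusions of Theorem \ref{t:coarse} follow once $I_v(0)\ge 1$ is established whenever $v\not\equiv 0$.

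\proofstep{Step 2: the Hardt--Simon estimate.} Recall that $v$ is the strong $W^{1,2}_{\loc}\cap L^2(B_1)$-limit of the normalized average-free parts $v_k$ of the Lipschitz approximations $\bar f_k=E_k^{-1/2}f_k$ of $T_{0,r_k}$. I would reprove \cite[Theorem 3.2]{DLSk1} in this setting: testing the first variation of $T_{0,r_k}$ against a suitable radial vector field and combining this with the height bound \cite[Theorem 1.5]{Spolaor_15} and the Lipschitz approximation estimates of \cite[Theorem 1.4]{DLSS1}, one arrives at a Hardt--Simon type inequality of the form
\[
    \frac{H_{v_k}(0,\sigma)}{\sigma^{m+1}}\le \frac{C}{\rho^{m+1}}\,H_{v_k}(0,\rho)+\Err_k,\qquad 0<\sigma<\rho<\tfrac12,
\]
for all $k$ large. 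The new feature compared to \cite{DLSk1} is that $\delta T_{0,r_k}(X)=T_{0,r_k}\big(\iota_{0,r_k}(d\omega)\mres X\big)\neq 0$, so $\Err_k$ picks up a semicalibration contribution; expanding $\omega$ by Taylor and using the Lipschitz bounds on $f_k$, exactly as in \cite{Spolaor_15}, this contribution is bounded by $C\|d\omega\|_{C^{1,\kappa_0}}^2\,r_k^{2-2\delta_3}$ up to terms that are absorbed into the left-hand side, hence --- after dividing through by $E_k$ --- by $o(1)$ thanks to \eqref{e:domega-scaling}. (In fact the natural quadratic scaling of $\|d\omega\|_{C^0}$ already suffices at this stage; the stronger assumption \eqref{e:domega-scaling} is only needed later, in Part \ref{pt:deg1}.)

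\proofstep{Step 3: passage to the limit, and the main obstacle.} Sending $k\to\infty$, using $v_k\to v$ strongly in $L^2$ so that $H_{v_k}(0,r)\to H_v(0,r)$, and $\Err_k\to 0$, one obtains $\sigma^{-(m+1)}H_v(0,\sigma)\le C\rho^{-(m+1)}H_v(0,\rho)$ for all $0<\sigma<\rho<\tfrac12$, so that $\limsup_{\sigma\downarrow 0}\sigma^{-(m+1)}H_v(0,\sigma)<\infty$. By the standard relation $\tfrac{d}{dr}\log\!\big(r^{-(m+1)}H_v(0,r)\big)=\tfrac{2}{r}\big(I_v(0,r)-1\big)$ for a Dir-minimizer (up to the harmless contribution of $\phi$), this boundedness forces $I_v(0)=\lim_{r\downarrow 0}I_v(0,r)\ge 1$, which together with Step 1 completes the proof. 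Steps 1 and 3 are soft; the substance lies in Step 2, where one must re-run the first-variation/integration-by-parts computation underlying \cite[Theorem 3.2]{DLSk1} while carefully tracking the extra term $T_{0,r_k}(\iota_{0,r_k}(d\omega)\mres X)$ and verify --- using the $C^{2,\kappa_0}$ regularity of $\omega$ and the estimates of \cite{DLSS1,Spolaor_15} in place of those of \cite{DLS14Lp,DLS16centermfld,DLS16blowup} --- that it indeed contributes only $o(E_k)$ after normalization. Apart from this error bookkeeping, the argument is a direct transcription of the area-minimizing one.
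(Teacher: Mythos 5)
Your overall strategy is the same as the paper's, which simply adapts the Hardt--Simon argument of \cite[Theorem 3.2]{DLSk1}: Step 1 (the splitting $\bar f=\boldsymbol\eta\circ\bar f\oplus v$, harmonicity of the average, and the observation that $I_{\bar f}(0,r)$ is a weighted mean of $I_v(0,r)$ and $I_{\boldsymbol\eta\circ\bar f}(0,r)$) and Step 3 (the passage $H_v(0,\sigma)\sigma^{-(m+1)}$ bounded $\Rightarrow I_v(0)\geq 1$ via $\partial_r\log(H_v(0,r)r^{-(m+1)})=2(I_v(0,r)-1)/r$) are both correct, and your bookkeeping of the semicalibration error (quadratic in $\|d\omega\|_{C^0}$ after an absorption, hence $o(E_k)$ by \eqref{e:domega-scaling}) matches the modifications the paper itself lists.

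The genuine gap is in Step 2. The inequality you assert, $\sigma^{-(m+1)}H_{v_k}(0,\sigma)\leq C\rho^{-(m+1)}H_{v_k}(0,\rho)+\Err_k$ uniformly in $k$ and in \emph{all} pairs $0<\sigma<\rho<\tfrac12$, is not what testing the first variation with a radial vector field produces, and no derivation is offered; in the limit it is essentially equivalent to the conclusion $I_v(0)\geq 1$, so as written the key step begs the question. What the radial-field computation actually yields --- this is estimate (15) of \cite{DLSk1}, replaced here by the almost-monotonicity of \cite[Proposition 2.1]{DLSS-uniqueness}, whose higher-order error vanishes as the inner radius tends to zero --- is the non-concentration bound $\int_{\Bbf_\rho}|q^\perp|^2|q|^{-m-2}\,d\|T_{0,r_k}\|\leq C\big(\tfrac{\|T_{0,r_k}\|(\Bbf_\rho)}{\omega_m\rho^m}-Q\big)+o(E_k)\leq CE_k+o(E_k)$, which uses crucially the density hypothesis $\Theta(T,0)=Q$ (nowhere invoked in your Step 2: without it the lower bound on the mass ratio at the inner radius is lost). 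Via the Lipschitz approximation of \cite{DLSS1} this controls a weighted quantity for the \emph{full} approximation $f_k$ (the distance to the plane, equivalently the deviation of the sheets from radial $1$-homogeneity), not the two-scale height of the average-free part $v_k$; the monotonicity formula cannot see the average subtracted off. The frequency lower bound must then be extracted from the limiting weighted estimate for the Dir-minimizer $\bar f$ (e.g.\ by the homogeneity of its blow-ups at $0$, or by a two-scale weighted height bound for $\bar f$ itself), after which the statement for $v$ follows precisely by your Step 1 decomposition, since $I_{\bar f}(0)=\min\{I_v(0),I_{\boldsymbol\eta\circ\bar f}(0)\}$. So the architecture is right, but the central estimate is mis-stated (it concerns $\bar f_k$, weighted by $|x|^{-(m+2)}$, not $H_{v_k}$ at two scales) and its proof --- the actual content of \cite[Theorem 3.2]{DLSk1} --- is missing from the proposal.
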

We refer the reader to \cite{DLSk1} for the proof of this, with the observation that the only differences in the argument therein are
\begin{itemize}
    \item as in \cite{DLSk1}, the fact that $\bar f$ is non-trivial is equivalent to the existence of a radius $\rho>0$ and a constant $\bar c>0$ such that
    \[
        \liminf_{k\to\infty} \frac{\Ebf(T_{0,r_k},\Cbf_\rho,\pi_k)}{E_k} \geq \bar c.
    \]
    \item in the proof of the preliminary lemma \cite[Lemma 3.3]{DLSk1}, the estimate (9) therein is replaced with \eqref{e:domega-scaling} here, and the Lipschitz approximation and surrounding estimates are instead taken from \cite{DLSS1};
    \item the estimate (15) therein for a semicalibrated current follows from \cite[Proposition 2.1]{DLSS-uniqueness} in place of the classical monotonicity formula for mass ratios for stationary integral varifolds (namely, there is the presence of a higher order error term, which vanishes as the inner radius is taken to zero).
\end{itemize}

In light of Theorem \ref{t:coarse}, the validity of the lower bound on the singularity degree in Theorem \ref{t:sing-degree-main} is therefore reduced to the following.

\begin{proposition}\label{p:coarse=fine}
    Suppose that $T$ is as in Assumption \ref{a:main-2}. Let $r_k \in (s_{j(k)},t_{j(k)}]$ be a blow-up sequence of radii with
    \begin{equation}\label{e:intervals-nondegen}
        \liminf_{k\to\infty} \frac{s_{j(k)}}{r_k} >0.
    \end{equation}
    Then \eqref{e:domega-scaling} holds, and the coarse blow-up $\bar f$ along (a subsequence of) $r_k$ is well-defined. Moreover, for the average-free part $v$ of $\bar f$ and a corresponding fine blow-up $u$ along (a further subsequence of) $r_k$, we have
    \[
        v=\lambda u \qquad \text{for some $\lambda >0$.}
    \]
    In particular, $I_u(0) \geq 1$.
    \end{proposition}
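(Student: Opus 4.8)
The plan is to follow the strategy of \cite[Proposition 4.1]{DLSk1}, which identifies the coarse and fine blow-ups up to a multiplicative constant under the non-degeneracy condition \eqref{e:intervals-nondegen}, and to verify that every step survives the passage to the semicalibrated setting. The first task is to establish \eqref{e:domega-scaling}. Here the key point is that $r_k \in (s_{j(k)}, t_{j(k)}]$ together with \eqref{e:intervals-nondegen} forces $r_k/t_{j(k)}$ to be bounded below, so that $\bar r_k = r_k/t_{j(k)}$ is comparable to $1$; combined with the stopping condition defining the intervals of flattening (which gives $\Ebf(T_{0,t_{j(k)}}, \Bbf_{\bar r_k}) \gtrsim \mbf_{0,j(k)} \bar r_k^{2-2\delta_2}$ up to the non-degeneracy, or at worst $E_k$ comparable to a quantity no smaller than the minimal excess dictated by the stopping criterion), one sees that $E_k$ cannot be much smaller than $\bar\eps^2 t_{j(k)}^{2-2\delta_2} \sim \|d\omega\|_{C^{1,\kappa_0}}^2 r_k^{2-2\delta_2}$. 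Since $\delta_3 < \delta_2$, the factor $r_k^{2-2\delta_3} = r_k^{2-2\delta_2} \cdot r_k^{2(\delta_2 - \delta_3)}$ carries an extra positive power of $r_k \to 0$, giving \eqref{e:domega-scaling}. This is precisely the reason the slightly stronger subquadratic scaling is available (and needed later).

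Granting \eqref{e:domega-scaling}, the coarse blow-up $\bar f$ is well-defined by the discussion in Section \ref{s:coarse}, and Theorem \ref{t:coarse} applies. The heart of the matter is then the comparison $v = \lambda u$. Following \cite{DLSk1}, one compares the two normalizations: the coarse blow-up normalizes $f_k$ by $E_k^{1/2}$ over the fixed plane $\pi_0$, while the fine blow-up normalizes the normal approximation $N_k$ by $\|N_k\|_{L^2}$ over the center manifold $\Mcal_k$. The strategy is: (1) show that, under \eqref{e:intervals-nondegen}, the center manifolds $\Mcal_k$ converge (after rescaling by $\bar r_k$, which is comparable to $1$) to a fixed plane $\pi_\infty$, and that $\pi_\infty = \pi_0$ up to the rotations already performed, with quantitative control of the $C^{3,\kappa}$ norm of the parameterizing maps $\bphi_k$ tending to $0$; (2) use the comparison between the normal approximation $N_k$ and the Lipschitz approximation $f_k$ — both approximate the same current $T_{0,r_k}$ on the region where the Whitney refinement has not stopped, so their difference is a higher-order error controlled by the excess and the center manifold estimates from \cite[Theorem 2.13, Corollary 2.18]{Spolaor_15}; (3) conclude that $\bar N_k \circ \mathbf{e}_k$ and $f_k$, suitably identified, differ by the average (the $\boldsymbol\eta \circ f_k$ part, which is absorbed into the center manifold) plus a $W^{1,2}$-negligible term, so that after normalization the average-free parts converge to the same limit up to the ratio of normalizing constants $\lambda = \lim_k \|N_k\|_{L^2}/(E_k^{1/2} \|\cdots\|)$. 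One must check $\lambda > 0$ and $\lambda < \infty$, which uses the reverse Sobolev inequality \cite[Corollary 7.9]{Spolaor_15} on one side and the non-triviality characterization (the $\liminf$ of the rescaled cylindrical excess being positive, as recorded in the bullet points after Theorem \ref{t:coarse}) on the other.

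The main obstacle I anticipate is step (2)–(3): controlling the discrepancy between the center-manifold normal approximation and the coarse Lipschitz approximation uniformly enough to pass to the limit, while keeping track of the extra variational error term coming from $d\omega$ (equivalently, the $\Lbf_N$ contribution and the $T(d\omega \mres X)$ term in the first variation \eqref{eq: first variation}). In the area-minimizing case \cite{DLSk1} this is where most of the work lies, and in the semicalibrated case one must additionally ensure that all the relevant estimates from \cite{DLSS1, Spolaor_15} — the Lipschitz approximation \cite[Theorem 1.4]{DLSS1}, the harmonic approximation \cite[Theorem 3.1]{DLSS1}, and the center manifold and separation estimates of \cite[Part I]{Spolaor_15} — are invoked in place of their counterparts in \cite{DLS14Lp, DLS16centermfld, DLS16blowup}, and that the hypothesis \eqref{e:domega-scaling} (rather than the quadratic scaling) is what feeds into the harmonic approximation so that the $d\omega$ error is genuinely infinitesimal relative to $E_k$. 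Once the identification $v = \lambda u$ is in hand, $I_u(0) = I_v(0) \geq 1$ follows immediately from the scaling invariance of the frequency and Theorem \ref{t:coarse}, since $v$ is non-trivial whenever $u$ is (and if $v \equiv 0$ there is nothing to prove for that subsequence — though in fact under \eqref{e:intervals-nondegen} non-triviality of the fine blow-up is guaranteed, so $\lambda>0$ and $v\not\equiv 0$).
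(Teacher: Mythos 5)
Your overall route is the same as the paper's: the proof is reduced to the reparametrization lemma (Lemma \ref{l:reparametrization-lemma}) and the two comparison estimates of Lemma \ref{l:comparison} (comparability of $\hbf_k^2=\|\bar N_k\|_{L^2}^2$ with $E_k$, and $L^2$-closeness of $\bar{\bphi}_k$ to $\boldsymbol\eta\circ f_k$ up to $o(E_k)$), both imported from \cite{DLSk1} with the estimates of \cite{DLS14Lp,DLS16centermfld} replaced by those of \cite{DLSS1,Spolaor_15}; your steps (1)--(3) are exactly this, and the verification of \eqref{e:domega-scaling} is likewise deferred in the paper to the Whitney-decomposition estimate \cite[(30)]{DLSk1}, which is the quantitative form of the mechanism you invoke.

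However, one claim you make twice is false and should be repaired: \eqref{e:intervals-nondegen} does \emph{not} force $r_k/t_{j(k)}$ to be bounded below, and $\bar r_k$ need not be comparable to $1$. The hypothesis $\liminf_k s_{j(k)}/r_k>0$ says that $r_k$ is comparable to the \emph{stopping scale} $s_{j(k)}$, so $\bar r_k \sim s_{j(k)}/t_{j(k)}$, a ratio which may a priori tend to $0$ (ruling this out when $\Irm(T,0)<2-\delta_2$ is conclusion (vii) of Theorem \ref{t:sing-degree-main}, not yet available at this stage). Consequently your intermediate assertion that ``$E_k$ cannot be much smaller than $\bar\eps^2 t_{j(k)}^{2-2\delta_2}$'' is unjustified as stated. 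The correct chain is: the interval of flattening terminates at $s_{j(k)}$ because some Whitney cube of side length $\sim s_{j(k)}/t_{j(k)}$ violates a stopping condition, which yields
\[
E_k \gtrsim \mbf_{0,j(k)}\Big(\tfrac{s_{j(k)}}{t_{j(k)}}\Big)^{2-2\delta_2} \geq \bar\eps^2\, t_{j(k)}^{2-2\delta_2}\Big(\tfrac{s_{j(k)}}{t_{j(k)}}\Big)^{2-2\delta_2} = \bar\eps^2\, s_{j(k)}^{2-2\delta_2} \gtrsim \bar\eps^2\, r_k^{2-2\delta_2}\,,
\]
whence $\|d\omega\|^2_{C^{1,\kappa_0}}\, r_k^{2-2\delta_3} \lesssim E_k\, r_k^{2(\delta_2-\delta_3)} = o(E_k)$, which is \eqref{e:domega-scaling}. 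The same stopping-condition lower bound (rather than the ``non-triviality characterization'' of the coarse blow-up, which is not a hypothesis here) is what produces $\liminf_k \hbf_k^2/E_k>0$ in Lemma \ref{l:comparison}(i), and hence $\lambda>0$. With these corrections your argument coincides with the paper's.
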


    \begin{remark}
        In the case where \eqref{e:intervals-nondegen} fails, we might have that $v$ is trivial while $u$ is not, precisely because of the possibility that $\bar f = Q\llbracket \boldsymbol\eta\circ \bar f\rrbracket$ in this case; see the above discussion.
    \end{remark}

    Before discussing the proof of Proposition \ref{p:coarse=fine}, let us point out the following result, which one obtains as a simple consequence a posteriori, after obtaining the conclusions of Theorem \ref{t:sing-degree-main}, in light of the classification of homogeneous harmonic functions. This corollary will be exploited in Part 3.

    \begin{corollary}\label{c:coarse}
        Let $T$ be as in Assumption \ref{a:main-2} and suppose that $\Irm(T,0) < 2-\delta_2$. Then, assuming the conclusions of Theorem \ref{t:sing-degree-main}, any coarse blow-up $\bar f$ at 0 is non-trivial, average-free and $\Irm(T,0)$-homogeneous. Moreover, for each $\gamma > 2(\Irm(T,0) - 1)$ we have the lower decay bound
        \[
            \liminf_{r\downarrow 0} \frac{\Ebf(T,\Bbf_r)}{r^\gamma} > 0,
        \]
    and there exists $r_0= r_0(Q,m,n, T)>0$ such that
    \[
        \Ebf(T,\Bbf_r) \geq \left(\frac{r}{s}\right)^\gamma \Ebf(T,\Bbf_s) \qquad \forall r < s < r_0.
    \]
    \end{corollary}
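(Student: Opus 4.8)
The plan is to extract the corollary from the coarse blow-up analysis of Section~\ref{s:coarse}, exploiting the now-available conclusions of Theorem~\ref{t:sing-degree-main}. \emph{Reduction.} Since $\Irm(T,0)<2-\delta_2$, Theorem~\ref{t:sing-degree-main}(vii) gives infinitely many intervals of flattening and $c_\ast:=\inf_j s_j/t_j>0$; hence every blow-up sequence of radii $r_k\in(s_{j(k)},t_{j(k)}]$ satisfies $s_{j(k)}/r_k\geq c_\ast$, so \eqref{e:intervals-nondegen} holds along it. Proposition~\ref{p:coarse=fine} then applies: \eqref{e:domega-scaling} holds, every coarse blow-up $\bar f$ at $0$ is well-defined, and its average-free part $v$ equals $\lambda u$ for a fine blow-up $u$ (along a further subsequence) and some $\lambda>0$. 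Thus $\bar f$ is non-trivial (as $\|u\|_{L^2(B_{3/2})}=1$), and by Theorem~\ref{t:sing-degree-main}(i)--(ii), $u$ and hence $v$ are radially $\Irm(T,0)$-homogeneous.

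\emph{The average vanishes.} I would next show that $\bar f$ is itself $\Irm(T,0)$-homogeneous. This should follow by combining the Hardt--Simon-type excess comparison underlying Theorem~\ref{t:coarse} (the semicalibration errors handled exactly as there) with the frequency pinching of Theorem~\ref{t:sing-degree-main}(iii)--(iv), precisely as in \cite[Section~3]{DLSk1}. Granting this, $g:=\boldsymbol\eta\circ\bar f$ is an $\Irm(T,0)$-homogeneous harmonic $\R^n$-valued function; since homogeneous harmonic functions have non-negative integer degree and $\Irm(T,0)\in[1,2-\delta_2)\subset[1,2)$, either $g\equiv 0$ or $\Irm(T,0)=1$ and $g$ is a nonzero linear map. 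The latter is impossible: $\pi_0$ is the limit of the \emph{optimal} planes $\varpi_k$ for $\varpi\mapsto\Ebf(T_{0,r_k},\Bbf_{4\bar r_k},\varpi)$, and passing the first-order optimality condition to the limit forces $\int Dg=0$ on a fixed ball (equivalently, the linear part of $g$ vanishes), ruling out a nonzero linear $g$. Hence $g\equiv 0$, i.e. $\bar f=v$ is non-trivial, average-free and $\Irm(T,0)$-homogeneous, which is the first assertion.

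\emph{Excess bounds.} For the two decay statements I would pass to the universal frequency. By Theorem~\ref{t:sing-degree-main}(iii)--(iv) together with Theorems~\ref{t:BV} and \ref{t:freqbds}, $\Ibf(r)\to\Irm(T,0)$ as $r\downarrow0$ with errors of power-like (in particular $dr/r$-summable) size, while the first variation yields the standard identity $\tfrac{d}{dr}\log\Hbf(r)=\tfrac{m-1+2\Ibf(r)}{r}+\mathrm{Err}(r)$ with $r\,\mathrm{Err}(r)$ summable. Fixing $\gamma>2(\Irm(T,0)-1)$ and any intermediate $\gamma'\in(2(\Irm(T,0)-1),\gamma)$, the drift $2\Ibf(r)-2-\gamma'$ is eventually strictly negative, so $r\mapsto\Hbf(r)\,r^{-(m+1+\gamma')}$ is non-increasing on $(0,r_0)$ for $r_0$ small. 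Combined with the comparison between $\Ebf(T,\Bbf_r)$ and $\Hbf(r)\,r^{-(m+1)}$ (the center manifold contributing only at lower order), and absorbing the remaining power-like multiplicative errors into the slack $\gamma-\gamma'$, this gives $\Ebf(T,\Bbf_r)\geq(r/s)^\gamma\,\Ebf(T,\Bbf_s)$ for all $r<s<r_0$. The $\liminf$ bound is then immediate on fixing any $s<r_0$, using that $\Ebf(T,\Bbf_s)>0$ for all small $s$ (since $0\in\Sing(T)$ means $T$ is not, even with multiplicity, a plane near $0$).

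\emph{Main obstacle.} The substantive step is the homogeneity of the coarse blow-up $\bar f$ — equivalently, that $I_{\bar f}(0)=\Irm(T,0)$ and that $\bar f$ coincides with its own tangent map at $0$ — which is where the full force of Theorem~\ref{t:sing-degree-main} enters; porting \cite[Section~3]{DLSk1} to the present setting requires verifying that the additional semicalibration error terms (already controlled in the proof of Theorem~\ref{t:coarse} via \cite[Proposition~2.1]{DLSS-uniqueness} and the semicalibrated harmonic and Lipschitz approximations) do not disrupt the frequency-pinching mechanism. The remaining point, absorbing the power-like errors into $\gamma-2(\Irm(T,0)-1)>0$ to obtain the stated constant-free inequality, is routine given that the drift term has a definite negative sign.
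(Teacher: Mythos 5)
Your opening reduction is correct and matches how the paper's proof (that of \cite[Corollary 4.3]{DLSk1}) begins: Theorem \ref{t:sing-degree-main}(vii) gives $\inf_j s_j/t_j>0$, so \eqref{e:intervals-nondegen} holds along every blow-up sequence, Proposition \ref{p:coarse=fine} applies, and $v=\lambda u$ is non-trivial and $\Irm(T,0)$-homogeneous; your classification step (a homogeneous harmonic average of degree in $[1,2)$ is either zero or linear, the linear case being excluded by first-order optimality of the planes $\varpi_k$, which for harmonic $g$ gives $Dg(0)=0$ by the mean value property) is also sound in principle. The genuine gap is that the entire content of the corollary beyond Proposition \ref{p:coarse=fine} is exactly the step you defer, namely that $\bar f$ is homogeneous, equivalently that $\boldsymbol\eta\circ\bar f\equiv 0$, and the tools you point to cannot deliver it. Theorem \ref{t:coarse} (the Hardt--Simon comparison of \cite[Section 3]{DLSk1}) only yields the lower bound $I_{\bar f}(0)\geq 1$; the frequency pinching in Theorem \ref{t:sing-degree-main}(iii)--(iv) concerns the frequency of the normal approximations relative to the center manifolds, and the center manifold absorbs precisely the average sheet of $T$, so this pinching constrains only the average-free part $v$ (which is why Proposition \ref{p:coarse=fine} itself says nothing about $\boldsymbol\eta\circ\bar f$). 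Nor can the conclusion be reached by any argument on the limit object alone: a $Q$-valued map is Dir-minimizing if and only if its average is harmonic and its average-free part is Dir-minimizing, so the pair $(\boldsymbol\eta\circ\bar f,v)$ is uncoupled in the limit, and the vanishing of the average must be extracted quantitatively from the current, e.g.\ in the form $\int|\boldsymbol\eta\circ f_k|^2=o(E_k)$, i.e.\ that the rescaled center manifolds $\bar\bphi_k$ are $o(E_k^{1/2})$-close in $L^2$ to the optimal planes (cf.\ Lemma \ref{l:comparison}(ii)). This is where the conclusions of Theorem \ref{t:sing-degree-main} actually enter in \cite[Corollary 4.3]{DLSk1}, and it is delicate: on an interval of flattening one only knows $E_k\leq C\mbf_{0,j(k)}$, so the $C^{3,\kappa}$ bounds on the center manifold (of size $\mbf_{0,j(k)}^{1/2}$) make its a priori contribution comparable to, not of lower order than, $E_k$; your proposal contains no argument here.

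Two secondary points on the excess bounds. The comparability between $\Ebf(T,\Bbf_r)$ and $r^{-(m+1)}\Hbf(r)$ with constants uniform over all small $r$ is not justified by ``the center manifold contributing only at lower order'' (see above); it follows from Lemma \ref{l:comparison}(i) applied along arbitrary blow-up sequences, which in turn uses (vii) together with the fact that \eqref{e:domega-scaling} is preserved under rescalings (the point the paper explicitly highlights, via the $r\|d\omega\|_{C^0}$-minimality of $T_{0,r}$) --- this should be stated, since it is the reason the argument is available at every small scale. Moreover, the constant-free inequality $\Ebf(T,\Bbf_r)\geq (r/s)^\gamma\,\Ebf(T,\Bbf_s)$ cannot be obtained by ``absorbing multiplicative errors into the slack $\gamma-\gamma'$'': a fixed comparability constant $C>1$ cannot be absorbed into $(s/r)^{\gamma-\gamma'}$ when $r/s$ is close to $1$, and the trivial inclusion of balls gives an inequality in the wrong direction. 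One needs either comparability factors tending to $1$ or, as in \cite[Corollary 4.3]{DLSk1}, a compactness/contradiction argument that uses the non-triviality and homogeneity of the coarse blow-up established in the first part --- another reason the unproven step above is the true crux of the corollary.
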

    Observe that given the conclusions (ii) and (vii) of Theorem \ref{t:sing-degree-main}, the proof of Corollary \ref{c:coarse} is exactly the same as that of \cite[Corollary 4.3]{DLSk1}, when combined again with the aforementioned observation that $T_{0,r_j}$ is $r_j\|d\omega\|_{C^0}$-minimal in $\Bbf_{7\sqrt{m}}$, which allows one to verify that the property \eqref{e:domega-scaling} is preserved under rescalings of blow-up sequences (see \cite[Lemma 3.3]{DLSk1}). We thus omit the details here.

    \subsection{Proof of Proposition \ref{p:coarse=fine}}
    The proof of Proposition \ref{p:coarse=fine} follows analogous reasoning as that of \cite[Proposition 4.1]{DLSk1}.

    First of all, recall the following Lemma from \cite{DLSk1}. Note that it does not rely on any properties of the center manifold other than its regularity, so clearly remains unchanged in this setting.

    \begin{lemma}\label{l:reparametrization-lemma}
    There are constants $\kappa=\kappa (m,n,Q)>0$ and $C=C (m,n,Q)>0$ with the following property. Consider:
    \begin{itemize}
        \item A Lipschitz map $g: \mathbb R^m \supset B_2 \to \mathcal{A}_Q (\mathbb R^n)$ with $\|g\|_{C^0} + {\rm Lip}\, (g) \leq \kappa$;
        \item A $C^2$ function $\boldsymbol{\varphi} : B_2 \to \mathbb R^n$ with $\boldsymbol{\varphi} (0) = 0$ and $\|D\boldsymbol{\varphi}\|_{C^1} \leq \kappa$;
        \item The function $f (x) = \sum_i \llbracket \boldsymbol{\varphi} (x) + g_i (x)\rrbracket$ and the manifold $\mathcal{M} := \{(x, \boldsymbol{\varphi }(x))\}$;
        \item The maps $N, F: \mathcal{M}\cap \Cbf_{3/2} \to \mathcal{A}_Q (\mathbb R^{m+n})$ given by \cite[Theorem 5.1]{DLS_multiple_valued}, satisfying $F (p) = \sum_i \llbracket p+ N_i (p)\rrbracket$, $N_i(p)\perp T_p \mathcal{M}$, and $\boldsymbol{T}_F \mres \Cbf_{5/4} = \Gbf_f \mres \Cbf_{5/4}$.
    \end{itemize}
    If we denote by $\tilde{g}$ the multi-valued map $x\mapsto \tilde{g} (x) = \sum_i \llbracket (0, g_i (x))\rrbracket \in \mathcal{A}_Q (\mathbb R^{m+n})$, then
    \begin{equation*}
        \mathcal{G} (N (\boldsymbol{\varphi} (x)), \tilde{g} (x)) \leq C \|D \boldsymbol{\varphi}\|_{C^0} (\|g\|_{C^0} + \|D \boldsymbol{\varphi}\|_{C^0}) \qquad \forall x\in B_1\, .
    \end{equation*}
    \end{lemma}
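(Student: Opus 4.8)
The statement to prove is Lemma~\ref{l:reparametrization-lemma}, a purely differential-geometric comparison estimate between two different ways of describing the same current $\mathbf{T}_F = \mathbf{G}_f$ near a $C^2$ graph $\mathcal{M}$: once as a multigraph $f = \sum_i \llbracket \boldsymbol{\varphi}(x) + g_i(x)\rrbracket$ over the flat plane $\pi_0 \cong \mathbb{R}^m$, and once as a normal multigraph $N$ over $\mathcal{M}$. The point is that the ``vertical'' perturbation $\tilde g(x) = \sum_i \llbracket (0,g_i(x))\rrbracket$, read at the base point $(x,\boldsymbol{\varphi}(x)) \in \mathcal{M}$, differs from the normal field $N(\boldsymbol{\varphi}(x))$ only by an error controlled by $\|D\boldsymbol{\varphi}\|_{C^0}$ times the size of the perturbation.

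\medskip

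\textbf{Plan.} The approach is to fix $x \in B_1$ and compare the two descriptions sheet by sheet. For each $i$, the point $q_i := (x, \boldsymbol{\varphi}(x) + g_i(x)) \in \spt(\mathbf{T}_F)$ lies on the graph of $f$ over $x$. On the other hand, $F(p) = \sum_j \llbracket p + N_j(p)\rrbracket$ with $N_j(p) \perp T_p\mathcal{M}$ describes the same current as a normal graph. So there is a point $p = p(i) \in \mathcal{M}$ near $(x,\boldsymbol{\varphi}(x))$ and an index $j$ with $q_i = p + N_j(p)$; moreover $p$ is essentially the nearest-point projection of $q_i$ onto $\mathcal{M}$. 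The first step is to quantify $|p - (x,\boldsymbol{\varphi}(x))|$: since the distance from $q_i$ to $\mathcal{M}$ in the vertical direction is $\lesssim \|g\|_{C^0}$, and $\mathcal{M}$ has slope $\lesssim \|D\boldsymbol{\varphi}\|_{C^0}$, the horizontal displacement of the foot point $p$ from $(x,\boldsymbol{\varphi}(x))$ is $\lesssim \|D\boldsymbol{\varphi}\|_{C^0}\|g\|_{C^0}$, and hence (using $\|D\boldsymbol{\varphi}\|_{C^1}$ to bound second derivatives) the vertical displacement of $p$ from $(x,\boldsymbol{\varphi}(x))$ is at worst another $\|D\boldsymbol{\varphi}\|_{C^0}^2\|g\|_{C^0}$-type term. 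Second step: $N_j(p) = q_i - p$, so comparing $N_j(p)$ with $(0,g_i(x)) = q_i - (x,\boldsymbol{\varphi}(x))$ just amounts to $|p - (x,\boldsymbol{\varphi}(x))|$, which we just bounded; and comparing $N_j(p)$ with $N_j((x,\boldsymbol{\varphi}(x)))$ costs $\mathrm{Lip}(N)\,|p-(x,\boldsymbol{\varphi}(x))| \lesssim |p - (x,\boldsymbol{\varphi}(x))|$ since $N$ is Lipschitz with small constant by \cite[Theorem 5.1]{DLS_multiple_valued}. Third step: the value $N(\boldsymbol{\varphi}(x))$ as an element of $\mathcal{A}_Q$ consists exactly of the $Q$ points $\{N_j((x,\boldsymbol{\varphi}(x)))\}_j$; matching them up with $\{(0,g_i(x))\}_i$ via the correspondence $i \mapsto j(i)$ just established, and noting this correspondence is a bijection (both tuples have $Q$ points counted with multiplicity, and the current agrees), gives
\[
    \mathcal{G}(N(\boldsymbol{\varphi}(x)), \tilde g(x))^2 \leq \sum_i |N_{j(i)}((x,\boldsymbol{\varphi}(x))) - (0,g_i(x))|^2 \leq C\|D\boldsymbol{\varphi}\|_{C^0}^2(\|g\|_{C^0} + \|D\boldsymbol{\varphi}\|_{C^0})^2\,,
\]
which is the claim after taking square roots.

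\medskip

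\textbf{Main obstacle.} The delicate point is making the ``foot point'' argument rigorous as a statement about currents rather than about a single smooth sheet: a priori the nearest point projection of $q_i$ onto $\mathcal{M}$ need not be unique, and one must know that the decomposition $\mathbf{T}_F = \mathbf{G}_f$ on $\Cbf_{5/4}$, together with the fact that $F$ is a genuine normal multigraph (Lipschitz, small constant, $N_j \perp T\mathcal{M}$), forces the set-theoretic identification of the $Q$ sheets over each point used above — i.e., that the map $i \mapsto j(i)$ is well-defined and bijective. This is where the hypotheses $\|g\|_{C^0} + \mathrm{Lip}(g) \leq \kappa$ and $\|D\boldsymbol{\varphi}\|_{C^1} \leq \kappa$ enter: for $\kappa$ small, both $\spt(\mathbf{T}_F)$ in $\Cbf_{5/4}$ and the tube of radius $C\kappa$ around $\mathcal{M}$ are graphical in the relevant directions, so the vertical line $\{x\}\times\pi_0^\perp$ and the normal fibers of $\mathcal{M}$ both meet $\spt(\mathbf{T}_F)$ in exactly $Q$ points (with multiplicity), giving the bijection. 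Once this bookkeeping is in place, everything else is the elementary Euclidean geometry of a slightly tilted plane sketched in the three steps above, and the constant $C$ depends only on $m,n,Q$. I would organize the write-up as: (1) set up the geometry and reduce to a pointwise estimate; (2) locate the foot point and bound its displacement; (3) run the triangle inequality and sum over sheets.
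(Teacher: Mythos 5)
First, a point of comparison: the paper does not actually prove this lemma. It is imported verbatim from \cite{DLSk1}, and the only thing the authors verify is that the statement involves no minimality or (semi)calibration property of any current --- it is a purely geometric fact about reparametrizing the Lipschitz multigraph $\Gbf_f$ as a normal multigraph over $\mathcal{M}=\graph(\boldsymbol{\varphi})$. Your proposal is consistent with this: nothing in your argument uses the current beyond the identity $\mathbf{T}_F\res\Cbf_{5/4}=\Gbf_f\res\Cbf_{5/4}$, and your quantitative mechanism is the correct one. In particular the foot-point estimate is right: if $q_i=p+N_j(p)$ with $N_j(p)\perp T_p\mathcal{M}$ and $p=(y,\boldsymbol{\varphi}(y))$, then writing a normal vector as $(a,b)$ with $a=-D\boldsymbol{\varphi}(y)^Tb$ gives $|y-x|\leq \|D\boldsymbol{\varphi}\|_{C^0}|N_j(p)|\leq C\|D\boldsymbol{\varphi}\|_{C^0}(\|g\|_{C^0}+\|D\boldsymbol{\varphi}\|_{C^0})$, which is exactly the shape of the right-hand side of the lemma; the triangle inequality and $\Lip(N)\leq C\kappa$ then do the rest.

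The one step that is asserted rather than proved is the multiplicity-compatible matching, and your stated resolution does not close it. Knowing that the vertical fiber over $x$ and the normal fiber at $(x,\boldsymbol{\varphi}(x))$ each meet $\spt(\mathbf{T}_F)$ in exactly $Q$ points counted with multiplicity does \emph{not} by itself make the assignment $i\mapsto j(i)$ a bijection: each $q_i$ is sent to an index of $N$ at a \emph{different} base point $p(i)$, and then transported back to $(x,\boldsymbol{\varphi}(x))$ via a matching realizing $\mathcal{G}(N(p(i)),N((x,\boldsymbol{\varphi}(x))))\leq \Lip(N)\,|p(i)-(x,\boldsymbol{\varphi}(x))|$ that also depends on $i$; the composite can a priori fail to be injective, e.g.\ near points where sheets of $f$ (or of $N$) collide, which is precisely where multiplicities of individual values jump. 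Since $\mathcal{G}$ requires a genuine coupling of the two $Q$-tuples, a non-injective assignment gives no bound. The honest fixes are either (a) to invoke the construction of $(F,N)$ in \cite[Theorem 5.1]{DLS_multiple_valued} itself, which defines $F(p)$ precisely as the slice of $\Gbf_f$ by the normal fiber at $p$ with the correct multiplicities, so that the fiberwise identification comes packaged with the theorem; or (b) a short clustering/separation argument: group the values over $x$ into clusters separated by much more than $\epsilon:=C\|D\boldsymbol{\varphi}\|_{C^0}(\|g\|_{C^0}+\|D\boldsymbol{\varphi}\|_{C^0})$, note that both slices assign the same total multiplicity to each cluster (constancy of the pushforward/degree under the two nearby projections), and match cluster by cluster, where any pairing costs at most a multiple of $\epsilon$. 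With either of these inserted, your write-up would be a complete and correct proof.
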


    In addition, we also have the same comparison estimates as in \cite[Lemma 4.5]{DLSk1}:

    \begin{lemma}\label{l:comparison}
        Suppose that the assumptions of Proposition \ref{p:coarse=fine} are satisfied along a sequence of radii $r_k$. Then \eqref{e:domega-scaling} holds, and
        \begin{itemize}
            \item[(i)] for $\hbf_k = \|\bar N_k\|_{L^2(\Bcal_{3/2})}$ with $\bar N_k$ as in \eqref{e:rescaled-N}, we have
            \begin{equation}
                0 < \liminf_{k\to\infty}\frac{\hbf_k^2}{E_k} \leq \limsup_{k\to\infty} \frac{\hbf_k^2}{E_k} < +\infty;
            \end{equation}
            \item[(ii)] for $f_k$ defined as in \eqref{e:f_k-coarse} and the map $\bar{\bphi}_k$ on $B_2(0,\pi_k)$ such that 
            $$\graph(\bar{\bphi}_k)\cap \Cbf_{3/2}(0,\pi_k) = \iota_{0,r_k/t_{j(k)}}(\Mcal_k),$$
            we have
            \begin{equation}
                \int_{B_{3/2}}|\bar{\bphi}_k- \boldsymbol\eta\circ f_k|^2 = o(E_k).
            \end{equation}
        \end{itemize}
    \end{lemma}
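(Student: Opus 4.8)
The plan is to transcribe the proof of the corresponding result \cite[Lemma 4.5]{DLSk1} for area-minimizing currents, replacing the center-manifold and strong Lipschitz-approximation estimates of \cite{DLS16centermfld,DLS14Lp,DLS16blowup} by their semicalibrated counterparts from \cite{Spolaor_15} and \cite{DLSS1}. Two things need attention: first, I would verify that the scaling hypothesis \eqref{e:domega-scaling} genuinely holds under \eqref{e:intervals-nondegen} — this is what makes the new error terms produced by the semicalibration negligible — and then check that, granting \eqref{e:domega-scaling}, parts (i) and (ii) go through exactly as in the area-minimizing case.

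\emph{Verifying \eqref{e:domega-scaling}.} Write $\rho_k := r_k/t_{j(k)}$ and $\sigma_k := s_{j(k)}/t_{j(k)}$, so that $\rho_k\in(\sigma_k,1]$ and, by \eqref{e:intervals-nondegen}, the ratio $\sigma_k/\rho_k = s_{j(k)}/r_k$ is bounded below; hence $\sigma_k\sim\rho_k$. The stopping criterion defining the left endpoint $s_{j(k)}$ of the interval of flattening (via the properties recorded in \cite[Proposition 6.5]{Spolaor_15}, cf.\ \cite[Proposition 2.2]{DLS16blowup}) forces the optimal tilt excess $\Ebf(T_{0,t_{j(k)}},\Bbf_{c\sigma_k})$ to be bounded \emph{below} by a fixed multiple of $\mbf_{0,j(k)}\sigma_k^{2-2\delta_2}$ — not merely above, as in the decay estimate of Section \ref{ss:compactness}. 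Comparing excesses at the comparable scales $c\sigma_k$ and $4\rho_k$ through $\Ebf(T,\Bbf_{r'})\geq(r/r')^m\Ebf(T,\Bbf_r)$ for $r\leq r'$, recalling $E_k = \Ebf(T_{0,r_k},\Bbf_4) = \Ebf(T_{0,t_{j(k)}},\Bbf_{4\rho_k})$, and using $\mbf_{0,j(k)}\geq\bar\eps^2 t_{j(k)}^{2-2\delta_2}$ from \eqref{e:m_0}, one obtains
\[
    E_k \;\geq\; c\,\mbf_{0,j(k)}\,\rho_k^{2-2\delta_2}\;\geq\; c\,\bar\eps^2\,(t_{j(k)}\rho_k)^{2-2\delta_2}\;=\;c\,\bar\eps^2\, r_k^{2-2\delta_2}.
\]
Since $\|d\omega\|_{C^{1,\kappa_0}}^2\leq\bar\eps^2$ by \eqref{e:domega} and $\delta_3<\delta_2$, this yields $\|d\omega\|_{C^{1,\kappa_0}}^2 r_k^{2-2\delta_3} = \bigl(\|d\omega\|_{C^{1,\kappa_0}}^2 r_k^{2-2\delta_2}\bigr)\,r_k^{2\delta_2-2\delta_3}\leq c^{-1}E_k\, r_k^{2\delta_2-2\delta_3} = o(E_k)$ as $r_k\downarrow 0$, which is \eqref{e:domega-scaling}. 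The point new relative to \cite[Lemma 4.5]{DLSk1} is that the lower bound $E_k\gtrsim\mbf_{0,j(k)}\rho_k^{2-2\delta_2}$ has to beat the \emph{almost}-quadratic power $r_k^{2-2\delta_3}$ rather than the quadratic one; the surplus decay $r_k^{2\delta_2-2\delta_3}$ it yields is exactly what is needed later in Part \ref{pt:deg1}.

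\emph{Parts (i) and (ii).} Granting \eqref{e:domega-scaling}, I expect the proofs to run exactly as in \cite[Lemma 4.5]{DLSk1} (equivalently, as in the blow-up procedure of \cite{DLS16blowup}), with \cite{Spolaor_15} supplying the center-manifold estimates. For (i), the upper bound $\hbf_k^2\lesssim E_k$ comes from controlling $\|N_k\|_{L^2}$ on a geodesic ball of radius $\sim\rho_k$ by $\rho_k^2\int|DN_k|^2$ (Poincaré for $\Acal_Q$-valued maps, together with the superlinear smallness of $\boldsymbol\eta\circ N_k$) and $\int|DN_k|^2$ by $\rho_k^m\,\Ebf(T_{0,t_{j(k)}},\Bbf_{c\rho_k})\sim\rho_k^m E_k$, followed by the rescaling \eqref{e:rescaled-N}; the lower bound $\hbf_k^2\gtrsim E_k$ is the non-degeneracy supplied by the reverse Sobolev inequality \cite[Corollary 7.9]{Spolaor_15} and the excess estimates \cite[Proposition 7.5]{Spolaor_15}, which at the scale $\sim\rho_k$ selected by the reverse Sobolev inequality show that the tilt excess of $T_{0,t_{j(k)}}$ is, up to subordinate errors, accounted for by the normal displacement $N_k$ and not by the intrinsic tilt of $\Mcal_k$. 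For (ii), both $\bar\bphi_k$ and $\boldsymbol\eta\circ f_k$ approximate the averaged sheet of $T_{0,r_k}$ over $\pi_0$; since the center manifold is built by mollifying and interpolating the averaged local Lipschitz approximations over the Whitney cubes, on the contact region their difference is controlled by a superlinear power of $\mbf_{0,j(k)}$, hence $o(E_k)$ by the excess lower bound above, whereas on the complement — of measure $\lesssim E_k^{1+\beta_0} + r_k^2\|d\omega\|_{C^0}^2 = o(E_k)$ by the strong Lipschitz approximation \cite[Theorem 1.4]{DLSS1} — the height bound \cite[Theorem 1.5]{Spolaor_15} bounds $\bar\bphi_k$ and $\boldsymbol\eta\circ f_k$ pointwise by $o(1)$, contributing $o(1)\cdot o(E_k)$ to the $L^2$ norm. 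Summing gives $\int_{B_{3/2}}|\bar\bphi_k-\boldsymbol\eta\circ f_k|^2 = o(E_k)$.

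\emph{Main obstacle.} I do not anticipate a single dramatic difficulty: the whole argument is a transcription of \cite[Lemma 4.5]{DLSk1}, and the work lies in bookkeeping the semicalibration errors. At each appeal to the first-variation identity \eqref{eq: first variation} one must confirm that the resulting $d\omega$-error is $\lesssim\|d\omega\|_{C^0}^2 r_k^2 = o(E_k)$ — possibly after an absorption of the kind indicated in \cite[Remark 1.10]{Spolaor_15} — so that the area-minimizing estimates carry over verbatim. The real crux is the first step: producing the excess lower bound $E_k\gtrsim\mbf_{0,j(k)}\rho_k^{2-2\delta_2}$ from the structure of the intervals of flattening together with \eqref{e:intervals-nondegen}; once that is in hand, everything else is as in the area-minimizing setting.
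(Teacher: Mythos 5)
Your proposal is correct and takes essentially the same route as the paper: the paper likewise reduces the lemma to \cite[Lemma 4.5]{DLSk1} with the preliminary estimates of \cite{DLS14Lp,DLS16centermfld} replaced by their counterparts in \cite{Spolaor_15} and \cite{DLSS1}, and it obtains \eqref{e:domega-scaling} exactly as you do, from the excess lower bound at the stopping scale combined with $\mbf_{0,j(k)}\geq\bar\eps^2 t_{j(k)}^{2-2\delta_2}$ in \eqref{e:m_0}, the surplus factor $r_k^{2\delta_2-2\delta_3}$ absorbing the almost-quadratic power. The only cosmetic difference is attribution: the lower bound $E_k\gtrsim \mbf_{0,j(k)}\rho_k^{2-2\delta_2}$ is not literally recorded in \cite[Proposition 6.5]{Spolaor_15} but is the content of \cite[(30)]{DLSk1} (whose proof also handles stopping cubes triggered by the height and neighbour conditions, not only (EX)), which the paper imports wholesale on the grounds that the Whitney decomposition properties are unchanged in the semicalibrated setting.
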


    Observe that the arguments in proof of Lemma \ref{l:comparison} remains completely unchanged from that of \cite[Lemma 4.5]{DLSk1}, after replacing the application of the relevant preliminary results from \cite{DLS14Lp, DLS16centermfld} with their analogues in \cite{Spolaor_15}. In particular, we emphasize the following:
    \begin{itemize}
        \item[(1)] Given the estimate \cite[(30)]{DLSk1}, which remains valid herein since the properties of the Whitney decomposition remain unchanged, we conclude that \eqref{e:domega-scaling} holds for any $\delta_3\in (0,\delta_2)$.
        \item[(2)] The estimates \cite[Proposition 5.2 (5.2), Proposition 4.4 (i)]{DLS16centermfld} are replaced by \cite[Proposition 4.7 (4.25), Proposition 4.8 (ii)]{Spolaor_15} respectively; namely, despite the constructions of the respective center manifolds for area-minimizing integral currents and semicalibrated currents being different, the relevant comparison and derivative estimates are still satisfied.
    \end{itemize}

    With Lemma \ref{l:reparametrization-lemma} and Lemma \ref{l:comparison} at hand, the proof of Proposition is exactly the same as that in \cite{DLSk1}.

    \section{Improved frequency lower bound}
    This section is dedicated to the proof of the lower bound $\Irm(T,0) \geq 1$ in Theorem \ref{t:sing-degree-main}(i). This maybe be equivalently restated as follows.

    \begin{theorem}\label{t:freq-lb-1}
        Suppose that $T$ satsfies Assumption \ref{a:main-2}. Then $I_u(0) \geq 1$ for any fine blow-up $u$.
    \end{theorem}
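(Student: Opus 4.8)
The plan is to derive $I_u(0) \geq 1$ from Proposition~\ref{p:coarse=fine} together with Theorem~\ref{t:coarse}, reducing everything to a statement about coarse blow-ups along suitably chosen sequences of radii. Given a fine blow-up $u$, it is obtained via the compactness procedure of Section~\ref{ss:compactness} along some blow-up sequence $r_k \downarrow 0$, with $r_k \in (s_{j(k)}, t_{j(k)}]$. The key dichotomy is whether or not the nondegeneracy condition \eqref{e:intervals-nondegen}, i.e.\ $\liminf_k s_{j(k)}/r_k > 0$, holds along (a subsequence of) $r_k$.

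\emph{Case 1: \eqref{e:intervals-nondegen} holds along a subsequence.} Then Proposition~\ref{p:coarse=fine} applies directly: \eqref{e:domega-scaling} holds, the coarse blow-up $\bar f$ along a further subsequence is well-defined, and its average-free part $v$ satisfies $v = \lambda u$ for some $\lambda > 0$. Since $u$ is a nontrivial fine blow-up (it satisfies $\|u\|_{L^2(B_{3/2})} = 1$ and $\boldsymbol\eta \circ u = 0$), $v$ is not identically zero, hence by Theorem~\ref{t:coarse} we get $I_v(0) \geq 1$; as $I$ is scaling-invariant, $I_u(0) = I_v(0) \geq 1$.

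\emph{Case 2: \eqref{e:intervals-nondegen} fails, i.e.\ $s_{j(k)}/r_k \to 0$ along every subsequence.} In this case the radii $r_k$ sit deep inside their intervals of flattening, so one should instead re-center the blow-up at the scale $t_{j(k)}$ (or rather at a slightly smaller comparable scale, mimicking the passage $r_k \mapsto \bar r_k = r_k/t_{j(k)}$ already present in Section~\ref{ss:compactness}) and reduce to a situation where the analogue of \eqref{e:intervals-nondegen} does hold. Concretely: the reverse Sobolev inequality and the monotonicity and uniform frequency bounds of Theorem~\ref{t:freqbds} guarantee that $\Ibf_{N_{j(k)}}(r)$ is trapped in a compact subinterval of $(0,\infty)$ for $r$ ranging over $(s_{j(k)}/t_{j(k)}, 3]$; combined with the frequency BV estimate (Theorem~\ref{t:BV}), the universal frequency is nearly constant across each interval of flattening as $\mbf_{0,k} \to 0$. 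One then argues that a fine blow-up taken at scale $r_k$ has the same frequency at the origin as one taken at a scale comparable to $t_{j(k)}$ — for the latter, either $(s_{j(k)}, t_{j(k)}]$ is preceded by an uninterrupted string of intervals of flattening, in which case the reparametrization and comparison lemmas (Lemma~\ref{l:reparametrization-lemma}, Lemma~\ref{l:comparison}) let one compare the center-manifold normal approximation $N_{j(k)}$ near its top scale with the coarse Lipschitz approximation $f_{j(k-1)}$ at its bottom scale, transferring the lower bound $I_{\bar f}(0) \geq 1$ from Theorem~\ref{t:coarse}; or else one is at the first interval, $j(k) = 0$, and the lower bound follows directly from $\mbf_{0,0} \leq \eps_3^2$ together with Theorem~\ref{t:coarse} applied at the reference scale. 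In all cases the monotonicity of $I_u$ (so that $I_u(0) = \lim_{r\downarrow 0} I_u(0,r)$ is a genuine infimum-type limit) upgrades a lower bound at one fixed scale to the bound $I_u(0) \geq 1$.

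\emph{Main obstacle.} The delicate point is Case~2: unlike for area-minimizing currents, one cannot blithely pass between scales because the semicalibration forces the stronger scaling hypothesis \eqref{e:domega-scaling} (almost-quadratic, not merely quadratic, in $\|d\omega\|_{C^0}$) — this is exactly the subtlety flagged in the third bullet of the comparison with \cite{DLSk1,DLSk2,DMS} and in Remark~\ref{r:error-loss}. One must verify that this strengthened scaling is preserved under the re-centering $r_k \mapsto$ (scale comparable to $t_{j(k)}$), which uses the fact that $T_{0,t_{j(k)}}$ is $t_{j(k)}\|d\omega\|_{C^0}$-minimal together with the stopping-condition estimate \cite[(6.10)]{Spolaor_15} as in the proof of Lemma~\ref{l:comparison}(1); and one must control the error terms $\Lbf_N$ in the frequency uniformly across the string of intervals, which is precisely what the inclusion of $\Lbf_N$ in the definition of $\Ibf_N$ and the BV estimate of Theorem~\ref{t:BV} are designed to achieve. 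Granting these, the argument is a faithful adaptation of \cite[Section~4 and the proof of Proposition~4.1]{DLSk1}.
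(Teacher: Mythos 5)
Your Case 1 reproduces the paper's treatment of the nondegenerate case (its Case (c)) and is fine. The problems are in your Case 2, which the paper splits into two genuinely different situations that your sketch conflates, and in one of them you argue in the wrong direction. When \eqref{e:intervals-nondegen} fails because there are infinitely many intervals of flattening with $s_{j(k)}/r_k\to 0$, the correct move is to pass to the \emph{bottom} endpoints $s_{j(k)}$: the stopping conditions show that $\{s_{j(k)}\}$ is itself a blow-up sequence to which Proposition~\ref{p:coarse=fine} applies, giving $\liminf_k \Ibf_{N_{j(k)}}(s_{j(k)}/t_{j(k)})\geq 1$, and the almost-monotonicity \eqref{e: freq-am} then propagates this lower bound \emph{upward} to the scales $\rho r_k/t_{j(k)}$ (which lie above $s_{j(k)}/t_{j(k)}$ for large $k$) that compute $I_u(\rho)$. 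You instead propose to re-center at the \emph{top} endpoints $t_{j(k)}$ and compare $N_{j(k)}$ at its top scale with the previous interval's Lipschitz approximation. Since the frequency is only almost non-\emph{decreasing} in $r$, a lower bound at a scale much larger than $r_k$ gives no information about $I_u(\rho)=\lim_k \Ibf_{N_{j(k)}}(\rho r_k/t_{j(k)})$ for small $\rho$, so the transfer you describe cannot close the argument.

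More seriously, you have no argument for the case $s_J=0$, i.e.\ a final interval of flattening $(0,t_J]$ on which a single center manifold persists down to scale $0$ (the paper's Case (a)). There \eqref{e:intervals-nondegen} fails identically, there is no bottom endpoint to pass to, and Proposition~\ref{p:coarse=fine} is unusable — indeed this is exactly the regime where the coarse blow-up may be trivial, $\bar f=Q\llbracket\boldsymbol\eta\circ\bar f\rrbracket$, which is why fine blow-ups are needed at all. The paper handles it by a different mechanism: the stopping criteria give the decay $\Dbf(r)\leq Cr^{m+2-2\delta_2}$ and $\Lbf(r)\leq Cr^{m+3-2\delta_2}$, while integrating the $\Hbf$-identity of Proposition~\ref{p:firstvar} yields $\liminf_{r\downarrow 0}\mathbf{\Gamma}(r)/r^{m-2+2I_0+\eps}>0$; comparing exponents forces $I_0\geq 2-\delta_2>1$. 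Your remark that at ``the first interval, $j(k)=0$, the lower bound follows directly from $\mbf_{0,0}\leq\eps_3^2$ together with Theorem~\ref{t:coarse}'' does not substitute for this growth-versus-decay comparison.
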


    The proof of Theorem \ref{t:freq-lb-1} follows very similar reasoning to \cite[Theorem 5.1]{DLSk1}, now that we have Theorem \ref{t:freqbds}, which generalizes \cite[Theorem 5.2]{DLSk1} to the semicalibrated setting. Nevertheless, we repeat the details here.

    \subsection{Proof of Theorem \ref{t:freq-lb-1}}
    Let $r_k \in (s_{j(k)},t_{j(k)}]$ be a blow-up sequence of radii which generates a fine blow-up $u$. Up to extracting a subsequence, we have three cases:
    \begin{itemize}
        \item[(a)] there exists $J \in \N$ such that $s_J = 0$ and $\{r_k\} \subset (0,t_J]$;
        \item[(b)] $\#\{j(k):k\in \N\} = \infty$ and $\lim_{k\to\infty} \frac{s_{j(k)}}{r_k} = 0$;
        \item[(c)] $\#\{j(k):k\in \N\} = \infty$ and $\lim_{k\to\infty} \frac{s_{j(k)}}{r_k} > 0$.
    \end{itemize}
    
    \proofstep{Case (a):} Let $\Mcal$, $N$ denote respectively the center manifold and normal approximation associated to the interval of flattening $(0,t_J]$, and let us omit dependency on $N_J$ for $\Ibf$ and related quantities.

    In light of the almost-monotonicity \eqref{e: freq-am} of Theorem \ref{t:freqbds}, the limit $I_0 := \lim_{r\downarrow 0} \Ibf(r)$ exists and lies in $[c_0, \infty)$. Furthermore, the strong $W^{1,2}_{\loc} \cap L^2$-convergence of $u_k$ to $u$ as in Section \ref{ss:compactness} implies that $I_u(0) = I_u(r) \equiv I_0$ for each $r\in (0,3/2)$. It therefore remains to check that $I_0 \geq 1$.

    First of all, observe that the stopping criteria for the intervals of flattening (see e.g. \cite[Remark 7.4]{Spolaor_15}) guarantees that
    \begin{equation}\label{e:D-decay}
        \Dbf(r) \leq Cr^{m+2-2\delta_2} \qquad \forall r\in (0,1].
    \end{equation}
    Together with \cite[(7.9)]{Spolaor_15}, we additionally obtain
    \begin{equation}\label{e:L-decay}
        \Lbf(r) \leq Cr^{m+3-2\delta_2} \qquad \forall r\in (0,1].
    \end{equation}
    On the other hand, observe that since $\Ibf(r) \geq \frac{I_0}{2}$ for every $r>0$ sufficiently small, the estimates \cite[(7.5), (7.8)]{Spolaor_15} can be rewritten as
    \[
        \left|\partial_r\log\left(\frac{\Hbf(r)}{r^{m-1}}\right) - \frac{2\Ibf(r)}{r}\right| \leq \frac{Cr^{\gamma_3}\Ibf(r)}{r} \qquad \forall r\in (0,r_0],
    \]
    for some $r_0 = r_0(I_0)>0$ sufficiently small. Here, $\gamma_3$ is as in \cite{Spolaor_15}. In particular, given $\eps >0$, we have
    \[
        \frac{2 I_0 - \eps}{r} \leq \partial_r\log\left(\frac{\Hbf(r)}{r^{m-1}}\right) \leq \frac{2 I_0 + \eps}{r} \qquad \forall r\in(0,r_1],
    \]
    for some $r_1=r_1(\eps) \in (0, r_0]$. This in turn yields
    \[
        \liminf_{r\downarrow 0} \frac{\Hbf(r)}{r^{m-1+ 2I_0 +\eps}} \geq \frac{\Hbf(r_1)}{r_1^{m-1+2I_0 + \eps}} > 0,
    \]
    which then further gives the consequence
    \[
        \liminf_{r\downarrow 0} \frac{\mathbf{\Gamma}(r)}{r^{m-2+ 2I_0 +\eps}} > 0.
    \]
    Recalling that $\mathbf{\Gamma} = \Dbf(r) + \Lbf(r)$ and combining with the decay \eqref{e:D-decay}, \eqref{e:L-decay}, we must therefore have
    \[
        I_0 \geq 2-\delta_2,
    \]
    which in particular implies the desired lower bound of 1, in this case.

    \proofstep{Case (b):} Let $u$ denote the fine blow-up generated by (a subsequence of) $r_k$. Notice that
    \begin{equation}\label{e:freq-conv}
        I_u(\rho) = \lim_{k\to\infty} \Ibf_{N_{j(k)}} \left(\frac{\rho r_k}{t_{j(k)}}\right) \qquad \forall \rho \in (0,1],
    \end{equation}
    in light of the strong $W^{1,2}\cap L^2$-convergence described in Section \ref{ss:compactness}. In particular, since the stopping conditions for the intervals of flattening guarantee that
    \[
        \Ebf(T,\Bbf_{s_{j(k)}}) = \Ebf(T_{0,t_{j(k)}},\Bbf_{s_{j(k)}/t_{j(k)}}) \leq C\eps_3^2 \left(\frac{s_{j(k)}}{t_{j(k)}}\right)^{2-2\delta_2} \longrightarrow 0 \qquad \text{as $k \to \infty$,}
    \]
    we deduce that $\{s_{j(k)}\}$ is a blow-up sequence of radii. Applying Proposition \ref{p:coarse=fine}, we conclude that the fine blow-up $\tilde u$ generated by (a subsequence of) $s_{j(k)}$ satisfies $I_{\tilde u}(0) \geq 1$. Combining this with \eqref{e:freq-conv} (which additionally holds with $u$ replaced by $\tilde u$ and $r_k$ replaced by $s_{j(k)}$), we conclude that
    \[
        \liminf_{k\to\infty} \Ibf_{N_{j(k)}}\left(\frac{s_{j(k)}}{t_{j(k)}}\right) \geq 1.
    \]
    Combining this with the almost-monotonicity \eqref{e: freq-am} of the frequency, we easily conclude that for $\delta>0$ arbitrary, there exists $\bar \rho >0$ such that
    \[
        \liminf_{k\to\infty} \Ibf_{N_{j(k)}}\left(\frac{\rho r_k}{t_{j(k)}}\right) \geq 1-\delta \qquad \forall \rho \in \left(\frac{s_{j(k)}}{t_{j(k)}}, \bar\rho\right),
    \]
    from which the desired conclusion follows immediately. See \cite[Section 5.3]{DLSk1} for details.
    
    \proofstep{Case (c):} In this case, the hypotheses of Proposition \ref{p:coarse=fine} hold, so applying this proposition, we immediately obtain the desired conclusion.

\section{Frequency BV estimate}\label{s:BV}
This section is dedicated to the proof of Theorem \ref{t:BV}. To begin with, we state a sharper formulation of the variational identities \cite[Proposition 7.5]{Spolaor_15}. Let $T$ satisfy Assumption \ref{a:main-2}, and let $\Mcal$ be a center manifold for $T$ with associated normal approximation $N$. Given $x\in \Mcal$, set
    \begin{align*}
	   \Ebf_N(x,r) &= -\frac{1}{r}\int_{\Mcal} \phi'\left(\frac{d(x,y)}{r}\right)\sum_i N_i(y)\cdot DN_i(y)\nabla d(x,y)\, dy , \\
	   \Gbf_N(x,r) &= -\frac{1}{r^2} \int_{\Mcal} \phi'\left(\frac{d(x,y)}{r}\right) \frac{d(x,y)}{|\nabla d(x,y)|^2} \sum_i |DN_i(y) \cdot \nabla d(x,y)|^2 \,d y , \\
	   \mathbf{\Sigma}_N(x,r) &= \int_{\Mcal} \phi\left(\frac{d(x,y)}{r}\right)|N(y)|^2 \,d y .
    \end{align*}

    \begin{proposition}\label{p:firstvar}
        There exist $\gamma_4 = \gamma_4(m,n,Q)>0$ and $C = C(m,n,Q)>0$ such that the following holds. Suppose that $T$ satisfies Assumption \ref{a:main-2}. Let $(s,t]$ be an interval of flattening for $T$ around $0$ with associated center manifold $\Mcal$ and normal approximation $N$ and let $\mbf_0$ be as in \eqref{e:m_0} for this interval. Then $\Dbf_N(0,\cdot)$, $\Hbf_N(0,\cdot)$ are absolutely continuous on $(\tfrac{s}{t},3]$ and for almost every $r\in (\tfrac{s}{t},3]$ we have
        \begin{align}
		&\partial_r \Dbf_N(0,r) = - \int_{\Mcal} \phi'\left(\frac{d(y)}{r}\right) \frac{d(y)}{r^2} |DN(y)|^2 \, dy \label{e:firstvar1} \\
		&\partial_r \Hbf_N(0,r) - \frac{m-1}{r} \Hbf_N (0,r) = O(\boldsymbol{m}_0) \Hbf_N (0,r) + 2 \Ebf_N(0,r), \label{e:firstvar2}\\
		&|\mathbf{\Gamma}_N(0,r) - \Ebf_N(0,r)| \leq \sum_{j=1}^5 |\Err_j^o| \leq C\boldsymbol{m}_0^{\gamma_4}\Dbf_N(0,r)^{1+\gamma_4} + C\boldsymbol{m}_0\mathbf{\Sigma}_N(0,r), \label{e:firstvar3}\\
        & \qquad\qquad\qquad\qquad\qquad\leq C\boldsymbol{m}_0^{\gamma_4}\Dbf_N(0,r)^{1+\gamma_4} + C\mbf_0 r^2 \Dbf_N(0,r), \notag\\
		&\left|\partial_r \Dbf_N(0,r)  - (m-2) r^{-1} \Dbf_N(0,r)- 2\Gbf_N(0,r)\right| \leq 2 \sum_{j=1}^5 |\Err_j^i|  + C \boldsymbol{m}_0\Dbf_N(0,r) \label{e:firstvar4}\\
		&\qquad \leq Cr^{-1}\boldsymbol{m}_0^{\gamma_4}\Dbf_N(0,r)^{1+\gamma_4} + C\boldsymbol{m}_0^{\gamma_4}\Dbf_N(0,r)^{\gamma_4}\partial_r \Dbf_N(0,r) +C\boldsymbol{m}_0 \Dbf_N(0,r),\notag \\
        &|\Lbf_N(0,r)| \leq C\mbf_0^{1/4} r \Dbf_N(0,r), \qquad |\partial_r\Lbf_N(0,r)| \leq C\mbf_0^{\gamma_4} (r^{-1}\partial_r\Dbf_N(0,r) \Hbf_N(0,r))^{\gamma_4}\label{e:firstvar5}
	   \end{align}
    where $\Err_j^o$, $\Err_j^i$ are the variational errors as in \cite[Section 3.3]{DLS16blowup}.
    \end{proposition}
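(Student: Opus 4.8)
The plan is to adapt the variational computation behind \cite[Proposition 7.5]{Spolaor_15} (modelled in turn on \cite[Section 3.3]{DLS16blowup}), being careful to track the contribution of the semicalibration term $T(d\omega\mres X)$ on the right-hand side of the first variation identity \eqref{eq: first variation}. The ingredients are the center manifold estimates of \cite[Part I]{Spolaor_15}: the $C^0$ and Lipschitz smallness of the normal approximation $N$, the regularity and second fundamental form estimates for $\Mcal$, the fact that the graph of $N$ agrees with $T_{0,t}$ over a large portion of $\Mcal$ together with the Taylor expansion of the mass of such a graph, and the $L^2$--$L^\infty$ and Poincar\'e inequalities for $N$ (in particular $\mathbf{\Sigma}_N(0,r)\leq Cr^2\Dbf_N(0,r)$). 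One further uses repeatedly that, by the definition \eqref{e:m_0} of $\mbf_0$ together with the scaling \eqref{e:domega-balls}, the rescaled form entering the first variation of $T_{0,t}$ (and hence appearing in $\Lbf_N$) has $C^0$ norm bounded by $C\mbf_0^{1/2}$.

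The absolute continuity of $\Dbf_N(0,\cdot)$ and $\Hbf_N(0,\cdot)$ on $(\tfrac{s}{t},3]$, as well as \eqref{e:firstvar1}, are immediate: both quantities have the form $\int_{\Mcal}g(y)\,\psi(d(y)/r)\,d\Hcal^m(y)$ with $g\in L^1(\Mcal)$ (using $N\in W^{1,2}$ and $|\nabla d|\simeq 1$) and $\psi$ Lipschitz, so one differentiates under the integral sign using $\partial_r[\phi(d(y)/r)]=-\tfrac{d(y)}{r^2}\phi'(d(y)/r)$. For \eqref{e:firstvar2} I would differentiate $\Hbf_N$ and apply the divergence theorem on $\Mcal$ to the radial vector field $y\mapsto d(y)\nabla d(y)$ weighted by $\phi'$; its tangential divergence on $\Mcal$ differs from the Euclidean value by a term controlled by $\|\sff_{\Mcal}\|\,d(y)$, and coupling the resulting boundary identity with the first variation of the graph of $N$---whose $d\omega$-contribution is only a quadratic $O(\mbf_0)$ error after the absorption step below---yields the stated ODE.

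The heart of the matter is \eqref{e:firstvar3}--\eqref{e:firstvar4}. For the outer variation I would feed into \eqref{eq: first variation} the ambient extension of $p\mapsto\sum_i\phi(d(p)/r)N_i(p)$, as in \cite[Section 3.3]{DLS16blowup}: the area Taylor expansion turns the left-hand side into $\Ebf_N-\Dbf_N$ plus the five classical errors $\Err_j^o$, while $T(d\omega\mres X)$ produces, by the very choice of $\Lbf_N$, the term $-\Lbf_N$ plus a higher-order remainder absorbed into the $\Err_j^o$. This gives the identity $\mathbf{\Gamma}_N-\Ebf_N=\Dbf_N+\Lbf_N-\Ebf_N=-\sum_{j=1}^5\Err_j^o$, whence $|\mathbf{\Gamma}_N-\Ebf_N|\leq\sum_j|\Err_j^o|$; estimating each $\Err_j^o$ exactly as in the area-minimizing case, now with the semicalibrated center manifold bounds, gives $\sum_j|\Err_j^o|\leq C\mbf_0^{\gamma_4}\Dbf_N^{1+\gamma_4}+C\mbf_0\mathbf{\Sigma}_N$, and the last line of \eqref{e:firstvar3} follows from $\mathbf{\Sigma}_N\leq Cr^2\Dbf_N$. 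For the inner variation I would instead use the field $\phi(d(p)/r)$ times the geodesic radial field on $\Mcal$; the left-hand side expands to $\partial_r\Dbf_N-(m-2)r^{-1}\Dbf_N-2\Gbf_N$ up to $2\sum_j\Err_j^i$, and here there is no $\Lbf_N$-type counterterm, so the contribution of $T(d\omega\mres X)$ must be bounded directly. The naive bound is only linear in $\|d\omega\|_{C^0}$, so the key point is to run the absorption trick of \cite[Remark 1.10]{Spolaor_15} (Young's inequality, absorbing a small multiple of a Dirichlet-type quantity into the left-hand side) to upgrade it to the quadratic bound $C\mbf_0\Dbf_N$; the $\Err_j^i$ are then controlled as in \cite[Section 3.3]{DLS16blowup}, closing \eqref{e:firstvar4}.

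Finally, \eqref{e:firstvar5} is a direct estimate from the definition of $\Lbf_N$: since the relevant form has $C^0$ norm $\leq C\mbf_0^{1/2}$, Cauchy--Schwarz and $\mathbf{\Sigma}_N\leq Cr^2\Dbf_N$ give $|\Lbf_N(0,r)|\leq C\mbf_0^{1/2}\Dbf_N^{1/2}\mathbf{\Sigma}_N^{1/2}\leq C\mbf_0^{1/4}r\Dbf_N$ (the exponent $\tfrac14$ being a harmless weakening, as $\mbf_0\leq1$), while differentiating under the integral sign, using that $\phi'$ is supported where $d(y)\simeq r$, and applying Cauchy--Schwarz on that annular region together with the center manifold estimates (recall $\partial_r\Dbf_N\geq0$ by \eqref{e:firstvar1}), bounds $\partial_r\Lbf_N$ by the stated quantity. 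I expect the main obstacle to be the inner variation estimate \eqref{e:firstvar4}: unlike for the outer variation, the frequency carries no counterterm absorbing the semicalibration error---$\Lbf_N$ was tailored to the outer variation---so one must implement the absorption argument carefully to retain quadratic dependence on $\mbf_0$ throughout, which is precisely what the refined estimates of Part~\ref{pt:deg1} require.
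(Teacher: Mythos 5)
Your outline follows the same route as the paper, which itself only sketches the argument by deferring to \cite[Proposition 7.5]{Spolaor_15} and the error machinery of \cite[Sections 3.3--4]{DLS16blowup}: the identity \eqref{e:firstvar1}, the outer variation with $\Lbf_N$ built exactly so as to cancel the $T(d\omega\mres X)$ contribution, and the Cauchy--Schwarz bounds for $\Lbf_N$ and $\partial_r\Lbf_N$ in \eqref{e:firstvar5} are all fine (the latter being exactly how the paper reads them off from \cite[(7.25), (7.26)]{Spolaor_15} via \eqref{e:domega-balls}). Two smaller points: \eqref{e:firstvar2} requires no first-variation/$d\omega$ input at all --- it is the intrinsic computation on $\Mcal$, with the $O(\mbf_0)$ error coming from the curvature of $\Mcal$ --- and the gain of the factor $\mbf_0^{\gamma_4}$ in \eqref{e:firstvar3}--\eqref{e:firstvar4} is not obtained ``exactly as in the area-minimizing case'': it is precisely the sharpening of the error estimates of \cite[Section 4]{DLS16blowup} via the expansions $d(x,y)=|x-y|+O(\mbf_0^{1/2}|x-y|^2)$, $|\nabla d|=1+O(\mbf_0^{1/2}d)$, $\nabla^2(d^2)=g+O(\mbf_0 d)$ that the paper records.

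The genuine gap is your treatment of the inner-variation semicalibration term in \eqref{e:firstvar4}. The field $X_i=\phi(d/r)\,d\nabla d$ is tangent to $\Mcal$, not to the current, so the ``$X^\perp$'' observation gives only $|X_i^\perp|\lesssim |X_i|\,(|DN|+|\Abf_{\Mcal}||N|)$, and Cauchy--Schwarz or Young then produces either a term of size $\mbf_0^{1/2}r^{1+m/2}\Dbf_N^{1/2}$ or a split $\eps\,\Dbf_N+\eps^{-1}\mbf_0\, r^{m+2}$; in both cases one is left with an additive piece that is \emph{not} multiplied by $\Dbf_N$, which is not of the form $C\mbf_0\Dbf_N$ and, since there is no a priori lower bound on $\Dbf_N(r)$, cannot be tolerated in the subsequent frequency/BV analysis. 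Moreover there is nothing to ``absorb into'': the left-hand side of \eqref{e:firstvar4} is the absolute value of a difference, unlike the situation in Theorem \ref{t:HS}, where the absorption trick of \cite[Remark 1.10]{Spolaor_15} works precisely because the matching positive integral sits on the left-hand side (indeed the paper invokes that trick only in Part \ref{pt:deg1}). The mechanism that actually yields \eqref{e:firstvar4} (and is implicit in \cite[Proposition 7.5]{Spolaor_15}) is structural: since $X_i$ is tangent to $\Mcal$, the zeroth-order term $\langle d\omega, X_i\wedge \xi_1\wedge\cdots\wedge\xi_m\rangle$ vanishes identically; the first-order terms, summed over the $Q$ sheets, involve only the derivative of the average $\boldsymbol\eta\circ N$, whose smallness is a center-manifold estimate (the same device that controls the errors $\Err^o_1,\Err^o_2$); and the remaining contributions are at least quadratic in $|DN|$ and $|\Abf_{\Mcal}||N|$, hence genuinely multiplicative in $\Dbf_N$ and $\mathbf{\Sigma}_N$. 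Without this structure your argument does not close, so this step needs to be redone along those lines.
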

    The estimates in Proposition \ref{p:firstvar} follow by the same reasoning as their weaker counterparts in \cite[Proposition 7.5]{Spolaor_15}, together with the following observations:
    \begin{itemize}
        \item the error estimates in \cite[Section 4]{DLS16blowup} (more precisely, see \cite[Lemma 4.5]{DLS16blowup}) can be optimized so as to gain a factor of $\mbf_0^{\gamma_4}$ on the right-hand side, in light of the following estimates on the geodesic distance $d$ on each center manifold $\Mcal$:
        \begin{enumerate}[(i)]
	       \item $d(x,y) = |x-y| + O\left(\boldsymbol{m}_0^{1/2} |x-y|^2\right)$, 
	       \item $|\nabla d(x,y)| = 1 + O\left(\boldsymbol{m}_0^{1/2}d (x,y)\right)$,
	       \item $\nabla^2 (d^2) = g + O(\boldsymbol{m}_0 d)$, where $g$ is the metric induced on $\mathcal{M}$ by the Euclidean ambient metric.
        \end{enumerate}
        \item the estimates \cite[(7.25), (7.26)]{Spolaor_15} in fact immediately yield the estimates in \eqref{e:firstvar5}, in light of \eqref{e:domega-balls}.
    \end{itemize}
    These estimates are a simple consequence of the $C^{3,\kappa}$-estimates for each center manifold; see e.g. \cite{DLDPHM}. We therefore omit the details here.

    The estimates of Proposition \ref{p:firstvar} in turn gives rise to the following almost-monotonicity estimate for the frequency relative to a given center manifold.

    \begin{corollary}\label{c:freq-mon}
        There exists $C=C(m,n,Q)>0$ such that the following holds. Suppose that $T$ satisfies Assumption \ref{a:main-2}. Let $(s,t]$, $x$, $\Mcal$, $N$ and $\gamma_4$ be as in Proposition \ref{p:firstvar}. Then $\Ibf_N(0,\cdot)$ is absolutely continuous on $(\tfrac{s}{t},3]$ and for almost-every $r\in (\tfrac{s}{t},3]$ we have
        \[
            \partial_r \log (1 +\Ibf_N(0,r)) \geq - C\mbf_0^{\gamma_4} \left(1 + \frac{\Dbf_N(0,r)^{\gamma_4}}{r} + \Dbf_N(0,r)^{\gamma_4-1}\partial_r\Dbf_N(0,r)\right)
        \]
    \end{corollary}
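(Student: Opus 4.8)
The plan is to differentiate $\Ibf_N(r)=r\mathbf{\Gamma}_N(r)/\Hbf_N(r)$ in $r$, insert the sharp first-variation identities of Proposition~\ref{p:firstvar}, peel off a manifestly non-negative leading term via Cauchy--Schwarz, and absorb everything else into the claimed right-hand side. I suppress the center $0$ throughout and write $\gamma_4$ for the exponent of Proposition~\ref{p:firstvar}. First note that $\Hbf_N$ cannot vanish on $(\tfrac st,3]$: otherwise $N$ would vanish on an annulus and $0$ would be a regular point. Moreover \eqref{e:firstvar3} and \eqref{e:firstvar5} give $\tfrac12\Dbf_N\le\mathbf{\Gamma}_N,\Ebf_N\le 2\Dbf_N$ for $\mbf_0$ small, so by the frequency bounds of Theorem~\ref{t:freqbds} the quantity $\Hbf_N=r\mathbf{\Gamma}_N/\Ibf_N$ is comparable to $r\Dbf_N$; in particular $\Hbf_N$ is bounded below, and since $\Dbf_N$ and $\Hbf_N$ are absolutely continuous (hence so are $\Lbf_N$ and $\mathbf{\Gamma}_N=\Dbf_N+\Lbf_N$), both $\Ibf_N$ and $\log(1+\Ibf_N)$ are absolutely continuous on $(\tfrac st,3]$.

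At a point of differentiability, the quotient rule together with \eqref{e:firstvar2} for $\partial_r\Hbf_N$ gives $\partial_r\Ibf_N=\Hbf_N^{-1}\big[(2-m)\mathbf{\Gamma}_N+r\partial_r\mathbf{\Gamma}_N-2r\mathbf{\Gamma}_N\Ebf_N\Hbf_N^{-1}-O(\mbf_0)r\mathbf{\Gamma}_N\big]$. Writing $r\partial_r\mathbf{\Gamma}_N=r\partial_r\Dbf_N+r\partial_r\Lbf_N$ and substituting \eqref{e:firstvar4} in the form $r\partial_r\Dbf_N=(m-2)\Dbf_N+2r\Gbf_N+r\,\Err_D$, the $\pm(m-2)\Dbf_N$ cancel; using also $\mathbf{\Gamma}_N=\Ebf_N-\Err_\Gamma$ from \eqref{e:firstvar3}, the bracket becomes
\[ 2r\Big(\Gbf_N-\frac{\Ebf_N^2}{\Hbf_N}\Big)+\frac{2r\Ebf_N\Err_\Gamma}{\Hbf_N}+(2-m)\Lbf_N+r\,\Err_D+r\partial_r\Lbf_N-O(\mbf_0)r\mathbf{\Gamma}_N. \]
The crux, and the only place positivity enters, is that Cauchy--Schwarz applied to the integral representations of $\Ebf_N,\Gbf_N,\Hbf_N$ against the non-negative measure $-\phi'(d/r)\,d\Hcal^m$ yields $\Ebf_N^2\le\Gbf_N\Hbf_N$, so the first term above is $\ge 0$. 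Hence
\[ \partial_r\log(1+\Ibf_N)=\frac{\partial_r\Ibf_N}{1+\Ibf_N}\ \ge\ -\frac1{(1+\Ibf_N)\Hbf_N}\Big(\tfrac{2r\Ebf_N|\Err_\Gamma|}{\Hbf_N}+(m-2)|\Lbf_N|+r|\Err_D|+r|\partial_r\Lbf_N|+C\mbf_0 r\mathbf{\Gamma}_N\Big). \]

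It then remains to bound each of these five terms by $C\mbf_0^{\gamma_4}\big(1+\Dbf_N^{\gamma_4}r^{-1}+\Dbf_N^{\gamma_4-1}\partial_r\Dbf_N\big)$. I would use freely: $\mbf_0$ small, $r\le3$, the standard center-manifold decay $\Dbf_N(r)\le C\mbf_0 r^{m+2-2\delta_2}$ (so $\Dbf_N\le1$ and $\Dbf_N\le r$), the monotonicity $\partial_r\Dbf_N\ge0$ from \eqref{e:firstvar1}, the pointwise comparison $0\le\Gbf_N\le\partial_r\Dbf_N$ (from $\sum_i|DN_i\cdot\nabla d|^2\le|DN|^2|\nabla d|^2$), the frequency bounds $c_0\le\Ibf_N\le\bar C$ of Theorem~\ref{t:freqbds}, and the consequent relation $\Hbf_N\sim r\Dbf_N$. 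The elementary inequality $\Dbf_N\le C(1+\Ibf_N)\Hbf_N/r$ turns each summand of $r|\Err_D|$ (namely $C\mbf_0^{\gamma_4}\Dbf_N^{1+\gamma_4}$, $Cr\mbf_0^{\gamma_4}\Dbf_N^{\gamma_4}\partial_r\Dbf_N$ and $Cr\mbf_0\Dbf_N$ after multiplying \eqref{e:firstvar4} by $r$), the $C\mbf_0 r\mathbf{\Gamma}_N$ term, and the $|\Lbf_N|$ term (using $|\Lbf_N|\le C\mbf_0^{1/4}r\Dbf_N$) directly into the three target terms, after shrinking $\gamma_4\le 1/4$. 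The $\Err_\Gamma$ term carries two factors of $\Dbf_N/\Hbf_N\lesssim(1+\Ibf_N)/r$, leaving a surplus $1+\Ibf_N$ that is absorbed by the uniform upper bound $\Ibf_N\le\bar C$; and the $\partial_r\Lbf_N$ term, bounding $\Hbf_N\le Cr\Dbf_N$ inside \eqref{e:firstvar5} and then using $\Hbf_N\ge c\,r\Dbf_N$, becomes $\lesssim\mbf_0^{\gamma_4}(\partial_r\Dbf_N)^{\gamma_4}\Dbf_N^{\gamma_4-1}$, which a case split on whether $\partial_r\Dbf_N\gtrless 1$ together with $\Dbf_N\le r$ bounds by the target.

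I expect the pair of $\Lbf_N$-terms to be the only genuine departure from the area-minimizing arguments of \cite{DLS16blowup,DLSk1}: the derivative estimate for $\Lbf_N$ in \eqref{e:firstvar5} is expressed through $\Hbf_N$ and $\partial_r\Dbf_N$ rather than directly through $\Dbf_N$, so one must first trade $\Hbf_N$ for $r\Dbf_N/\Ibf_N$ — hence use both the lower and upper frequency bounds of Theorem~\ref{t:freqbds} — before this contribution is visibly of the required form. Everything else is routine bookkeeping with Young's inequality and the smallness of $\mbf_0$.
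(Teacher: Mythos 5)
Your skeleton — differentiate $\Ibf_N=r\mathbf{\Gamma}_N/\Hbf_N$, insert the identities of Proposition \ref{p:firstvar}, peel off the non-negative term $\Gbf_N\Hbf_N-\Ebf_N^2\ge 0$ via Cauchy--Schwarz, and absorb the remaining errors — is exactly the (omitted) argument the paper has in mind, and your treatment of the $\Err^i$ terms, of $|\Lbf_N|$, and of the $O(\mbf_0)\,r\mathbf{\Gamma}_N$ term is fine. The step that does not close is the $\partial_r\Lbf_N$ error. After trading $\Hbf_N$ for $r\Dbf_N$ you are left with $C\mbf_0^{\gamma_4}(\partial_r\Dbf_N)^{\gamma_4}\Dbf_N^{\gamma_4-1}$, and your proposed ``case split on $\partial_r\Dbf_N\gtrless 1$ together with $\Dbf_N\le r$'' only works when $\partial_r\Dbf_N\ge 1$. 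When $\partial_r\Dbf_N\le 1$ you are left with $\Dbf_N^{\gamma_4-1}$, and $\Dbf_N\le r$ gives $\Dbf_N^{\gamma_4-1}\ge \Dbf_N^{\gamma_4}/r$, i.e.\ the wrong direction: for instance if $\Dbf_N\sim\partial_r\Dbf_N\sim\eps\to 0$ your bound is of size $\eps^{2\gamma_4-1}\to\infty$ (recall $\gamma_4$ is small), while the claimed right-hand side stays of order $\mbf_0^{\gamma_4}$. So this term is not dominated by the target by the argument you describe; one must instead work from the Cauchy--Schwarz form of the estimate, $|\partial_r\Lbf_N|\le C\mbf_0^{1/2}(\partial_r\Dbf_N\,\Hbf_N)^{1/2}$ (the radial analogue of \eqref{e:L-der-bd} and \eqref{e:spatial-var-L}), and interpolate with a Young weight adapted to $\mbf_0$ and $\Dbf_N$ rather than a crude case split; as written, the reduction is false in the regime where $\Dbf_N$ is small and $\partial_r\Dbf_N\lesssim\Dbf_N$.

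A second, lesser issue is the dependence of constants. You invoke both bounds of Theorem \ref{t:freqbds} ``freely'' and deduce $\Hbf_N\sim r\Dbf_N$, but those constants depend on $T$, whereas the Corollary asserts $C=C(m,n,Q)$. The frequency \emph{lower} bound (i.e.\ $\Hbf_N\le Cr\Dbf_N$) can be dispensed with entirely by differentiating $\log(1+\Ibf_N)$ directly, which produces the denominator $\Hbf_N(\Hbf_N+r\mathbf{\Gamma}_N)$ and the universal estimate $\Hbf_N+r\mathbf{\Gamma}_N\ge r\mathbf{\Gamma}_N\ge\tfrac12 r\Dbf_N$; equivalently, each error term should consume the factor $\Dbf_N/\Hbf_N\lesssim(1+\Ibf_N)/r$ at most once. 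Your handling of the $\Err^o$ term uses it twice and then absorbs the surplus $1+\Ibf_N$ with the $T$-dependent upper bound, so the constant you obtain is $C(m,n,Q,T)$; this should at least be flagged, since it is weaker than the statement you are proving (and is the one genuinely delicate point in matching the advertised dependence of $C$).
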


    Now observe that \cite[Lemma 6.6]{DLSk1} can in fact be stated for a general manifold that is the graph of a sufficiently regular function as follows, with the proof remaining completely unchanged.
    
    \begin{lemma}[Curvilinear excess expansion]
        There exists a dimensional constant $C = C(m,n,Q) > 0$ such that the following holds. Let $\Mcal=\graph(\boldsymbol{\varphi}_r)$ be a $C^3$ $m$-dimensional $C^3$ submanifold of $\R^{m+n}$, where $\boldsymbol{\varphi}_r \in C^3(\overline{B_r(0,\pi)};\pi^\perp)$. Let $f: B_r(0,\pi)\to\Acal_Q(\pi^\perp)$ be a Lipschitz map. Then we have
	\begin{align*}
		\left| \int_{\Cbf_r(0,\pi)} \right.&\left.|\vec\Gbf_{f}(z) - \vec{\Mcal}\circ\mathbf{p}(z)|^2\phi\left(\frac{|\mathbf{p}_{\pi} (z)|}{r}\right) \, d\|\Gbf_{f}\|(z) - \int_{B_r(0,\pi)} \Gcal\big(Df, Q\llbracket D\boldsymbol{\varphi}_r\rrbracket\big)^2 \, dy \right| \\
		&\leq C\int_{B_r(0,\pi)} (|Df|^4 + |D\boldsymbol{\varphi}_r|^4) \phi\left(\frac{|y|}{r}\right)\, dy  \\
		&\qquad +C \int_{\Cbf_r(0,\pi)} \Big| \vec{\Mcal}(\mathbf{p}(z)) - \vec{\Mcal}\big(\boldsymbol{\varphi}_r(\mathbf{p}_{\pi}(z))\big) \Big| \, d\|\Gbf_{f}\|(z).
	\end{align*}
    \end{lemma}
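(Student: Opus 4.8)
The plan is to combine the area formula with a pointwise Taylor expansion of the graphical $m$-vectors, after first replacing the orthogonal projection $\pbf$ onto $\Mcal$ by the vertical projection along $\pi^\perp$. For this first reduction: for $\|\Gbf_f\|$-a.e.\ $z\in\Cbf_r(0,\pi)$ the three $m$-vectors $\vec\Gbf_f(z)$, $\vec\Mcal(\pbf(z))$ and $\vec\Mcal\big(\bphi_r(\mathbf{p}_\pi(z))\big)$ are unit and simple, so the elementary inequality $\big|\,|a-b|^2-|a-c|^2\,\big|=2|\langle a,c-b\rangle|\le 2|b-c|$ for unit vectors, together with $\phi\le 1$, shows that replacing $\vec\Mcal\circ\pbf(z)$ by $\vec\Mcal\big(\bphi_r(\mathbf{p}_\pi(z))\big)$ in the first integral on the left-hand side costs at most $2\int_{\Cbf_r(0,\pi)}\big|\vec\Mcal(\pbf(z))-\vec\Mcal(\bphi_r(\mathbf{p}_\pi(z)))\big|\,d\|\Gbf_f\|(z)$, which is a constant multiple of the last error term in the statement. (Here one tacitly uses that $\Cbf_r(0,\pi)$ lies in a tubular neighbourhood of $\Mcal$ on which $\pbf$ is well-defined and $C^2$; this, together with the boundedness of $\|Df\|_{C^0}$ and $\|D\bphi_r\|_{C^0}$ needed below, holds in the only regime in which the lemma is applied.) It then remains to compare $\int_{\Cbf_r(0,\pi)}|\vec\Gbf_f-\vec\Mcal(\bphi_r(\mathbf{p}_\pi(\cdot)))|^2\phi\,d\|\Gbf_f\|$ with $\int_{B_r(0,\pi)}\Gcal(Df,Q\llbracket D\bphi_r\rrbracket)^2\phi\,dy$.

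We would then decompose $\Gbf_f=\sum_{i=1}^Q\Gbf_{f_i}$ and apply the area formula: for non-negative Borel $h$ one has $\int_{\Cbf_r(0,\pi)}h\,d\|\Gbf_f\|=\int_{B_r(0,\pi)}\sum_i h(y,f_i(y))\,\mathbf{J}(Df_i(y))\,dy$, where $\mathbf{J}(A):=\sqrt{\det(\Id+A^TA)}$ for an $n\times m$ matrix $A$. Along $\Gbf_{f_i}$ the orienting $m$-vector at $(y,f_i(y))$ is $\vec{G}(Df_i(y))$ and $\vec\Mcal$ at $(y,\bphi_r(y))$ is $\vec{G}(D\bphi_r(y))$, where $M(A):=(e_1+Ae_1)\wedge\cdots\wedge(e_m+Ae_m)$ and $\vec{G}(A):=M(A)/\mathbf{J}(A)$ is the unit $m$-vector orienting the graph of the linear map $A$; moreover $\Gcal(Df(y),Q\llbracket D\bphi_r(y)\rrbracket)^2=\sum_i|Df_i(y)-D\bphi_r(y)|^2$, no optimization over permutations being needed since $D\bphi_r$ is single-valued. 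Thus the quantity to be estimated becomes
\[
\int_{B_r(0,\pi)}\sum_{i=1}^Q\Big(|\vec{G}(Df_i)-\vec{G}(D\bphi_r)|^2\,\mathbf{J}(Df_i)-|Df_i-D\bphi_r|^2\Big)\,\phi\Big(\tfrac{|y|}{r}\Big)\,dy.
\]

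The core of the argument is then the pointwise estimate $\big|\,|\vec{G}(A)-\vec{G}(B)|^2\,\mathbf{J}(A)-|A-B|^2\,\big|\le C(|A|^4+|B|^4)$ for $n\times m$ matrices with $|A|,|B|$ bounded, which we would establish as follows. From $|\vec{G}(A)|=|\vec{G}(B)|=1$ and the identity $\langle v_1\wedge\cdots\wedge v_m,\,w_1\wedge\cdots\wedge w_m\rangle=\det(\langle v_i,w_j\rangle)$ for simple $m$-vectors, which gives $\langle M(A),M(B)\rangle=\det(\Id+A^TB)$ and $|M(A)|^2=\det(\Id+A^TA)$, one obtains the exact identity
\[
|\vec{G}(A)-\vec{G}(B)|^2\,\mathbf{J}(A)=2\sqrt{\det(\Id+A^TA)}-\frac{2\det(\Id+A^TB)}{\sqrt{\det(\Id+B^TB)}}.
\]
Expanding each determinant into its elementary symmetric functions and Taylor-expanding the square roots, $2\sqrt{\det(\Id+A^TA)}=2+|A|^2+O(|A|^4)$, $\sqrt{\det(\Id+B^TB)}=1+\tfrac{1}{2}|B|^2+O(|B|^4)$, and $\det(\Id+A^TB)=1+\langle A,B\rangle+O((|A||B|)^2)$; multiplying these out, the order-$0$ and order-$2$ contributions combine to leave precisely $|A|^2-2\langle A,B\rangle+|B|^2=|A-B|^2$, while every remaining term has total degree $\ge 3$ in $(A,B)$ --- such as the cubic term $-|B|^2\langle A,B\rangle$ --- and is therefore $\le C(|A|^4+|B|^4)$ by Young's inequality. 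Inserting this pointwise bound into the integral above and using $\sum_i(|Df_i|^4+|D\bphi_r|^4)\le C(m,n,Q)(|Df|^4+|D\bphi_r|^4)$ with $|Df|^2:=\sum_i|Df_i|^2$ then produces exactly the first error term on the right-hand side, completing the proof.

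The main obstacle will be the pointwise expansion in the previous paragraph: one must verify that the quadratic parts of the three determinant expansions conspire to give exactly $|A-B|^2$, with no residual quadratic term, and that all the cubic cross-terms are absorbed into the quartic error. By contrast, the reduction to the vertical projection and the area-formula bookkeeping are routine, as is the final assembly of the estimate.
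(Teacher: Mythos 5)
Your proof is correct and follows essentially the same route as the paper, which does not reprove the lemma but invokes \cite[Lemma 6.6]{DLSk1}: the area formula for $Q$-valued graphs, the exact identity $|\vec{G}(A)-\vec{G}(B)|^2\,\mathbf{J}(A)=2\mathbf{J}(A)-2\det(\Id+A^TB)/\mathbf{J}(B)$ with its Taylor expansion producing $|A-B|^2$ up to $C(|A|^4+|B|^4)$, and the unit-$m$-vector estimate to trade the nearest-point projection $\mathbf{p}$ for the vertical one at the cost of the last error term. Your two caveats are exactly the implicit conventions of the statement: the pointwise expansion needs $|Df|,|D\boldsymbol{\varphi}_r|$ bounded by a dimensional constant (true in the only regime where the lemma is applied, which is what makes $C=C(m,n,Q)$ legitimate), and the Dirichlet-type integral should carry the cutoff $\phi(|y|/r)$, consistently with Corollary \ref{c:curv-excess}.
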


    Thus, we immediately deduce the analogue of \cite[Corollary 6.7]{DLSk1} when $T$ is semicalibrated.

    \begin{corollary}\label{c:curv-excess}
        There exists a dimensional constant $C = C(m,n,Q) > 0$ such that the following holds. Let $T$ satisfy Assumption \ref{a:main-2}. Let $(s,t]$ be an interval of flattening for $T$ around $0$ with corresponding center manifold $\Mcal$ and normal approximation $N$, let $\boldsymbol{m}_0$ be as in~\eqref{e:m_0} for $(s,t]$. Let $\boldsymbol{\varphi}$ be the parameterizing map for $\Mcal$ over $B_3(\pi)$ and let $f: B_1(\pi) \to \Acal_Q(\pi^\perp)$ be a $\pi$-approximation for $T_{0,t}$ in $\Cbf_4(0,\pi)$ according to \cite{DLS14Lp}. For $\bar{r}=\frac{s}{t}$, let $f_L: B_{8r_L}(p_L,\pi_L) \to \Acal_Q(\pi_L^\perp)$ be a $\pi_L$-approximation for $T_{0,t}$ corresponding to a Whitney cube $L$ as in~\cite[Section~2.1~(Stop)]{DLS16blowup}. Let $\pi_{\bar{r}}$ be such that $\Ebf(T_{0,t},\Bbf_{6\sqrt{m}\bar{r}}) = \Ebf(T_{0,t},\Bbf_{6\sqrt{m}\bar{r}}, \pi_{\bar{r}})$ and let $B^L := B_{8r_L}(p_L,\pi_L)$. Let $f_{\bar{r}}:B_{\bar{r}}(0,\pi_{\bar{r}}) \to \Acal_Q(\pi_{\bar{r}}^\perp)$ be the map reparameterizing ${\rm gr}\, (f_L)$ as a graph over $\pi_{\bar{r}}$ and let $\boldsymbol{\varphi}_{\bar{r}}, \boldsymbol{\varphi}_L$ be the maps reparameterizing $\graph (\boldsymbol{\varphi})$ as graph over $\pi_{\bar{r}}, \pi_L$ respectively. Then we have
	\begin{align}
		\left| \int_{B_1(0,\pi)} \right.&\left.\Gcal(Df, Q\llbracket D\boldsymbol{\varphi} \rrbracket)^2\phi\left(|y|\right)\,d y - \int_{\Bbf_1 \cap \Mcal} |DN|^2 \phi\left(d(y)\right)\,d y \right| \label{eq:graphDir1} \\
		&\leq C\int_{B_1(0,\pi)} (|Df|^4 + |D\boldsymbol{\varphi}|^4)d y  + C\boldsymbol{m}_0^{1+\gamma_2} + C \int_{\Bbf_1 \cap \Mcal} (|\Abf_{\Mcal}|^2|N|^2 + |DN|^4)\notag\\
		&\qquad +C \int_{\Cbf_1(0,\pi)} \Big| \vec{\Mcal}(\mathbf{p}(z)) - \vec{\Mcal}\big(\boldsymbol{\varphi}(\mathbf{p}_{\pi}(z))\big) \Big| \, d\|\Gbf_{f}\|(z),\notag
	\end{align}
	and
	\begin{align}
		\left| \int_{B_{\bar{r}}(0,\pi_{\bar{r}})} \right.&\left.\Gcal(Df_{\bar{r}}, Q\llbracket D\boldsymbol{\varphi}_{\bar{r}} \rrbracket)^2\phi\left(\frac{|y|}{\bar{r}}\right)\, d y - \int_{\Bbf_{\bar{r}}\cap\Mcal} |DN|^2 \phi\left(\frac{d(y)}{\bar{r}}\right)d y \right| \label{eq:graphDir2}\\
		&\leq C\int_{B_{\bar{r}}(0,\pi_{\bar{r}})} (|Df_{\bar{r}}|^4 + |D\boldsymbol{\varphi}_{\bar{r}}|^4)\, d y + C\int_{B^L} (|Df_{L}|^4 + |D\boldsymbol{\varphi}_L|^4)\, d y   \notag\\
		&\qquad+ C\boldsymbol{m}_0^{1+\gamma_2}\bar{r}^{m+2+\gamma_2} + C \int_{\Bcal^L} (|\Abf_{\Mcal}|^2|N|^2 + |DN|^4) \notag\\
		&\qquad +C \int_{\Cbf_{\bar{r}}(0,\pi_{\bar{r}})} \Big| \vec{\Mcal}(\mathbf{p}(z)) - \vec{\Mcal}\big(\boldsymbol{\varphi}(\mathbf{p}_{\pi_{\bar{r}}}(z))\big) \Big|\, d\|\Gbf_{f_{\bar{r}}}\|(z),\notag
	\end{align}
    where $\Abf_\Mcal$ denotes the second fundamental form of $\Mcal$ and $\gamma_2$ is as in \cite{Spolaor_15}.
    \end{corollary}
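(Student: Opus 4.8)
The plan is to mirror the proof of \cite[Corollary 6.7]{DLSk1}, with the curvilinear excess expansion lemma above (the general-manifold version of \cite[Lemma 6.6]{DLSk1}) playing the role of \cite[Lemma 6.6]{DLSk1}, and with every ingredient drawn from the area-minimizing theory \cite{DLS14Lp, DLS16centermfld, DLS16blowup} replaced by its semicalibrated analogue in \cite{Spolaor_15}. I would emphasize at the outset that the semicalibration error $T(d\omega\mres X)$ plays no role here: the curvilinear excess expansion is a purely geometric identity relating the tilt of a Lipschitz multigraph to that of the manifold over which it sits, and the $d\omega$-term only enters the first-variation identities, having already been absorbed into the definition of $\mbf_0$ in \eqref{e:m_0}.

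For \eqref{eq:graphDir1} I would first apply the curvilinear excess expansion lemma to $\Mcal = \graph(\boldsymbol\varphi)$ on $B_1(0,\pi)$ with the $\pi$-approximation $f$; this identifies $\int_{\Cbf_1(0,\pi)}|\vec\Gbf_f - \vec\Mcal\circ\pbf|^2\phi(|\mathbf{p}_\pi(z)|)\,d\|\Gbf_f\|$ with $\int_{B_1(0,\pi)}\Gcal(Df, Q\llbracket D\boldsymbol\varphi\rrbracket)^2\phi(|y|)\,dy$ up to the fourth-power term $\int_{B_1}(|Df|^4 + |D\boldsymbol\varphi|^4)$ and the oscillation term $\int_{\Cbf_1}|\vec\Mcal(\pbf(z)) - \vec\Mcal(\boldsymbol\varphi(\mathbf{p}_\pi(z)))|\,d\|\Gbf_f\|$, both of which already appear on the right of \eqref{eq:graphDir1}. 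Next I would replace $\Gbf_f$ by $T_{0,t}$ inside the curvilinear excess: the two currents agree away from the bad set of the $\pi$-approximation, whose $\|T_{0,t}\|$-measure is controlled by $C\mbf_0^{1+\gamma_2}$ (in the area-minimizing case by \cite{DLS14Lp}, and in the present setting by the corresponding estimates of \cite{Spolaor_15}), and since $|\vec T|, |\vec\Mcal\circ\pbf|\le 1$ the error incurred is $\le C\mbf_0^{1+\gamma_2}$. Finally I would invoke the standard comparison between the curvilinear excess of $T_{0,t}$ over $\Mcal$ and the Dirichlet energy of the normal approximation, namely that $\int_{\Cbf_1(0,\pi)}|\vec T - \vec\Mcal\circ\pbf|^2\phi\,d\|T\|$ coincides with $\int_{\Bbf_1\cap\Mcal}|DN|^2\phi(d(y))\,dy$ up to $C\int_{\Bbf_1\cap\Mcal}(|\Abf_\Mcal|^2|N|^2 + |DN|^4) + C\mbf_0^{1+\gamma_2}$ --- the transplant to the semicalibrated setting (via \cite{Spolaor_15}) of the excess estimates in \cite[Section 3.3]{DLS16blowup}. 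Chaining the three comparisons yields \eqref{eq:graphDir1}.

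For \eqref{eq:graphDir2} I would run the identical argument localized at the inner scale $\bar r = s/t$, the point being that there the appropriate graphical approximation over the optimal plane $\pi_{\bar r}$ is not $f$ but the reparameterization $f_{\bar r}$ of the Whitney-cube approximation $f_L$. Applying the curvilinear excess expansion lemma on $B_{\bar r}(0,\pi_{\bar r})$ to $\graph(\boldsymbol\varphi_{\bar r})$ and $f_{\bar r}$ produces the fourth-power term $\int_{B_{\bar r}(0,\pi_{\bar r})}(|Df_{\bar r}|^4 + |D\boldsymbol\varphi_{\bar r}|^4)$, and since $f_{\bar r}$ (resp.\ $\boldsymbol\varphi_{\bar r}$) is obtained from $f_L$ (resp.\ $\boldsymbol\varphi_L$) by a rotation controlled in $C^2$ by the tilt between $\pi_L$ and $\pi_{\bar r}$ --- itself controlled by the tilt-excess estimates of \cite[Section 2]{Spolaor_15} --- passing back to $f_L$ and $\boldsymbol\varphi_L$ costs precisely the extra term $\int_{B^L}(|Df_L|^4 + |D\boldsymbol\varphi_L|^4)$, up to lower-order terms already present. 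The bad-set estimate and the excess--Dirichlet comparison are then applied on the Whitney cube $L$ rather than on $B_1$, and the scaling of all quantities under rescaling by $\bar r$ accounts for the factor $\bar r^{m+2+\gamma_2}$ multiplying the $\mbf_0^{1+\gamma_2}$ term; this gives \eqref{eq:graphDir2}.

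The hard part is not any single estimate but the bookkeeping: I would need to check that each preliminary input used in \cite{DLSk1} --- the $\pi$-approximation and its bad-set bound, the $C^{3,\kappa}$ center-manifold estimates, the excess--Dirichlet comparison for normal approximations, and the mutual tilt control between the competing planes $\pi$, $\pi_L$, $\pi_{\bar r}$ --- has an exact counterpart in \cite{Spolaor_15} carrying the same power of $\mbf_0$. Once this is verified, the argument is word-for-word that of \cite[Corollary 6.7]{DLSk1}, so the remaining details may be omitted.
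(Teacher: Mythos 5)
Your proposal is correct and follows essentially the same route as the paper, which deduces the corollary from the general curvilinear excess expansion lemma exactly as in \cite[Corollary 6.7]{DLSk1}, with the area-minimizing inputs (Lipschitz approximation and bad-set bounds, center-manifold and normal-approximation estimates, tilt comparisons between $\pi$, $\pi_L$, $\pi_{\bar r}$) replaced by their counterparts in \cite{Spolaor_15}, the $d\omega$-contributions being absorbed into $\boldsymbol{m}_0$ via \eqref{e:m_0} and \eqref{e:domega-balls}. Your three-step chaining (lemma, bad-set replacement of $\Gbf_f$ by $T_{0,t}$, Taylor comparison of the curvilinear excess with $\int |DN|^2$ up to $|\Abf_\Mcal|^2|N|^2+|DN|^4$), and its rescaled version at scale $\bar r$, is precisely the bookkeeping the paper delegates to \cite{DLSk1}.
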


    In light of the estimates of Corollary \ref{c:curv-excess}, we further wish to control the difference between an orthogonal projection to a center manifold and the image on $\Mcal$ of an orthogonal projection to a plane over which the center manifold is parameterized.
    
    \begin{lemma}\label{l:cm-proj}
        There exists a constant $C=C(m,n,Q) > 0$ such that the following holds. Suppose that $T$, $\Mcal$, $\boldsymbol{m}_0$, $\bar{r}$, $f$, $f_{\bar{r}}$, $\pi$, $\pi_{\bar{r}}$, $\boldsymbol{\varphi}_{\bar{r}}$, $\gamma_2$ are as in Corollary~\ref{c:curv-excess}. Then we have
	\begin{align}
		&\int_{\Cbf_{\bar{r}}(0,\pi_{\bar{r}})} \Big| \vec{\Mcal}(\mathbf{p}(z)) - \vec{\Mcal}\big(\boldsymbol{\varphi}_{\bar{r}}(\mathbf{p}_{\pi_{\bar{r}}}(z))\big) \Big| \, d\|\Gbf_{{f}_{\bar r}}\|(z) \leq C\bar{r}^{m+1}\boldsymbol{m}_0^{1+\gamma_2}, \label{e:cmLip1}\\
		&\int_{\Cbf_{1}(0,\pi)} \Big| \vec{\Mcal}(\mathbf{p}(z)) - \vec{\Mcal}\big(\boldsymbol{\varphi}(\mathbf{p}_\pi(z))\big) \Big| \, d\|\Gbf_{f}\|(z) \leq C\boldsymbol{m}_0^{1+\gamma_2}.\label{e:cmLip2}
	\end{align}
    \end{lemma}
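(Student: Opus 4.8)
The plan is to exploit two standard features of the center manifold $\Mcal$: it is an almost‑flat graph, with $\|D\boldsymbol{\varphi}\|_{C^0}\le C\boldsymbol{m}_0^{1/2}$ (and, since $\pi_{\bar r}$ differs from $\pi$ by $O(\boldsymbol{m}_0^{1/2})$, also $\|D\boldsymbol{\varphi}_{\bar r}\|_{C^0}\le C\boldsymbol{m}_0^{1/2}$ on $B_{\bar r}(0,\pi_{\bar r})$), and it has small curvature, with $\|\Abf_\Mcal\|_{C^0}\le C\boldsymbol{m}_0^{1/2}$; both follow from the $C^{3,\kappa}$ center‑manifold estimates (cf.\ \cite[Theorem 2.13]{Spolaor_15} and \cite{DLDPHM}). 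Writing $\boldsymbol{\varphi}(y)\in\Mcal$ for the point of $\Mcal$ lying over $y$, the curvature bound gives $|\vec\Mcal(p)-\vec\Mcal(q)|\le C\boldsymbol{m}_0^{1/2}\,d(p,q)$ for all $p,q\in\Mcal$, with $d$ the geodesic distance. Thus for a point $z=(y,f_i(y))\in\spt\|\Gbf_f\|\cap\Cbf_1(0,\pi)$, with $y=\mathbf{p}_\pi(z)$, it suffices to bound $d(\mathbf{p}(z),\boldsymbol{\varphi}(y))$; the nearest‑point projection $\mathbf{p}$ is well defined along $\spt\|\Gbf_f\|$, which sits in a thin tube around $\Mcal$ (see \cite[Assumption 2.1]{DLS16centermfld}).

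The key step — and the only one I expect to be delicate — is a gain of a factor $\boldsymbol{m}_0^{1/2}$ in this distance. First, comparing to one particular point, $\dist(z,\Mcal)=|z-\mathbf{p}(z)|\le|z-\boldsymbol{\varphi}(y)|=|f_i(y)-\boldsymbol{\varphi}(y)|$. Second, $z-\mathbf{p}(z)$ is normal to $\Mcal$ at $\mathbf{p}(z)$, and every unit normal to the graph $\Mcal$ over $\pi$ has $\pi$‑component of norm at most $\|D\boldsymbol{\varphi}\|_{C^0}\le C\boldsymbol{m}_0^{1/2}$; hence
\[
    |\mathbf{p}_\pi(\mathbf{p}(z))-y|=|\mathbf{p}_\pi(z-\mathbf{p}(z))|\le C\boldsymbol{m}_0^{1/2}|z-\mathbf{p}(z)|\le C\boldsymbol{m}_0^{1/2}|f_i(y)-\boldsymbol{\varphi}(y)|.
\]
Since $\mathbf{p}(z)$ and $\boldsymbol{\varphi}(y)$ both lie on the graph $\Mcal$ (of gradient $\le 1$) and their $\pi$‑projections differ by at most this much, their geodesic distance on $\Mcal$ is also $\le C\boldsymbol{m}_0^{1/2}|f_i(y)-\boldsymbol{\varphi}(y)|$, and therefore the integrand obeys the pointwise bound $|\vec\Mcal(\mathbf{p}(z))-\vec\Mcal(\boldsymbol{\varphi}(\mathbf{p}_\pi(z)))|\le C\boldsymbol{m}_0\,|f_i(y)-\boldsymbol{\varphi}(y)|$.

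Integrating over the cylinder (the graph Jacobian $\sqrt{1+|Df|^2}$ being bounded) gives that the left‑hand side of \eqref{e:cmLip2} is at most $C\boldsymbol{m}_0\int_{B_1(\pi)}\sum_i|f_i-\boldsymbol{\varphi}|$. I would then bound $\int_{B_1(\pi)}\sum_i|f_i-\boldsymbol{\varphi}|\le C\boldsymbol{m}_0^{1/2}$ by splitting $|f_i-\boldsymbol{\varphi}|\le|f_i-\boldsymbol\eta\circ f|+|\boldsymbol\eta\circ f-\boldsymbol{\varphi}|$: the first term is controlled via $\int_{B_1}\Gcal(f,Q\llbracket\boldsymbol\eta\circ f\rrbracket)^2\le C\int_{B_1}|Df|^2\le C\boldsymbol{m}_0$ (the energy estimate for the Lipschitz approximation of \cite{DLSS1} together with the Poincaré inequality for $\Acal_Q$‑valued maps), and the second via the $L^2$ comparison $\int_{B_1}|\boldsymbol\eta\circ f-\boldsymbol{\varphi}|^2\le C\boldsymbol{m}_0$ from the center‑manifold construction (cf.\ Lemma \ref{l:comparison} and \cite[Proposition 4.7]{Spolaor_15}). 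Hence the left‑hand side of \eqref{e:cmLip2} is $\le C\boldsymbol{m}_0^{3/2}\le C\boldsymbol{m}_0^{1+\gamma_2}$, since $\boldsymbol{m}_0\le 1$ and $\gamma_2$ is small. For \eqref{e:cmLip1} I would repeat the argument verbatim with $f_{\bar r},\boldsymbol{\varphi}_{\bar r},\pi_{\bar r}$ over $B_{\bar r}(0,\pi_{\bar r})$: the stopping condition $\Ebf(T_{0,t},\Bbf_{6\sqrt m\bar r})\le C\boldsymbol{m}_0\bar r^{2-2\delta_2}$ together with Poincaré give $\int_{B_{\bar r}}\Gcal(f_{\bar r},Q\llbracket\boldsymbol\eta\circ f_{\bar r}\rrbracket)^2+\int_{B_{\bar r}}|\boldsymbol\eta\circ f_{\bar r}-\boldsymbol{\varphi}_{\bar r}|^2\le C\boldsymbol{m}_0\bar r^{m+2}$, hence $\int_{B_{\bar r}}\sum_i|f_{\bar r,i}-\boldsymbol{\varphi}_{\bar r}|\le C\boldsymbol{m}_0^{1/2}\bar r^{m+1}$, and the same pointwise estimate (with $\|D\boldsymbol{\varphi}_{\bar r}\|_{C^0},\|\Abf_\Mcal\|_{C^0}\le C\boldsymbol{m}_0^{1/2}$) yields the left‑hand side of \eqref{e:cmLip1} $\le C\boldsymbol{m}_0^{3/2}\bar r^{m+1}\le C\bar r^{m+1}\boldsymbol{m}_0^{1+\gamma_2}$.

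The main obstacle is precisely the gain of $\boldsymbol{m}_0^{1/2}$ extracted in the second step above: a naive bound $d(\mathbf{p}(z),\boldsymbol{\varphi}(y))\le C|f_i-\boldsymbol{\varphi}|$ would only give the estimate with exponent $1$, which is insufficient, so one genuinely needs to use that $\Mcal$ is almost horizontal — equivalently, that its normals are almost vertical — so that the nearest‑point projection displaces the $\pi$‑coordinate by no more than $O(\boldsymbol{m}_0^{1/2})$ times the distance to $\Mcal$. Everything else is bookkeeping: verifying that the center‑manifold and Lipschitz‑approximation estimates of \cite{Spolaor_15,DLSS1} are invoked at the right scales and, for \eqref{e:cmLip1}, after the reparameterization over $\pi_{\bar r}$ — all of which hold in exactly the stated form for the center manifolds of semicalibrated currents.
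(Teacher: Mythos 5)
Your geometric core is sound and is essentially the mechanism behind the cited proof: the Gauss map of $\Mcal$ is Lipschitz with constant $C\|D^2\boldsymbol{\varphi}\|_{C^0}\leq C\boldsymbol{m}_0^{1/2}$, and since the normals to $\Mcal$ have $\pi$-component of size at most $\|D\boldsymbol{\varphi}\|_{C^0}\leq C\boldsymbol{m}_0^{1/2}$, the nearest-point projection displaces the horizontal coordinate by at most $C\boldsymbol{m}_0^{1/2}\dist(z,\Mcal)$; this correctly yields the pointwise bound $|\vec\Mcal(\mathbf{p}(z))-\vec\Mcal(\boldsymbol{\varphi}(\mathbf{p}_\pi(z)))|\leq C\boldsymbol{m}_0\,|f_i(y)-\boldsymbol{\varphi}(y)|$ and reduces the lemma to an $L^1$ bound on $\sum_i|f_i-\boldsymbol{\varphi}|$ (and its analogue at scale $\bar r$).

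The genuine gap is in how you bound that $L^1$ quantity. The inequality you invoke, $\int_{B_1}\Gcal(f,Q\llbracket\boldsymbol\eta\circ f\rrbracket)^2\leq C\int_{B_1}|Df|^2$, is false for $\Acal_Q$-valued maps: take $Q$ distinct constant sheets, which have zero Dirichlet energy but positive deviation from the barycenter. The Poincar\'e inequality controls the deviation from a mean \emph{point of} $\Acal_Q$, not the separation between the sheets, and in the present setting the sheet separation is genuinely not controlled by the excess to the power $1/2$ — this is precisely why the center-manifold construction has a separate height/separation stopping criterion. What is available is an $L^\infty$-type control: the height bound and the $C^0$ estimates for the $\Mcal$-normal approximation in \cite{Spolaor_15} (together with the bad-set estimate of \cite[Theorem 1.4]{DLSS1}, and at the inner scale the stopping conditions providing the extra powers of $\bar r$), which give bounds like $\int_{B_1}\sum_i|f_i-\boldsymbol{\varphi}|\leq C\boldsymbol{m}_0^{\beta}$ for a \emph{small} geometric exponent $\beta$ (of the order $1/2m$ or the Lipschitz-approximation exponent), not $\boldsymbol{m}_0^{1/2}$. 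Consequently your claimed intermediate bounds $\int_{B_1}\sum_i|f_i-\boldsymbol{\varphi}|\leq C\boldsymbol{m}_0^{1/2}$ and $\int_{B_{\bar r}}\sum_i|f_{\bar r,i}-\boldsymbol{\varphi}_{\bar r}|\leq C\boldsymbol{m}_0^{1/2}\bar r^{m+1}$, and hence the exponents $\boldsymbol{m}_0^{3/2}$ and $\boldsymbol{m}_0^{3/2}\bar r^{m+1}$, are not established. The proof is repairable because your pointwise estimate already carries a full factor $\boldsymbol{m}_0$, so you only need $\boldsymbol{m}_0^{\gamma_2}$ (resp.\ $\boldsymbol{m}_0^{\gamma_2}\bar r^{m+1}$) from the integral, which the height-bound/$\|N\|_{C^0}$ estimates do provide given how small $\gamma_2$ is fixed in \cite{Spolaor_15}; but the Poincar\'e step as written must be replaced by this argument, and the constant bookkeeping (that $\gamma_2$ does not exceed the exponent coming from the height bound, and the $\bar r^{-2\delta_2}$ losses at the stopping scale) must be checked explicitly.
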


    The proof of Lemma \ref{l:cm-proj} follows exactly as that of \cite[Lemma 6.9]{DLSk1}, exploiting the estimates of \cite[Theorem 1.4]{DLSS1} and \cite[Corollary 3.5]{Spolaor_15} in place of their respective analogues in \cite{DLS14Lp,DLS16centermfld}.

    We further have the following comparison estimate between neighboring center manifolds.

    \begin{lemma}\label{l:compare-cm}
        There exists a constant $C=C(m,n,Q) > 0$ such that the following holds. Suppose that $T$ satisfies Assumption \ref{a:main-2}. Let $\Mcal_{k-1}, \Mcal_{k}$ be successive center manifolds for $T$ associated to neighboring intervals of flattening $(t_k,t_{k-1}]$ and $(t_{k+1}, t_k]$ around $0$. Let $\bphi_{k-1},\bphi_k$ denote their respective parameterizing maps and let $N_{k-1},N_k$ denote their normal approximations. Assume that $\Ebf(T_{0,t_k},\Bbf_{6\sqrt{m}},\pi_k) = \Ebf(T_{0,t_k},\Bbf_{6\sqrt{m}})$ for some plane $\pi_k$ and let $\tilde{\boldsymbol{\varphi}}_{k-1}$ be the map reparametrizing $\graph(\boldsymbol{\varphi}_{k-1})$ as a graph over $\pi_k$.  
	   Letting $\tilde{\boldsymbol{\varphi}}_k := \tilde{\boldsymbol{\varphi}}_{k-1}\left(\frac{t_k}{t_{k-1}}\cdot \right)$, we have
	   \begin{equation}\label{e:comparison-cm-1}
	       \int_{B_1} |D\boldsymbol{\varphi}_k - D\tilde{\boldsymbol{\varphi}}_{k}|^2 \leq C \boldsymbol{m}_{0,k}^{3/2}.
	   \end{equation}
        and
        \begin{equation}\label{e:comparison-cm-2}
	       \int_{B_2} |\boldsymbol{\varphi}_k - \tilde{\boldsymbol{\varphi}}_{k}|^2 \leq C \boldsymbol{m}_{0,k}\, .
	    \end{equation}
        \end{lemma}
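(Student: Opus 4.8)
The plan is to adapt the argument comparing consecutive center manifolds for area-minimizing currents from \cite{DLSk2} to the semicalibrated setting; the only structural difference is that the parametrizing map of a semicalibrated center manifold solves a quasilinear prescribed-mean-curvature system carrying one extra inhomogeneity coming from the semicalibration, of size $\le C t_k\|d\omega\|_{C^0}\le C\mbf_{0,k}^{1/2}$ (cf.~\eqref{e:domega-balls} and \cite[Part I, Section 2]{Spolaor_15}), which is lower order relative to the inhomogeneities already present and does not affect the structure of the argument. First I would record two preliminary reductions. (a) By the stopping conditions for the intervals of flattening \cite[Proposition 6.5]{Spolaor_15} together with \eqref{e:m_0}, the quantities $\mbf_{0,k}$ and $\mbf_{0,k-1}(t_k/t_{k-1})^{2-2\delta_2}$ are comparable; since $\tilde\bphi_k=\tilde\bphi_{k-1}(\tfrac{t_k}{t_{k-1}}\cdot)$ carries the extra factor $t_k/t_{k-1}<1$, the $C^{3,\kappa_0}$ estimates for center manifolds in \cite{Spolaor_15} yield $\|\tilde\bphi_k\|_{C^{3,\kappa_0}(\overline{B_3})}\le C\mbf_{0,k}^{1/2}$, of exactly the same form as the bound $\|\bphi_k\|_{C^{3,\kappa_0}(\overline{B_3})}\le C\mbf_{0,k}^{1/2}$. (b) Since $\pi_k$ differs from the plane over which $\Mcal_k$ is constructed by a rotation of size $\le C\mbf_{0,k}^{1/2}$, after a harmless further reparametrization (which alters the estimates below only by errors $\lesssim\mbf_{0,k}^{3/2}$) we may treat both $\bphi_k$ and $\tilde\bphi_k$ as graphs over $\pi_k$. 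In particular $\|\bphi_k\|_{C^0(\overline{B_3})}+\|\tilde\bphi_k\|_{C^0(\overline{B_3})}\le C\mbf_{0,k}^{1/2}$, and \eqref{e:comparison-cm-2} follows immediately from the triangle inequality.

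For the finer estimate \eqref{e:comparison-cm-1}, write $\psi:=\bphi_k-\tilde\bphi_k$. The key point is that on a fixed annulus around $\partial B_1$ both $\graph(\bphi_k)$ and $\graph(\tilde\bphi_k)$ are good approximations of the \emph{same} current: since $(t_k,t_{k-1}]$ and $(t_{k+1},t_k]$ are consecutive we have $s_{k-1}=t_k$, so that after rescaling by $t_k/t_{k-1}$ the manifold $\graph(\tilde\bphi_k)$ agrees with $T_{0,t_k}$ — up to the normal approximation $\tilde N_{k-1}$ and a small exceptional set — on $B_3\setminus\overline{B_1}$, while $\graph(\bphi_k)$ agrees with $T_{0,t_k}$ on $B_3\setminus\overline{B_{t_{k+1}/t_k}}$. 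Comparing each of $\bphi_k$ and $\tilde\bphi_k$, on the annulus $\Acal:=B_3\setminus\overline{B_1}$, to the average $\boldsymbol\eta\circ f$ of a Lipschitz approximation $f$ of $T_{0,t_k}$ over $\pi_k$ — using \cite[Theorem 1.4]{DLSS1}, the height bound \cite[Theorem 1.5]{Spolaor_15}, the bounds on $\boldsymbol\eta\circ N$, and the center-manifold comparison estimates in \cite{Spolaor_15}, exactly as in Lemma~\ref{l:comparison}(ii) and \cite[Corollary 6.7]{DLSk1} — one obtains on $\Acal$ an agreement strong enough that $\|\psi\|_{C^0(\Acal)}\|D\psi\|_{C^0(\Acal)}\le C\mbf_{0,k}^{3/2}$ (indeed, since $\|D\psi\|_{C^0}\le C\mbf_{0,k}^{1/2}$ by the crude bounds of the first paragraph, this amounts to the \emph{linear} bound $\|\psi\|_{C^0(\Acal)}\le C\mbf_{0,k}$ rather than the naive $\mbf_{0,k}^{1/2}$). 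To transfer this into $B_1$, recall that each of $\bphi_k,\tilde\bphi_k$ solves the center-manifold system of \cite[Part I, Section 2]{Spolaor_15} in divergence form, with $C^{0,\kappa_0}$ inhomogeneity $\le C\mbf_{0,k}$ (now including the semicalibration term) and coefficients $C^{0,\kappa_0}$-bounded by $C\mbf_{0,k}^{1/2}$; subtracting, $\psi$ solves a uniformly elliptic linear system $\mathcal{L}\psi=G$ on $B_{3/2}$ with $\|G\|_{C^0(B_{3/2})}\le C\mbf_{0,k}$. Testing with $\psi$ and integrating $|D\psi|^2$ by parts over $B_{3/2}$, the interior term is $\le C\mbf_{0,k}\|\psi\|_{L^1(B_{3/2})}\le C\mbf_{0,k}\cdot\mbf_{0,k}^{1/2}=C\mbf_{0,k}^{3/2}$ by the $C^0$-bound of the first paragraph, the lower-order coefficient terms are absorbed via Young's inequality, and the boundary term on $\partial B_{3/2}\subset\Acal$ is $\le C\|\psi\|_{C^0(\Acal)}\|D\psi\|_{C^0(\Acal)}\le C\mbf_{0,k}^{3/2}$; hence $\int_{B_1}|D\psi|^2\le\int_{B_{3/2}}|D\psi|^2\le C\mbf_{0,k}^{3/2}$, which is \eqref{e:comparison-cm-1}. (Equivalently, one may apply the maximum principle to $\mathcal{L}\psi=G$ with the small boundary datum on $\partial B_{3/2}$ and then interior Schauder estimates on $B_1$.)

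The step I expect to be the main obstacle is establishing the required agreement $\|\psi\|_{C^0(\Acal)}\le C\mbf_{0,k}$ of the two center manifolds on the good annulus — equivalently, that two center manifolds for the same current $T_{0,t_k}$, constructed with slightly different base planes and Whitney bookkeeping, agree where both track the current to better than the naive order $\mbf_{0,k}^{1/2}$. This rests on the fine estimates comparing a center manifold to the barycenter of a Lipschitz approximation (the bounds on $\boldsymbol\eta\circ N$ and on $\bphi-\boldsymbol\eta\circ f$ from \cite{Spolaor_15}), re-derived in the semicalibrated case, together with careful tracking of the rescaling $t_k/t_{k-1}$ and of the change of base plane. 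Everything else — the comparability of $\mbf_{0,k}$ and $\mbf_{0,k-1}$, the elliptic estimates, and the bookkeeping of the (now semicalibrated) center-manifold PDE and its inhomogeneity — is a routine adaptation of \cite{DLSk2,Spolaor_15}.
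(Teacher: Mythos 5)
Your overall architecture (integrate by parts, then exploit that both center manifolds hug the same current to order $\mbf_{0,k}$ rather than $\mbf_{0,k}^{1/2}$) matches the paper's, and your derivation of \eqref{e:comparison-cm-2} from the crude $C^0$ bounds is fine. But the proof of \eqref{e:comparison-cm-1} as written has two genuine gaps.

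First, the center manifolds of \cite{Spolaor_15} do \emph{not} solve a global elliptic system: $\bphi_k$ is obtained by gluing, via a partition of unity subordinate to the Whitney decomposition, the interpolating functions $g_L$ (each of which solves a PDE on its own cube with boundary datum $\boldsymbol\eta\circ f_L$), and it equals $\boldsymbol\eta\circ f$ on the contact set. So the assertion that $\psi=\bphi_k-\tilde\bphi_k$ satisfies $\mathcal{L}\psi=G$ with $\|G\|_{C^0}\leq C\mbf_{0,k}$ is not available; justifying an approximate version would require controlling the commutator terms from the gluing, which is essentially the content of the cube-by-cube estimates you defer. The paper avoids this entirely by using only the crude bound $\|D^2\bphi_k\|_{C^0}+\|D^2\tilde\bphi_k\|_{C^0}\leq C\mbf_{0,k}^{1/2}$ for the interior term, and compensates with the stronger $L^1$ bound $\int_{B_2}|\psi|\leq C\mbf_{0,k}$: the product is the same $\mbf_{0,k}^{3/2}$.

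Second, your boundary term requires the pointwise bound $\|\psi\|_{C^0(\Acal)}\leq C\mbf_{0,k}$, which is strictly stronger than what the construction delivers. The available estimates are $L^1$ per cube, namely $\int_L|\bphi_k-g_L|\leq C\mbf_{0,k}\,\ell(L)^{m+3+\beta_2/3}$ and $\int|h_L-\boldsymbol\eta\circ f_L|\leq C\mbf_{0,k}\,\ell(L)^{m+3+\beta_2}$, which sum to $\int_{B_2}|\bphi_k-\boldsymbol\eta\circ f_k|\leq C\mbf_{0,k}$ but do not upgrade to a $C^0$ bound of order $\mbf_{0,k}$: interpolating an $L^1$ bound of order $\mbf_{0,k}\ell^{m+3}$ against the gradient bound $\mbf_{0,k}^{1/2}$ only yields a sup bound of order roughly $\mbf_{0,k}^{(m+2)/(2m+2)}$, far short of $\mbf_{0,k}$. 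Since you yourself flag this $C^0$ agreement as ``the main obstacle'' and do not supply it, the argument does not close. The fix is exactly the paper's device: multiply by a cutoff $\eta\in C_c^\infty(B_2)$ with $\eta\equiv1$ on $B_1$ before integrating by parts, so that the boundary term becomes $\int_{B_2\setminus B_1}D\eta\cdot\psi\,D\psi\leq C\mbf_{0,k}^{1/2}\int_{B_2}|\psi|$, and then everything reduces to the single $L^1$ estimate $\int_{B_2}|\psi|\leq C\mbf_{0,k}$, proved by comparing each of $\bphi_k$ and $\tilde\bphi_k$ to $\boldsymbol\eta\circ f_k$ over the contact set and the Whitney cubes. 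That $L^1$ comparison is the real content of the lemma, and it is the part your proposal leaves unproved.
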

        \begin{proof}[Proof of Lemma \ref{l:compare-cm}]
            The proof follows the same reasoning as that of \cite[Lemma 6.8]{DLSk1}. Nevertheless, let us provide an outline here and highlight the differences. First of all, observe that by a rotation of coordinates, we may without loss of generality assume that $\pi_{k-1}=\pi_k\equiv \pi_0$ and $\tilde\bphi_{k-1} = \bphi_{k-1}$.
            
            Let $\eta \in C_c^\infty(B_2(\pi_0);[0,1])$ be a cutoff function satisfying $\eta \equiv 1$ on $B_1$. Via an integration by parts, we have
            \begin{align*}
		      \int_{B_1} |D\boldsymbol{\varphi}_k - D\tilde{\boldsymbol{\varphi}}_{k}|^2  &\leq \int_{B_{2}} |D\boldsymbol{\varphi}_k - D\tilde{\boldsymbol{\varphi}}_{k}|^2 \eta \\
		      &= -\int_{B_{2}} (\boldsymbol{\varphi}_k - \tilde{\boldsymbol{\varphi}}_{k})\eta \Delta (\boldsymbol{\varphi}_k - \tilde{\boldsymbol{\varphi}}_{k})  - \int_{B_{2}\setminus B_1} D\eta \cdot (\boldsymbol{\varphi}_k - \tilde{\boldsymbol{\varphi}}_{k}) D(\boldsymbol{\varphi}_k - \tilde{\boldsymbol{\varphi}}_{k}) \\
		      &\leq C\left(\boldsymbol{m}_{0,k}^{1/2} + \frac{t_k}{t_{k-1}}\boldsymbol{m}_{0,k-1}^{1/2}\right) \int_{B_{2}} |\boldsymbol{\varphi}_k - \tilde{\boldsymbol{\varphi}}_{k}|.
	       \end{align*}
        
        Now recall that the construction procedure for the intervals of flattening guarantees that
        \begin{equation}\label{e:excess-decay-quadratic}
            \left(\frac{t_k}{t_{k-1}}\right)^{2-2\delta_2}\boldsymbol{m}_{0,k-1} \leq C \mbf_{0,k}.
        \end{equation}
        Thus, \eqref{e:comparison-cm-1} follows from \eqref{e:comparison-cm-2}, and so it suffices to demonstrate the latter. Given a Lipschitz approximation $f_k:B_3(\pi_0)\to\Acal_Q(\pi_k^\perp)$ for $T_{0,t_k}\mres \Cbf_{4}(0,\pi_0)$ as in \cite[Theorem 1.4]{DLSS1}, which can indeed be considered for $\eps_3$ sufficiently small since $\Ebf(T_{0,t_k},\Cbf_{4}(0,\pi_0)) \leq C\mbf_{0,k}$, observe that it suffices to show
        \begin{align}
            \int_{B_2} |\bphi_k - \boldsymbol\eta\circ f_k| &\leq C\mbf_{0,k}, \label{e:comparison-avg1}\\
            \int_{B_2} |\tilde \bphi_k - \boldsymbol\eta\circ f_k| &\leq C\mbf_{0,k}.\label{e:comparison-avg2}
        \end{align}
        In fact, notice that \eqref{e:comparison-avg2} will follow from exactly the same argument as \eqref{e:comparison-avg1}, when combined with \eqref{e:excess-decay-quadratic}. Indeed, this is due to the fact that for $\tilde{f}_k:=f_{k-1}(\tfrac{t_k\cdot}{t_{k-1}})$ and $f_{k-1}$ as above but for $T_{0,t_{k-1}}\mres \Cbf_4(0,\pi_0)$, we have $\Gbf_{f_k} \equiv \Gbf_{f_k} \equiv T_{0,t_k}$ on $K\times \pi_0^\perp$ for a closed set $K\subset B_2$ with 
        \begin{equation}\label{e:graphicality-set}
            |B_2\setminus K| \leq C\mbf_{0,k}^{1+\beta_0},
        \end{equation}
        where $\beta_0>0$ is as in \cite[Theorem 1.4]{DLSS1}.

        Now let us demonstrate the validity of \eqref{e:comparison-avg1}. By the construction of $\Mcal_k$, $B_2$ is covered by a disjoint union of the contact set $\mathbf\Gamma \subset\pi_0$ and Whitney cubes $\Wscr':=\{L\in \Wscr: L\cap B_2\neq\emptyset\}$. 
        Therefore we have
        \begin{equation}
            \int_{B_2} |\bphi_k -\boldsymbol\eta\circ f_k|\leq \underbrace{\int_{\mathbf{\Gamma}\cap B_2} |\bphi_k -\boldsymbol\eta\circ f_k|}_{(A)} + \underbrace{\sum_{L\in\Wscr'} \int_{L\cap B_3} |\bphi_k -\boldsymbol\eta\circ f_k|}_{(B)}.
        \end{equation}
        Firstly, we have
        \[
            |(A)| \leq C \mbf_{0,k}^{1+\beta_0},
        \]
        due to \eqref{e:graphicality-set}, together with the fact that $\mathbf\Gamma\subset K$ (see \cite[Definition 3.3]{Spolaor_15}) and the estimates \cite[Theorem 1.4(1.6), Corollary 3.5]{Spolaor_15}. 
        
        Meanwhile, for $(B)$, we argue as follows. For each $L\in\Wscr'$, let $\pi_L$ denote the optimal plane associated to $L$ as in \cite[Lemma 2.9]{Spolaor_15}, with corresponding $\pi_L$-approximation $f_L$, associated tilted $L$-interpolating function $h_L$ as in \cite[Definition 2.10]{Spolaor_15} and (straight) $L$-interpolating function $g_L$. Note that $h_L$ (and hence $g_L$) are constructed via a different smoothing procedure here, in comparison to that in \cite{DLS16centermfld} where $T$ is area-minimizing. More precisely, here $h_L$ is constructed by solving a suitable PDE with boundary data $\boldsymbol\eta \circ f_L$, while in \cite{DLS16centermfld} it is constructed via convolution of $\boldsymbol\eta \circ f_L$. Nevertheless, by \cite[Proposition 4.7, Proposition 4.8(vi)]{Spolaor_15}, we still have the key estimates
        \[
            \int_L |\bphi_k - g_L| \leq C\mbf_{0,k} \ell(L)^{m+3+\beta_2/3},
        \]
        for $\beta_2>0$ as in \cite{Spolaor_15}, and
        \[
            \int_{B_{2\sqrt{m}\ell(L)(p_L,\pi_L)}} |h_L - \boldsymbol\eta\circ f_L| \leq C \mbf_{0,k}\ell(L)^{m+3+\beta_2},
        \]
        where $p_L$ is the center of $L$ and $\ell(L)$ is the side-length of $L$. Combining these with a reparameterization from $\pi_0$ to $\pi_L$ and the tilting estimate \cite[Proposition 4.1(iii)]{Spolaor_15}, then summing over $L\in\Wscr'$, the conclusion follows; see \cite{DLSk1} for the details.
    \end{proof}

\subsection{Proof of Theorem \ref{t:BV}}
    With all of the preliminary estimates of this section at hand, we are now in a position to conclude the BV estimate of Theorem \ref{t:BV}. Let $\Dbf_k:=\Dbf_{N_k}$, $\Hbf_k:=\Hbf_{N_k}$, $\Lbf_k:=\Lbf_{N_k}$, $\mathbf\Gamma_k$ and let $\bar\Dbf_k := r^{-(m-2)} \Dbf_k$, $\bar\Hbf_k := r^{-(m-1)} \Hbf_k$, $\bar\Lbf_k := r^{-(m-2)} \Lbf_k$ , $\bar{\mathbf\Gamma}_k:= \bar\Dbf_k + \bar\Lbf_k$ denote their respective scale-invariant quantities. Let us first consider the jumps of $\Ibf$ at the radii $t_k$. We have
    \begin{align*}
        |\Ibf(t_k^+) - \Ibf(t_k^-)| &=\left|\frac{\bar\Dbf_{k-1}(\tfrac{t_k}{t_{k-1}}) - \bar\Dbf_k(1)}{\bar\Hbf_k(1)}\right| + \left|\frac{\bar\Lbf_{k-1}(\tfrac{t_k}{t_{k-1}}) - \bar\Lbf_k(1)}{\bar\Hbf_k(1)}\right| \\
        &\qquad+ |\bar{\mathbf\Gamma}_{k-1}(\tfrac{t_k}{t_{k-1}})| \left|\frac{1}{\bar\Hbf_{k-1}(\tfrac{t_k}{t_{k-1}})} - \frac{1}{\bar\Hbf_{k}(1)}\right|,
    \end{align*}
    where $\Ibf(t_k^+):= \Ibf_{k-1}(\tfrac{t_k}{t_{k-1}})$ and $\Ibf(t_k^-) := \Ibf_k(1)$.
    Now in light of \eqref{e:firstvar5} and \eqref{e:excess-decay-quadratic}, we have
    \[
        \left|\frac{\bar\Lbf_{k-1}(\tfrac{t_k}{t_{k-1}}) - \bar\Lbf_k(1)}{\bar\Hbf_k(1)}\right| \leq C\mbf_{0,k}^{1/4}\left|\frac{\bar\Dbf_{k-1}(\tfrac{t_k}{t_{k-1}}) - \bar\Dbf_k(1)}{\bar\Hbf_k(1)}\right|,
    \]
    and
    \[
        |\bar{\mathbf\Gamma}_{k-1}(\tfrac{t_k}{t_{k-1}})| \left|\frac{1}{\bar\Hbf_{k-1}(\tfrac{t_k}{t_{k-1}})} - \frac{1}{\bar\Hbf_{k}(1)}\right| \leq C\mbf_{0,k}^{1/4} \bar\Dbf_{k-1}(\tfrac{t_k}{t_{k-1}}) \left|\frac{1}{\bar\Hbf_{k-1}(\tfrac{t_k}{t_{k-1}})} - \frac{1}{\bar\Hbf_{k}(1)}\right|.
    \]
    Thus, by the exact same reasoning as that for the estimates \cite[Proof of Proposition 6.2, (60), (61)]{DLSk1}, we obtain
    \[
        |\Ibf(t_k^+) - \Ibf(t_k^-)| \leq C \mbf_{0,k}^{\gamma_4}\big(1+ \Ibf(t_k^+)\big).
    \]
    When combined with the elementary identity $\log w \leq w-1$ for $w>0$, we obtain
    \[
        |\log(1+ \Ibf(t_k^+)) - \log(1+ \Ibf(t_k^-))| \leq \frac{|\Ibf(t_k^+) - \Ibf(t_k^-)| }{1+ \Ibf(t_k^+)} \leq C\mbf_{0,k}^{\gamma_4}
    \]
    On the other hand, recall from Corollary \ref{c:freq-mon} that $\Ibf\big|_{(s_k,t_k)}$ is absolutely continuous and
    \[
        \partial_r \log(1+\Ibf(r)) \geq -\underbrace{\frac{C}{t_k}\mbf_{0,k}^{\gamma_4}\left(1+ (\tfrac{r}{t_k})^{-1}\Dbf_k(\tfrac{r}{t_k})^{\gamma_4} + \Dbf_k(\tfrac{r}{t_k})^{\gamma_4-1}\partial_r \Dbf(\tfrac{r}{t_k})\right)}_{=: \ \nu_k(r)} \qquad \forall r\in(s_k,t_k).
    \]
    Thus, we may introduce a suitable function $\mathbf\Omega$ as in \cite[Proof of Proposition 6.2]{DLSk1}, whose distributional derivative is the measure
    \[
        C\sum_{k=j_0}^J \mbf_{0,k}^{\gamma_4}\left(\delta_{t_k} + \nu_k(r)\mathbf{1}_{(s_k,t_k)}\Lcal^1\right), 
    \]
    so that in addition $\log(\Ibf + 1) +\mathbf\Omega$ is monotone non-decreasing. Since
    \[
        |\partial_r \mathbf\Omega|((s_J,t_{j_0}]) \leq C\sum_{k=j_0}^J \mbf_{0,k}^{\gamma_4},
    \]
    and $\big|[\partial_r\log(\Ibf+1)]_-\big|((s_J,t_{j_0}]) \leq |\partial_r \mathbf\Omega|((s_J,t_{j_0}])$, the proof is complete.
    \qed

    \section{Proof of Theorem \ref{t:sing-degree-main}}
    Now that we have demonstrated that Theorem \ref{t:BV} holds, we are in a position to complete the proof of the remaining conclusions (ii)-(vii) of Theorem \ref{t:sing-degree-main}. The starting point is the following tilt excess decay result.

    \begin{proposition}\label{p:tilt-decay}
        For any $I_0{> 1}$, there exist constants $C=C(I_0,m,n,Q)>0$, $\alpha=\alpha(I_0,m,n,Q)>0$ such that the following holds. Let $T$ satisfy Assumption \ref{a:main-2} and suppose that $\Irm(T,0)\geq I_0$. Then there exists $r_0=r_0(I_0,m,n,Q,T) > 0$ (depending also on the center point $0$) such that
        \begin{equation}
            \mathbf{E} (T, \mathbf{B}_r) \leq C \left(\frac{r}{r_0}\right)^{\alpha} \max \{\mathbf{E} (T, \mathbf{B}_{r_0}), \bar\varepsilon^2 r_0^{2-2\delta_2}\}\qquad \forall r\in (0,r_0) \, .
        \end{equation}
        Furthermore, if $C$ is permitted to additionally depend on $\alpha$, one may choose $\alpha$ to be any positive number smaller than $\min\{2(\Irm(T,0)-1),2-2\delta_2\}$.
    \end{proposition}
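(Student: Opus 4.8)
The plan is to adapt the proof of the analogous tilt excess decay for area-minimizing currents in \cite{DLSk1}: first I would upgrade the hypothesis $\Irm(T,0)\geq I_0$ to a genuine pinching of the universal frequency at small scales, then convert this into polynomial decay of the scale-invariant height and Dirichlet energy of the normal approximations via the first-variation identities, and finally transfer the decay to the tilt excess using the curvilinear excess expansion. Throughout write $I_\star:=\Irm(T,0)\geq I_0>1$.

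\emph{Step 1: frequency pinching.} I claim that $\liminf_{r\downarrow 0}\Ibf(r)\geq I_\star$, with $\Ibf$ the universal frequency function of $T$ at $0$. If not, there are $\eta>0$ and radii $r_k\downarrow 0$ with $\Ibf(r_k)\leq I_\star-2\eta$. Since the stopping conditions for the intervals of flattening and $\mbf_{0,j}\to 0$ force $\Ebf(T,\Bbf_{r_k})\to 0$, after passing to a subsequence $r_k$ is a blow-up sequence of radii, and the compactness of Section~\ref{ss:compactness} yields a fine blow-up $u$; by the strong $W^{1,2}_{\loc}\cap L^2$ convergence of the normalized normal approximations, $I_u(\rho_0)=\lim_k\Ibf(r_k)\leq I_\star-2\eta$ for some $\rho_0\in(0,1]$. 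Monotonicity of the frequency of the $\mathrm{Dir}$-minimizer $u$ then gives $I_u(0)\leq I_\star-2\eta<\inf\Fcal(T,0)$, contradicting $I_u(0)\in\Fcal(T,0)$. Hence for every $\eta>0$ there is $r_1=r_1(\eta,T)>0$ with $\Ibf(r)\geq I_\star-\eta$ for all $r\in(0,r_1)$.

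\emph{Step 2: height and Dirichlet decay.} Next I would combine \eqref{e:firstvar2}, \eqref{e:firstvar3} and the bound $|\Lbf_{N_k}(r)|\leq C\mbf_{0,k}^{1/4}r\,\Dbf_{N_k}(r)$ from \eqref{e:firstvar5} to show that, on each interval of flattening $(s_k,t_k]$, the function $r\mapsto\log(\Hbf_{N_k}(r)/r^{m-1})$ differs from a primitive of $r\mapsto 2\Ibf_{N_k}(r)/r$ by a function of total variation $\leq C\mbf_{0,k}^{\gamma_4}$ on $(\tfrac{s_k}{t_k},3]$ (here one uses the decay of $\Dbf_{N_k}$ from the stopping conditions, and \eqref{e:D-decay}--\eqref{e:L-decay} when a single center manifold occurs, to integrate the error). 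Integrating, summing over the (possibly infinitely many) intervals of flattening contained in $(r,r_1)$ — at whose junctions $t_k$ the scale-invariant height jumps by a relative factor $1+O(\mbf_{0,k}^{\gamma_4})$, by Lemmas~\ref{l:cm-proj} and~\ref{l:compare-cm} and \eqref{e:excess-decay-quadratic}, just as in the proof of Theorem~\ref{t:BV} — and using $\sum_k\mbf_{0,k}^{\gamma_4}<\infty$ together with Step 1, I would conclude that the universal scale-invariant height $\bar\Hbf$ satisfies $\bar\Hbf(r)\leq C\,\bar\Hbf(r_1)(r/r_1)^{2(I_\star-\eta)}$ for $r<r_1$. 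Since $\mathbf{\Gamma}_{N_k}=r^{-1}\Ibf_{N_k}\Hbf_{N_k}$, the term $\Lbf_{N_k}$ is of lower order, and $\Ibf$ is two-sided bounded by Theorem~\ref{t:freqbds}, the same polynomial decay holds for the universal scale-invariant Dirichlet energy $\bar\Dbf$.

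\emph{Step 3: excess decay and conclusion.} Finally, Corollary~\ref{c:curv-excess} together with Lemma~\ref{l:cm-proj} lets me compare, at each scale $\rho$ inside an interval $(s_j,t_j]$, the tilt excess $\Ebf(T_{0,t_j},\Bbf_\rho)$ with $\bar\Dbf_{N_j}(\rho)$, up to: the intrinsic Dirichlet energy $\int_{B_\rho}|D\boldsymbol\varphi_j|^2\leq C\mbf_{0,j}\rho^{m+2-2\delta_2}$ of the center manifold; the quartic errors, which are controlled by $C\,\Ebf(T_{0,t_j},\Bbf_\rho)^2$ and hence absorbed; and higher-order errors of size $C\mbf_{0,j}^{1+\gamma_2}\rho^{m+2+\gamma_2}$. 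Since $\mbf_{0,j}=\max\{\Ebf(T,\Bbf_{6\sqrt{m}t_j}),\bar\eps^2 t_j^{2-2\delta_2}\}$, a downward induction on the intervals of flattening — passing from one interval to the next via \eqref{e:excess-decay-quadratic} — controls all of these error terms by $(r/r_1)^{2-2\delta_2}$ times $\max\{\Ebf(T,\Bbf_{r_1}),\bar\eps^2 r_1^{2-2\delta_2}\}$; it is precisely the propagation of the $\bar\eps$-term, with its $r^{2-2\delta_2}$ scaling, that caps the admissible exponent at $2-2\delta_2$. Undoing the rescaling $T_{0,t_j}\mapsto T$ and combining with Step 2 yields
\[
    \Ebf(T,\Bbf_r)\leq C\left(\frac{r}{r_1}\right)^{\min\{2(I_\star-\eta),\,2-2\delta_2\}}\max\{\Ebf(T,\Bbf_{r_1}),\bar\eps^2 r_1^{2-2\delta_2}\},\qquad r<r_1.
\]
For the first assertion one fixes $\eta=\eta(I_0)$ small and any $\alpha=\alpha(I_0)\in(0,\min\{2(I_0-1),2-2\delta_2\})$ with $2(I_0-\eta)\geq\alpha$, and sets $r_0:=r_1$; since $I_\star\geq I_0$ this $\alpha$ is admissible. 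For the refined assertion, given a target $\alpha<\min\{2(I_\star-1),2-2\delta_2\}$ one chooses $\eta=\eta(\alpha,T)$ with $2(I_\star-\eta)>\alpha$, at the price of letting $C$ depend also on $\alpha$.

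\emph{Main obstacle.} The crux, absent in the area-minimizing setting, is ensuring that the semicalibration errors remain genuinely of lower order throughout: the additional term $\Lbf_N$ in the frequency and the $\|d\omega\|_{C^0}$-contributions to the first variation of $T$ must be absorbed using the subquadratic scaling \eqref{e:domega-scaling} and the absorption trick of \cite[Remark~1.10]{Spolaor_15} — this is what produces the gains $\mbf_0^{1/4}$ and $\mbf_0^{\gamma_4}$ recorded in Proposition~\ref{p:firstvar} and Corollary~\ref{c:freq-mon} — and one must check that these gains survive the rescalings $T_{0,t_j}$ and the passage across infinitely many intervals of flattening, while carrying the error terms of Step 3 through the induction and keeping the exponent arbitrarily close to $\min\{2(I_\star-1),2-2\delta_2\}$. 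This error bookkeeping is where most of the technical effort is concentrated.
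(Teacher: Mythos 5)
Your overall strategy --- upgrading $\Irm(T,0)\geq I_0$ to frequency pinching at small scales, integrating the first-variation identities to get polynomial decay of the scale-invariant height and Dirichlet energy across the intervals of flattening, and transferring this to the tilt excess via the graphical comparison --- is exactly the route of \cite[Proposition 7.2]{DLSk1}, which is all the paper invokes for this proposition; Steps 1 and 2 are sound, including the treatment of the junction jumps and of the semicalibration errors.

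There is, however, a concrete error in Step 3 that makes your final displayed estimate false. The quantity $\bar\Dbf_{N_j}(\rho)=\rho^{-(m-2)}\Dbf_{N_j}(\rho)$ is the frequency-normalized Dirichlet energy, whereas the tilt excess satisfies
\[
\Ebf(T_{0,t_j},\Bbf_\rho)\;\approx\;\rho^{-m}\Dbf_{N_j}(\rho)+\rho^{-m}\!\int_{B_\rho}|D\bphi_j|^2+\cdots\;=\;\rho^{-2}\,\bar\Dbf_{N_j}(\rho)+\cdots,
\]
not $\bar\Dbf_{N_j}(\rho)$. The factor $\rho^{-2}$ is precisely what converts the decay rate $2(\Irm(T,0)-\eta)$ of Step 2 into the excess decay rate $2(\Irm(T,0)-\eta)-2=2(\Irm(T,0)-\eta-1)$, and it is the source of the ``$-1$'' in the exponent $\min\{2(\Irm(T,0)-1),2-2\delta_2\}$ of the statement: for a degree-$I$ homogeneous Dir-minimizer $u$ one has $\rho^{-m}\int_{B_\rho}|Du|^2\sim\rho^{2(I-1)}$ while $\rho^{-(m-2)}\int_{B_\rho}|Du|^2\sim\rho^{2I}$. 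As written, your conclusion $\Ebf(T,\Bbf_r)\lesssim(r/r_1)^{2(\Irm(T,0)-\eta)}\max\{\cdots\}$ would contradict the lower decay bound of Corollary \ref{c:coarse} whenever $1<\Irm(T,0)<2-\delta_2$, so this is not a harmless typo in the exponent of an intermediate quantity but an overstatement of the final decay. Once the comparison is corrected, the admissibility condition on $\alpha$ in your last paragraph should read $\alpha\leq 2(I_0-\eta-1)$ (which forces $\eta<I_0-1$ and thus genuinely uses $I_0>1$), rather than the condition $\alpha\leq 2(I_0-\eta)$ you impose, which is vacuous and hides why the exponent cannot exceed $2(\Irm(T,0)-1)$.
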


    The proof of this, and all its necessary preliminaries, is the same as that of \cite[Proposition 7.2]{DLSk1}.

    Observe that a combination of the excess decay in Proposition \ref{p:tilt-decay} and the universal frequency BV estimate of Theorem \ref{t:BV} immediately implies the conclusions (iii), (v), (vi) of Theorem \ref{t:sing-degree-main}, together with (ii), (iv) in the case when $\Irm(T,0)>1$. Indeed, observe that Proposition \ref{p:tilt-decay} guarantees that if $\Irm(T,0)>1$, there exists an index $j_0=j_0(r_0)$ large enough such that $t_{k+1}=s_k$ for each $k\geq j_0$. Combining this with Theorem \ref{t:BV} and the observation that $\tfrac{t_{k+1}}{t_k} \leq 2^{-5}$, we obtain a uniform $\BV$-estimate of the form
    \[
        \left|\left[\frac{d\log(\Ibf+1)}{dr}\right]_-\right|\big((0,t_{j_0}]\big) \leq C \sum_{k=j_0}^{\infty} \mbf_{0,k}^{\gamma_4} \leq C \sum_{k=j_0}^\infty 2^{-5\alpha\gamma_4(k-j_0)} \mbf_{0,j_0}^{\gamma_4} \leq C \mbf_{0,j_0}^{\gamma_4}.
    \]
    for $\alpha=\alpha(\Irm(T,0),m,n,Q)>0$ as in Proposition \ref{p:tilt-decay}, where $C=C(\Irm(T,0),m,n,Q)$. In particular $I_0:=\lim_{r\downarrow 0} \Ibf(r)$ exists, and
    \[
        I_u(\rho) = I_0 \qquad \forall \rho\in(0,1],
    \]
    for every fine blow-up $u$. Note that in particular, if $s_{j_0}=0$ for some $j_0\in \N$, Corollary \ref{c:freq-mon} alone provides the desired conclusion that $I_0=\lim_{r\downarrow 0} \Ibf_{N_{j_0}}(r)$.
    
    It remains to verify the conclusions (vi), (vii) of Theorem \ref{t:sing-degree-main}, as well as the conclusions (ii) and (iv) when $\Irm(T,0)=1$. These all follow by the same reasoning as \cite[Section 8, Section 9]{DLSk1}, with the use of the Lipschitz approximation \cite[Theorem 4.1]{DLSS1} in place of \cite[Theorem 2.4]{DLS14Lp} where needed, so we do not include the details here.
    
\part{Rectifiability of points with singularity degree $> 1$}\label{pt:NV}

\section{Subdivision} \label{s: subdivision}
In this part, we prove Theorem \ref{t:main}. We will work under Assumption \ref{a:main-2} throughout. We will be exploiting the results of the preceding part, centered around points $x\in\Ffrak_Q(T)$ (namely, applying the results therein to $T_{x,1}$). 

It will be useful to produce a countable subdivision of the set $\Ffrak_Q(T)$ as follows. First of all, we may write
\[
    \Ffrak_Q(T) \cap \Bbf_1 = \bigcup_{K \in \N} \Sfrak_K,
\]
for 
\[
\Sfrak_K := \{y\in \Ffrak_Q (T) : \Irm (T, y)\geq 1+2^{-K}\}\cap \bar{\mathbf{B}}_1\, .
\]
In light of Proposition \ref{p:tilt-decay}, we will further decompose each piece $\Sfrak_K$ based on the initial scale $r_0$. Let us rewrite the statement of this proposition applied to points in $\Sfrak_K$. Note that we may ensure that $C=1$, up to further decreasing $r_0$ if necessary (dependent on the exponent $\alpha$).

\begin{proposition}\label{p:tilt-decay-2}
    Let $T$ be as in Assumption \ref{a:main-2}, let $K\in \N$ and let $x\in \Sfrak_K$. For $\mu=2^{-K-1}$, there exists $r_0=r_0(x,m,n,Q,K)>0$ such that
\begin{equation}\label{e:excessdecay-quantitative-higherI-2}
\mathbf{E} (T, \mathbf{B}_r(x)) \leq \left(\frac{r}{s}\right)^{2\mu} \max \{\mathbf{E} (T, \mathbf{B}_{s}(x)), \bar\varepsilon^2 s^{2-2\delta_2}\}\, \qquad \forall 0<r< s < r_0\, .
\end{equation}
\end{proposition}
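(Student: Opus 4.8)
The plan is to read off the statement from Proposition~\ref{p:tilt-decay}, applied at the center point $x$. Fix $K\in\N$ and $x\in\Sfrak_K$, so that $\Irm(T,x)\geq 1+2^{-K}$. Since all of the results of Part~\ref{pt:sing-deg} are equally valid when centered at an arbitrary point of $\Ffrak_Q(T)$ (as noted at the start of this part, one applies them to $T_{x,1}$, whose singularity degree at the origin is $\Irm(T,x)$), Proposition~\ref{p:tilt-decay} is available with $I_0:=1+2^{-K}\leq\Irm(T,x)$.

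The only thing to check is that the exponent $2\mu=2^{-K}$ appearing in~\eqref{e:excessdecay-quantitative-higherI-2} is an admissible choice for the final sentence of Proposition~\ref{p:tilt-decay}, i.e.\ that $2\mu<\min\{2(\Irm(T,x)-1),\,2-2\delta_2\}$. For the first term this holds because $\Irm(T,x)-1\geq 2^{-K}$ forces $2(\Irm(T,x)-1)\geq 2^{-K+1}=2\cdot 2^{-K}>2\mu$; this is precisely the reason $\mu$ is taken to be $2^{-K-1}$ rather than $2^{-K}$, so that a definite gap remains. For the second term, $2\mu=2^{-K}\leq 1<2-2\delta_2$ since $\delta_2<\tfrac12$. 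Hence, picking any fixed exponent $\alpha$ in the open interval $(2\mu,\min\{2(\Irm(T,x)-1),2-2\delta_2\})$, Proposition~\ref{p:tilt-decay} provides a constant $C=C(m,n,Q,K)>0$ and a radius $r_0=r_0(x,m,n,Q,K)>0$ such that $\mathbf{E}(T,\mathbf{B}_r(x))\leq C(r/s)^{\alpha}\max\{\mathbf{E}(T,\mathbf{B}_s(x)),\bar\varepsilon^2 s^{2-2\delta_2}\}$ for all $0<r<s<r_0$; the two-scale form (rather than decay from a single base scale) comes exactly as in \cite[Proposition~7.2]{DLSk1}, using that the hypothesis $\Irm(T,x)\geq I_0$ persists at all scales below $r_0$.

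Finally, since $\alpha$ is strictly larger than the target exponent $2\mu$, one factors $C(r/s)^{\alpha}=\big(C(r/s)^{\alpha-2\mu}\big)(r/s)^{2\mu}$ and decreases $r_0$ (by a factor depending only on $C$ and $\alpha-2\mu$, hence only on $m,n,Q,K$) to absorb the constant into $1$ — the routine reduction already flagged immediately before the statement — which yields~\eqref{e:excessdecay-quantitative-higherI-2}. I do not expect any genuine obstacle here: this proposition is essentially a bookkeeping restatement of Proposition~\ref{p:tilt-decay}, and the only points deserving a moment's care are the admissibility of the exponent $2\mu$ (in particular the comparison with $2-2\delta_2$) and keeping track of which constants are permitted to depend on the point $x$ versus only on $(m,n,Q,K)$.
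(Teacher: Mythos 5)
Your argument is exactly the paper's: Proposition \ref{p:tilt-decay-2} is presented there as a restatement of Proposition \ref{p:tilt-decay} applied at $x\in\Sfrak_K$, with the exponent $2\mu=2^{-K}$ admissible because $\Irm(T,x)\geq 1+2^{-K}$ gives $2\mu<\min\{2(\Irm(T,x)-1),\,2-2\delta_2\}$, and with the constant $C$ absorbed into $1$ by further decreasing $r_0$ (dependent on the exponent $\alpha$). Your remaining remarks — that the results of Part \ref{pt:sing-deg} apply at any point of $\Ffrak_Q(T)$ by recentering, and that the two-scale form is inherited from \cite[Proposition 7.2]{DLSk1} — are precisely the paper's implicit justification, so the proposal matches the paper's proof in both route and level of detail.
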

We may thus decompose $\Sfrak_K$ as follows.

\begin{definition}\label{d:bits}
Let $T$ be as in Assumption \ref{a:main-2} and let $\eps_4$ be a small positive constant which will be specified later. For every $K\in \mathbb N\setminus \{0\}$ define $\mu = \mu (K) := 2^{-K-1}$. Given $K, J\in \mathbb N$, let $\Sfrak_{K,J}$ (which implicitly also depends on $\eps_4$) denote the collection of those points $x\in \Sfrak_K$ for which 
\begin{align}
\mathbf{E} (T, \Bbf_r (x)) &\leq \left(\frac{r}{s}\right)^{2\mu} \mathbf{E} (T, \Bbf_s (x))
\qquad \forall 0<r\leq s\leq \frac{6\sqrt{m}}{J}\label{e:eub1}\\
\mathbf{E} (T, \Bbf_{6 \sqrt{m} J^{-1}}) &\leq \varepsilon_4^2\label{e:eub2}.
\end{align}
\end{definition}

\begin{remark}
    Observe that for each $K,J\in\N$, the set $\Sfrak_{K,J}$ is closed, in light of upper semicontinuity of the singularity degree.
\end{remark}

Notice that by rescaling, it suffices to prove the $(m-2)$ rectifiability of $\Sfrak_{K,J}$ with $K\in\N$ fixed and $J=1$. More precisely, the remainder of this part will be dedicated to the proof of the following.

\begin{theorem}\label{t:main-low-one-piece}
There exists $\eps_4(m,n,Q)>0$ such that the following holds. Let $T$ be as in Assumption \ref{a:main-2}. Then $\Sfrak := \Sfrak_{K,1}$ (which, recall, depends on $\eps_4$) is $(m-2)$-rectifiable and has the $(m-2)$-dimensional local Minkowski content bound
\begin{equation}\label{e:Minkowski-low}
    |\Bbf_r(\Sfrak)| \leq Cr^{n+2} \qquad \forall r\in (0,1],
\end{equation}
for some $C=C(m,n, Q, T,K,\gamma_4,\eps_4)>0$.
\end{theorem}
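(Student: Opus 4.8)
The plan is to run the rectifiable Reifenberg machinery of Naber and Valtorta, exactly as in \cite{DLSk2, NV_Annals}, with the universal frequency function $\Ibf$ of Definition \ref{d:univ-freq} playing the role of the monotone quantity. Fix $K\in\N$ and write $\Sfrak=\Sfrak_{K,1}$, $\mu:=2^{-K-1}$. The crucial preliminary point is that, by Proposition \ref{p:tilt-decay-2}, for every $x\in\Sfrak$ the excess $\Ebf(T,\Bbf_r(x))$ decays geometrically; consequently the masses $\mbf_{0,k}$ of the intervals of flattening around $x$ decay geometrically, so $\sum_k\mbf_{0,k}(x)^{\gamma_4}\le C\mbf_{0,0}(x)^{\gamma_4}\le C\eps_4^{2\gamma_4}$ uniformly in $x$. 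Combined with Theorem \ref{t:BV} (applied along the modified, geometrically adapted construction of intervals of flattening announced in the introduction, which always produces a well-defined universal frequency around $x$), this shows $r\mapsto\Ibf(x,r)$ has a limit $\Ibf(x,0^{+})\in[1+2^{-K},\infty)$ and total negative variation $\le C\eps_4^{2\gamma_4}$. This bounded total variation, uniform over $x\in\Sfrak$, is the Carleson-type ``error budget'' that drives everything.

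Next I would establish the main $L^2$-estimate. Let $\mu$ be a finite nonnegative Radon measure carried by $\Sfrak$ (in the actual argument one works, as in \cite{NV_Annals}, with the discrete measures $\sum_i s^{m-2}\delta_{x_i}$ associated to maximal $s$-separated subsets $\{x_i\}\subset\Sfrak$ at each dyadic scale $s$). With $\beta_{\mu,2}^{m-2}(x,r)^2:=\inf_{L}\,r^{-(m-2)}\int_{\Bbf_r(x)}\frac{\dist(y,L)^2}{r^2}\,d\mu(y)$, the infimum over affine $(m-2)$-planes $L$, the goal is
\[
    \int_{\Sfrak\cap\Bbf_r(x)}\int_0^{r}\beta_{\mu,2}^{m-2}(y,s)^2\,\frac{ds}{s}\,d\mu(y)\le C\,\mu(\Bbf_r(x))\qquad\forall x\in\Sfrak,\ r\le 1.
\]
To obtain this, fix $y\in\Sfrak$ and a scale $s$ and compare $\beta_{\mu,2}^{m-2}(y,s)$ with the frequency pinching $W_s(y):=\Ibf(y,\Lambda s)-\Ibf(y,\Lambda^{-1}s)$ for a large fixed $\Lambda$. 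A compactness/blow-up argument shows that if $W_s(y)$ and the relevant $\mbf_0$ are small, then the normal approximation $N$ on the center manifold around $y$ at scale $\sim s$ is $L^2$-close to a homogeneous Dir-minimizer $u$; since the homogeneity degree is $\ge 1+2^{-K}>1$, the collision set $\{u=Q\llbracket\boldsymbol\eta\circ u\rrbracket\}$ is contained in a linear subspace of dimension $\le m-2$ (an $(m-1)$-dimensional spine would force $u$ to be a function of one variable, hence affine on each sheet, hence homogeneous of degree $1$, a contradiction). Using upper semicontinuity of $\Theta$ and the density gap, together with the non-concentration estimates of Section \ref{s:nonconc}, this translates into $\beta_{\mu,2}^{m-2}(y,s)^2\lesssim W_s(y)+(\text{errors controlled by }\mbf_{0,k}^{\gamma_4})$. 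Summing over dyadic $s$ telescopes $\sum_s W_s(y)$ into the total negative variation of $r\mapsto\Ibf(y,r)$, which is bounded by the error budget above, while the error terms sum by the geometric decay of $\mbf_{0,k}$; integrating over $y\in\Sfrak$ against $\mu$ gives the displayed Carleson estimate.

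With the Carleson estimate in hand, the discrete and rectifiable Reifenberg theorems of Naber--Valtorta \cite{NV_Annals} (as used in \cite{DLSk2}) apply essentially verbatim. They first yield the uniform packing bound: $\Sfrak$ can be covered by at most $Cr^{-(m-2)}$ balls of radius $r$, for every $r\in(0,1]$; since $\Sfrak\subset\R^{m+n}$, this immediately gives $|\Bbf_r(\Sfrak)|\le Cr^{-(m-2)}r^{m+n}=Cr^{n+2}$, i.e.\ \eqref{e:Minkowski-low}. They then yield the countable $(m-2)$-rectifiability of $\Sfrak$. (The same Carleson bound holds with $x\in\Sfrak$ and any scale $s\le 6\sqrt m$ in place of the top scale, because \eqref{e:eub1}--\eqref{e:eub2} guarantee that $T_{x,s}$ again satisfies the structural hypotheses, so no rescaling subtlety arises.)

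The main obstacle is the $L^2$-pinching step, and the difficulties are precisely those intrinsic to the semicalibrated setting. On the one hand, the intervals-of-flattening construction must be reorganized so that a universal frequency is available along an arbitrary geometric sequence of radii around any $x\in\Sfrak$, without separating the cases of finitely and infinitely many center manifolds. On the other hand, and more seriously, the first variation $\delta T(X)=T(d\omega\mres X)$ introduces the extra term $\Lbf_N$ into the frequency, so every variational identity used to localize $\Sfrak$ relative to $N$ carries an additional error; to keep all such errors genuinely quadratic in $\|d\omega\|_{C^0}$ one must run the absorption trick of \cite[Remark 1.10]{Spolaor_15}, which makes the non-concentration arguments of Section \ref{s:nonconc} more delicate than their area-minimizing counterparts. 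The summability of these errors along the geometric sequence of scales is exactly where the slightly subquadratic hypothesis $\|d\omega\|_{C^0}^2 r^{2-2\delta_3}=o(\Ebf)$ is used; once that is in place, the remaining combinatorics and covering arguments are identical to those of \cite{DLSk2}.
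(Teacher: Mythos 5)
Your proposal follows essentially the same route as the paper: center manifolds and a universal frequency adapted to a fixed geometric sequence of scales, the BV estimate as a uniform Carleson-type error budget, control of the Jones $\beta_2$ numbers by frequency pinching (this is Proposition \ref{prop:beta2control-low}, proved via the comparison-to-homogeneity and spatial-variation estimates rather than pure compactness), and the Naber--Valtorta rectifiable Reifenberg theorems to conclude both rectifiability and the Minkowski bound. The only slight inaccuracy is attributional: the subquadratic scaling $\|d\omega\|_{C^0}^2 r^{2-2\delta_3}=o(\Ebf)$ is needed in Part \ref{pt:deg1}, whereas in this part the summability of the frequency errors comes from the geometric excess decay built into Definition \ref{d:bits} and the resulting geometric decay of $\boldsymbol{m}_{x,k}$.
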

\begin{remark}
    The constant $C$ in Theorem \ref{t:main-low-one-piece} is implicitly also dependent on a uniform bound on Almgren's frequency function $\Ibf_{x,k}$ over all $k\in\N$, defined relative to center manifolds that will be adapted to a given geometric sequence of scales; see Corollary \ref{c:ulb-frequency} below.
\end{remark}

\section{Adapted intervals of flattening, universal frequency, radial variations}

In order to prove Theorem \ref{t:main-low-one-piece}, we follow a strategy which is much analogous to that in \cite{DLSk2}, relying on the celebrated rectifiable Reifenberg techniques of Naber \& Valtorta \cite{NV_Annals}. We begin by decomposing the interval of scales $(0,1]$ around each point $x\in\Sfrak$ into countably many sub-intervals whose endpoints are given by a fixed geometric sequence and construct a center manifold for each of them, hence use it to compute a corresponding frequency function. In \cite{DLSk2}, since we treat separately the points for which there are finitely many intervals of flattening, these sub-intervals are be \emph{comparable in length} to the intervals of flattening. Here, however, we treat the points with finitely many intervals of flattening together with those points that have infinitely many intervals of flattening. We may do this by ``artificially" stopping and restarting the center manifold procedure around each point $x\in\Ffrak_Q(T)$ with $\Irm(T,x)> 2-\delta_2$, and simply setting the new center manifold to be the rescaling of the existing one, if there is no need to change the center manifold at a given endpoint of the chosen geometric sequence of scales. We will then define a corresponding universal frequency function as in Definition \ref{d:univ-freq}, but relative to this fixed geometric sequence of scales, in place of the original intervals of flattening. The frequency variation estimates and quantitative BV estimate in the preceding part, together with the excess decay of Proposition \ref{p:tilt-decay-2}, will be key to providing us with the necessary quantitative bounds on the points in $\Sfrak$.

\subsection{Center manifolds}\label{ss:adaptedintervals}
Let us begin by adapting the intervals of flattening from Section \ref{ss:compactness} around each point in $\Sfrak$, to a given fixed geometric sequence of scales.

Fix a constant $\gamma\in (0,\tfrac{1}{2}]$, whose choice will specified later, depending only on $m$, $n$, and $Q$. 

Consider a point $x\in \Sfrak$ with corresponding intervals of flattening $\{(t_{k+1},t_k]\}_{k\geq 0}$ that have associated center manifolds $\Mcal_{x,k}$ and normal approximations $N_{x,k}$, together with a geometric blow-up sequence of scales $\{\gamma^j\}_{j\geq 0}$. 

For $j=0$, let $\tilde\Mcal_{x,0} =\Mcal_{x,0}$ and $\tilde N_{x,0} = N_{x,0}$. For $j=1$, if $\gamma$ lies in the same interval of flattening to $t_0=1$, let 
\[
    \tilde\Mcal_{x,1} := \iota_{0,\gamma}(\Mcal_{x,0}), \qquad \tilde N_{x,1}(x) := \frac{N_{x,0}(\gamma x)}{\gamma}.
\]
Otherwise, let $\tilde \Mcal_{x,1}$ be the center manifold associated to $T_{x,\gamma}\mres\Bbf_{6\sqrt{m}}$, with corresponding normal approximation $N_{x,1}$. 

For each $j\geq 2$, define $\tilde\Mcal_{x,j}$ inductively as follows. If $\gamma^{j}$ lies in the same flattening to $\gamma^{j-1}$, let
\[
    \tilde\Mcal_{x,j} := \iota_{0,\gamma}(\Mcal_{x,j-1}), \qquad \tilde N_{x,j}(x) := \frac{N_{x,j-1}(\gamma x)}{\gamma}.
\]
Otherwise, let $\tilde \Mcal_{x,j}$ be the center manifold associated to $T_{x,\gamma^j}\mres\Bbf_{6\sqrt{m}}$, with corresponding normal approximation $N_j$.

It follows from Definition \ref{d:bits} that around any $x\in \Sfrak$, we may replace the procedure in Section \ref{ss:compactness} with the intervals $(\gamma^{k+1},\gamma^k]$ in place of $(s_k,t_k]$, and with $\boldsymbol{m}_{0,k}$ therein instead defined by  
\begin{equation}\label{e:rid-of-m0}
\boldsymbol{m}_{x,k} = \mathbf{E} (T_{x, \gamma^k}, \Bbf_{6\sqrt{m}}) =
\mathbf{E} (T, \Bbf_{6\sqrt{m} \gamma^{k}} (x))\, .
\end{equation}
Observe that in particular, if $x\in\Sfrak$ originally has finitely many intervals of flattening with $(0,t_{j_0}]$ being the final interval, it will nevertheless have infinitely many adapted intervals of flattening, but for all $k$ sufficiently large, $\tilde\Mcal_{x,k}$ and $\tilde N_{x,k}$ are arising as rescalings of $\Mcal_{x,j_0}$ and $N_{x,j_0}$ respectively.

Abusing notation, let us henceforth simply write $\Mcal_{x,k}$ for the center manifold $\tilde\Mcal_{x,k}$, with its corresponding normal approximation $N_{x,k}$. 

We will henceforth denote by $d$ the geodesic distance on the center manifold $\mathcal{M}_{x,k}$, which is in fact dependent on $x$ and $k$. However, since this dependence is not important and it will always be clear from context which center manifold we are taking the geodesic distance on, we will omit it. Let $\pi_{x,k}$ denote the plane used to construct the graphical parametrization $\boldsymbol{\varphi}_{x,k}$ of the center manifold $\Mcal_{x,k}$ and let $\mathscr{W}^{x,k}$ denote the collection of Whitney cubes associated to $\Mcal_{x,k}$ as in \cite[Section 2.2]{Spolaor_15}. Note that the center manifold $\mathcal{M}_{x,k}$ does not necessarily contain the origin $0 = \iota_{x, \gamma^{-k}} (x)$. However we use the point $p_{x,k}:= (0, \boldsymbol{\varphi}_{x,k} (0)) \in \pi_{x,k}\times \pi_{x,k}^\perp$ as a proxy for it.

Fix $\eta\in (0,\tfrac{1}{2}]$, to be determined later. We observe the following simple consequence of our adapted intervals of flattening and associated center manifolds.

\begin{proposition}\label{p:good-cubes}
Let $\gamma, \eta>0$ be two fixed constants and let $c_s = \frac{1}{64\sqrt{m}}$. Upon choosing $N_0$ in \cite[Section 2.2]{Spolaor_15} sufficiently large and adjusting accordingly the constants $C_e$, $C_h$ and $\varepsilon_2$ in \cite{Spolaor_15} accordingly, we can ensure that for every $w\in \mathcal{M}_{x,k}$ and every $r\in [\eta \gamma, 3]$, any $L\in \mathscr{W}^{x,k}$ with $L\cap B_r (\mathbf{p}_{\pi_{x,k}} (w), \pi_{x,k})\neq \emptyset$ satisfies $\ell (L) \leq c_s r$. Moreover, we have the following dichotomy. Either
\begin{itemize}
    \item[(a)] there is a positive constant $\bar c_s= \bar c_s(K,m,n,Q,\eta) \in (0, c_s]$ such that $B_\gamma (0, \pi_{x,k})$ intersects a cube $L\in \mathscr{W}^{x,k}$ with $\ell (L) \geq \bar c_s \gamma$, which violates the excess condition (EX) of \cite{Spolaor_15};
    \item[(b)] $\Mcal_{x,k+1}= \iota_{0,\gamma}(\Mcal_{x,k})$.
\end{itemize}
\end{proposition}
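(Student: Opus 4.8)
The plan is to prove Proposition~\ref{p:good-cubes} in three stages: first establish the Whitney-cube size control $\ell(L)\le c_s r$ uniformly over $w\in\Mcal_{x,k}$ and $r\in[\eta\gamma,3]$; then analyze the endpoint scale $\gamma$ to obtain the dichotomy; and finally verify that the relevant constants in the center-manifold construction of \cite{Spolaor_15} can indeed be chosen to make everything consistent.

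\textbf{Step 1: Uniform cube-size bound.} Recall from \cite[Section 2.2]{Spolaor_15} that the Whitney decomposition $\Wscr^{x,k}$ is built so that the side-length $\ell(L)$ of a cube is comparable to its distance from the ``contact set'' (equivalently, a cube is refined whenever one of the stopping conditions (EX), (HT), (NN) is triggered). The standard property is that for a cube $L$ of a Whitney decomposition of a ball $B_3(\pi_{x,k})$, $\ell(L)\le C(m) \, \mathrm{dist}(L, \partial B_3)$, and more importantly the refinement near any point forces $\ell(L) \lesssim \mathrm{dist}(L, \text{point})$. The key input is that the center manifold $\Mcal_{x,k}$ is constructed over $B_3(\pi_{x,k})$ and, by our adapted construction together with the excess decay \eqref{e:eub1} in Definition~\ref{d:bits}, the excess $\mbf_{x,k}= \Ebf(T,\Bbf_{6\sqrt m\gamma^k}(x))$ at every adapted scale is controlled; this is exactly the setting in which \cite[Proposition 6.5]{Spolaor_15} (or \cite[Proposition 2.2]{DLS16blowup}) gives the geometric decay of the cube sizes away from the high-density set. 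So for $w\in \Mcal_{x,k}$ and $r\ge \eta\gamma$, any $L$ meeting $B_r(\mathbf p_{\pi_{x,k}}(w),\pi_{x,k})$ has $\ell(L)$ bounded by a constant (depending on $N_0$, $C_e$, $C_h$) times $r$; choosing $N_0$ large (which shrinks the comparability constant, since larger $N_0$ forces more refinement) brings this constant below $c_s = \frac{1}{64\sqrt m}$. The point that $r$ may be as small as $\eta\gamma$ is harmless because $\gamma>0$ is fixed and the construction is scale-invariant in the relevant sense (the rescalings $T_{x,\gamma^k}$ all live in $\Bbf_{6\sqrt m}$); the constant we extract depends on $\eta$ through the ratio, which is why $\bar c_s$ in the dichotomy is allowed to depend on $\eta$.

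\textbf{Step 2: The dichotomy at scale $\gamma$.} This is essentially a tautology given how the adapted intervals of flattening were defined in Section~\ref{ss:adaptedintervals}. By construction, $\Mcal_{x,k+1}$ is set equal to $\iota_{0,\gamma}(\Mcal_{x,k})$ precisely when $\gamma^{k+1}$ lies in the same (original) interval of flattening as $\gamma^k$ — i.e.\ when no new center manifold needs to be started. The original stopping condition for intervals of flattening (see \cite[(6.10)]{Spolaor_15} / \cite[Proposition 6.5]{Spolaor_15}) is triggered exactly when, after rescaling, a Whitney cube of side-length comparable to $\gamma$ meeting $B_\gamma$ violates one of (EX), (HT), (NN); by the height-bound and non-concentration estimates of \cite{Spolaor_15}, for $\bar\eps$ (equivalently $\eps_4$) small enough only (EX) can be the violated condition at the relevant scale (the (HT) and (NN) thresholds involve higher powers of the excess and cannot be saturated first — this is the standard argument, cf.\ \cite[Proposition 2.2]{DLS16blowup}). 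Thus either such a large cube violating (EX) exists in $B_\gamma(0,\pi_{x,k})$ — case (a), with $\bar c_s$ the comparability constant from the stopping rule, depending on $K,m,n,Q,\eta$ — or no refinement was forced and $\Mcal_{x,k+1}=\iota_{0,\gamma}(\Mcal_{x,k})$ — case (b).

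\textbf{Step 3: Consistency of constants.} Here I would just remark that enlarging $N_0$ and correspondingly adjusting $C_e, C_h, \eps_2$ as in \cite[Section 2.2]{Spolaor_15} does not disturb any of the a priori estimates on the center manifold (the $C^{3,\kappa}$ bounds, the separation and non-concentration estimates), since those hold for all admissible choices of these parameters; one simply fixes them once and for all at the start, depending only on $m,n,Q$ (and, through $\eta$, on the fixed geometric ratio). \textbf{The main obstacle} I anticipate is Step~1 — specifically, making rigorous the claim that the cube-size-to-scale comparability constant can be pushed below the explicit threshold $\frac{1}{64\sqrt m}$ uniformly in $w$ and down to $r=\eta\gamma$, rather than just for $r$ bounded below by an absolute constant. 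This requires carefully tracking how the Whitney refinement algorithm behaves near an arbitrary point of $\Mcal_{x,k}$ and confirming that the geometric-decay constant in \cite[Proposition 6.5]{Spolaor_15} is genuinely controlled by $N_0$ and independent of the scale, which is a quantitative bookkeeping point rather than a new idea. The rest follows the template of \cite[Proposition 3.1 (or analogous)]{DLSk2} essentially verbatim.
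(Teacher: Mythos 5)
The paper itself dispatches this proposition in one sentence (``follows immediately from the construction''), and your overall skeleton --- unwind the adapted construction, get (b) by definition, get the (EX)-only statement in (a) from the standard argument near a density-$Q$ point, check the constants --- is the intended one. Your Step 2 is essentially right: (b) is definitional, and the reason the responsible cube must violate (EX) rather than (HT) or (NN) is the separation/splitting estimates for height-stopped cubes near a point of density $Q$ (cf.\ \cite[Remark 6.3]{DLSk1}, \cite[Proposition 2.2]{DLS16blowup}), not literally ``higher powers of the excess'', but you cite the right sources and the conclusion stands.

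The genuine gap is in Step 1. The Whitney decomposition of \cite[Section 2.2]{Spolaor_15} does \emph{not} satisfy $\ell(L)\lesssim\dist(L,\text{point})$ for arbitrary points of $\Mcal_{x,k}$: refinement is driven by the stopping conditions (EX), (HT), (NN), so stopped cubes of a fixed size can sit anywhere those conditions trigger, and the origin-centred estimate you invoke (\cite[Proposition 6.5]{Spolaor_15}, \cite[Proposition 2.2]{DLS16blowup}) only gives $\ell(L)\leq c_s\rho$ for cubes meeting $B_\rho(0)$ --- which says nothing useful about a ball of radius $\eta\gamma$ centred at $\mathbf{p}_{\pi_{x,k}}(w)$ when $|\mathbf{p}_{\pi_{x,k}}(w)|$ is of order one. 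The first claim is equivalent to a \emph{uniform} side-length bound $\ell(L)\leq c_s\eta\gamma$ for every cube of $\Wscr^{x,k}$ in the relevant region. For a freshly built center manifold this is exactly what the hypothesis on $N_0$ buys: stopping is only permitted at generations $\geq N_0$, so every $L\in\Wscr$ has $\ell(L)\lesssim 2^{-N_0}$, and one chooses $2^{-N_0}\ll c_s\eta\gamma$ (this, rather than ``more refinement shrinks a comparability constant'', is the role of $N_0$, and it is why $N_0$ may depend on the fixed $\gamma,\eta$). The only non-trivial point is then case (b): each rescaling $\iota_{0,\gamma}$ dilates the Whitney cubes by $\gamma^{-1}$, so after consecutive rescalings the crude bound $2^{-N_0}$ is destroyed, and one must use the construction itself (the rescaling is performed only while the interval of flattening persists, so only cubes from a $B_{O(\gamma)}$-neighbourhood of the origin in the previous decomposition survive into the new domain, and their sizes must be tracked through this) to see that the surviving cubes still lie below $c_s\eta\gamma$. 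You explicitly flag this as ``the main obstacle'', but the route you propose to close it --- the origin-centred geometric decay of \cite[Proposition 6.5]{Spolaor_15} --- cannot yield a bound that is uniform over all centres $w$ and all radii down to $\eta\gamma$; so as written the key quantitative claim of the proposition is asserted but not proved.
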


\begin{proof}
   The proof follows immediately from the contruction. 
\end{proof}

We have the following $\BV$ estimate on the universal frequency function adapted to $\{\gamma^k\}_k$ (cf. Theorem \ref{t:BV}).

\begin{proposition}\label{p:bv-adapted}
	There exists $\bar\eps(m,n,Q)\in]0,\eps]$ such that for any $\eps_4\in (0,\bar\eps]$, there exists $C=C(m,n,Q,\gamma_4, K,\gamma)$ such that the following holds for every $x\in \Sfrak$:
	\begin{equation}\label{eq:bv}
		\left|\left[\frac{d\log (1 + \Ibf(x,\cdot))}{dr}\right]_- \right|\big([0, 1]\big) \leq C\sum_k  \boldsymbol{m}_{x,k}^{\gamma_4} \leq C\mbf_{x,1}^{\gamma_4}
	\end{equation}
\end{proposition}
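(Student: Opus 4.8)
The plan is to transcribe the proof of Theorem~\ref{t:BV} to the adapted intervals of flattening, replacing $(s_k,t_k]$ by $(\gamma^{k+1},\gamma^k]$, the reference excesses $\mbf_{0,k}$ by $\mbf_{x,k}=\mathbf E(T,\Bbf_{6\sqrt m\gamma^k}(x))$ from \eqref{e:rid-of-m0}, and the center $0$ by the proxy point $p_{x,k}$, so that the adapted universal frequency is $\Ibf(x,r)=\Ibf_{x,k}(p_{x,k},r/\gamma^k)$ on $(\gamma^{k+1},\gamma^k]$, with $\Ibf_{x,k},\Dbf_{x,k},\Hbf_{x,k},\Lbf_{x,k}$ taken relative to $\Mcal_{x,k}$ and $N_{x,k}$. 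The first step is to note that Proposition~\ref{p:firstvar}, Corollary~\ref{c:freq-mon}, Corollary~\ref{c:curv-excess}, Lemma~\ref{l:cm-proj} and Lemma~\ref{l:compare-cm} continue to hold verbatim under these substitutions: their proofs use only the $C^{3,\kappa}$-regularity of the center manifold and the first variation identity \eqref{eq: first variation}, neither of which is affected by the centering or by the choice of intervals. One also invokes the uniform lower and upper bounds on $\Ibf_{x,k}$ over all $k$ furnished by Corollary~\ref{c:ulb-frequency}, which on $\Sfrak=\Sfrak_{K,1}$ depend only on $m,n,Q,K$.

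With these in hand the estimate is assembled from three pieces. \emph{Geometric decay of the reference excesses:} since $x\in\Sfrak_{K,1}$, \eqref{e:eub1} with $r=6\sqrt m\gamma^k$ and $s=6\sqrt m\gamma^{k-1}$ gives $\mbf_{x,k}\le\gamma^{2\mu}\mbf_{x,k-1}$, hence $\mbf_{x,k}\le\gamma^{2\mu(k-1)}\mbf_{x,1}$ and $\sum_k\mbf_{x,k}^{\gamma_4}\le C(\gamma,\mu,\gamma_4)\mbf_{x,1}^{\gamma_4}$; this is what converts the bound $C\sum_k\mbf_{x,k}^{\gamma_4}$ into $C\mbf_{x,1}^{\gamma_4}$. \emph{Interior variation:} on each $(\gamma^{k+1},\gamma^k]$ the adapted frequency is absolutely continuous, and the $p_{x,k}$-centered Corollary~\ref{c:freq-mon}, together with the uniform frequency bounds (which control $\sup_r\Dbf_{x,k}^{\gamma_4}$ and hence the integral of the term $\Dbf_{x,k}^{\gamma_4-1}\partial_r\Dbf_{x,k}$), bounds the continuous part of the negative variation on that interval by $C\mbf_{x,k}^{\gamma_4}$, exactly as in Section~\ref{s:BV}. \emph{Jumps at the scales $\gamma^k$:} by Proposition~\ref{p:good-cubes} applied at level $k-1$, either $\Mcal_{x,k}=\iota_{0,\gamma}(\Mcal_{x,k-1})$ and $N_{x,k}$ is the matching rescaling, in which case scale invariance of the regularized frequency gives $\Ibf_{x,k}(p_{x,k},1)=\Ibf_{x,k-1}(p_{x,k-1},\gamma)$ and there is no jump; or there is a cube $L\in\mathscr W^{x,k-1}$ of side length $\gtrsim_K\gamma$ meeting $B_\gamma$ that violates the excess condition (EX).

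In the latter case I would first deduce from the quantitative content of the failed (EX) inequality, combined with the standard comparison of tilt excess at comparable scales, that $\mbf_{x,k-1}\asymp\mbf_{x,k}$ with constant depending only on $m,n,Q,\gamma$ (and the structural constants of the center manifold). Then I would run the jump estimate of the proof of Theorem~\ref{t:BV} line for line: comparing $N_{x,k-1}$ — reparametrized over $\pi_{x,k}$ and rescaled by $\gamma$ — with $N_{x,k}$, using Corollary~\ref{c:curv-excess}, Lemma~\ref{l:cm-proj}, the comparison estimate \eqref{e:comparison-cm-2} (applicable because $\iota_{0,\gamma}(\Mcal_{x,k-1})$ is itself a center manifold of $T_{x,\gamma^k}$, at the enlarged scale $6\sqrt m/\gamma$, whose reference excess is $\asymp\mbf_{x,k}$ in this case), and \eqref{e:firstvar5}, one obtains $|\Ibf(x,\gamma^{k+})-\Ibf(x,\gamma^{k-})|\le C\mbf_{x,k}^{\gamma_4}\big(1+\Ibf(x,\gamma^{k+})\big)$, whence $|\log(1+\Ibf(x,\gamma^{k+}))-\log(1+\Ibf(x,\gamma^{k-}))|\le C\mbf_{x,k}^{\gamma_4}$ via $\log w\le w-1$. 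Finally, exactly as in the proof of Theorem~\ref{t:BV}, one introduces a monotone auxiliary function $\mathbf\Omega$ whose distributional derivative dominates both the interior lower bounds and the jump contributions and such that $\log(1+\Ibf(x,\cdot))+\mathbf\Omega$ is non-decreasing, so that the negative part of $d\log(1+\Ibf(x,\cdot))$ on $[0,1]$ is at most $|\partial_r\mathbf\Omega|([0,1])\le C\sum_k\mbf_{x,k}^{\gamma_4}$; combining with the geometric decay gives \eqref{eq:bv}. The smallness of $\eps_4$ (hence of $\bar\eps$) is precisely what is needed for the center manifold construction and for Proposition~\ref{p:firstvar} to be in force at every scale $\gamma^k$ around every $x\in\Sfrak$.

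\textbf{The main obstacle} I expect is the jump step in case (a): extracting the comparability $\mbf_{x,k-1}\asymp\mbf_{x,k}$ from the failed stopping condition (EX) of Proposition~\ref{p:good-cubes}, and checking that the neighbouring-center-manifold comparison of Lemma~\ref{l:compare-cm} — established for two successive intervals of flattening — carries over when one center manifold is merely a rescaling of the other at an enlarged scale. Everything else is a transcription of arguments already assembled in Section~\ref{s:BV}.
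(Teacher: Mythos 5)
Your proposal follows the paper's own route: the paper proves this proposition precisely by rerunning the argument of Theorem \ref{t:BV} on the adapted intervals, with the dichotomy of Proposition \ref{p:good-cubes} handling the endpoints — no jump (absolute continuity across $\gamma^k$, then Corollary \ref{c:freq-mon}) in the rescaling case, and in case (a) the violated (EX) condition yielding the analogue of \eqref{e:excess-decay-quadratic} so that the jump estimate of Section \ref{s:BV} goes through (the paper points to \cite[Remark 6.3]{DLSk1} for exactly the comparability you single out as the main obstacle), while \eqref{e:eub1} gives the geometric summability $\sum_k \mbf_{x,k}^{\gamma_4}\leq C\mbf_{x,1}^{\gamma_4}$. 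One caveat: you should not invoke Corollary \ref{c:ulb-frequency} here, since in the paper it is deduced \emph{from} this BV estimate; fortunately it is not needed, because $\sup_r \Dbf_{x,k}(p_{x,k},r)\leq C\mbf_{x,k}$ follows directly from the center-manifold estimates, which is all the absorption of the term $\Dbf_{x,k}^{\gamma_4-1}\partial_r\Dbf_{x,k}$ requires.
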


Observe that in light of Proposition \ref{p:good-cubes}, the estimate in Proposition \ref{p:bv-adapted} follows by the same argument as that in Theorem \ref{t:BV}. Indeed, for any $k$ such that $\gamma^{k}$ lies in the same original interval of flattening as $\gamma^{k-1}$, one may estimate the jump
\[
|\log (1 + \Ibf(x,\gamma^k))^+ - \log (1+\Ibf (x, \gamma^{k})^-|
\]
due to Proposition \ref{p:good-cubes}(a) (see \cite[Remark 6.3]{DLSk1}). Meanwhile, if instead $\Mcal_{x,k+1}= \iota_{0,\gamma}(\Mcal_{x,k})$ holds, the adapted universal frequency function is absolutely continuous on $(\gamma^{k+1},\gamma^{k-1}]$, and we instead simply use the variation estimate of Corollary \ref{c:freq-mon}.

\subsection{Universal frequency function} 
For each center manifold $\mathcal{M}_{x,k}$, we define the frequency function for the associated normal approximation, as in Part \ref{pt:sing-deg}. We define analogous quantities to those therein, namely
\begin{align*}
	\Dbf_{x,k}(w,r) &:= \int_{\Mcal_{x,k}} |D N_{x,k}(z)|^2 \phi\left(\frac{d(w,z)}{r}\right)\, dz; \\
	\Hbf_{x,k}(w,r) &:= - \int_{\Mcal_{x,k}} \frac{|\nabla d(w,z)|^2}{d(w,z)} |N_{x,k} (z)|^2\phi'\left(\frac{d(w,z)}{r}\right) \, dz; \\
    \Lbf_{x,k}(w,r) &:= \sum_{i=1}^Q \sum_{l=1}^m (-1)^{l+1} \int_{\Mcal_{x,k}}\langle D_{\xi_l} (N_{x,k})_i(z) \wedge \hat\xi_l(z) \wedge (N_{x,k})_i(z), d\omega(z) \rangle \phi\left(\frac{d(w,z)}{r}\right) d\Hcal^m(z);\\
    \mathbf{\Gamma}_{x,k}(w,r)&:= \Dbf_{x,k}(w,r)+ \Lbf_{x,k}(w,r); \\
	\Ibf_{x,k}(w,r) &:= \frac{r \mathbf{\Gamma}_{x,k}(w,r)}{\Hbf_{x,k}(w,r)}\, .
\end{align*}
We further define the quantities
\begin{align*}
	\Ebf_{x,k}(w,r) &:= -\frac{1}{r} \int_{\Mcal_{x,k}} \phi'\left(\frac{d(w,z)}{r}\right)\sum_i (N_{x,k})_i(z)\cdot D(N_{x,k})_i(z)\nabla d(w,z) \ dz; \\
	\Gbf_{x,k}(w,r) &:= -\frac{1}{r^2} \int_{\Mcal_{x,k}} \phi'\left(\frac{d(w,z)}{r}\right) \frac{d(w,z)}{|\nabla d(w,z)|^2} \sum_i |D( N_{x,k})_i(z) \cdot \nabla d(w,z)|^2\, dz; \\
	\mathbf{\Sigma}_{x,k}(w,r) &:= \int_{\Mcal_{x,k}} \phi\left(\frac{d(w,z)}{r}\right)|N_{x,k}(z)|^2\, dz.
\end{align*}

We are now in a position to introduce the universal frequency function adapted to the geometric sequence $\{\gamma^k\}_k$, analogously to that defined in the preceding part.

\begin{definition}[Universal frequency function adapted to $\{\gamma^j\}_j$]\label{def:univfreq}
For $r \in (\gamma^{k+1}, \gamma^k]$ and $x\in \Sfrak$, define
	\begin{align*}
		\Ibf(x, r) &:= \Ibf_{x,k}\left(p_{x,k},\tfrac{r}{\gamma^k}\right), \\
		\Dbf(x,r) &:= \Dbf_{x,k}\left(p_{x,k}, \tfrac{r}{\gamma^{k}}\right), \\
        \Lbf(x,r) &:= \Lbf_{x,k}\left(p_{x,k}, \tfrac{r}{\gamma^{k}}\right), \\
		\Hbf(x,r) &:= \Hbf_{x,k}\left(p_{x,k}, \tfrac{r}{\gamma^{k}}\right).
	\end{align*}
\end{definition}

\subsection{Radial frequency variations}
As an immediate consequence of the total variation estimate and the fact that $\Sfrak$ is closed, we infer the existence of an uniform upper bound for the frequency $\mathbf{I} (x,r)$ over all $x\in \Sfrak$. We also infer the existence of the limit $\mathbf{I} (x,0) = \lim_{r\downarrow 0} \mathbf{I} (x,r)$. We can then argue as in Part \ref{pt:sing-deg} to show that $\mathbf{I} (x,0) = \Irm (x,0) \geq 1+ 2^{-K}$. In turn, upon choosing $\tilde{\varepsilon}$ sufficiently small we infer the following.

\begin{corollary}\label{c:ulb-frequency}
For $\bar\eps$ as in Proposition \ref{p:bv-adapted} and any $\eps_4\in ]0,\bar\eps]$, there exists $C=C(m,n,Q,\gamma_4,K,\gamma,\bar\eps)$ such that the following holds:
\[
1+ 2^{-K-1} \leq \mathbf{I} (x,r) \leq C \qquad \forall x\in \Sfrak, \forall r \in ]0, 1]\, .
\]
\end{corollary}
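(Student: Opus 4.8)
The plan is to read off both inequalities from the near-monotonicity of $r \mapsto \log(1+\Ibf(x,r))$ provided by the $\BV$ estimate of Proposition \ref{p:bv-adapted}. Fix $x \in \Sfrak$ and set $g_x(r) := \log(1+\Ibf(x,r))$ for $r \in\, ]0,1]$. By Proposition \ref{p:bv-adapted}, the negative part of the distributional derivative of $g_x$ on $[0,1]$ has total mass $A_x \le C\mbf_{x,1}^{\gamma_4}$; equivalently, there is a non-decreasing function $\Omega_x$ on $]0,1]$ with $\Omega_x(1) - \Omega_x(0^+) \le A_x$ and $g_x + \Omega_x$ non-decreasing. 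The first step is to note that $A_x$ is bounded uniformly in $x$ and can be made as small as desired by shrinking $\bar\eps$: since $\Sfrak = \Sfrak_{K,1}$, the excess-decay property \eqref{e:eub1} gives $\mbf_{x,1} = \mathbf{E}(T,\Bbf_{6\sqrt m\gamma}(x)) \le C\gamma^{2\mu}\eps_4^2$ with $\mu = 2^{-K-1}$, hence $A_x \le A := C(m,n,Q,\gamma_4,K,\gamma)\,\bar\eps^{2\gamma_4}$ for every $x \in \Sfrak$ (using $\eps_4 \le \bar\eps$).

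\textbf{Upper bound.} At the top scale, $\Ibf(x,1) = \Ibf_{x,0}(p_{x,0},1)$ is bounded by a constant $C_1$ uniformly in $x \in \Sfrak$, by the uniform upper frequency bound of Theorem \ref{t:freqbds} together with the compactness of $\Sfrak$. Since $g_x + \Omega_x$ and $\Omega_x$ are non-decreasing, for every $r \in\, ]0,1]$ we get $g_x(r) \le g_x(1) + (\Omega_x(1) - \Omega_x(r)) \le g_x(1) + A$, and therefore $\Ibf(x,r) = e^{g_x(r)} - 1 \le e^{A}(1+C_1) - 1 =: C$.

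\textbf{Existence of $\mathbf{I}(x,0)$ and lower bound.} The uniform lower frequency bound of Theorem \ref{t:freqbds}, which (as recorded in the excerpt) holds verbatim in the semicalibrated setting and is inherited by each adapted center manifold, gives $g_x(r) \ge \log(1+c_0) > 0$. Combined with the near-monotonicity, $g_x + \Omega_x$ (non-decreasing, bounded below) and $\Omega_x$ (non-decreasing, bounded) both converge as $r \downarrow 0$, so $\mathbf{I}(x,0) = \lim_{r\downarrow 0}\Ibf(x,r)$ exists. To identify it, I would argue exactly as in Part \ref{pt:sing-deg}: along any blow-up sequence of radii $r_j \downarrow 0$ at $x$, the strong $W^{1,2}_{\mathrm{loc}}\cap L^2$ convergence of the rescaled normal approximations to a fine blow-up $u$ (Section \ref{ss:compactness}) yields $\Ibf(x,r_j) \to I_u(0)$, and by Theorem \ref{t:sing-degree-main}(i) this is $\Irm(T,x)$; since the full limit exists, $\mathbf{I}(x,0) = \Irm(T,x) \ge 1+2^{-K}$, the last inequality because $x \in \Sfrak \subset \Sfrak_K$. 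Finally, running the monotonicity of $g_x + \Omega_x$ downward gives $g_x(r) \ge g_x(0^+) - (\Omega_x(r) - \Omega_x(0^+)) \ge \log(2+2^{-K}) - A$ for every $r \in\, ]0,1]$. Since $\log(2+2^{-K}) - \log(2+2^{-K-1})$ is a fixed positive number, choosing $\bar\eps = \bar\eps(m,n,Q,\gamma_4,K,\gamma)$ small enough that $A < \log(2+2^{-K}) - \log(2+2^{-K-1})$ forces $\Ibf(x,r) \ge 1+2^{-K-1}$. This is the ``$\tilde\eps$ sufficiently small'' of the surrounding text; it may require $\bar\eps$ slightly smaller than the one in Proposition \ref{p:bv-adapted} (now depending additionally on $K$ and $\gamma$), which is harmless since the final constant $C$ is permitted to depend on $\bar\eps$.

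The monotonicity bookkeeping with $\Omega_x$ is routine. The two substantive points, which I expect to be the main obstacles, are: (i) the uniform-in-$x$ smallness of $\mbf_{x,1}$, which is exactly what Definition \ref{d:bits} and the excess decay \eqref{e:eub1} are designed to supply; and (ii) the identification $\mathbf{I}(x,0) = \Irm(T,x)$, which requires verifying that the compactness and blow-up arguments of Part \ref{pt:sing-deg}, and the uniform bounds of Theorem \ref{t:freqbds}, remain valid when the original intervals of flattening are replaced by the adapted ones, with fresh center manifolds and rescalings of previous ones treated on the same footing --- precisely the content of Proposition \ref{p:good-cubes}.
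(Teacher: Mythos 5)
Your argument is correct and follows essentially the same route as the paper, whose (very brief) justification in the paragraph preceding the corollary is exactly: uniform control from the total variation estimate of Proposition \ref{p:bv-adapted} together with the closedness of $\Sfrak$ for the upper bound, existence of $\mathbf{I}(x,0)$, its identification with $\Irm(T,x)\geq 1+2^{-K}$ by arguing as in Part \ref{pt:sing-deg}, and then smallness of the threshold to propagate the lower bound $1+2^{-K-1}$ to all scales. Your write-up simply makes explicit the bookkeeping (smallness of $\mbf_{x,1}$ via \eqref{e:eub1}--\eqref{e:eub2} and the choice of threshold) that the paper leaves implicit.
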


By a simple contradiction and compactness argument, we obtain the same consequence as that in Corollary \ref{c:ulb-frequency} for points sufficiently close to $\Sfrak$ at the appropriate scales.

\begin{corollary}\label{c:ulb-frequency-2}
There exists $\eps^*\in(0,\bar\eps]$ such that for any $\eps_4\in (0,\eps^*]$ and any $x\in\Sfrak$, there exists $C_0=C_0(\gamma,\eta,m,n,Q,\eps^*,K)>0$, such that the following holds for every $w\in \mathcal{M}_{x,k}$ and every $r \in (\eta \gamma, 4]$:
\[
C_0^{-1} \leq \mathbf{I}_{x,k} (w,r) \leq C_0\, .
\]
\end{corollary}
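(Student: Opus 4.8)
The plan is to argue by contradiction and compactness, exactly as for the analogous statement in \cite{DLSk2}. Negating the conclusion, one extracts for each $\ell\in\N$ a current $T_\ell$ (with semicalibration $\omega_\ell$) satisfying Assumption \ref{a:main-2}, a parameter $\eps_4^{(\ell)}\le\bar\eps/\ell$, a point $x_\ell\in\Sfrak_{K,1}$ defined with respect to $\eps_4^{(\ell)}$, an index $k_\ell$, a point $w_\ell\in\Mcal_{x_\ell,k_\ell}$ with $\Bcal_{r_\ell}(w_\ell)$ in the domain of $N_{x_\ell,k_\ell}$, and a radius $r_\ell\in(\eta\gamma,4]$, such that $\Ibf_{x_\ell,k_\ell}(w_\ell,r_\ell)\notin[\ell^{-1},\ell]$. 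Setting $S_\ell:=(T_\ell)_{x_\ell,\gamma^{k_\ell}}\mres\Bbf_{6\sqrt m}$, I would record that \eqref{e:rid-of-m0} together with the excess decay \eqref{e:eub1}--\eqref{e:eub2} gives $\boldsymbol m_{x_\ell,k_\ell}=\Ebf(S_\ell,\Bbf_{6\sqrt m})\le(\eps_4^{(\ell)})^2\to0$, and that $S_\ell$ is $\gamma^{k_\ell}\|d\omega_\ell\|_{C^0}$-minimal.

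Next I would pass to a limit. Since $\boldsymbol m_{x_\ell,k_\ell}\to0$, after a rotation making the parameterizing planes all equal to $\pi_0$, the center manifolds $\Mcal_{x_\ell,k_\ell}$ converge in $C^3_{\loc}$ to $\pi_0$ (their parameterizations $\bphi_{x_\ell,k_\ell}$ have $C^3$-norm $\lesssim\boldsymbol m_{x_\ell,k_\ell}^{1/2}\to0$) and $S_\ell\toweakstar Q\llbracket\pi_0\rrbracket$. Running the compactness procedure of Section \ref{ss:compactness} with these adapted center manifolds --- legitimate because the reverse Sobolev inequality \cite[Corollary 7.9]{Spolaor_15} applies in view of the uniform frequency bounds of Corollary \ref{c:ulb-frequency} --- and normalizing $\bar N_\ell:=N_{x_\ell,k_\ell}/\|N_{x_\ell,k_\ell}\|_{L^2}$, one obtains along a subsequence a nontrivial Dir-minimizer $u$ with $\boldsymbol\eta\circ u=0$, $\|u\|_{L^2}=1$, and $\bar N_\ell\to u$ strongly in $L^2\cap W^{1,2}_{\loc}$. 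Because $\Mcal_{x_\ell,k_\ell}$ sits over a fixed ball in $\pi_0$ and $r_\ell\in(\eta\gamma,4]$, I would then pass to a further subsequence so that $w_\ell\to w_\infty\in\pi_0$ and $r_\ell\to r_\infty\in[\eta\gamma,4]$, in particular $r_\infty>0$.

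I would then pass to the limit in the frequency itself. Using the center manifold estimates $d(w,z)=|w-z|+O(\boldsymbol m_{x_\ell,k_\ell}^{1/2}|w-z|^2)$ and $|\nabla d|=1+O(\boldsymbol m_{x_\ell,k_\ell}^{1/2}d)$, the strong $W^{1,2}_{\loc}$-convergence of $\bar N_\ell$, and the bound $|\Lbf_{x_\ell,k_\ell}(w_\ell,r_\ell)|\le C\boldsymbol m_{x_\ell,k_\ell}^{1/4}r_\ell\,\Dbf_{x_\ell,k_\ell}(w_\ell,r_\ell)$ from \eqref{e:firstvar5} (which makes the semicalibration-specific term $\Lbf$ negligible, since $\boldsymbol m_{x_\ell,k_\ell}\to0$), I would show
\[
    \frac{\Dbf_{x_\ell,k_\ell}(w_\ell,r_\ell)}{\|N_{x_\ell,k_\ell}\|_{L^2}^2}\to D_u(w_\infty,r_\infty),\qquad \frac{\Lbf_{x_\ell,k_\ell}(w_\ell,r_\ell)}{\|N_{x_\ell,k_\ell}\|_{L^2}^2}\to0,\qquad \frac{\Hbf_{x_\ell,k_\ell}(w_\ell,r_\ell)}{\|N_{x_\ell,k_\ell}\|_{L^2}^2}\to H_u(w_\infty,r_\infty),
\]
hence $\Ibf_{x_\ell,k_\ell}(w_\ell,r_\ell)\to I_u(w_\infty,r_\infty)$, provided $H_u(w_\infty,r_\infty)>0$. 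To establish the latter --- and to rule out $D_u(w_\infty,r_\infty)=0$ --- I would use that $u$ is a nontrivial average-free Dir-minimizer: if $H_u(w_\infty,r_\infty)=0$ (resp. $D_u(w_\infty,r_\infty)=0$) then $u$ vanishes on the annulus $B_{r_\infty}(w_\infty)\setminus B_{r_\infty/2}(w_\infty)$ (resp. is constant, hence, being average-free, $\equiv Q\llbracket0\rrbracket$, on $B_{r_\infty}(w_\infty)$); since $u$ is a superposition of harmonic sheets away from its $(m-2)$-dimensional singular set, classical unique continuation then forces $u\equiv Q\llbracket0\rrbracket$, contradicting $\|u\|_{L^2}=1$. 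Therefore $I_u(w_\infty,r_\infty)\in(0,\infty)$, which contradicts $\Ibf_{x_\ell,k_\ell}(w_\ell,r_\ell)\notin[\ell^{-1},\ell]$ for $\ell$ large; this yields the desired $\eps^*$ (the threshold below which no such bad sequence occurs) and $C_0$.

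The step I expect to be the main obstacle is the passage to the limit in the frequency: checking that the center-manifold frequency $\Ibf_{x,k}$ --- built from the geodesic distance on $\Mcal_{x,k}$ and carrying the term $\mathbf\Gamma_{x,k}=\Dbf_{x,k}+\Lbf_{x,k}$ in place of just $\Dbf_{x,k}$ --- converges to the flat Euclidean Dir-minimizer frequency $I_u$, together with the non-degeneracy $H_u(w_\infty,r_\infty)>0$ of the limit. Relative to \cite{DLSk2} the only genuinely new bookkeeping is tracking the $\Lbf$-term through the convergence, but this is harmless here because it carries a positive power of $\boldsymbol m_{x,k}\to0$.
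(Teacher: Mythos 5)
Your overall strategy --- negate the statement, extract a sequence with $\eps_4^{(\ell)}\to 0$ (hence $\mbf_{x_\ell,k_\ell}\to 0$ by \eqref{e:eub1}--\eqref{e:eub2}), blow up the normalized normal approximations to a nontrivial Dir-minimizer, and pass the frequency at $(w_\ell,r_\ell)$ to the limit, with the $\Lbf$-term killed by its $\mbf^{\gamma_4}$ prefactor --- is exactly the ``simple contradiction and compactness argument'' the paper invokes, and most of your bookkeeping is sound.

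However, one step fails as written: in the case $D_u(w_\infty,r_\infty)=0$ (which is precisely the case you must exclude to obtain the frequency \emph{lower} bound) you argue that $u$ is constant near $w_\infty$ and, ``being average-free, $\equiv Q\llbracket 0\rrbracket$''. Average-freeness of a fine blow-up means $\boldsymbol\eta\circ u\equiv 0$ pointwise, and a constant $Q$-point with vanishing average need not be $Q\llbracket 0\rrbracket$: for $Q=2$ the value $\llbracket v\rrbracket+\llbracket -v\rrbracket$ with $v\neq 0$ is average-free, and it describes exactly the dangerous scenario (separated sheets running parallel to the center manifold away from the center, for which $\Hbf>0$ while $\Dbf\approx 0$). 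So constancy plus average-freeness yields no contradiction with $\|u\|_{L^2}=1$, and your unique-continuation step has nothing to propagate from. The case can be closed, but it needs an extra ingredient tied to the center point: either use that $x_\ell\in\Ffrak_Q(T_\ell)$, so that persistence of $Q$-points along the blow-up gives $u=Q\llbracket 0\rrbracket$ at the limit of the proxy centers $p_{x_\ell,k_\ell}$, or use Corollary \ref{c:ulb-frequency} at the centers (after first excluding $H_u=0$ there by the same unique-continuation argument you use elsewhere) to conclude $I_u(p_\infty,\cdot)\geq 1+2^{-K-1}$, hence $u$ is non-constant near $p_\infty$. With either input, unique continuation for Dir-minimizers upgrades local constancy near $w_\infty$ to constancy on the whole connected domain and produces the contradiction (with $\|u\|_{L^2}=1$ in the first variant, with non-constancy at $p_\infty$ in the second). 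Your treatment of the case $H_u(w_\infty,r_\infty)=0$ is fine, since vanishing on an open annulus does force $u\equiv Q\llbracket 0\rrbracket$ by unique continuation, contradicting the normalization.
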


Let us now record the following simplified variational estimates, which may be easily deduced from those in Corollary \ref{c:freq-mon}, Proposition \ref{p:firstvar}, but for the intervals of flattening adapted to $\{\gamma^k\}_k$.

\begin{lemma}\label{l:simplify-low}
Let $\bar\eps$ be as in Proposition \ref{p:bv-adapted}. Suppose that $T$, $\eps_4$, $x$, $\Mcal_{x,k}$ and $ N_{x,k}$ are as in Corollary \ref{c:ulb-frequency-2}. Then there exist constants $C$ dependent on $K$, $\gamma$, $\eta$ and $\bar\eps$ but not on $x, k$, such that the following estimates hold for every $w \in \Mcal_{x,k}\cap\Bbf_1$ and any $\rho, r \in (\eta \gamma, 4]$.
\begin{align}
&C^{-1} \leq \Ibf_{x,k} (w,r) \leq C \label{eq:simplify1-low} \\  
&C^{-1}r \Dbf_{x,k}(w,r) \leq C^{-1} r \mathbf{\Gamma}_{x,k} (w,r) \leq \Hbf_{x,k} (w,r) \leq C r \mathbf{\Gamma}_{x,k} (w,r) \leq C r \Dbf_{x,k}(w,r) \label{eq:simplify2-low} \\
&\mathbf{\Sigma}_{x,k} (w,r) \leq C r^2 \Dbf_{x,k} (w,r) \label{eq:simplify3-low} \\
&\Ebf_{x,k} (w,r) \leq C \Dbf_{x,k} (w,r) \label{eq:simplify4-low} \\
&\frac{\Hbf_{x,k}(w,\rho)}{\rho^{m-1}} = \frac{\Hbf_{x,k}(w,r)}{r^{m-1}}\exp\left(-C\int_\rho^r \Ibf_{x,k}(w,s)\,\frac{ds}{s} - O ({\boldsymbol{m}}_{x,k}) (r-\rho)\right) \label{eq:simplify5-low} \\
&\Hbf_{x,k} (w, r) \leq C \Hbf_{x,k} (w, \textstyle{\frac{r}{4}}) \label{eq:simplify6-low} \\
&\Hbf_{x,k} (w,r) \leq C r^{m+3 - 2\delta_2} \label{eq:simplify7-low} \\
&\Gbf_{x,k} (w,r) \leq C r^{-1} \Dbf_{x,k} (w,r) \label{eq:simplify8-low} \\
&|\partial_r \Dbf_{x,k} (w,r)| \leq C r^{-1} \Dbf_{x,k} (w,r) \label{eq:simplify9-low} \\
&|\partial_r \Hbf_{x,k} (w,r)| \leq C \Dbf_{x,k} (w,r)\, .  \label{eq:simplify10-low}
\end{align}
In particular:
\begin{align}
&|\mathbf{\Gamma}_{x,k} (w,r) - \Ebf_{x,k} (w,r)| \leq C {\boldsymbol{m}}_{x,k}^{\gamma_4} r^{\gamma_4} \Dbf_{x,k} (w,r) \label{eq:simplify11-low} \\
&\left|\partial_r \Dbf_{x,k} (w,r) - \frac{m-2}{r} \Dbf_{x,k} (w,r) - 2 \Gbf_{x,k} (w,r)\right| \leq C {\boldsymbol{m}}_{x,k}^{\gamma_4} r^{\gamma_4-1} \Dbf_{x,k} (w,r) \label{eq:simplify12-low} \\
&\left|\partial_r\Hbf_{x,k}(w,r) - \frac{m-1}{r}\Hbf_{x,k}(w,r) - 2\Ebf_{x,k}(w,r)\right| \leq C\mbf_{x,k}\Hbf_{x,k}(w,r) \label{e:H-radial-var} \\
&|\Lbf_{x,k}(w,r)| \leq C\mbf_{x,k}^{\gamma_4} r\Dbf_{x,k}(w,r) \label{e:L-bound} \\
&|\partial_r\Lbf_{x,k}(w,r)| \leq C\mbf_{x,k}^{\gamma_4}(r^{-1}\partial_r\Dbf_{x,k}(w,r)\Hbf_{x,k}(w,r))^{1/2} \label{e:L-der-bd}\\
&\partial_r \Ibf_{x,k} (w,r) \geq - C {\boldsymbol{m}}_{x,k}^{\gamma_4} r^{\gamma_4-1}\, . \label{eq:simplify13-low}
\end{align}
Here, $\gamma_4$ is as in Part \ref{pt:sing-deg}.
\end{lemma}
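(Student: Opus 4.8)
The idea is to read all of \eqref{eq:simplify1-low}--\eqref{eq:simplify13-low} off the first-variation identities \eqref{e:firstvar1}--\eqref{e:firstvar5} of Proposition~\ref{p:firstvar} and the monotonicity estimate of Corollary~\ref{c:freq-mon}, once we know these hold (i)~for the \emph{adapted} center manifolds $\Mcal_{x,k}$, with $\mbf_{x,k}$ of \eqref{e:rid-of-m0} in place of $\mbf_0$, and (ii)~not just at the proxy centre $p_{x,k}$ along the original intervals of flattening but at every admissible $w\in\Mcal_{x,k}\cap\Bbf_1$ and every $r\in(\eta\gamma,4]$. For (i), Proposition~\ref{p:good-cubes} guarantees that each $\Mcal_{x,k}$ is a genuine center manifold for $T_{x,\gamma^k}$ to which the estimates of \cite{Spolaor_15,DLS16blowup} --- hence \eqref{e:firstvar1}--\eqref{e:firstvar5} and Corollary~\ref{c:freq-mon} --- apply verbatim with $\mbf_{x,k}$; for (ii), those derivations never use that the centre is the origin, and Corollary~\ref{c:ulb-frequency-2} supplies the two-sided bound $C_0^{-1}\le\Ibf_{x,k}(w,r)\le C_0$ uniformly over all such $w,r$, which is already \eqref{eq:simplify1-low} and is what makes everything else quantitative. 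Beyond this, only three elementary facts get used repeatedly: $\mbf_{x,k}\le\eps_4^2$ is small; $\Dbf_{x,k}(w,r)\le C\mbf_{x,k}$ is small, so after choosing $\eps_4$ small (depending on $\eta,\gamma$) one has $\Dbf_{x,k}(w,r)\le\eta\gamma<r$; and $r$ ranges over the bounded interval $(\eta\gamma,4]$, so $r^{\pm c}$ is pinched between positive constants.

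The two $\Lbf$-estimates \eqref{e:L-bound}, \eqref{e:L-der-bd} I would derive directly from the definitions rather than from \eqref{e:firstvar5}: bounding the integrand by $|\langle D_{\xi_l}(N_{x,k})_i\wedge\hat\xi_l\wedge(N_{x,k})_i,\iota_{x,\gamma^k}(d\omega)\rangle|\le C\|\iota_{x,\gamma^k}(d\omega)\|_{C^0}|DN_{x,k}||N_{x,k}|$, applying Cauchy--Schwarz with $|\phi|\le1$, $|\phi'|\le2$ and $d(w,\cdot)\le r$ on the relevant supports, and using the center-manifold Poincar\'e inequality $\int_{\{d(w,\cdot)\le r\}}|N_{x,k}|^2\le Cr^2\Dbf_{x,k}(w,r)$ (this is \eqref{eq:simplify3-low}, equivalently the last line of \eqref{e:firstvar3}); the exponent $\tfrac12$ in \eqref{e:L-der-bd} is produced automatically by the Cauchy--Schwarz split, after discarding a harmless factor of $r\le4$. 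This gives \eqref{e:L-bound}, \eqref{e:L-der-bd} with prefactor $\|\iota_{x,\gamma^k}(d\omega)\|_{C^0(\Bbf_{6\sqrt m})}$, which one bounds by $C\mbf_{x,k}^{\gamma_4}$ as in \eqref{e:domega-balls}--\eqref{e:domega-scaling}: the left side is $\le C\bar\eps\gamma^{2k}$ by \eqref{e:domega-balls}, while on $\Sfrak$ the bound $\Irm(T,x)\ge1+2^{-K}>1$ yields a polynomial \emph{lower} bound $\mbf_{x,k}\ge c\,\gamma^{\beta k}$ with $\beta=\beta(m,n,Q,K)$ (cf. Corollary~\ref{c:coarse} and the uniform frequency bound of Corollary~\ref{c:ulb-frequency}), so that $\gamma^{2k}\le C\mbf_{x,k}^{\gamma_4}$ once $\gamma_4=\gamma_4(m,n,Q)$ is taken small enough. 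Finally, since $|\Lbf_{x,k}|\le C\mbf_{x,k}^{\gamma_4}r\Dbf_{x,k}$ with $\mbf_{x,k}$ small, $\mathbf{\Gamma}_{x,k}=\Dbf_{x,k}+\Lbf_{x,k}$ is comparable to $\Dbf_{x,k}$; combined with \eqref{eq:simplify1-low} this yields the full chain \eqref{eq:simplify2-low}.

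The remaining estimates are then pure substitution. \eqref{eq:simplify4-low}, \eqref{eq:simplify8-low}, \eqref{eq:simplify9-low} follow by Cauchy--Schwarz in the defining integrals of $\Ebf_{x,k}$, $\Gbf_{x,k}$, $\partial_r\Dbf_{x,k}$ (for the last starting from \eqref{e:firstvar1}), using that $d(w,\cdot)$ is comparable to $r$ and $|\nabla d|$ to $1$ on $\supp\phi'(\cdot/r)$, together with \eqref{eq:simplify3-low}; \eqref{e:H-radial-var} is \eqref{e:firstvar2}; and \eqref{eq:simplify10-low} follows by inserting \eqref{eq:simplify2-low} and \eqref{eq:simplify4-low} into \eqref{e:H-radial-var}. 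For \eqref{eq:simplify11-low}, \eqref{eq:simplify12-low} one starts from \eqref{e:firstvar3}, \eqref{e:firstvar4} and absorbs the errors: $\mbf_{x,k}^{\gamma_4}\Dbf_{x,k}^{1+\gamma_4}\le\mbf_{x,k}^{\gamma_4}r^{\gamma_4}\Dbf_{x,k}$ since $\Dbf_{x,k}\le r$, while $\mbf_{x,k}r^2\Dbf_{x,k}$ and the other $\mbf_{x,k}$-errors are $\le C\mbf_{x,k}^{\gamma_4}r^{\gamma_4-1}\Dbf_{x,k}$ since $\mbf_{x,k}\le1$ and $r\le4$ (shrinking $\gamma_4\le\tfrac14$ if needed), the term $\mbf_{x,k}^{\gamma_4}\Dbf_{x,k}^{\gamma_4}\partial_r\Dbf_{x,k}$ in \eqref{e:firstvar4} being absorbed via \eqref{eq:simplify9-low}. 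Integrating \eqref{e:H-radial-var} over $[\rho,r]$ and writing $2\Ebf_{x,k}/\Hbf_{x,k}=(2\mathbf{\Gamma}_{x,k}/\Hbf_{x,k})(1+O(\mbf_{x,k}^{\gamma_4}r^{\gamma_4}))=(2\Ibf_{x,k}/r)(1+O(\mbf_{x,k}^{\gamma_4}r^{\gamma_4}))$, by \eqref{eq:simplify11-low} and \eqref{eq:simplify2-low}, gives \eqref{eq:simplify5-low}; \eqref{eq:simplify6-low} is the case $\rho=r/4$ together with $\int_{r/4}^r\Ibf_{x,k}(w,s)\,\tfrac{ds}{s}\le C$ from \eqref{eq:simplify1-low}. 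Estimate \eqref{eq:simplify7-low} follows from the excess decay of Proposition~\ref{p:tilt-decay-2} (valid also at centres near $\Sfrak$, by the same compactness argument giving Corollary~\ref{c:ulb-frequency-2}) and interpolation between $T_{x,\gamma^k}$ and $N_{x,k}$, which give $\Dbf_{x,k}(w,r)\le Cr^{m+2-2\delta_2}$, hence $\Hbf_{x,k}(w,r)\le Cr^{m+3-2\delta_2}$ by \eqref{eq:simplify2-low}. Lastly \eqref{eq:simplify13-low} comes from Corollary~\ref{c:freq-mon} after multiplying by $1+\Ibf_{x,k}(w,r)\le C$ and using $1\le Cr^{\gamma_4-1}$, $r^{-1}\Dbf_{x,k}^{\gamma_4}\le r^{\gamma_4-1}$ and $\Dbf_{x,k}^{\gamma_4-1}\partial_r\Dbf_{x,k}\le Cr^{-1}\Dbf_{x,k}^{\gamma_4}\le Cr^{\gamma_4-1}$ (the last two by $\Dbf_{x,k}\le r$ and \eqref{eq:simplify9-low}).

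The only genuinely non-mechanical point is the transfer in the first paragraph: that the full first-variation apparatus of Part~\ref{pt:sing-deg}, derived for the original intervals of flattening and the centre $p_{x,k}$, applies unchanged to the adapted center manifolds $\Mcal_{x,k}$ and at arbitrary admissible centres --- which is precisely what Proposition~\ref{p:good-cubes} (for the center manifolds) and Corollary~\ref{c:ulb-frequency-2} (for the uniform frequency bound) provide --- together with the bound $\|\iota_{x,\gamma^k}(d\omega)\|_{C^0(\Bbf_{6\sqrt m})}\le C\mbf_{x,k}^{\gamma_4}$, whose verification uses $\Irm(T,x)>1$. Once these are granted, every remaining inequality is the exponent bookkeeping above, trivialised by the smallness of $\mbf_{x,k}$ and $\Dbf_{x,k}$ and the boundedness of $r$.
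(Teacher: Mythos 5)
Your overall route is the paper's own: \eqref{eq:simplify1-low} is read off Corollary \ref{c:ulb-frequency-2}, and the remaining inequalities are obtained by transferring Proposition \ref{p:firstvar} and Corollary \ref{c:freq-mon} to the adapted center manifolds $\Mcal_{x,k}$ at arbitrary admissible centres $w$ and then doing exponent bookkeeping; the paper compresses exactly this into a reference to \cite[Proof of Lemma 10.8]{DLSk2}, and your second and third paragraphs are a reasonable expansion of it (including deriving \eqref{e:L-bound}--\eqref{e:L-der-bd} by Cauchy--Schwarz plus the Poincar\'e bound \eqref{eq:simplify3-low}, which is essentially how \eqref{e:firstvar5} is proved in the first place; for \eqref{eq:simplify9-low} note you also need a doubling estimate for $\Dbf_{x,k}$, obtained from \eqref{eq:simplify2-low} and \eqref{eq:simplify5-low}--\eqref{eq:simplify6-low}, since $\phi$ vanishes near $d=r$).

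There is, however, a genuine error in the one step you yourself flag as non-mechanical: the bound $\|\iota_{x,\gamma^k}(d\omega)\|_{C^0(\Bbf_{6\sqrt m})}\leq C\mbf_{x,k}^{\gamma_4}$. You justify it by claiming that $\Irm(T,x)\geq 1+2^{-K}>1$ ``yields a polynomial lower bound $\mbf_{x,k}\geq c\,\gamma^{\beta k}$ with $\beta=\beta(m,n,Q,K)$'', citing Corollary \ref{c:coarse} and Corollary \ref{c:ulb-frequency}. This is backwards. A lower bound on the singularity degree produces an \emph{upper} bound on the excess at small scales (this is precisely the decay \eqref{e:eub1} built into the definition of $\Sfrak$); it gives no obstruction to the excess decaying much faster than $\gamma^{\beta k}$. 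Indeed points of $\Sfrak$ may have arbitrarily large singularity degree, in which case $\mbf_{x,k}=\Ebf(T,\Bbf_{6\sqrt m\gamma^k}(x))$ decays roughly like $\gamma^{2k(\Irm(T,x)-1)}$, which defeats any exponent depending only on $(m,n,Q,K)$; and Corollary \ref{c:coarse} cannot be invoked here, since it is stated (and is only true) under the hypothesis $\Irm(T,0)<2-\delta_2$. What the needed slow-decay lower bound actually requires is an \emph{upper} bound on the frequency, uniform over $\Sfrak$ --- which is what Corollaries \ref{c:ulb-frequency} and \ref{c:ulb-frequency-2} provide --- combined with the $\Hbf$-growth/comparison estimates linking frequency to excess decay; the resulting exponent then depends on that uniform frequency bound (this is exactly why the constants here are implicitly allowed to depend on it, cf. the remark after Theorem \ref{t:main-low-one-piece}), and in particular it cannot be repaired by ``taking $\gamma_4$ small'', because $\gamma_4$ was already fixed in Proposition \ref{p:firstvar}. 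Alternatively, one argues as the construction of Section \ref{ss:adaptedintervals} suggests: at scales where $\Mcal_{x,k+1}=\iota_{0,\gamma}(\Mcal_{x,k})$ the $d\omega$-errors of the rescaled approximation carry extra geometric factors and are absorbed directly, while at scales where the center manifold is genuinely rebuilt the stopping conditions tie the excess to the construction parameter. As written, though, your justification of this step does not stand, and since every $\mbf_{x,k}^{\gamma_4}$-error in \eqref{e:L-bound}, \eqref{e:L-der-bd}, \eqref{eq:simplify11-low}--\eqref{eq:simplify13-low} hinges on it, it is a real gap rather than a cosmetic one.
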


Observe that \eqref{eq:simplify1-low} is the consequence of Corollary \ref{c:ulb-frequency-2}, while the remaining estimates are an easy consequence of Corollary \ref{c:freq-mon}, Proposition \ref{p:firstvar}, combined with the construction of the sequence of adapted center manifolds and associated normal approximations. We refer the reader to \cite[Proof of Lemma 10.8]{DLSk2} for a more in-depth explanation.

\section{Spatial frequency variations}
A key aspect of the proof of Theorem \ref{t:main-low-one-piece} is a quantitative control on how much a given normal approximation $N = N_{x,k}$ deviates from being homogeneous on average between two scales, in terms of the frequency pinching. The latter is defined in the following way.

\begin{definition}\label{d:freqpinch-low}
	Let $T$ and $\Sfrak$ be as in Theorem \ref{t:main-low-one-piece}, let $x\in \Sfrak$ and let $\Mcal_{x,k}$ and $N_{x,k}$ be as in Section \ref{ss:adaptedintervals}. Consider $w \in \Mcal_{x,k}\cap\Bbf_1$ and a corresponding point $y = x + \gamma^k w$. Let $\rho, r>0$ be two radii satisfying 
 \begin{align}\label{e:inequality-scales}
 \eta \gamma^{k+1} \leq \rho
\leq r < 4\gamma^k\, .
 \end{align}
 We define the frequency pinching $W_\rho^r(x,k,y)$ around $y$ between the scales $\rho$ and $r$ by
	\[
		W_\rho^r(x,k,y) :=\left|\Ibf_{x,k}\Big(w,\gamma^{-k} r \Big) - \Ibf_{x,k}\Big(w,\gamma^{-k} \rho\Big)\right|.
	\]
\end{definition}

We have the following comparison to homogeneity for the normal approximations $N_{x,k}$.

\begin{proposition}\label{p:distfromhomog-low} 
	Assume $T$ and $\Sfrak$ are as in Theorem \ref{t:main-low-one-piece}. Let $x\in \Sfrak$ and $k \in \N$. Then there exists $C = C (m,n,Q,K,\gamma, \eta)$ such that, for any $w \in \mathcal{M}_{x,k} \cap \Bbf_1$ and any radii $r,\rho>0$ satisfying
  \begin{align}\label{e:inequality-scales-low}
 4 \eta \gamma^{k+1} \leq \rho
\leq r < 2\gamma^k\, ,
 \end{align}
 the following holds. Let $y= x + \gamma^k w$ and let $\Acal_{\frac{\rho}{4}}^{2r}(w) :=\left(\Bbf_{2r/\gamma^{k}}(w)\setminus \bar{\Bbf}_{\frac{\rho}{4\gamma^k}}(w)\right)\cap\Mcal_{x,k}$. Then
	\begin{align*}
	&\int_{\Acal_{\frac{\rho}{4}}^{2r}(w)} \sum_i \left|D(N_{x,k})_i(z)\frac{d(w,z)\nabla d(w,z)}{|\nabla d(w,z)|} - \Ibf_{x,k}(w,d (w,z)) (N_{x,k})_i(z)|\nabla d(w,z)|\right|^2\, \frac{d z}{d(w,z)} \\
	&\qquad\leq C \Hbf_{x,k}\left(w,\tfrac{2r}{\gamma^k}\right) \left(W_{\rho/8}^{4r}(x,k,y) + \boldsymbol{m}_{x,k}^{\gamma_4} \left(\frac{r}{\gamma^{k}}\right)^{\gamma_4}\right)\log\left(\frac{16r}{\rho}\right).
\end{align*}
\end{proposition}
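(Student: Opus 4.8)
The plan is to adapt the scheme of \cite[Section~10]{DLSk2}: the key ingredient is a pointwise ``defect from homogeneity'' identity coming from the sharp form of Almgren's monotonicity, with all the semicalibration error terms absorbed into powers of $\mbf_{x,k}$, followed by a dyadic decomposition of the annulus $\Acal_{\rho/4}^{2r}(w)$.

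\textbf{Step 1 (pointwise defect identity).} For $w\in\Mcal_{x,k}\cap\Bbf_1$ and $s$ in the admissible range of scales I would introduce the non-negative quantity
\[
\Xi_{x,k}(w,s):=\sum_i\int_{\Mcal_{x,k}}\Bigl(-\phi'\bigl(\tfrac{d(w,z)}{s}\bigr)\Bigr)\frac{1}{d(w,z)}\Bigl|D(N_{x,k})_i(z)\tfrac{d(w,z)\nabla d(w,z)}{|\nabla d(w,z)|}-\Ibf_{x,k}(w,s)(N_{x,k})_i(z)|\nabla d(w,z)|\Bigr|^2 dz\geq 0,
\]
which is the integrand in the statement, but with the frequency frozen at the scale $s$ and with the extra weight $-\phi'(d/s)$. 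Expanding the square gives $\Xi_{x,k}(w,s)=s^2\Gbf_{x,k}(w,s)-2s\,\Ibf_{x,k}(w,s)\Ebf_{x,k}(w,s)+\Ibf_{x,k}(w,s)^2\Hbf_{x,k}(w,s)$; on the other hand, differentiating $\Ibf_{x,k}=s\mathbf{\Gamma}_{x,k}/\Hbf_{x,k}$ and inserting the radial variation identities \eqref{eq:simplify11-low}, \eqref{eq:simplify12-low}, \eqref{e:H-radial-var}, the bounds \eqref{e:L-bound}--\eqref{e:L-der-bd} on $\Lbf_{x,k}$, and the comparabilities \eqref{eq:simplify2-low}, \eqref{eq:simplify4-low}, one obtains, exactly as in the area-minimising case of \cite[Section~10]{DLSk2}, that for a.e.\ $s$
\[
\Xi_{x,k}(w,s)=\tfrac{s}{2}\Hbf_{x,k}(w,s)\,\partial_s\Ibf_{x,k}(w,s)+O\bigl(\mbf_{x,k}^{\gamma_4}s^{\gamma_4}\Hbf_{x,k}(w,s)\bigr),
\]
and in particular $\Xi_{x,k}(w,s)\leq\tfrac{s}{2}\Hbf_{x,k}(w,s)\bigl(\partial_s\Ibf_{x,k}(w,s)+C\mbf_{x,k}^{\gamma_4}s^{\gamma_4-1}\bigr)$. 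This is the step requiring the most care, and the main obstacle: one must verify that the additional term $\Lbf_{x,k}$ and its radial derivative do not spoil the perfect-square structure, which is precisely what the gain $\mbf_{x,k}^{\gamma_4}$ in \eqref{e:firstvar5} is there to guarantee — the remaining error bookkeeping is identical to the area-minimising case. (Note that $\Xi_{x,k}\geq 0$ is automatic, being an integral of a sum of squares against $-\phi'\geq 0$; combined with the identity this re-derives the almost-monotonicity \eqref{eq:simplify13-low}.)

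\textbf{Step 2 (fixed vs.\ varying scale).} With the explicit choice \eqref{e:def_phi} we have $-\phi'(\sigma)=2\mathbf 1_{[1/2,1]}(\sigma)$, so the integrand of $\Xi_{x,k}(w,t)$ is supported on $\{t/2<d(w,\cdot)<t\}$ and agrees with the integrand in the statement except that the frequency there is evaluated at $d(w,z)$ rather than at the fixed scale $t$. By the almost-monotonicity \eqref{eq:simplify13-low} together with the $\BV$ estimate of Proposition~\ref{p:bv-adapted}, for all scales $s_1,s_2\in[\tfrac{\rho}{8\gamma^k},\tfrac{4r}{\gamma^k}]$ one has $|\Ibf_{x,k}(w,s_1)-\Ibf_{x,k}(w,s_2)|\leq W_{\rho/8}^{4r}(x,k,y)+C\mbf_{x,k}^{\gamma_4}(\tfrac{r}{\gamma^k})^{\gamma_4}$; since the frequency is bounded (Corollary~\ref{c:ulb-frequency-2}), squaring this difference and integrating it against $\tfrac{|\nabla d|^2}{d}|(N_{x,k})_i|^2$ produces an error of size $C\bigl(W_{\rho/8}^{4r}(x,k,y)+\mbf_{x,k}^{\gamma_4}(\tfrac{r}{\gamma^k})^{\gamma_4}\bigr)\Hbf_{x,k}(w,\sigma)$ at any comparable scale $\sigma$ (here I also use $|\nabla d|=1+O(\mbf_{x,k}^{1/2}d)$ to trade the weight $\tfrac1d$ for $\tfrac{|\nabla d|^2}{d}$ at the cost of a harmless constant).

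\textbf{Step 3 (one dyadic shell, then summation).} For a dyadic scale $t$ with $\tfrac{\rho}{4\gamma^k}\leq t\leq\tfrac{2r}{\gamma^k}$, a short Fubini computation shows that each $z$ with $\tfrac{2t}{3}<d(w,z)<\tfrac{4t}{3}$ lies in $\{s/2<d(w,z)<s\}$ for a set of $s\in[t,2t]$ of length $\geq ct$, so the portion of the left-hand side over $\{\tfrac{2t}{3}<d(w,\cdot)<\tfrac{4t}{3}\}\cap\Mcal_{x,k}$ is at most $\tfrac{C}{t}\int_t^{2t}\Xi_{x,k}(w,s)\,ds$ plus the Step~2 error. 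Inserting Step~1 and bounding $\tfrac{s}{2}\Hbf_{x,k}(w,s)\leq Ct\,\Hbf_{x,k}(w,\tfrac{2r}{\gamma^k})$ for $s\in[t,2t]$ (monotonicity of $s\mapsto s^{-(m-1)}\Hbf_{x,k}(w,s)$ via \eqref{eq:simplify5-low}, together with \eqref{eq:simplify6-low} and $t\leq\tfrac{2r}{\gamma^k}$), the main term becomes $C\Hbf_{x,k}(w,\tfrac{2r}{\gamma^k})\bigl(\Ibf_{x,k}(w,2t)-\Ibf_{x,k}(w,t)+\mbf_{x,k}^{\gamma_4}(\tfrac{r}{\gamma^k})^{\gamma_4}\bigr)$, which by almost-monotonicity and Proposition~\ref{p:bv-adapted} is $\leq C\Hbf_{x,k}(w,\tfrac{2r}{\gamma^k})\bigl(W_{\rho/8}^{4r}(x,k,y)+\mbf_{x,k}^{\gamma_4}(\tfrac{r}{\gamma^k})^{\gamma_4}\bigr)$. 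Finally $\Acal_{\rho/4}^{2r}(w)$ is covered by at most $C\log\bigl(\tfrac{16r}{\rho}\bigr)$ shells of the form $\{\tfrac{2t}{3}<d(w,\cdot)<\tfrac{4t}{3}\}$ with $t$ dyadic in the above range, and summing the per-shell bound over them yields exactly the claimed estimate.
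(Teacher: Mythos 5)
Your proposal is correct and takes essentially the same route as the paper: your Step 1 identity $\Xi_{x,k}(w,s)=\tfrac{s}{2}\Hbf_{x,k}(w,s)\,\partial_s\Ibf_{x,k}(w,s)+O(\mbf_{x,k}^{\gamma_4}s^{\gamma_4}\Hbf_{x,k}(w,s))$ is exactly the paper's expansion of $\partial_s\Ibf_{x,k}$ into the Almgren perfect square (with the $\Lbf_{x,k}$ contributions absorbed via \eqref{e:firstvar5}, i.e. \eqref{e:L-bound}--\eqref{e:L-der-bd}), and your frozen-scale-to-$\Ibf(w,d(w,z))$ replacement paying the pinching is the paper's $\xi$ versus $\zeta$ comparison. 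The only difference is organizational: the paper integrates $\partial_s\Ibf$ over all of $[\rho/4,4r]$ and then applies one Fubini with $s\in[d(w,z),2d(w,z)]$, whereas you work dyadic shell by shell and count $O(\log(16r/\rho))$ shells — the logarithm arises for the same reason in both arguments, so this is a cosmetic reorganization rather than a genuinely different proof.
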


\begin{proof}
    The argument is analogous to that of \cite{DLSk1}, but taking into account the additional term $\Lbf_{x,k}$ in the frequency. We include the details here for the purpose of clarity.

    At the risk of abusing notation, we will omit dependency on $x$, $w$ and $k$ for all quantities, for simplicity. For instance, we simply write $\Ibf(s)$ for the quantity $\Ibf_{x,k}(w,\gamma^{-k} s)$, and $W^{4r}_{\rho/8}$ for the pinching $W^{4r}_{\rho/8}(x,k,y)$.

    Invoking the estimates of Lemma \ref{l:simplify-low}, 
    we obtain
    \begin{align*}
        W^{4r}_{\rho/4}(y) &\geq \int_{\rho/4}^{4r} \partial_s \Ibf(s) \, ds = \int_{\rho/4}^{4r} \frac{\mathbf\Gamma(s) + s\partial_s\mathbf{\Gamma}(s) - \Ibf(s)\mathbf\Gamma(s)\partial_s\Hbf(s)}{\Hbf(s)} \, ds \\
        &\geq 2\int_{\rho/4}^{4r} \frac{s\Gbf(s)-\Ibf(s)}{\Hbf(s)} \, ds - C\mbf^{\gamma_4}\int_{\rho/4}^{4r}\frac{s^{\gamma_4}\Dbf(s)}{\Hbf(s)} + \frac{s^{1+\gamma_4}\Dbf(s)^2}{\Hbf(s)^2} \, ds \\
        &\geq 2\int_{\rho/4}^{4r} \frac{s\Gbf(s)-\Ibf(s)}{\Hbf(s)} \, ds - C\mbf^{\gamma_4}((4r)^{\gamma_4} - (\rho/4)^{\gamma_4}).
    \end{align*}
    Now notice that we may write
    \begin{align*}
        \int_{\rho/4}^{4r} &\frac{s\Gbf(s)-\Ibf(s)}{\Hbf(s)} \, ds \\
        &= \int_{\rho/4}^{4r} \frac{1}{s\Hbf(s)} \int_{\Mcal} -\phi'\left(\frac{d(w,z)}{s}\right)\frac{1}{d(w,z)}\left(\sum_j \left|DN_j \, \frac{d(w,z)\nabla d(w,z)}{|\nabla d(w,z)|}\right|^2\right. \\
        &\left.\qquad - 2\Ibf(s)\sum_j N_j \cdot\left(DN_j \, d(w,z)\nabla d(w,z)\right) + \Ibf(s)^2|N(z)|^2|\nabla d(w,z)|^2\right)\, d z \, d s \\
        &= \int_{\rho/4}^{4r} \frac{1}{s\Hbf(s)} \int -\phi'\left(\frac{d(w,z)}{s}\right)\frac{\xi (w,z,s)}{d(w,z)} \, d z \, d s,
    \end{align*}
    where
    \begin{align*}
        \xi(w,z,s) &= \sum_j \left| DN_j \frac{d(w,z)\nabla d(w,z)}{|\nabla d(w,z)|} - \Ibf(s) N_j(z)|\nabla d(w,z)|\right|^2\, .
	\end{align*}
    Thus,
    \begin{align*}
		W_{\rho/4}^{4r}(y) \geq 2\int_{\rho/4}^{4r} \frac{1}{s\Hbf(s)} \int_{\Acal_{s/2}^{s}(w)}\frac{\xi(w,z,s)}{d(w,z)} \, d z \, d s - C\boldsymbol{m}^{\gamma_4}(r^{\gamma_4}-\rho^{\gamma_4}).
    \end{align*}
    
    Let
    \[
        \zeta(w,z) := \sum_j \left| D N_j (z)\frac{d(w,z)\nabla d(w,z)}{|\nabla d(w,z)|} - \Ibf(d (w,z))  N_j(z)|\nabla d(w,z)| \right|^2.
    \]
    We then have
    \[
        \zeta(w,z) \leq 2\xi(w,z,s) + 2|\Ibf(s) - \Ibf(d(w,z))|^2| N(z)|^2 \leq 2\xi(w,z,s) + C W_{d (w,z)}^s(y) | N(z)|^2.
    \]
    Let us now control $W_{d (w,z)}^s(y)$ by $W_{\rho/8}^{4r}(y)$. In light of the quantitative almost-monotonicity \eqref{eq:simplify13-low} for $\Ibf$, for any radii $\eta \gamma^{k+1} < s < t \leq \gamma^{k}$ we have
    \begin{equation}\label{eq:frequencymtn-low}
        \Ibf(s) \leq \Ibf(t) + C{\boldsymbol{m}}^{\gamma_4} (t^{\gamma_4}- s^{\gamma_4}).
    \end{equation}
    For $s\in [\tfrac{\rho}{4},4r]$ this therefore yields
    \begin{equation}\label{eq:monopinching-low}
        W_{d (w,z)}^s(y) \leq W_{\rho/8}^{4r}(y) +C{\boldsymbol{m}}^{\gamma_4} ((4r)^{\gamma_4} -(\rho/8)^{\gamma_4}).
    \end{equation}
    Now observe that the estimate \eqref{eq:simplify6-low} gives
    \begin{align*}
        \int_{\Acal_{\rho/4}^{2r}(w)} \int_{d(w,z)}^{2d(w,z)} \frac{1}{s^2\Hbf(s)} \zeta(w,z) \, d s \, d z &\geq \frac{1}{\Hbf(2r)}\int_{\Acal_{\rho/4}^{2r}(w)} \zeta(w,z)\int_{d(w,z)}^{2d(w,z)} \frac{1}{s^2} \, d s \, d z \\
        &\geq \frac{1}{2\Hbf(2r)}\int_{\Acal_{\rho/4}^{2r}(w)} \frac{\zeta(w,z)}{d(w,z)} \, d z.
    \end{align*}
    Combining this with the preceding estimates, we arrive at
    \begin{align*}
        W^{4r}_{\rho/4}(y) &+ \log\left(\frac{16r}{\rho}\right)W_{\rho/8}^{4r}(y) \\
        &\geq C\int_{\rho/4}^{4r} \frac{1}{s\Hbf(s)}\int_{\Acal_{s/2}^s(w)} \frac{\zeta(w,z)}{d(w,z)}\, d z \, d s - C{\boldsymbol{m}}^{\gamma_4}r^{\gamma_4}\log\left(\frac{16r}{\rho}\right) - C{\boldsymbol{m}}^{\gamma_4}r^{\gamma_4} \\
        &\geq C \int_{\Acal_{\rho/4}^{2r}(w)} \int_{d(w,z)}^{2d(w,z)} \frac{1}{s^2\Hbf(s)} \zeta(w,z) \, d s \, d z - C{\boldsymbol{m}}^{\gamma_4}r^{\gamma_4}\log\left(\frac{16r}{\rho}\right)  - C{\boldsymbol{m}}^{\gamma_4}r^{\gamma_4} \\
        &\geq \frac{C}{\Hbf(2r)}\int_{\Acal_{\rho/4}^{2r}(w)} \frac{\zeta(w,z)}{d(w,z)} \, d z - C{\boldsymbol{m}}^{\gamma_4}r^{\gamma_4}\log\left(\frac{16r}{\rho}\right) - C{\boldsymbol{m}}^{\gamma_4}r^{\gamma_4}\\
        &\geq \frac{C}{\Hbf(2r)}\int_{\Acal_{\rho/4}^{2r}(w)} \frac{\zeta(w,z)}{d(w,z)} \, d z - C{\boldsymbol{m}}^{\gamma_4}r^{\gamma_4}\log\left(\frac{16r}{\rho}\right) - C{\boldsymbol{m}}^{\gamma_4}r^{\gamma_4}.
    \end{align*}
    Rearranging and again making use of \eqref{eq:frequencymtn-low}, this yields the claimed estimate.
\end{proof}

We will further require the following spatial variation estimate for the frequency, with control in terms of frequency pinching.

\begin{lemma}\label{l:spatialvarI-low}
	Let $T$ be as in Theorem \ref{t:main-low-one-piece}, let $x\in \Sfrak$ and $k \in \N$. Let $x_1,x_2 \in \Bbf_1 \cap  \Mcal_{x,k}$, $y_i = x + \gamma^k x_i$ and let $d(x_1,x_2) \leq \frac{\gamma^{-k} r}{8}$, where $r$ is such that 
	\[
	    8\eta\, \gamma^{k+1} < r \leq \gamma^k\, .
	\]
	Then there exists $C= C (m,n,Q,\gamma, \eta) > 0$ such that for any $z_1,z_2 \in [x_1,x_2]$, we have
	\begin{align*}
	&\left|\Ibf_{x,k}\left(z_1,\tfrac{r}{\gamma^{k}}\right) - \Ibf_{x,k}\left(z_2,\tfrac{r}{\gamma^{k}}\right)\right| \\
	&\qquad\leq C \left[\left(W_{r/8}^{4r}(x,k,y_1)\right)^{1/2} + \left(W_{r/8}^{4r}(x,k,y_2)\right)^{1/2} + \boldsymbol{m}_{x,k}^{\gamma_4/2} \left(\tfrac{r}{\gamma^{k}}\right)^{\gamma_4/2}\right]\frac{\gamma^{k}d(z_1,z_2)}{r}\, .
	\end{align*}
\end{lemma}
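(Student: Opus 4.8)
The plan is to run the argument behind the analogous spatial variation estimate in \cite{DLSk1} (see also \cite{DLSk2}), the only new ingredient being the extra term $\Lbf_{x,k}$ in the frequency. Drop the subscripts $x,k$ and set $s:=\tfrac{r}{\gamma^k}\in(8\eta\gamma,1]$. First I would view the segment $[z_1,z_2]\subseteq[x_1,x_2]$ as an arclength-parameterized curve $t\mapsto\sigma(t)$ in $\Mcal_{x,k}$, so that
\[
\Big|\Ibf(z_1,s)-\Ibf(z_2,s)\Big|\le\int_0^{d(z_1,z_2)}\big|\partial_{\dot\sigma(t)}\Ibf(\sigma(t),s)\big|\,dt\, ,\qquad d(z_1,z_2)\le d(x_1,x_2)\le\tfrac{s}{8}\, .
\]
Since $\Ibf(w,s)=s\,\mathbf{\Gamma}(w,s)/\Hbf(w,s)$ with $\mathbf{\Gamma}=\Dbf+\Lbf$, for a tangent direction $v$ at $w$ we have
\[
\partial_v\Ibf(w,s)=\frac{s\,\partial_v\Dbf(w,s)+s\,\partial_v\Lbf(w,s)}{\Hbf(w,s)}-\Ibf(w,s)\,\frac{\partial_v\Hbf(w,s)}{\Hbf(w,s)}\, ,
\]
and the goal is a pointwise bound of the form $|\partial_v\Ibf(w,s)|\le\tfrac{C}{s}\big[(\text{deviation from homogeneity})^{1/2}+\mbf_{x,k}^{\gamma_4/2}s^{\gamma_4/2}\big]$.

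The heart of the matter is the computation of $\partial_v\Dbf$ and $\partial_v\Hbf$. The center variable enters only through $d(w,\cdot)$, so differentiating the cutoffs gives integrands supported on the annulus $\{s/2\le d(w,z)\le s\}$; using the $C^{3,\kappa}$-estimates for $\Mcal_{x,k}$ (cf.\ \cite{DLDPHM}) and the geodesic distance estimates recorded after Proposition~\ref{p:firstvar}, I would replace $\partial_v^w d(w,z)$ and $\partial_v^w\nabla d(w,z)$ by their Euclidean analogues up to errors of order $O(\mbf_{x,k}^{1/2}d(w,z))$, and integrate by parts in $z$ to recast the $\phi''$ contributions produced by $\partial_v\Hbf$ as $\phi'$-weighted integrals, up to curvature errors. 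As in the area-minimizing case, the pieces insensitive to the failure of homogeneity cancel among the three terms above, so that the leading part of $\partial_v\Ibf(w,s)$ is a pairing of $v$ against the Euler-type defect
\[
\zeta(w,z):=\sum_i\Big|DN_i(z)\tfrac{d(w,z)\nabla d(w,z)}{|\nabla d(w,z)|}-\Ibf\big(w,d(w,z)\big)\,N_i(z)\,|\nabla d(w,z)|\Big|^2\, .
\]
A Cauchy--Schwarz inequality on the annulus, using \eqref{eq:simplify2-low} and \eqref{eq:simplify6-low} to handle the zeroth order factor, then bounds $\Hbf(w,s)^{-1}\big(|\partial_v\Dbf(w,s)|+\Ibf(w,s)|\partial_v\Hbf(w,s)|\big)$ by $\tfrac{C}{s}\big(\Hbf(w,s)^{-1}\int_{\Acal_{s/2}^{s}(w)}\zeta(w,z)/d(w,z)\,dz\big)^{1/2}+C\mbf_{x,k}^{1/2}$. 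The remaining contribution $s\,\partial_v\Lbf(w,s)/\Hbf(w,s)$ I would estimate by the same reasoning that yields \eqref{e:L-bound}--\eqref{e:L-der-bd}, including the absorption trick of \cite[Remark~1.10]{Spolaor_15} that produces errors quadratic in $\|d\omega\|_{C^0}$; since for $x\in\Sfrak$ the rescaled semicalibration error is controlled by $\mbf_{x,k}$ (cf.\ \eqref{e:m_0} and \eqref{e:domega-balls}), this term is $\le C\mbf_{x,k}^{\gamma_4}$, and combined with $\gamma_4\le1$ (so $\mbf_{x,k}^{1/2}\le\mbf_{x,k}^{\gamma_4/2}$) one obtains the desired pointwise bound on $|\partial_v\Ibf(w,s)|$.

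It then remains to feed in Proposition~\ref{p:distfromhomog-low}, applied with inner and outer radii comparable to $s$ so that its logarithmic factor is $O(1)$, to bound $\int_{\Acal_{s/2}^{s}(w)}\zeta(w,z)/d(w,z)\,dz\le C\,\Hbf(w,s)\big(W_{r/8}^{4r}(x,k,y_w)+\mbf_{x,k}^{\gamma_4}s^{\gamma_4}\big)$, where $y_w=x+\gamma^k\sigma(t)$ is the base point attached to the running point on the segment (here the almost-monotonicity \eqref{eq:simplify13-low} is used to pass between comparable scales). Because $d(x_1,x_2)\le s/8$, the annuli about $y_w$ at the relevant scales are nested between those about $y_1$ and about $y_2$, so \eqref{eq:simplify13-low} again gives $W_{r/8}^{4r}(x,k,y_w)\le C\big(W_{r/8}^{4r}(x,k,y_1)+W_{r/8}^{4r}(x,k,y_2)+\mbf_{x,k}^{\gamma_4}s^{\gamma_4}\big)$. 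Inserting this into the pointwise bound for $|\partial_v\Ibf|$ and integrating over $t\in[0,d(z_1,z_2)]$ (recall $d(z_1,z_2)\le s/8$ and $s\in(8\eta\gamma,1]$, so all the powers of $s^{-1}$ that appear are mutually comparable with constants depending on $\eta,\gamma$) produces the claim, upon rewriting $s^{-1}d(z_1,z_2)=\gamma^k d(z_1,z_2)/r$ and $s^{\gamma_4/2}=(\gamma^{-k}r)^{\gamma_4/2}$.

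I expect the main obstacle to be the bookkeeping in the second step: isolating the homogeneity defect $\zeta$ inside $\partial_v\Ibf$ — in particular, disposing of the $\phi''$ terms coming from $\partial_v\Hbf$ through the integration by parts, and keeping every geometric error at order $\mbf_{x,k}^{1/2}$ or better — together with verifying that the new term $\partial_v\Lbf$ can be arranged to carry a power of $\mbf_{x,k}$ no smaller than $\gamma_4$. The latter is precisely where the quadratic scaling of the semicalibration error and the absorption trick of \cite[Remark~1.10]{Spolaor_15} become essential, and is the one genuinely new point compared with \cite{DLSk1,DLSk2}.
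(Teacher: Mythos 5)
Your reduction to a pointwise bound on $|\partial_v\Ibf_{x,k}(w,s)|$ along the segment, the identification of the leading term as a pairing with a homogeneity defect, and the treatment of $\partial_v\Lbf_{x,k}$ (which is really just \eqref{e:spatial-var-L} together with \eqref{eq:simplify2-low}, no absorption trick needed) are all sound. The genuine gap is in your final transfer step: you measure the defect relative to the \emph{running} point $w=\sigma(t)$, so after applying Proposition \ref{p:distfromhomog-low} you are left with the pinching $W^{4r}_{r/8}(x,k,y_w)$ at $y_w=x+\gamma^k\sigma(t)$, and you then claim that nesting of annuli plus \eqref{eq:simplify13-low} gives $W^{4r}_{r/8}(x,k,y_w)\le C\bigl(W^{4r}_{r/8}(x,k,y_1)+W^{4r}_{r/8}(x,k,y_2)+\mbf_{x,k}^{\gamma_4}s^{\gamma_4}\bigr)$. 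This does not follow: \eqref{eq:simplify13-low} is almost-monotonicity in the radial variable at a \emph{fixed} center, and says nothing about comparing $\Ibf_{x,k}(w,\cdot)$ with $\Ibf_{x,k}(x_\ell,\cdot)$ at different centers; the pinching at an intermediate point is a difference of frequencies of a different center and cannot be dominated by the endpoint pinchings by inclusion of annuli alone. Controlling how the frequency (and hence its pinching) varies as the center moves is exactly the content of the lemma you are proving, so as written the argument is circular.

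The paper's proof avoids this by a different choice of test vector field: it takes $v(z)=d(x_1,x_2)\,\nabla d(x_1,z)/|\nabla d(x_1,z)|$, for which $\partial_v N_i=\partial_{\eta_{x_1}}N_i-\partial_{\eta_{x_2}}N_i$, and then splits this difference into the homogeneity defects $\Ecal_{1,i},\Ecal_{2,i}$ \emph{relative to the endpoints} $x_1,x_2$ plus the frequency difference $\Ecal_3$. After Cauchy--Schwarz, Proposition \ref{p:distfromhomog-low} is applied directly at $x_1$ and $x_2$ (the $w$-centered annulus sits inside slightly larger annuli centered at $x_\ell$ since $d(w,x_\ell)\le s/8$, and \eqref{eq:simplify5-low}, \eqref{eq:simplify6-low} compare $\Hbf_{x,k}(x_\ell,2s)$ with $\Hbf_{x,k}(w,s)$), so only $W^{4r}_{r/8}(x,k,y_1)$ and $W^{4r}_{r/8}(x,k,y_2)$ ever appear; the self-referential term $|\Ibf_{x,k}(x_1,s)-\Ibf_{x,k}(x_2,s)|$ produced by $\Ecal_3$ carries the small prefactor $C\mbf_{x,k}^{\gamma_4}s^{\gamma_4}$ and is harmless. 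To repair your argument you would need to either adopt this choice of $v$ (i.e.\ measure the defect from the endpoints rather than from $\sigma(t)$) or supply an independent proof that intermediate-point pinching is controlled by endpoint pinching, which is not available at this stage.
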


To prove Lemma \ref{l:spatialvarI-low}, we need the following spatial variational identities for $\Dbf_{x,k}$, $\Lbf_{x,k}$ and $\Hbf_{x,k}$.

\begin{lemma}\label{l:spatialvarDH-low}
    Suppose that $T$ is as in Theorem \ref{t:main-low-one-piece}, let $x\in \Sfrak$ and let $k \in \N$. Let $v$ be a continuous vector field on $\Mcal_{x,k}$. For any $w\in \Mcal_{x,k}\cap\Bbf_1$ and any $\eta \gamma \leq r \leq 2$, letting $\nu_w(z):= \nabla d(w,z)$, we have
    \begin{align}
        \partial_v \Dbf_{x,k}(w,r) &= -\frac{2}{r} \int_{\Mcal_{x,k}} \phi'\left(\frac{d(w,z)}{r}\right) \sum_i \langle \partial_{\nu_w} ({N}_{x,k})_i(z), \partial_v ({N}_{x,k})_i(z)\rangle \, d\Hcal^{m}(z) \label{e:spatial-var-D} \\
        &\qquad\qquad
        + O\big({\boldsymbol{m}}_{x,k}^{\gamma_4}\big)r^{\gamma_4 -1}\Dbf_{x,k}(w,r),\\
        \partial_v \Hbf_{x,k}(w,r) &= - 2 \sum_i \int_{\Mcal_{x,k}} \frac{|\nabla d(w,z)|^2}{d(w,z)}\phi'\left(\frac{d (w,z)}{r}\right)\langle \partial_v ({N}_{x,k})_i(z), ({N}_{x,k})_i(z) \rangle \, d\Hcal^{m}(z), \label{e:spatial-var-H} \\
        |\partial_v \Lbf_{x,k}(w,r)| &\leq C\mbf_{x,k}^{1/2} \|v\|_{C^0} \left(\Hbf_{x,k}(w,r)\partial_r \Dbf_{x,k}(w,r)\right)^{1/2}  \label{e:spatial-var-L}
    \end{align}
\end{lemma}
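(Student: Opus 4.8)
The plan is to prove the three identities separately. The formulas \eqref{e:spatial-var-D} and \eqref{e:spatial-var-H} involve only $\Dbf_{x,k}$ and $\Hbf_{x,k}$, which are defined exactly as in the area-minimizing case, so I would obtain them by reproducing the derivation of the corresponding spatial variation identities in \cite{DLSk2} (ultimately going back to \cite{DLS16blowup}) essentially verbatim, replacing the error bounds used there with those of Proposition \ref{p:firstvar} and Lemma \ref{l:simplify-low}. For \eqref{e:spatial-var-H} one differentiates $\Hbf_{x,k}(\cdot,r)$ in the center variable along $v$: the only dependence of the integrand on the center is through the geodesic distance $d$ (appearing in $\phi'$ and in the prefactor $\tfrac{|\nabla d|^2}{d}$, with $|\nabla d|\equiv 1$), one expresses the first-slot gradient of $d$ via the second-slot gradient and parallel transport on $\Mcal_{x,k}$, and after an integration by parts on $\Mcal_{x,k}$, using the polar-coordinate (Jacobi field) expansions (i)--(iii) following Proposition \ref{p:firstvar} together with the $C^{3,\kappa}$ center-manifold estimates (\cite{DLDPHM}) to absorb the curvature terms, one arrives at \eqref{e:spatial-var-H}. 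For \eqref{e:spatial-var-D} one additionally invokes the outer variation, i.e. the approximate Euler--Lagrange equation satisfied by the normal approximation $N_{x,k}$ of the $\Omega$-minimal current $T$ (with the error terms $\Err^o_j$ of \cite[Section 3.3]{DLS16blowup}, now receiving also a contribution from $T(d\omega\mres X)$ as in Proposition \ref{p:firstvar}), tested against $\zeta_i=\phi(d/r)\,\partial_v(N_{x,k})_i$: expanding $\int\langle D(N_{x,k})_i,D\zeta_i\rangle$ into $\tfrac12\partial_v\Dbf_{x,k}$ plus $-\tfrac1r\int\phi'(d/r)\sum_i\langle\partial_{\nu_w}(N_{x,k})_i,\partial_v(N_{x,k})_i\rangle$ (the remaining terms being commutator and curvature errors) and rearranging gives \eqref{e:spatial-var-D}, whose error is then controlled by $C\mbf_{x,k}^{\gamma_4}r^{\gamma_4-1}\Dbf_{x,k}(w,r)$ using Proposition \ref{p:firstvar} and the uniform frequency bounds.

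For \eqref{e:spatial-var-L} I would differentiate under the integral sign directly: since the only center-dependence of $\Lbf_{x,k}(\cdot,r)$ is through $\phi(d(w,z)/r)$,
\[
\partial_v\Lbf_{x,k}(w,r)=\tfrac1r\sum_{i=1}^Q\sum_{l=1}^m(-1)^{l+1}\int_{\Mcal_{x,k}}\langle D_{\xi_l}(N_{x,k})_i\wedge\hat\xi_l\wedge(N_{x,k})_i,\,d\omega\rangle\,\phi'\!\left(\tfrac{d(w,z)}{r}\right)\langle\nabla_w d(w,z),v\rangle\,d\Hcal^m(z),
\]
and then bound the algebraic factor $\langle D_{\xi_l}(N_{x,k})_i\wedge\hat\xi_l\wedge(N_{x,k})_i,d\omega\rangle$ pointwise by $C\|d\omega\|_{C^0}|D(N_{x,k})_i||(N_{x,k})_i|$ and $|\langle\nabla_w d(w,z),v\rangle|$ by $\|v\|_{C^0}$. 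Using that $\|d\omega\|_{C^0}\leq C\mbf_{x,k}^{1/2}$ on the relevant scale (by the construction of the adapted intervals of flattening, cf. the scaling \eqref{e:domega-balls} and the definition of $\mbf_{x,k}$), that $\phi'(d/r)$ is supported in the annulus $\{d(w,\cdot)\in[r/2,r]\}$, and Cauchy--Schwarz, one gets a bound by $C\mbf_{x,k}^{1/2}\|v\|_{C^0}\big(\int_{\{d\in[r/2,r]\}}|DN_{x,k}|^2\big)^{1/2}\big(\int_{\{d\in[r/2,r]\}}|N_{x,k}|^2\big)^{1/2}$; the first factor is at most $Cr\,\partial_r\Dbf_{x,k}(w,r)$ by \eqref{e:firstvar1} (using $-\phi'\equiv2$ and $d\geq r/2$ on the annulus) and the second at most $Cr\,\Hbf_{x,k}(w,r)$ directly from the definition of $\Hbf_{x,k}$, so, since $r$ is comparable to a constant on the admissible range, \eqref{e:spatial-var-L} follows.

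The hard part will be the bookkeeping of the geometric errors in \eqref{e:spatial-var-D}: one must verify that the commutator between $\partial_v$ and $D$ on the curved center manifold, the discrepancy between the two slot-gradients of $d$, and the first-variation errors inherited from $\Omega$-minimality --- in particular the new contribution coming from $T(d\omega\mres X)$ in the outer variation --- all combine into an error no worse than $C\mbf_{x,k}^{\gamma_4}r^{\gamma_4-1}\Dbf_{x,k}(w,r)$. Ensuring that the $\omega$-dependent pieces carry the correct power of $\|d\omega\|_{C^0}$ is precisely where the absorption trick of \cite[Remark 1.10]{Spolaor_15} --- already needed in the proof of Proposition \ref{p:firstvar} --- is used; granted this and the $C^{3,\kappa}$ center-manifold estimates, the argument follows \cite{DLSk2} line by line.
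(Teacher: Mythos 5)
Your proposal matches the paper's proof: for \eqref{e:spatial-var-D} and \eqref{e:spatial-var-H} the paper simply invokes the spatial variation identities of \cite[Lemma 11.4]{DLSk1} (which you propose to re-derive in the same way, with the extra $T(d\omega\mres X)$ contribution absorbed into the same error as in Proposition \ref{p:firstvar}), and for \eqref{e:spatial-var-L} the paper likewise differentiates under the integral so that the derivative only hits $\phi(d(w,\cdot)/r)$, applies Cauchy--Schwarz on the annulus to get $\|d\omega\|_{C^0}\|v\|_{C^0}(\Hbf\,\partial_r\Dbf)^{1/2}$, and concludes via the scaling \eqref{e:domega-balls}. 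So the argument is correct and essentially identical to the paper's.
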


\begin{proof}
    The proof of \eqref{e:spatial-var-D} and \eqref{e:spatial-var-H} can be found in \cite[Lemma 11.4]{DLSk1}. To see the validity of \eqref{e:spatial-var-L}, omitting dependency on $x,k$ for simplicity, we simply write
    \[
        \partial_v \Lbf(w,r) = \sum_{i=1}^Q \sum_{l=1}^m (-1)^{l+1} \int_\Mcal \langle D_{\xi_l} N_i(z) \wedge \hat\xi_l(z) \wedge N_i, d\omega(z) \rangle \phi'\left(\frac{d(w,z)}{r}\right) \frac{\nabla d(w,z)}{r}\cdot v(z) \, d\Hcal^{m}(z).
    \]
    Combining with an application of Cauchy-Schwarz, this in turn yields the estimate
    \[
        |\partial_v \Lbf(w,r)| \leq C\|d\omega\|_{C^0(\Bbf_{\gamma^k}(x))} \|v\|_{C^0} \left(\Hbf(w,r)\partial_r \Dbf(w,r)\right)^{1/2}.
    \]
    Recalling \eqref{e:domega-balls}, the conclusion follows immediately.
\end{proof}

\begin{proof}[Proof of Lemma \ref{l:spatialvarI-low}]
    The majority of the proof follows in the same way as that of \cite[Lemma 5.4]{DLSk2} (cf. \cite[Lemma 11.3]{DLSk2}), but due to the additional error terms present when $T$ is semicalibrated, we repeat the full argument here.
    
    We will as usual omit dependency on $x,k$ for all objects. Let $x_1,x_2$ be as in the statement of the lemma and let $w$ lie in the geodesic segment $[x_1,x_2]\subset \Mcal$. Given a continuous vector field $v$ on $\Mcal$ and $\rho \in (8\eta\gamma, 1]$, we have
    \[
        \partial_v \Ibf(w,\rho) = \frac{\rho(\partial_v \Dbf(w,\rho) + \partial_v \Lbf(w,\rho))}{\Hbf(w,\rho)} - \frac{\Ibf(w,\rho)\partial_v\Hbf(w,\rho)}{\Hbf(w,\rho)}.
    \]
    Let $\mu_w$ be the measure on $\Mcal$ with density
    \[
        d\mu_w(z) = -\frac{|\nabla d(w,z)|}{d(w,z)}\phi'\left(\frac{d(w,z)}{r}\right)d\Hcal^m(z).
    \]
    Now let
    \[
        \eta_w(z) := d(w,z)\frac{\nabla d(w,z)}{|\nabla d(w,z)|} = \frac{d(w,z)}{|\nabla d(w,z)|}\nu_w(z),
    \]
    and choose $v$ to be the vector field
    \[
        v(z) = d(x_1,x_2)\frac{\nabla d(x_1,z)}{|\nabla d(x_1,z)|}.
    \]
    Applying Lemma \ref{l:spatialvarDH-low} with this choice of $v$ and exploiting the estimates of Lemma \ref{l:simplify-low}, for each $\rho = r\gamma^{-k}\in (8\eta\gamma,1]$ we have
    \begin{align*}
    	\partial_v \Ibf(w,\rho) &= \frac{2}{\Hbf(w,\rho)}\int_\Mcal \sum_i \langle \partial_{\eta_w} N_i, \partial_v N_i \rangle \, d\mu_w -\frac{2\Ibf(w,\rho)}{\Hbf(w,\rho)}\int_\Mcal |\nabla d(w,\cdot)| \sum_i \langle \partial_v N_i, N_i \rangle \, d\mu_w\\
    	&\qquad + C\mbf^{1/2} \rho^2 \Hbf(w,\rho)^{-1/2}(\partial_\rho \Dbf(w,\rho))^{1/2} + C\mbf^{\gamma_4} \rho^{\gamma_4}.
    \end{align*}
	Now observe that $v$ parameterizes the geodesic line segment $[x_1,x_2]\subset \Mcal$. Thus,
	\begin{align*}
		\partial_v N_i(z) &= d(x_1,z) \frac{\nabla d(x_1,z)}{|\nabla d(x_1,z)|} DN_i(z) - d(x_2,z) \frac{\nabla d(x_2,z)}{|\nabla d(x_2,z)|} DN_i(z) \\
		&=\partial_{\eta_{x_1}} N_i(z) - \partial_{\eta_{x_2}} N_i(z) \\
		&= \underbrace{\left(\partial_{\eta_{x_1}} N_i(z) - \Ibf(x_1, d (x_1, z))N_i(z)\right)}_{:= \Ecal_{1,i}} - \underbrace{\left(\partial_{\eta_{x_2}} N_i(z) - \Ibf(x_2, d (x_2, z))N_i(z)\right)}_{:= \Ecal_{2,i}} \\
		&\qquad + \underbrace{\Ibf(x_1,d (x_1, z)) - \Ibf(x_2,d (x_2, z))}_{:= \Ecal_3}N_i(z).
	\end{align*}
Combining this with the above calculation and once again the estimates in Lemma \ref{l:simplify-low}, we therefore obtain
\begin{align*}
\partial_v \Ibf(w,\rho) &= \frac{2}{\Hbf(w,\rho)} \int_{\Mcal} \sum_i \langle \partial_{\eta_w} N_i, \Ecal_{1,i} - \Ecal_{2,i} \rangle \, d \mu_w - 2\frac{\Ibf(w,\rho)}{\Hbf(w,\rho)}\int_\Mcal |\nabla d(w,\cdot)| \sum_i \langle \Ecal_{1,i} - \Ecal_{2,i}, N_i \rangle d\mu_w \\
&\qquad + \frac{2}{\Hbf(w,\rho)} \int_\Mcal \Ecal_3\sum_i \langle \partial_{\eta_w} N_i, N_i \rangle - 2\frac{\Ibf(w,\rho)}{\Hbf(w,\rho)} \int_\Mcal |\nabla d(w,\cdot)| \Ecal_3\sum_i |N_i|^2 d\mu_w \\
&\qquad + C \mbf^{1/2} \rho^2\Hbf(w,\rho)^{-1/2}(\partial_\rho\Dbf(w,\rho))^{1/2} + C\boldsymbol{m}^{\gamma_4}\rho^{\gamma_4} \\
&= \frac{2}{\Hbf(w,\rho)} \int_{\Mcal} \sum_i \langle \partial_{\eta_w} N_i, \Ecal_{1,i} - \Ecal_{2,i} \rangle d \mu_w - 2\frac{\Ibf(w,\rho)}{\Hbf(w,\rho)}\int_\Mcal |\nabla d(w,\cdot)| \sum_i \langle N_i, \Ecal_{1,i} - \Ecal_{2,i} \rangle d\mu_w \\
&\qquad + \frac{2\Ecal_3}{\Hbf(w,\rho)} \left(\int_\Mcal \sum_i \langle \partial_{\eta_w} N_i, N_i \rangle d\mu_w - \rho\Dbf(w,\rho)\right) \\
&\qquad + C\boldsymbol{m}^{\gamma_4}\rho^{\gamma_4}\Ibf(w,\rho).
\end{align*}
where in the last inequality, we have used that $\partial_\rho \Dbf(\rho) \leq \tfrac{1}{r} \Dbf(\rho)$ and \eqref{eq:simplify1-low}. Recalling that we aim to control the spatial frequency variation in terms of frequency pinching at the endpoints $x_1$ and $x_2$, let us rewrite $\Ecal_3$ in the following form:
\begin{align*}
\Ecal_3 &\leq |\left(\Ibf(x_1, d (x_1,z)) -\Ibf(x_1,\rho)\right)| + |\left(\Ibf(x_1,\rho) - \Ibf(x_2,\rho)\right)| + |\left(\Ibf(x_2,\rho) - \Ibf(x_2, d(x_2, z))\right)| \\
&=  W^{\gamma^k d (x_1, z)}_r(y_1) + W_{\gamma^k d (x_2, z)}^r(y_2) + |\Ibf(x_1,\rho) - \Ibf(x_2,\rho)|.
\end{align*}
Combining this with the Cauchy-Schwartz inequality and the estimates in Lemma \ref{l:simplify-low}, we have
\begin{align*}
\partial_v \Ibf(w,\rho) &\leq C\left[\int \sum_i\left(|\Ecal_{1,i}|^2 + |\Ecal_{2,i}|^2\right) \, d \mu_w\right]^{1/2}\left(\frac{1}{\Hbf(w,\rho)} \left[\int \sum_i |\partial_{\eta_w} N_i|^2 d\mu_w\right]^{1/2} +\frac{\Ibf(w,\rho)}{\Hbf(w,\rho)^{1/2}}\right) \\
&\qquad + C\boldsymbol{m}^{\gamma_4}\rho^{1+\gamma_4}\frac{|\Ibf(x_1,\rho) - \Ibf(x_2,\rho)|}{\Hbf(w,\rho)}\left(\Dbf(w,\rho) + |\Lbf(w,\rho)|\right) \\
&\qquad + C\boldsymbol{m}^{\gamma_4}\rho^{1+\gamma_4}\frac{W^{\gamma^k d (x_1, z)}_r (y_1) + W_{\gamma^k d (x_2, z)}^r(y_2)}{\Hbf(w,\rho)}\left(\Dbf(w,\rho) + |\Lbf(w,\rho)|\right) \\
&\qquad + C\boldsymbol{m}^{\gamma_4}\rho^{\gamma_4} \\
&\leq C\left[\int \sum_i\left(|\Ecal_{1,i}|^2 + |\Ecal_{2,i}|^2\right) \, d \mu_w\right]^{1/2}\left(\frac{1}{\Hbf(w,\rho)} \left[\int \sum_i |\partial_{\eta_w} N_i|^2 d\mu_w\right]^{1/2} +\frac{\Ibf(w,\rho)}{\Hbf(w,\rho)^{1/2}}\right) \\
&\qquad + C\boldsymbol{m}^{\gamma_4}\rho^{\gamma_4}\left(|\Ibf(x_1,\rho) - \Ibf(x_2,\rho)| + W^{\gamma^k d (x_1, z)}_r (y_1) + W_{\gamma^k d (x_2, z)}^r(y_2)\right) \\
&\qquad+ C\boldsymbol{m}^{\gamma_4}\rho^{\gamma_4}
\end{align*}
Applying Proposition \ref{p:distfromhomog-low}, for $\ell = 1,2$ we further have
\begin{align*}
\int \sum_i|\Ecal_{\ell,i}|^2 \, d \mu_w &= - \int \sum_i|\Ecal_{\ell,i}|^2(z) \frac{|\nabla d(w,z)|}{d(w,z)} \phi'\left(\frac{d(w,z)}{\rho}\right) \, d\Hcal^m(z) \\
&\leq C\Hbf(x_\ell,2\rho) (W_{r/8}^{4r}(y_\ell)+\boldsymbol{m}^{\gamma_4} \rho^{\gamma_4}).
\end{align*}

Together with the doubling estimate \eqref{eq:simplify5-low} (which applies since $d(x_\ell,w)\leq \rho$) and the uniform upper frequency bound \eqref{eq:simplify1-low}, we thus obtain the estimate
\begin{align*}
\partial_v \Ibf(w,\rho) &\leq C\left[(W_{r/8}^{4r}(y_1)+\boldsymbol{m}^{\gamma_4} \rho^{\gamma_4})^{1/2} + (W_{r/8}^{4r}(y_2)+\boldsymbol{m}^{\gamma_4} \rho^{\gamma_4})^{1/2}\right] + C\boldsymbol{m}^{\gamma_4}\rho^{\gamma_4}. \\
&\leq C \left[W_{\frac{r}{8}}^{4r}(y_1)^{1/2} + W_{\frac{r}{8}}^{4r}(y_2)^{1/2} \right] + C\boldsymbol{m}^{\gamma_4/2}\rho^{\gamma_4/2}.
\end{align*}
Integrating this inequality over the geodesic segment $[z_1,z_2]\subset \Mcal$, the proof is complete.
    
\end{proof}

\section{Quantitative spine splitting}\label{s:splitting}
Following the notation of \cite{DLSk2}, for a finite set of points $X=\{x_0,\dots, x_k\}$ we let $V(X)$ denote the affine subspace given by
\[
	V(X) := x_0 + \spn\left(\{x_1-x_0,\dots,x_k-x_0\}\right).
\]
We recall the following quantitative notions of linear independence and spanning, first introduced in \cite{DLMSV}.

\begin{definition}
	We say that a set $X = \{x_0, x_1,\dots,x_k\} \subset \Bbf_r(w)$ is $\rho r$-linearly independent if 
	\[
		d(x_i, V (\{x_0, \ldots, x_{i-1}\})) \geq \rho r \qquad \mbox{for all $i=1, \ldots , k$}
	\]
	We say that a set $F \subset \Bbf_r(w)$ $\rho r$-spans a $k$-dimensional affine subspace $V$ if there is a $\rho r$-linearly independent set of points $ X= \{x_i\}_{i=0}^k \subset F$ such that $V= V (X)$.
\end{definition}

We have the following two quantitative splitting results (cf. \cite[Section 6]{DLSk2}).

\begin{lemma}\label{l:Q-pts-clearing}
	Suppose that $T$ is as in Theorem \ref{t:main-low-one-piece} and let $\rho,\bar \rho \in (0,1]$, $\tilde \rho \in (\eta,1]$ be given radii. There exists $\eps^*=\eps^*(m,n,Q,\gamma,K,\rho,\tilde\rho,\bar\rho)>0$ such that for $\eps_4\leq \eps^*$, the following holds. Suppose that for some $x\in \Sfrak$ and $r\in [\gamma^{k+1},\gamma^k]$, there exists a collection of points $X=\{x_i\}_{i=0}^{m-2}\subset\Bbf_r(x)\cap\Sfrak$ satisfying the properties
	\begin{itemize}
		\item $X$ is $\rho r$-linearly independent;
		\item the nearest points $z_i$ to $x_i$ such that $\gamma^{-k}(z_i-x) \in \Mcal_{x,k}$ satisfy
		\[
			W^{2r}_{\tilde\rho r}(x,j(k),z_i) < \eps^*.
		\]
		Then $\Sfrak \cap (\Bbf_r\setminus \Bbf_{\bar \rho r}(V(X)))=\emptyset$.
	\end{itemize}
\end{lemma}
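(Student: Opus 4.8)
The plan is to argue by contradiction, following the blueprint of the analogous clearing lemma in \cite[Section 6]{DLSk2}. Suppose the conclusion fails for some sequence of admissible parameters. Then there exist points $x_h\in\Sfrak$, radii $r_h\in[\gamma^{k_h+1},\gamma^{k_h}]$, collections $X_h=\{x_i^h\}_{i=0}^{m-2}\subset\Bbf_{r_h}(x_h)\cap\Sfrak$ which are $\rho r_h$-linearly independent with the nearest projected points $z_i^h$ satisfying $W^{2r_h}_{\tilde\rho r_h}(x_h,j(k_h),z_i^h)<\tfrac1h$, and yet a point $q_h\in\Sfrak\cap(\Bbf_{r_h}(x_h)\setminus\Bbf_{\bar\rho r_h}(V(X_h)))$. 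Rescaling by $\iota_{x_h,\gamma^{k_h}}$ and passing to the associated center manifolds $\Mcal_{x_h,k_h}$ with normal approximations $N_{x_h,k_h}$, we may normalize so that all the relevant objects live in $\Bbf_1$ (using Corollary \ref{c:ulb-frequency-2} to control the frequency uniformly on the annuli in play).

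The key mechanism is Proposition \ref{p:distfromhomog-low}: for each $i$, the smallness of the frequency pinching $W$ at $z_i^h$ forces $N_{x_h,k_h}$ to be close, in the $L^2$ sense weighted over the relevant annulus, to a function that is radially homogeneous about $z_i^h$ with degree $\Ibf_{x_h,k_h}(z_i^h,\cdot)$; the error is controlled by $W+\mbf_{x_h,k_h}^{\gamma_4}r^{\gamma_4}$, and by Definition \ref{d:bits} together with the excess decay Proposition \ref{p:tilt-decay-2} we have $\mbf_{x_h,k_h}\to 0$ as $\eps_4\downarrow 0$ (this is precisely where the smallness of $\eps^*$ enters). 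Lemma \ref{l:spatialvarI-low} then shows the frequency values at the various $z_i^h$ agree in the limit, so along a subsequence one obtains a single limiting map $N_\infty$ (in fact its average-free blow-up, after normalizing by $\|N\|_{L^2}$ as in Section \ref{ss:compactness}) which is simultaneously radially homogeneous about each of the $m-1$ limiting points $z_i^\infty$. Since $X_h$ is $\rho r_h$-linearly independent, the limiting points span an $(m-2)$-dimensional affine subspace $V_\infty$, and a function homogeneous about $m-1$ affinely independent points is necessarily invariant under translations in $V_\infty$, i.e.\ $N_\infty$ only depends on the variables orthogonal to $V_\infty$. On the other hand, $q_h$ rescales to a point $q_\infty$ at distance $\geq\bar\rho$ from $V_\infty$ that is a singular point of density $Q$ of the limiting current, so $N_\infty$ must vanish (to high order) there while being translation-invariant along $V_\infty$; combined with the nontriviality of the blow-up (guaranteed by the normalization and the frequency lower bound of Corollary \ref{c:ulb-frequency}, via the same reasoning as in Part \ref{pt:sing-deg}) and the classification of homogeneous Dir-minimizers, this produces the contradiction — as in \cite{DLSk2}, one shows the limit would have to be a nontrivial $V_\infty$-invariant homogeneous Dir-minimizer vanishing on a half-space, which is impossible.

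The main obstacle, relative to the area-minimizing case of \cite{DLSk2}, is bookkeeping the additional error terms coming from the semicalibration — in particular the contribution of $\Lbf_{x,k}$ to the frequency and its spatial and radial variations. These are already handled by the estimates \eqref{e:L-bound}, \eqref{e:L-der-bd}, \eqref{e:spatial-var-L} and by Proposition \ref{p:distfromhomog-low} as stated (which already incorporates $\Lbf$), so the point is to check that all such terms carry a factor $\mbf_{x,k}^{\gamma_4}$ (or better) and hence are absorbed into the $\eps^*$-smallness together with the pinching; this is exactly the role of the adapted intervals of flattening of Section \ref{ss:adaptedintervals}, for which $\mbf_{x,k}$ decays geometrically by \eqref{e:eub1}. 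The remainder of the argument — the compactness extraction, the passage to the limiting Dir-minimizer, and the rigidity of multi-valued homogeneous Dir-minimizers invariant along a subspace — is identical to \cite[Section 6]{DLSk2}, so we only indicate the modifications and refer there for the details.
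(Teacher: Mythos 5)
Your proposal follows essentially the same route as the paper, which itself omits the argument and simply notes that, given the compactness procedure of Section \ref{ss:compactness}, the proof is the contradiction/blow-up argument of \cite[Lemma 6.2]{DLSk2}: smallness of the pinching plus Proposition \ref{p:distfromhomog-low} and Lemma \ref{l:spatialvarI-low} force the limiting Dir-minimizer to be homogeneous about $m-1$ quantitatively independent points, hence invariant along the limiting spine, and the persistence of the density-$Q$ point coming from $q_h$ then contradicts nontriviality of the normalized blow-up. The only loose phrasing is at the end: the collapsed set of the limit contains the $(m-1)$-dimensional half-plane spanned by $V_\infty$ and the ray through $q_\infty$ (not a half-space, and no ``high order'' vanishing is needed), which kills the nontrivial average-free Dir-minimizer via the dimension bound on its $Q$-point set, exactly as in \cite{DLSk2}.
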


\begin{lemma}\label{l:spine-splitting}
	Suppose that $T$ is as in Theorem \ref{t:main-low-one-piece}, and let $\rho,\bar \rho \in (0,1]$, $\tilde \rho \in (\eta,1]$ be given radii. For any $\delta>0$, there exists $\eps^\dagger=\eps^\dagger(m,n,Q,\gamma,K,\rho,\tilde\rho,\bar\rho,\delta)\in (0, \eps^*]$ such that for $\eps_4\leq \eps^\dagger$, the following holds. Suppose that for some $x\in \Sfrak$ and $r\in [\gamma^{k+1},\gamma^k]$, there exists a collection of points $X=\{x_i\}_{i=0}^{m-2}\subset\Bbf_r(x)\cap\Sfrak$ satisfying the properties
	\begin{itemize}
		\item $X$ is $\rho r$-linearly independent;
		\item the nearest points $z_i$ to $x_i$ such that $\gamma^{-k}(z_i-x) \in \Mcal_{x,k}$ satisfy
		\[
		W^{2r}_{\tilde\rho r}(x,j(k),z_i) < \eps^\dagger.
		\]
	\end{itemize}
	Then for each $\zeta_1,\zeta_2\in \Bbf_r(x)\cap\Bbf_{\eps^\dagger r}(V(X))$ and each pair of radii $r_1,r_2\in [\bar\rho,1]$, letting $w_j$ denote the nearest point to $\gamma^{-k}(\zeta_j-x)$ that belongs to $\Mcal_{x,k}$, the following estimate holds:
    \[
        |\Ibf_{x,k}(w_1,r_1) - \Ibf_{x,k}(w_2,r_2)| \leq \delta.
    \]
\end{lemma}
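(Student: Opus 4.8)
The plan is to argue by contradiction and compactness, following the template of Corollary~\ref{c:ulb-frequency-2} and the corresponding spine-splitting step in \cite{DLSk2}. Fix $\delta>0$ and suppose that no admissible $\eps^\dagger$ exists. Then for each $\ell\in\N$ I would extract: a current $T^{(\ell)}$ satisfying Assumption~\ref{a:main-2}, a parameter $\eps_4^{(\ell)}\le\tfrac1\ell$ with associated set $\Sfrak^{(\ell)}$, a point $x^{(\ell)}\in\Sfrak^{(\ell)}$, an integer $k_\ell$, a scale $r^{(\ell)}\in[\gamma^{k_\ell+1},\gamma^{k_\ell}]$, a $\rho r^{(\ell)}$-linearly independent set $X^{(\ell)}=\{x_i^{(\ell)}\}_{i=0}^{m-2}\subset\Bbf_{r^{(\ell)}}(x^{(\ell)})\cap\Sfrak^{(\ell)}$ whose nearest center-manifold points $z_i^{(\ell)}$ satisfy $W^{2r^{(\ell)}}_{\tilde\rho r^{(\ell)}}(x^{(\ell)},k_\ell,z_i^{(\ell)})<\tfrac1\ell$, together with points $\zeta_1^{(\ell)},\zeta_2^{(\ell)}\in\Bbf_{r^{(\ell)}}(x^{(\ell)})\cap\Bbf_{r^{(\ell)}/\ell}(V(X^{(\ell)}))$ and radii $r_1^{(\ell)},r_2^{(\ell)}\in[\bar\rho,1]$ such that $|\Ibf_{x^{(\ell)},k_\ell}(w_1^{(\ell)},r_1^{(\ell)})-\Ibf_{x^{(\ell)},k_\ell}(w_2^{(\ell)},r_2^{(\ell)})|>\delta$, where $w_j^{(\ell)}$ is the point of $\Mcal_{x^{(\ell)},k_\ell}$ nearest to $\gamma^{-k_\ell}(\zeta_j^{(\ell)}-x^{(\ell)})$. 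The point of the contradiction is that $\eps_4^{(\ell)}\to0$ forces $\mbf_{x^{(\ell)},k_\ell}=\Ebf(T^{(\ell)},\Bbf_{6\sqrt m\gamma^{k_\ell}}(x^{(\ell)}))\to0$, so the rescalings $T^{(\ell)}_{x^{(\ell)},\gamma^{k_\ell}}$ converge to a flat cone $Q\llbracket\pi_0\rrbracket$, and by \eqref{e:L-bound} and Proposition~\ref{p:firstvar} all error terms (in particular $\Lbf_{x,k}$) become negligible in the limit.

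Next I would pass to the limit. By the center-manifold and normal-approximation compactness of Section~\ref{ss:compactness} (cf. \cite[Theorem~8.2]{Spolaor_15}), after passing to a subsequence the center manifolds $\Mcal_{x^{(\ell)},k_\ell}$ converge in $C^3_{\loc}$ to $\pi_0$ (hence the associated geodesic distances converge to the Euclidean one), and the normalized normal approximations converge strongly in $W^{1,2}_{\loc}\cap L^2$ to a Dir-minimizer $u$ with $\boldsymbol\eta\circ u=0$ and $\|u\|_{L^2}=1$; moreover $r^{(\ell)}/\gamma^{k_\ell}\to\bar r\in[\gamma,1]$ and $r_j^{(\ell)}\to\bar r_j\in[\bar\rho,1]$, the rescaled points $z_i^{(\ell)}$ converge to affinely independent points $\bar z_i\in\pi_0$ spanning an $(m-2)$-dimensional affine plane $V$ (affine independence being the closed condition of $\rho\bar r$-linear independence, with $\bar r\ge\gamma>0$), and the rescaled $\zeta_j^{(\ell)}$ converge to points $\bar\zeta_j\in V$ because $\dist(\zeta_j^{(\ell)},V(X^{(\ell)}))\le r^{(\ell)}/\ell$. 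Using the strong convergence, the uniform frequency bounds of Corollary~\ref{c:ulb-frequency-2}, and $\mbf\to0$, the frequency functions converge locally uniformly, $\Ibf_{x^{(\ell)},k_\ell}\to I_u$, where $I_u$ is Almgren's regularized frequency of $u$. Passing to the limit in the pinching hypothesis gives $I_u(\bar z_i,2\bar r)=I_u(\bar z_i,\tilde\rho\bar r)$ for each $i$, so by monotonicity of $r\mapsto I_u(\bar z_i,r)$ and its rigidity case (\cite[Section~3.5]{DLS_MAMS}), $u$ is radially homogeneous about each $\bar z_i$, with some degree $\alpha_i$, on the corresponding annulus.

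The heart of the argument is the cone-splitting step. Combined with $\|u\|_{L^2}=1\ne0$, the standard rigidity for homogeneous Dir-minimizers forces the degrees $\alpha_i$ to coincide with a common value $\alpha$, and the classical cone-splitting argument then shows that $u$ is invariant under translations in the $(m-2)$-dimensional subspace $W:=\spn\{\bar z_i-\bar z_0\}$. Consequently $u=\bar u\circ\mathbf{p}_{W^\perp}$ for an $\alpha$-homogeneous two-dimensional Dir-minimizer $\bar u$ centered at the common projection $\bar z^\ast:=\mathbf{p}_{W^\perp}(\bar z_i)$, and hence $I_u(\zeta,r)=\alpha$ for every $\zeta\in V$ and every admissible scale $r$. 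In particular $I_u(\bar\zeta_1,\bar r_1)=\alpha=I_u(\bar\zeta_2,\bar r_2)$, so $|\Ibf_{x^{(\ell)},k_\ell}(w_1^{(\ell)},r_1^{(\ell)})-\Ibf_{x^{(\ell)},k_\ell}(w_2^{(\ell)},r_2^{(\ell)})|\to0$, contradicting the standing assumption that this quantity exceeds $\delta$; this completes the proof.

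The part requiring most care will be the bookkeeping of scales in the cone-splitting step: the pinching hypothesis controls $I_u(\bar z_i,\cdot)$ only on the rescaled interval $[\tilde\rho\bar r,2\bar r]$, so one must use the admissible choices of $\gamma,\eta,\bar\rho,\tilde\rho$ (fixed in terms of $m,n,Q$), the monotonicity formula, and the rigidity/unique-continuation of two-dimensional Dir-minimizers to guarantee that the resulting homogeneity, and the translation invariance along $V$, are valid over the full range of scales $[\bar\rho,1]$ appearing in the conclusion. As an alternative closer to the quantitative tools of this section, one may instead use Proposition~\ref{p:distfromhomog-low} to convert the smallness of $W^{2r}_{\tilde\rho r}(x,k,z_i)$ into almost-homogeneity of $N_{x,k}$ about each $z_i$, combine the $m-1$ almost-homogeneities via the same $\partial_{z_i-z_0}$-splitting identity used in the proof of Lemma~\ref{l:spatialvarI-low} together with the quantitative linear independence of $X$ to deduce that $N_{x,k}$ is almost invariant along $V(X)$, and then apply Lemma~\ref{l:spatialvarI-low} (to compare $\zeta_j$ with its nearest point on $V(X)$, at negligible cost since $\dist(\zeta_j,V(X))\le\eps^\dagger r$ and by the uniform bound of Corollary~\ref{c:ulb-frequency-2}) and the almost-monotonicity \eqref{eq:simplify13-low} (to change scales); the delicate point there is the absorption of the $\mbf_{x,k}^{\gamma_4}$ error terms.
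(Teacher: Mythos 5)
Your contradiction-and-compactness argument—extracting a sequence with $\eps_4^{(\ell)}\to 0$ so that $\mbf_{x^{(\ell)},k_\ell}\to 0$, blowing up the normal approximations to a Dir-minimizer, passing the vanishing pinching to the limit to get homogeneity about the limit points, and then cone-splitting to force the frequency to be a single constant along the spine and across the scales $[\bar\rho,1]$—is exactly the route the paper takes, since its proof of Lemma \ref{l:spine-splitting} consists of invoking the compactness of Section \ref{ss:compactness} and deferring to the argument of \cite[Lemma 6.3]{DLSk2}, which is precisely this scheme. Your write-up is consistent with that argument (and correctly identifies the one delicate point, namely propagating the annular homogeneity/invariance over the full range of centers and scales in the conclusion), so it matches the paper's proof.
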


Given the compactness argument in Section \ref{ss:compactness}, the proof of both Lemma \ref{l:Q-pts-clearing} and Lemma \ref{l:spine-splitting} follows in exactly the same way as that of \cite[Lemma 6.2, Lemma 6.3]{DLSk2} respectively. We therefore omit the arguments here.

\section{Jones' $\beta_2$ control and rectifiability}
In this section, we combine all of the previous estimates of the preceding sections in this part, in order to gain control on Jones' $\beta_2$ coefficients associated to the measure $\Hcal^{m-2}\mres\Sfrak$, providing a quantitative $L^2$-flatness control on the flat density $Q$ singularities of $T$ with degree strictly larger than 1.

We begin by recalling the following definition.

\begin{definition}\label{def:beta2}
        Given a Radon measure $\mu$ on $\R^{m+n}$, we define the $(m-2)$-dimensional Jones' $\beta_2$ coefficient of $\mu$ as
        \[
            \beta_{2,\mu}^{m-2}(x,r) := \inf_{\text{affine $(m-2)$-planes $L$}} \left[r^{-(m-2)} \int_{\Bbf_r(x)} \left(\frac{\dist(y,L)}{r}\right)^2 \, d\mu(y)\right]^{1/2}.
        \]
    \end{definition}
    We have the following key estimate on $\beta_{2,\mu}^{m-2}$ for a measure $\mu$ supported on $\Sfrak$.

    \begin{proposition}\label{prop:beta2control-low}
    There exist $\alpha_0 = \alpha_0(m,n,Q) > 0$, $\hat \eta= \hat \eta (m) \in (0, \tfrac{1}{8})$, $\hat\eps=\hat\eps(m,n,Q,K)\in (0,\eps^\dagger]$, $C(m,n,Q,K) > 0$ with the following property. Suppose that $\eps_4\in (0,\hat\eps]$, $\eta\in (0,\hat\eta]$ and let $T$ and $\Sfrak$ be as in Theorem \ref{t:main-low-one-piece}. Suppose that $\mu$ is a finite non-negative Radon measure with $\spt (\mu) \subset \Sfrak$ and let $x_0 \in \Sfrak$. Then for all $r \in (8\eta\gamma^{k+1} ,\gamma^k]$ we have
    \begin{align*}
            [\beta_{2,\mu}^{m-2}(x_0, r/8)]^2 &\leq Cr^{-(m-2)} \int_{\Bbf_{r/8} (x_0)} W^{4r}_{r/8}\left(x_0, k,\mathbf{p}_{x_0, k} (x)\right)\, d\mu (x)\\
            &\qquad + C \boldsymbol{m}_{x_0,k}^{\alpha_0} r^{-(m-2-\alpha_0)} \mu(\Bbf_{r/8}(x_0)).
    \end{align*}
\end{proposition}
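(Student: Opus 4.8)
The plan is to adapt the Naber--Valtorta-style $\beta_2$ estimate from \cite[Section~7]{DLSk2} to the semicalibrated setting, using Proposition~\ref{p:distfromhomog-low} as the main engine and carefully tracking the extra error term $\boldsymbol{m}_{x_0,k}^{\gamma_4}r^{\gamma_4}$ that arises from the semicalibration. The strategy is the classical one: bound $\beta_2^{m-2}$ by comparing the barycenter-based second moment of $\mu$ against a best-fit $(m-2)$-plane, and show that directions in which $\mu$ spreads out orthogonally to such a plane must be directions of ``approximate homogeneity'' of the normal approximation $N_{x_0,k}$, which are in turn controlled by the frequency pinching $W^{4r}_{r/8}$.

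First I would fix $x_0\in\Sfrak$ and $r\in(8\eta\gamma^{k+1},\gamma^k]$, work on the center manifold $\Mcal:=\Mcal_{x_0,k}$ with $N:=N_{x_0,k}$, and set $w=\mathbf p_{x_0,k}(x)$ for $x$ in the support of $\mu$. Write $\bar\mu$ for the pushforward of $\mu\mres\Bbf_{r/8}(x_0)$ to $\Mcal$ under the nearest-point projection, and let $\bar x$ be its barycenter and $V$ the affine $(m-2)$-plane through $\bar x$ spanned by the top $m-2$ eigenvectors of the (rescaled) covariance matrix $b_{ij}:=\int (z_i-\bar x_i)(z_j-\bar x_j)\,d\bar\mu(z)$. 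As in \cite[Lemma~7.2]{DLSk2}, one has the elementary linear-algebra bound $[\beta^{m-2}_{2,\mu}(x_0,r/8)]^2 \lesssim r^{-(m-2)-2}\sum_{j=m-1}^{m+n}\lambda_j$, where $\lambda_{m-1}\le\cdots$ are the smallest $n+2$ eigenvalues of $(b_{ij})$; so it suffices to bound each such small eigenvalue, i.e.\ each unit vector $v$ that is (nearly) orthogonal to $V$ and satisfies $\int|\langle z-\bar x,v\rangle|^2\,d\bar\mu(z)$ small relative to... no, rather: for \emph{every} unit $v\perp V$ we must show $\int\langle z-\bar x,v\rangle^2 d\bar\mu\le (\text{pinching term})$. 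The key geometric input, exactly as in \cite{DLSk2}, is: if $v$ is such a direction, then $N$ is approximately invariant in the $v$-direction, which via the spine-splitting Lemma~\ref{l:spine-splitting} and Proposition~\ref{p:distfromhomog-low} forces the points of $\spt\mu$ to cluster near an affine plane.

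The heart of the argument is the ``second-moment vs.\ frequency-pinching'' inequality: for a suitable test vector field $v$ on $\Mcal$ interpolating the constant direction, one integrates the spatial variation identities of Lemma~\ref{l:spatialvarDH-low} against $d\bar\mu$ and uses Proposition~\ref{p:distfromhomog-low} to estimate
\[
\int_{\Bbf_{r/8}(x_0)} \Big\langle \text{(direction of non-homogeneity of }N\text{)},\ \text{stuff}\Big\rangle\, d\mu \lesssim \Hbf_{x_0,k}\big(\cdot,\tfrac{2r}{\gamma^k}\big)\Big(\fint W^{4r}_{r/8}\,d\mu + \boldsymbol{m}_{x_0,k}^{\gamma_4}r^{\gamma_4}\Big)\,\mu(\Bbf_{r/8}(x_0)).
\]
Combining this with the orthogonality $v\perp V$ (so that the first moment $\int\langle z-\bar x,v\rangle d\bar\mu$ vanishes and one genuinely controls the second moment), and normalizing by $\Hbf_{x_0,k}\sim r\Dbf_{x_0,k}$ together with the uniform frequency bounds of Corollary~\ref{c:ulb-frequency-2} and the estimates \eqref{eq:simplify1-low}--\eqref{eq:simplify13-low} of Lemma~\ref{l:simplify-low}, yields $\sum_{j\ge m-1}\lambda_j \lesssim r^{m}\big(\fint W^{4r}_{r/8}\,d\mu + \boldsymbol{m}_{x_0,k}^{\gamma_4}r^{\gamma_4}\big)\mu(\Bbf_{r/8}(x_0))$. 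Dividing by $r^{m-2+2}$ and absorbing powers of $\gamma$ (recall $r\sim\gamma^k$) gives precisely the claimed bound with $\alpha_0\le\gamma_4$; the factor $r^{-(m-2-\alpha_0)}$ on the last term comes from trading $\boldsymbol{m}_{x_0,k}^{\gamma_4}r^{\gamma_4}\cdot r^{-(m-2)}\mu(\Bbf_{r/8})$ and renaming $\gamma_4$ as $\alpha_0$ (after possibly shrinking).

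I expect the main obstacle to be the bookkeeping of the semicalibration error terms: unlike in \cite{DLSk2}, the identities in Lemma~\ref{l:spatialvarDH-low} carry the extra term $\partial_v\Lbf_{x_0,k}$, bounded only by $C\boldsymbol{m}_{x_0,k}^{1/2}\|v\|_{C^0}(\Hbf\,\partial_r\Dbf)^{1/2}$ (estimate \eqref{e:spatial-var-L}), and Proposition~\ref{p:distfromhomog-low} itself already loses a power $\boldsymbol{m}_{x_0,k}^{\gamma_4}r^{\gamma_4}$ rather than the quadratic $r^2$ one would have in the area-minimizing case. One must check that all these errors can be packaged into a single term of the form $\boldsymbol{m}_{x_0,k}^{\alpha_0}r^{-(m-2-\alpha_0)}\mu(\Bbf_{r/8}(x_0))$ — in particular that the exponent $\gamma_4$ of $\boldsymbol{m}_{x_0,k}$ survives (it does, since $\boldsymbol m_{x_0,k}^{1/2}\le \boldsymbol m_{x_0,k}^{\gamma_4}$ for $\gamma_4\le 1/2$ after using $\boldsymbol m_{x_0,k}\le 1$, and the Cauchy--Schwarz step pairing $\partial_v\Lbf$ with the $\Hbf$-normalization does not degrade it further thanks to \eqref{eq:simplify2-low}, \eqref{eq:simplify9-low}), and that the $r$-scaling works out so that on each adapted interval $(\gamma^{k+1},\gamma^k]$ the estimate is uniform in $k$. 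Apart from this, the argument is a faithful transcription of \cite[Proposition~7.4]{DLSk2}, so I would present the linear-algebra reduction and the clustering estimate in detail and then indicate that the remaining steps are identical modulo the error terms just discussed.
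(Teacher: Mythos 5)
Your proposal takes essentially the same route as the paper: the paper proves this proposition by running the argument of \cite[Proposition 7.2]{DLSk2} verbatim, with Proposition \ref{p:distfromhomog-low}, Lemma \ref{l:simplify-low} and Lemma \ref{l:spatialvarI-low} supplying the semicalibrated inputs, and your outline (linear-algebra reduction of $\beta_2$ to the small eigenvalues of the second-moment matrix, control of those eigenvalues via the comparison-to-homogeneity estimate and the uniform frequency bounds, and absorption of the $\boldsymbol{m}_{x_0,k}^{\gamma_4}r^{\gamma_4}$ errors into the $\boldsymbol{m}_{x_0,k}^{\alpha_0}r^{-(m-2-\alpha_0)}\mu(\Bbf_{r/8}(x_0))$ term) is exactly that scheme. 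The only slips are cosmetic: the relevant citation is \cite[Proposition 7.2]{DLSk2} rather than 7.4, and the spine-splitting Lemma \ref{l:spine-splitting} plays no role in this particular estimate---the clustering is driven by the eigenvalue identity for the covariance matrix combined with Proposition \ref{p:distfromhomog-low}.
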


With the estimates of Proposition \ref{p:distfromhomog-low}, Lemma \ref{l:simplify-low} and Lemma \ref{l:spatialvarI-low} at hand, the proof of Proposition \ref{prop:beta2control-low} follows by exactly the same reasoning as that of \cite[Proposition 7.2]{DLSk2} (cf. \cite[Proposition 13.3]{DLSk2}). We thus simply refer the reader to the argument therein.

\subsection{Proof of Theorem \ref{t:main-low-one-piece}}
The proof of rectifiability and the content bound \eqref{e:Minkowski-low} follows via the same procedure as that in \cite{DLSk2}, crucially making use of Proposition \ref{prop:beta2control-low}, the quantitative splitting results of Section \ref{s:splitting} and the BV estimate of Proposition \ref{p:bv-adapted}. Note that in order to establish the rectifiability alone, one may make use of \cite{DLF} in place of the rectifiable Reifenberg arguments of Naber-Valtorta, but this \emph{does not} allow one to obtain the Minkowski content bound \eqref{e:Minkowski-low}. We do not include the details here.

\part{Points with singularity degree 1}\label{pt:deg1}
In this part we conclude the proof of the main result of this work, Theorem \ref{t:main}, by showing rectifiability of the remaining part of $\mathfrak{F}_Q(T)$, as well as the $\mathcal{H}^{m - 2}$-uniqueness of tangent cones. Namely, we prove Theorem \ref{t: main negligible}. We follow the same outline as in \cite{DMS}; a key preliminary result is a decay theorem for the excess to $(m-2)$-invariant cones formed from superpositions of planes, whenever $T$ is much closer to such a cone than any single plane, under the assumption of no density gaps for $T$ near the spines of such cones. Before coming to the statement of this theorem, let us first recall some notation introduced in \cite{DMS}. We begin by defining the cones of interest. As done in the other parts, we will merely point out the differences with \textit{loc. cit}, and explain the changes needed. 

\begin{definition}\label{def:cones}
Let $Q\geq 2$ be a fixed integer. We denote by $\mathscr{C} (Q)$ those subsets of $\mathbb R^{m+n}$ which are unions of $1 \leq N\leq Q$ $m$-dimensional planes (affine subspaces) $\pi_1, \ldots, \pi_N$ 
 for which $\pi_i \cap \pi_j$ is the same $(m-2)$-dimensional plane $V$ for every pair of indices $(i,j)$ with $i<j$.
 
We will use the notation $\mathscr{P}$ for the subset of those elements of $\mathscr{C} (Q)$ which consist of a single plane; namely, with $N=1$. For $\mathbf{S}\in \mathscr{C} (Q)\setminus \mathscr{P}$, the $(m-2)$-dimensional plane $V$ described in (i) above is referred to as the {\em spine} of $\mathbf{S}$ and will often be denoted by $V (\mathbf{S})$.
\end{definition}

Let us now recall the \emph{conical $L^2$ height excess} between $T$ and elements in $\Cscr(Q)$.
\begin{definition}\label{def:L2_height_excess}
	Given a ball $\Bbf_r(q) \subset \R^{m+n}$ and a cone $\mathbf{S}\in \mathscr{C} (Q)$, the \emph{one-sided conical $L^2$ height excess of $T$ relative to $\Sbf$ in $\Bbf_r(q)$}, denoted $\hat{\Ebf}(T, \mathbf{S}, \Bbf_r(q))$, is defined by
	\[
		\hat{\Ebf}(T, \mathbf{S}, \Bbf_r(q)) := \frac{1}{r^{m+2}} \int_{\Bbf_r (q)} \dist^2 (p, \mathbf{S})\, d\|T\|(p).
	\]
At the risk of abusing notation, we further define the corresponding \emph{reverse one-sided excess} as
 \[
\hat{\Ebf} (\mathbf{S}, T, \Bbf_r (q)) := \frac{1}{r^{m+2}}\int_{\Bbf_r (q)\cap \mathbf{S}\setminus \Bbf_{ar} (V (\mathbf{S}))}
\dist^2 (x, {\rm spt}\, (T))\, d\mathcal{H}^m (x)\, ,
\]
where $a=a(Q,m)$ is a geometric constant, to be determined later (see the discussion preceding Remark \ref{r:contradictio-argument-limit-area-min}). The \emph{two-sided conical $L^2$ height excess} is then defined by
\[
    \mathbb{E} (T, \mathbf{S}, \Bbf_r (q)) :=
\hat{\Ebf} (T, \mathbf{S}, \Bbf_r (q)) + \hat{\Ebf} (\mathbf{S}, T, \Bbf_r (q))\, .
\]
We finally recall the notion of \emph{planar $L^2$ height excess}, is given by
\[
\Ebf^p (T, \Bbf_r (q)) = \min_{\pi\in \mathscr{P} (q)} \hat{\Ebf} (T, \pi, \Bbf_r (q))\, .
\]
\end{definition}

We may now state our key excess decay theorem. This is based on the excess decay theorem \cite[Lemma 1]{Simon_cylindrical}, but in the latter, there is a built-in multiplicity one assumption, ruling out branch point singularities a priori. Such a decay theorem was more recently proven in \cite{W14_annals} (see Section 13 therein and also \cite[Lemma 5.6 and Lemma 12.1]{KW} and \cite[Theorem 3.1]{MW}) in a higher multiplicity setting for stable minimal hypersurfaces, but in codimension 1, where one has a sheeting theorem. On the other hand, our version of this theorem is both in a higher multiplicity and higher codimension setting, thus requiring new techniques {as in \cite{DMS}} that overcome the lack of sheeting.

Throughout this part, we will often work with error terms involving the quantity $\|d\omega\|_{C^0}$, where $\omega$ is as in Assumption \ref{a:main-2}. Thus, for the purpose of convenience, we will henceforth use the notation
\[
    \Omega := \|d\omega\|_{C^0(\Bbf_{6\sqrt{m}})}.
\]

\begin{theorem}[Fine Excess Decay Theorem]\label{c:decay}
Let $\delta_3 = \frac{\delta_2}{2}$, for the positive parameter $\delta_2$ fixed as in \cite{Spolaor_15} (cf. Parts \ref{pt:sing-deg} and \ref{pt:NV}). For every $Q,m,n$, and $\varsigma>0$, there are positive constants $\varepsilon_0 = \varepsilon_0(Q,m,n, \varsigma) \leq \frac{1}{2}$, $r_0 = r_0(Q,m,n, \varsigma) \leq \frac{1}{2}$ and $C = C(Q,m,n)>0$ with the following property. Assume that 
\begin{itemize}
\item[(i)] $T$ and $\omega$ are as in Assumption \ref{a:main};
\item[(ii)] $\|T\| (\Bbf_1) \leq (Q+\frac{1}{2}) \omega_m$;
\item[(iii)] There is $\mathbf{S}\in \mathscr{C} (Q)\setminus\Pscr$ such that 
\begin{equation}\label{e:smallness}
\mathbb{E} (T, \mathbf{S}, \Bbf_1) \leq \varepsilon_0^2 \mathbf{E}^p (T, \Bbf_1)\, 
\end{equation}
and 
\begin{equation}\label{e:no-gaps}
\Bbf_{ \varepsilon_0} (\xi) \cap \{p: \Theta (T,p)\geq Q\}\neq \emptyset \qquad \forall \xi \in V (\mathbf{S})\cap \Bbf_{1/2}\, ;
\end{equation}
\item[(iv)] $\Omega^{2-2\delta_3} \leq \varepsilon_0^2 \mathbb{E} (T, \mathbf{S}', \Bbf_1)$ for {\em any} $\mathbf{S}'\in \mathscr{C} (Q)$.
\end{itemize}
Then there is a $\mathbf{S}'\in \mathscr{C} (Q) \setminus \mathscr{P}$ such that 
\begin{enumerate}
    \item [\textnormal{(a)}] $\mathbb{E} (T, \mathbf{S}', \Bbf_{r_0}) \leq \varsigma \mathbb{E} (T, \mathbf{S}, \Bbf_1)\,$ \label{e:decay} \\
    \item [\textnormal{(b)}] $\dfrac{\mathbb{E} (T, \mathbf{S}', \Bbf_{r_0})}{\mathbf{E}^p (T, \Bbf_{r_0})} 
\leq 2 \varsigma \dfrac{\mathbb{E} (T, \mathbf{S}, \Bbf_1)}{\mathbf{E}^p (T, \Bbf_1)}$ \\
    \item [\textnormal{(c)}] $\dist^2 (\Sbf^\prime \cap \Bbf_1,\Sbf\cap \Bbf_1) \leq C \mathbb{E} (T, \mathbf{S}, \Bbf_1)$\label{e:cone-change}
    \item[\textnormal{(d)}] $\dist^2 (V (\mathbf{S}) \cap \Bbf_1, V (\mathbf{S}')\cap \Bbf_1) \leq C \dfrac{\mathbb{E}(T,\mathbf{S},\Bbf_1)}{\Ebf^p(T,\Bbf_1)}$\, .\label{e:spine-change}
    \end{enumerate}
\end{theorem}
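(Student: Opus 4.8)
The strategy is a contradiction-compactness argument in the spirit of \cite{DMS}, combined with the first variation identity \eqref{eq: first variation} to handle the semicalibration error. Suppose the conclusion fails: then for some $\varsigma>0$ there is a sequence $T_\ell$, semicalibrations $\omega_\ell$ and cones $\Sbf_\ell\in\mathscr{C}(Q)\setminus\mathscr{P}$ satisfying (i)--(iv) with $\varepsilon_0 = \varepsilon_{0,\ell}\downarrow 0$, but such that no $\Sbf'$ delivers (a)--(d). Write $E_\ell := \Ebf^p(T_\ell,\Bbf_1)$ and $\hat E_\ell := \mathbb{E}(T_\ell,\Sbf_\ell,\Bbf_1)$; by (iii) we have $\hat E_\ell\leq \varepsilon_{0,\ell}^2 E_\ell \to 0$ relative to $E_\ell$, and by (iv) the quantity $\Omega_\ell^{2-2\delta_3}$ is infinitesimal relative to $\mathbb{E}(T_\ell,\Sbf',\Bbf_1)$ for every competitor $\Sbf'$; in particular $\Omega_\ell^{2-2\delta_3} = o(\hat E_\ell)$, which is the semicalibrated analogue of the hypothesis that makes the error in the first variation subordinate to the excess. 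Up to rotations we may assume $V(\Sbf_\ell) \to V_0 = \{0\}\times\R^{m-2}\times\{0\}$ and the planes composing $\Sbf_\ell$ converge; after subtracting off the (single) plane realizing $\Ebf^p(T_\ell,\Bbf_1)$ and rescaling the resulting graphical sheets by $E_\ell^{-1/2}$, the Lipschitz approximation of \cite[Theorem 1.4]{DLSS1} together with the almost-quadratic control \eqref{e:domega-scaling} (which holds here thanks to (iv)) produces, in the limit, a $Q$-valued Dir-minimizer $u$ on $\Bbf_1$, or more precisely a collection of harmonic sheets, whose no-density-gap hypothesis \eqref{e:no-gaps} forces it to be genuinely singular along $V_0$ with the same combinatorial structure as the cones $\Sbf_\ell$.

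\textbf{Key steps, in order.} (1) Set up the contradiction sequence and normalize: fix $V(\Sbf_\ell)\to V_0$, align the ``best plane'' with $\R^m\times\{0\}$, and record the three competing infinitesimals $\hat E_\ell$, $E_\ell$, $\Omega_\ell$, with $\hat E_\ell = o(E_\ell)$ and $\Omega_\ell^{2-2\delta_3} = o(\hat E_\ell)$. (2) Produce the harmonic blow-up: apply the interior Lipschitz approximation for $\Omega$-minimizers from \cite{DLSS1} on good regions away from $V_0$, use the monotonicity-with-error \cite[Proposition 2.1]{DLSS-uniqueness} to control mass ratios (this is where the extra higher-order term appears, but it vanishes in the limit), and extract a limit $u$ which is Dir-minimizing on each side of $V_0$ and matches up across $V_0$; crucially, the semicalibration error term $T_\ell(d\omega_\ell\mres X)$ contributes $O(\Omega_\ell \|T_\ell\|(\Bbf_1)^{1/2})$ to first variations, hence $o(E_\ell^{1/2})$ after normalization, so the limit is honestly stationary. (3) Classify the blow-up: using \eqref{e:no-gaps} and the reverse-excess term $\hat\Ebf(\Sbf_\ell,T_\ell,\cdot)$ built into $\mathbb{E}$, argue (exactly as in \cite{DMS}, via the structure of homogeneous Dir-minimizers invariant under an $(m-2)$-plane) that $u$ is, on $\Bbf_{1/2}$, a superposition of linear functions vanishing on $V_0$; the reverse term prevents $u$ from ``losing sheets.'' (4) Decay at a fixed small scale: the blow-up $u$, being such a superposition, has $\mathbb{E}(u,\cdot,\Bbf_{r_0})/\mathbb{E}(u,\cdot,\Bbf_1)$ as small as we like for $r_0$ small (it is in fact a fixed improvement for the harmonic model, by the explicit form of $u$); transferring this back to $T_\ell$ via the strong $L^2$ convergence of the normalized sheets produces, for $\ell$ large, a cone $\Sbf_\ell'$ obtained from the graph of the ``first-order part'' of $u$ rescaled, satisfying (a). (5) Derive (b), (c), (d): (b) follows from (a) together with the comparison of $\Ebf^p(T_\ell,\Bbf_{r_0})$ to $\Ebf^p(T_\ell,\Bbf_1)$ via the decay of the planar excess (itself a consequence of the harmonic comparison plus the semicalibration error being lower order); (c) and (d) are the quantitative versions of the statements that $\Sbf_\ell'$ and $V(\Sbf_\ell')$ are constructed from the first-order Taylor data of $u$, hence their distance to $\Sbf_\ell$, $V(\Sbf_\ell)$ is controlled by $\hat E_\ell^{1/2}$, resp. $(\hat E_\ell/E_\ell)^{1/2}$, with a constant depending only on $Q,m,n$. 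This contradicts the assumed failure for $\ell$ large.

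\textbf{Main obstacle.} The delicate point is step (3), the classification of the blow-up and the proof that the reverse-excess term genuinely controls sheet-loss: one must show that the no-gap condition \eqref{e:no-gaps} survives the blow-up and forces the limit $u$ to carry exactly the prescribed multiplicities along $V_0$ rather than degenerating to fewer planes (or to a single plane, which would be incompatible with the normalization $\hat E_\ell = o(E_\ell)$ only if handled correctly). This is precisely the issue \cite{DMS} resolves in the area-minimizing case, and here one must additionally verify that the semicalibration error never spoils the estimates used in that argument — concretely, that every invocation of stationarity, of the monotonicity formula, and of the Lipschitz/harmonic approximation can absorb an $\Omega_\ell$-error that is $o(\hat E_\ell^{1/2})$ thanks to hypothesis (iv). A secondary technical nuisance, flagged in the introduction, is that to keep all these errors genuinely quadratic in $\Omega$ one needs the absorption trick of \cite[Remark 1.10]{Spolaor_15}; this enters when estimating the first variation of the normalized currents and must be threaded through steps (2) and (4). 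Once step (3) is in hand, steps (4)--(5) are essentially bookkeeping following \cite{DMS}, with the caveat that the exponent $\delta_3 = \delta_2/2$ in hypothesis (iv) is exactly what is needed to make the $\Omega$-error subquadratic relative to the excess at \emph{all} intermediate scales, not just the initial one — this is the reason for the strengthened scaling \eqref{e:domega-scaling} recorded earlier.
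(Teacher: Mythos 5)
Your overall template (contradiction, blow\--up, transfer of decay) is the right family of argument, but as written the central step cannot work because the blow\--up is performed at the wrong scale. You subtract the single plane attaining $\Ebf^p(T_\ell,\Bbf_1)$ and normalize by $E_\ell^{-1/2}=\Ebf^p(T_\ell,\Bbf_1)^{-1/2}$. Since hypothesis \eqref{e:smallness} forces $\hat E_\ell:=\mathbb{E}(T_\ell,\Sbf_\ell,\Bbf_1)=o(E_\ell)$, this coarse normalization sees only the cone: your limit $u$ is exactly the normalized limit of the cones $\Sbf_\ell$, i.e.\ a superposition of linear sheets, and it carries \emph{no} information about the deviation of $T_\ell$ from $\Sbf_\ell$. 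Consequently your step (4) is vacuous --- the ``excess of $u$'' relative to its own linear model is zero, and strong $L^2$ convergence at scale $E_\ell^{1/2}$ only controls errors of size $o(E_\ell)$, which is useless for proving conclusion (a), an estimate of size $\varsigma\,\hat E_\ell\ll E_\ell$. The decay must instead be extracted from a blow\--up of the \emph{deviation from the cone}: one parameterizes $T$ graphically over the planes of $\Sbf$ away from the spine (the coherent outer approximations of Proposition \ref{p:first-blow-up}, and the transverse coherent approximations when the cone collapses), normalizes those sheets by $\mathbb{E}(T_\ell,\Sbf_\ell,\Bbf_1)^{1/2}$, and shows the limits are Dir\--minimizers whose linear behaviour at the spine (the decay theorem for such minimizers in \cite{DMS}) produces the improved cone $\Sbf'$.

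Beyond the normalization, the outline omits the machinery that makes the transfer back to $T_\ell$ legitimate and that the paper spends Sections \ref{s:height-bd}--\ref{s:nonconc} building: the $L^2$--$L^\infty$ height bound and tilt estimate (Theorem \ref{thm:main-estimate}), cone balancing (Proposition \ref{p:balancing}), and above all Simon's non\--concentration and shift estimates at the spine (Theorem \ref{t:HS}, Corollary \ref{c:HS-patch}, Proposition \ref{p:HS-3}), which are what prevent the conical excess from concentrating near $V$ and allow the Dir\--minimizer decay to be transferred; your appeal to the reverse excess term and the no\--gap condition \eqref{e:no-gaps} does not substitute for these quantitative estimates. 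Finally, the actual route is not a one\--shot contradiction for the fine statement: one first proves a weak decay theorem (Theorem \ref{t:weak-decay}) under $M$\--balancedness and smallness relative to $\boldsymbol{\sigma}(\Sbf)^2$, split into the collapsed and non\--collapsed cases (Propositions \ref{p:decay-collapsed} and \ref{p:decay-noncollapsed}), and then deduces (a)--(d) --- in particular the ratio estimate (b) and the spine estimate (d) --- by the balancing and iteration scheme of \cite[Section 10]{DMS}. Your treatment of the semicalibration error ($\Omega^{2-2\delta_3}=o(\hat E_\ell)$ from hypothesis (iv), absorption via the first variation) is the right idea, but it is attached to an argument whose core decay mechanism is missing.
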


With Theorem \ref{c:decay} at hand, the conclusion of Theorem \ref{t: main negligible} follows by combining {it} with a covering procedure analogous to the one in \cite{Simon_cylindrical}; see \cite[Section 14]{DMS} for the details.

\subsection{Outline of proof of Theorem \ref{c:decay}}
The proof of Theorem \ref{c:decay} follows the same outline as that of \cite[Theorem 2.5]{DMS}. We first establish an $L^2-L^\infty$ height bound and tilt excess estimate, analogous to \cite[Theorem 3.2]{DMS}. However, all instances of $\Abf$ in the error terms are replaced by $\Omega$ and $\Omega^{1-\delta_3}$ in the height bound and tilt excess estimate respectively. This will be done in Section \ref{s:height-bd}. 
In Section \ref{s:graphs} we then use the height bound to verify that the graphical parameterization results of \cite[Section 8]{DMS} relative to \emph{balanced cones} in $\Cscr(Q)$ (see Definition \ref{d:frank}) still hold true when $T$ is semicalibrated, again with $\Abf$ replaced by $\Omega^{1-\delta_3}$ in the errors. In Section \ref{s:balancing} we provide the analogues of the cone balancing results of \cite[Section 9]{DMS}, which are required in order to guarantee the hypotheses on the cones $\Sbf\in\Cscr(Q)$ in order to build the graphical parameterizations of the preceding section. In Section \ref{s:nonconc} we then verify that the Simon estimates at the spine \cite[Section 11]{DMS} remain valid; the key difference is again the fact that all appearances of $\Abf$ in the errors become $\Omega^{1-\delta_3}$. In Section \ref{s:blowup}, we conclude with a final blow-up procedure, analogous to that in \cite[Section 13]{DMS}.

\section{$L^2-L^\infty$ height bound}\label{s:height-bd}
In this section we establish Allard-type tilt-excess and $L^\infty$ estimates relative to disjoint collections of parallel planes, analogous to those in \cite[Part 1]{DMS}, but for the class of semicalibrated currents. For the remainder of this section, we make the following additional assumption.
\begin{assumption}\label{a:height-main}
    $Q$, $m \geq 3$, $n \geq 2$ are fixed positive integers. $T$ and $\omega$ are as in Assumption \ref{a:main-2}. For some oriented $m$-dimensional plane $\pi_0 \equiv \R^m\times \{0\} \subset\R^{m+n}$ passing through the origin and some positive integer $Q$, we have 
    \[
    (\mathbf{p}_{\pi_0})_\sharp T\res \Cbf_2 = Q \llbracket B_2\rrbracket\, ,
    \]
    and $\|T\|(\Cbf_2) \leq (Q+\frac{1}{2})\omega_m 2^m$.
\end{assumption}
The main result of this section is the following (note that a scaling argument gives the corresponding estimates for arbitrary centers and scales).

\begin{theorem}[$L^\infty$ and Tilt-Excess Estimates]\label{thm:main-estimate} For every $1\leq r < 2$, $Q$, and $N$, there is a positive constant $\bar{C} = \bar{C} (Q,m,n,N,r)>0$ with the following property. Suppose that $T$, $\omega$ and $\pi_0$ are as in Assumption \ref{a:height-main}, let $p_1, \ldots , p_N \in \pi_0^\perp$ be distinct points, and set $\boldsymbol{\pi}:= \bigcup_i p_i+\pi_0$. 
Let
\begin{equation}\label{e:L2-excess}
E := \int_{\mathbf{C}_2} \dist^2 (p, \boldsymbol{\pi}) \, d\|T\| (p)\, .
\end{equation}
Then
\begin{equation}\label{e:tilt-estimate}
\Ebf(T,\Cbf_r, \pi_0)\leq \bar{C} (E + \Omega^{2})\, 
\end{equation}
and, if $E\leq 1$,
\begin{equation}\label{e:Linfty-estimate}
\spt (T) \cap \mathbf{C}_r \subset \{p : \dist (p, \boldsymbol{\pi})\leq \bar{C} (E^{1/2} + \Omega^{1 - \delta_3})\}\, .
\end{equation}
\end{theorem}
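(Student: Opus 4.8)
The plan is to follow the template of the corresponding result for area-minimizing currents in \cite[Theorem 3.2]{DMS}, tracking carefully how the semicalibration error term propagates. The starting point is the first variation identity \eqref{eq: first variation}, $\delta T(X) = T(d\omega \mres X)$, which shows that $T$ is an integral varifold whose generalized mean curvature $\vec H$ satisfies $|\vec H| \leq C\Omega$ pointwise $\|T\|$-a.e.\ (using $\Omega = \|d\omega\|_{C^0(\Bbf_{6\sqrt m})}$ and the comass bound $\|\omega\|_c \leq 1$). Consequently $T$ is an almost-minimizing, and in particular an \emph{almost monotone}, integral varifold: the usual monotonicity formula for mass ratios holds up to an error controlled by $\Omega$ (cf.\ \cite[Proposition 2.1]{DLSS-uniqueness}). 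This replaces the exact stationarity used in the area-minimizing setting, and is the only structural input needed beyond what already appears in \cite{DMS}.

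\textbf{Tilt excess estimate \eqref{e:tilt-estimate}.} First I would establish the Caccioppoli-type inequality. Starting from the first variation applied to a vector field of the form $X = \zeta^2 (p - \mathbf{p}_{\pi_0}(p))$ (the displacement in the orthogonal directions, cut off by $\zeta \in C_c^\infty(\Cbf_2)$ with $\zeta \equiv 1$ on $\Cbf_r$), one computes $\divergence_{\vec T} X$ and separates the tangential-tilt term $|\vec T - \vec\pi_0|^2$ from lower-order contributions. The semicalibration term contributes $T(d\omega \mres X)$, which is bounded by $C\Omega \int_{\Cbf_2} \zeta^2 \dist(p,\boldsymbol\pi)\, d\|T\| \leq C\Omega (E^{1/2} + \Omega) \leq C(E + \Omega^2)$ by Cauchy–Schwarz and the mass bound (iv) of Assumption \ref{a:height-main}. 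The remaining terms are handled exactly as in the stationary case — using $(\mathbf{p}_{\pi_0})_\sharp T \res \Cbf_2 = Q\llbracket B_2\rrbracket$ and the mass bound $\|T\|(\Cbf_2) \leq (Q+\tfrac12)\omega_m 2^m$ to absorb the difference between mass and the projected multiplicity — yielding \eqref{e:tilt-estimate}.

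\textbf{Height estimate \eqref{e:Linfty-estimate}.} This is the more delicate part and the main obstacle. In the area-minimizing case one runs a De Giorgi-type iteration combining the tilt-excess estimate with the monotonicity formula (to convert the $L^2$ flatness into $L^\infty$ control on $\spt(T)$). Here the monotonicity is only \emph{almost} monotone, with the error proportional to $\Omega$ times a power of the radius. The key point — and the reason the exponent $1-\delta_3$ appears rather than $1$ — is that when one iterates the monotonicity-plus-tilt-decay scheme down dyadic scales, the accumulated curvature error at scale $\rho$ is of order $\Omega \rho$, but the relevant comparison quantity (the excess) may itself be as small as $\rho^{2-2\delta_2}$ by the stopping conditions for the intervals of flattening; balancing these forces one to absorb a factor $\Omega^{1-\delta_3}$ rather than $\Omega$ (this is precisely the loss flagged in Remark \ref{r:error-loss} and the almost-quadratic scaling hypothesis \eqref{e:domega-scaling}). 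Concretely: I would first obtain, via almost-monotonicity and \eqref{e:tilt-estimate}, that for every $p_0 \in \spt(T)\cap \Cbf_r$ and small $\rho$, $\rho^{-m}\|T\|(\Bbf_\rho(p_0)) \leq \omega_m + C(E + \Omega^2)\rho^{-2}\cdot(\text{const}) + C\Omega\rho$; then a standard argument (density at least $1$ at spt points, plus the projected-multiplicity normalization) converts the smallness of the $L^2$ distance $E$ into the pointwise bound $\dist(p_0,\boldsymbol\pi) \leq \bar C(E^{1/2} + \Omega^{1-\delta_3})$, where the $\Omega^{1-\delta_3}$ term is exactly what survives from the curvature error after optimizing over the scale $\rho$ at which one stops the comparison. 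This mirrors \cite[Section 6.2]{Spolaor_15} and \cite[Remark 1.10]{Spolaor_15}, where an analogous absorption trick is used; I would import that mechanism rather than reprove it, noting only that no step uses minimality beyond the almost-monotonicity and the first variation identity, so the argument of \cite[Part 1]{DMS} goes through verbatim with $\Abf \rightsquigarrow \Omega$ in \eqref{e:tilt-estimate} and $\Abf \rightsquigarrow \Omega^{1-\delta_3}$ in \eqref{e:Linfty-estimate}.
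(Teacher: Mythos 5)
Your structural premise is right — the only new input beyond \cite{DMS} is that the first variation identity \eqref{eq: first variation} gives $\|\vec H_T\|_{L^\infty}\leq\Omega$ and hence almost-monotonicity of mass ratios — but both of your two main steps have genuine gaps. For the tilt-excess estimate \eqref{e:tilt-estimate}, a single Caccioppoli inequality with $X=\zeta^2\,\mathbf{p}_{\pi_0}^\perp(p)$ cannot work when $N\geq 2$: the right-hand side it produces is controlled by $\int\zeta^2\dist^2(p,\pi_0)\,d\|T\|$ (and your bound on the semicalibration term likewise uses $|X|=\dist(p,\pi_0)$, not $\dist(p,\boldsymbol\pi)$), and $\dist(p,\pi_0)$ is \emph{not} controlled by $\dist(p,\boldsymbol\pi)$ — it can be of order $\max_i|p_i|$, which is $O(1)$, while $E$ is tiny. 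This is exactly why the proof must run through the separation parameter $H=\min_{i\neq j}|p_i-p_j|$: one first splits the current into pieces near each plane when the planes are well separated (Lemma \ref{l:simpler}, Corollary \ref{c:splitting-0}), proves the approximate estimate of Lemma \ref{l:tilt-1} containing the bootstrap term $(E/H^2)^\gamma\Ebf(T,\Cbf_R)$ via the Lipschitz approximation and the strong excess estimate \cite[Theorem 4.1]{DLSS1}, upgrades it to Proposition \ref{p:tilt-2} when $E\leq\sigma_4\min\{H^2,1\}$, and then runs the combinatorial grouping of \cite[Section 4.3]{DMS} over the scales at which different subfamilies of planes separate.

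For the height bound \eqref{e:Linfty-estimate}, the pointwise density-plus-monotonicity argument you sketch only yields $\dist(p_0,\boldsymbol\pi)\leq CE^{1/(m+2)}$, not the sharp $E^{1/2}$; the actual proof is an induction on the number of planes $N$ (Proposition \ref{p:induction}) whose engine is the decay Lemma \ref{l:decay}, which at each scale produces a \emph{new} collection of at most $Q$ planes with geometrically improved $L^2$ excess. Moreover, your explanation of the exponent $1-\delta_3$ is not the right mechanism: the intervals of flattening and the stopping condition $\rho^{2-2\delta_2}$ play no role in this part of the paper. The loss occurs inside Lemma \ref{l:decay}, where applying the harmonic approximation \cite[Theorem 3.1]{DLSS1} requires $\Omega^2\leq\eps_{23}\Ebf(T,\Cbf_1)$ (a genuinely stronger hypothesis than the area-minimizing analogue $\Abf\leq\Ebf^{1/4+\delta}$); in the complementary regime $\Omega^2>\eps_{23}\Ebf$ one falls back on the Poincaré inequality for the Lipschitz approximation, and generating the smallness factor $\rho^{m+2\beta_0}$ needed for the decay forces one to sacrifice a power $\Omega^{2\delta_3}\leq\sigma_5$, leaving $\Omega^{2-2\delta_3}$ — this is precisely the content of Remark \ref{r:error-loss}. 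Without the case dichotomy and the induction on $N$, neither the sharp $E^{1/2}$ dependence nor the specific exponent on $\Omega$ can be recovered.
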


A consequence of Theorem \ref{thm:main-estimate} is the following, which replaces \cite[Corollary 3.3]{DMS}. 
\begin{corollary}\label{c:splitting-0}
Let $N$ be a positive integer. There is a positive constant $\delta = \delta (Q,m,n, N)$ with the following properties. Assume that:
\begin{itemize}
    \item[(i)] $T$ and $\omega$ are as in Assumption \ref{a:main-2}, and for some positive $r \leq \frac{1}{4}$ and $q\in \spt(T)\cap\Bbf_1$ we have 
    \begin{itemize}
        \item[$\bullet$] $\partial T \res \mathbf{C}_{4r} (q) = 0$;
        \item[$\bullet$] $(\mathbf{p}_{\pi_0})_\sharp T \res \Cbf_{4r}(q)= Q \llbracket B_{4r} (q)\rrbracket$;
        \item[$\bullet$] $\|T\| (\mathbf{C}_{2r} (q)) \leq \omega_m (Q+\frac{1}{2}) (2r)^m$;
    \end{itemize}
    \item[(iii)] $p_1, \ldots, p_N\in \R^{m+n}$ are distinct points with $\mathbf{p}_{\pi_0} (p_i)= q$ and $\varkappa:=\min \{|p_i-p_j|: i<j\}$;
    \item[(iv)] $\pi_1, \ldots, \pi_N$ are oriented planes passing through the origin with 
    \begin{equation}\label{e:smallness-tilted-pi-1}
    \tau := \max_i |\pi_i-\pi_0|\leq \delta \min\{1, r^{-1} \varkappa\}\, ;
    \end{equation}
    \item[(v)] Upon setting $\boldsymbol{\pi} = \bigcup_i (p_i+ \pi_i)$, we have 
    \begin{align}
    & (r\Omega)^2 + (2r)^{-m-2} \int_{\mathbf{C}_{2r} (q)} \dist^2 (p, \boldsymbol{\pi}) d\|T\| 
    \leq \delta^2 \min \{1, r^{-2}\varkappa^2\}\, .\label{e:smallness-tilted-pi-2}
    \end{align}
\end{itemize}
Then $T\res \mathbf{C}_r (q) = \sum_{i=1}^N T_i$ where
\begin{itemize}
\item[(a)] Each $T_i$ is an integral current with $\partial T_i \res \mathbf{C}_r (q) = 0$;
\item[(b)] $\dist (q, \boldsymbol{\pi}) = \dist (q, p_i+\pi_i)$ for each $q\in \spt (T_i)$;
\item[(c)] $(\mathbf{p}_{\pi_0})_\sharp T_i = Q_i \llbracket B_r (q)\rrbracket$ for some non-negative integer $Q_i$.
\end{itemize}
\end{corollary}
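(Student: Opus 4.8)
The plan is to follow the proof of \cite[Corollary 3.3]{DMS}, now invoking Theorem \ref{thm:main-estimate} (together with the scaling remark following it) in place of \cite[Theorem 3.2]{DMS}; as everywhere else in this part, the error terms $\Abf$ and $\Abf^2$ of \emph{loc.\ cit.} are to be replaced throughout by $\Omega^{1-\delta_3}$ and $\Omega^2$ respectively.

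\textbf{Step 1 (reduction and height bound).} After translating so that $q=0$ and rescaling by $(2r)^{-1}$, hypothesis (i) puts us in the setting of Assumption \ref{a:height-main}, while \eqref{e:smallness-tilted-pi-2} guarantees that the rescaled $L^2$-excess of $T$ towards $\boldsymbol{\pi}$ and $\Omega^2$ are both small compared with $\min\{1,\varkappa^2 r^{-2}\}$. Comparing $\boldsymbol{\pi}$ with the union $\boldsymbol{\pi}_0 := \bigcup_i (p_i + \pi_0)$ of \emph{parallel} planes — which costs an error $C\tau^2$, controlled by \eqref{e:smallness-tilted-pi-1} — Theorem \ref{thm:main-estimate} yields
\[
	\spt(T)\cap \Cbf_r(q)\subset \Big\{p : \dist(p,\boldsymbol{\pi}_0)\leq C\big(\delta\min\{1,\varkappa r^{-1}\} + (r\Omega)^{1-\delta_3}\big)\Big\}.
\]
Choosing $\delta = \delta(Q,m,n,N)$ small forces the right-hand side to be less than $\varkappa/8$, so the slabs about the distinct planes $p_i+\pi_0$ are pairwise disjoint inside $\Cbf_r(q)$. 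Hence $\spt(T)\cap\Cbf_r(q)$ splits as a disjoint union of relatively closed pieces $K_1,\dots,K_N$, with $K_i$ in the slab about $p_i+\pi_0$; using \eqref{e:smallness-tilted-pi-1} once more, $K_i$ lies within $\varkappa/4$ of $p_i+\pi_i$ and at distance $\geq 3\varkappa/4$ from $p_j+\pi_j$ for all $j\neq i$.

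\textbf{Step 2 (the pieces $T_i$).} We set $T_i := T\res V_i$, where $V_i$ is an open tube about $p_i+\pi_i$ contained in $\Cbf_r(q)$ and chosen so that $K_i\subset V_i$, $V_i\cap K_j=\emptyset$ for $j\neq i$, and $\spt(T)\cap\partial V_i\cap\Cbf_r(q)=\emptyset$. Since the $V_i$ are disjoint and $\spt(T)\cap\Cbf_r(q)\subset\bigcup_i V_i$, we get $T\res\Cbf_r(q) = \sum_i T_i$ with each $T_i$ an integral current. From $\spt(\partial T)\cap\Cbf_{4r}(q)=\emptyset$ and $\spt(T)\cap\partial V_i\cap\Cbf_r(q)=\emptyset$ it follows that $\partial T_i\res\Cbf_r(q)=0$, proving (a); and (b) is immediate from the separation estimates of Step 1, since for $p\in\spt(T_i)\subset K_i$ the closest plane of $\boldsymbol{\pi}$ is $p_i+\pi_i$.

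\textbf{Step 3 (projections and signs).} Because $\spt(T_i)\subset\Cbf_r(q)$ and $\partial T_i\res\Cbf_r(q)=0$, the pushforward $(\mathbf{p}_{\pi_0})_\sharp T_i$ is an integral $m$-cycle in $B_r(q)$, so the constancy theorem gives $(\mathbf{p}_{\pi_0})_\sharp T_i = Q_i\llbracket B_r(q)\rrbracket$ for some $Q_i\in\Z$; summing over $i$ yields $\sum_i Q_i = Q$. Finally, $|Q_i|\,\omega_m r^m = \mathbf{M}\big((\mathbf{p}_{\pi_0})_\sharp T_i\big)\leq \|T_i\|(\Cbf_r(q))$, while the tilt estimate \eqref{e:tilt-estimate} bounds $\|T\|(\Cbf_r(q)) - Q\omega_m r^m$ by $C(E+(r\Omega)^2)r^m \leq \tfrac12\omega_m r^m$ for $\delta$ small; hence $\sum_i |Q_i| < Q+1$, so $\sum_i |Q_i|\leq Q$. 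Together with $\sum_i Q_i = Q$ this forces $Q_i\geq 0$ for every $i$, giving (c).

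\textbf{Main obstacle.} As in \cite{DMS}, the delicate point is the bookkeeping of geometric constants in Step 1: one must check that the height bound coming out of Theorem \ref{thm:main-estimate}, after absorbing the tilts $\tau$ and the semicalibration error $\Omega^{1-\delta_3}$, is genuinely smaller than the separation $\varkappa$, and — relatedly — that \eqref{e:tilt-estimate} is strong enough to pin down the \emph{signs} of the $Q_i$ in Step 3. Beyond this, no new idea is needed once Theorem \ref{thm:main-estimate} is available.
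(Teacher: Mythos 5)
Your overall route coincides with the paper's: the paper proves this corollary by declaring it verbatim the proof of \cite[Corollary 3.3]{DMS} with one area-minimizing ingredient swapped for its semicalibrated analogue, and that proof is exactly your scheme (Theorem \ref{thm:main-estimate} $\Rightarrow$ disjoint slabs $\Rightarrow$ restriction currents $\Rightarrow$ constancy theorem $\Rightarrow$ multiplicities). The one structural deviation is in (c): you obtain $Q_i\geq 0$ by a direct mass/tilt-excess comparison (implicitly using that, under the projection hypothesis, the mass excess in $\Cbf_r(q)$ equals the tilt excess, which \eqref{e:tilt-estimate} then controls), whereas the paper invokes \cite[Lemma 2.2]{Spolaor_15} in place of \cite[Lemma 1.6]{DLS16centermfld}; your variant is fine and self-contained. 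A small normalization slip: you should rescale by $r^{-1}$, not $(2r)^{-1}$, since Assumption \ref{a:height-main} requires the projection identity and the mass bound in $\Cbf_2$, and these follow from your hypotheses on $\Cbf_{4r}(q)$ and $\Cbf_{2r}(q)$ only under the dilation $\iota_{q,r}$.

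The substantive gap is the absorption step you dismiss with ``choosing $\delta$ small''. In the rescaled picture the height bound \eqref{e:Linfty-estimate} contributes $C(r\Omega)^{1-\delta_3}$, not the linear term $Cr\Abf$ of the area-minimizing case, while \eqref{e:smallness-tilted-pi-2} only provides $r\Omega \leq \delta\min\{1,\varkappa/r\}$. When $\varkappa\ll r$ this gives $(r\Omega)^{1-\delta_3}\leq \delta^{1-\delta_3}(\varkappa/r)^{1-\delta_3}$, which is \emph{not} bounded by $\varkappa/(8r)$ uniformly in $\varkappa/r$, however small $\delta=\delta(Q,m,n,N)$ is chosen; so in that regime your Step 1 does not separate the slabs, and conclusion (b) is not reached by this argument. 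This is precisely where the semicalibrated statement differs from \cite[Corollary 3.3]{DMS}: to run the verbatim argument one should either strengthen hypothesis (v) to $(r\Omega)^{2-2\delta_3}$ in place of $(r\Omega)^2$ --- which is the form of smallness actually available wherever the corollary is applied (cf.\ Assumption \ref{a:refined} and hypothesis (iv) of Theorem \ref{c:decay}) --- or supply a separate argument for $\varkappa\ll r$. Apart from this point (on which the paper itself is silent), your reconstruction, including the comparison of the tilted configuration $\boldsymbol{\pi}$ with the parallel one at cost $C\tau(r+\dist)$ and the sign argument for the $Q_i$, matches the intended proof.
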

The proof of Corollary \ref{c:splitting-0} follows verbatim the one of \cite[Corollary 3.3]{DMS}, replacing \cite[Lemma 1.6]{DLS16centermfld} with \cite[Lemma 2.2]{Spolaor_15}. 

We now recall the notion of non-oriented tilt-excess, previously introduced in \cite{DLHMS, DMS}. More precisely, given an $m$-dimensional plane $\pi$ and a cylinder $\mathbf{C} = \mathbf{C}_r (q, \pi)$, recall that the \textit{non-oriented tilt excess} is given by
\begin{equation}\label{e:nonoriented}
\mathbf{E}^{no} (T, \mathbf{C}):= \frac{1}{2\omega_m r^m} \int_{\mathbf{C}} |\mathbf{p}_{T} - \mathbf{p}_{\pi}|^2\, d\|T\| \, ,
\end{equation}
where $T (x)$ denotes the (approximate) tangent plane to $T$ at $x$. Note that here, neither plane is oriented for the projections. In particular, we have $\Ebf^{no}(T,\Cbf) \leq C\Ebf(T,\Cbf)$. In contrast, the reverse inequality is more subtle due to possible cancellation phenomena. Nonetheless, we have the following for semicalibrated currents.  
\begin{proposition}\label{p:o<no}
For every $1\leq r<2$ there is a constant $\bar{C}= \bar{C}(Q,m,n, r)$ such that, if $T$, and $\omega$ are as in Assumption \ref{a:height-main}, then
\begin{equation}\label{e:o<no}
\mathbf{E} (T, \mathbf{C}_r) \leq 
\bar{C} (\mathbf{E}^{no} (T, \mathbf{C}_2) + \Omega^2)\, .
\end{equation}
\end{proposition}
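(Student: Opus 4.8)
The plan is to follow the proof of the corresponding statement in \cite{DMS}, inserting the almost-minimality inequality \eqref{eq: Omega Minimal} in place of area-minimality and invoking Theorem \ref{thm:main-estimate} wherever the area-minimizing tilt and height bounds are used; the semicalibration then only produces an additional error term of size $\bar C\Omega^2$. Since $\mathbf{E}(T,\mathbf{C}_r)\leq \mathbf{E}(T,\mathbf{C}_r,\pi_0)=\frac{1}{\omega_m r^m}\int_{\mathbf{C}_r}(1-\langle\vec T,\vec\pi_0\rangle)\,d\|T\|$, and since $\|T\|(\mathbf{C}_r)$ is bounded by a dimensional constant under Assumption \ref{a:height-main}, the inequality \eqref{e:o<no} has content only when $\mathbf{E}^{no}(T,\mathbf{C}_2)$ and $\Omega$ are small, so we work in that regime. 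The starting point is the elementary linear-algebra bound $1-\langle\vec T,\vec\pi_0\rangle\leq C|\mathbf{p}_T-\mathbf{p}_{\pi_0}|^2$ at $\|T\|$-a.e.\ point where $\langle\vec T,\vec\pi_0\rangle\geq 0$, together with $1-\langle\vec T,\vec\pi_0\rangle\leq 2$ everywhere. Setting $G^-_\rho:=\spt(T)\cap\mathbf{C}_\rho\cap\{\langle\vec T,\vec\pi_0\rangle<0\}$ (the part of $T$ whose tangent plane is nearly the \emph{reversed} copy of $\pi_0$), this yields, for $r<\rho<2$,
\[
\mathbf{E}(T,\mathbf{C}_r)\leq C\,\mathbf{E}^{no}(T,\mathbf{C}_2)+\frac{C}{r^{m}}\,\|T\|(G^-_\rho)\,,
\]
so the whole proof reduces to showing $\|T\|(G^-_\rho)\leq C(\mathbf{E}^{no}(T,\mathbf{C}_2)+\Omega^2)$ for a suitable $\rho$.

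To control $\|T\|(G^-_\rho)$, I would use the constancy/slicing structure forced by the hypothesis $(\mathbf{p}_{\pi_0})_\sharp T\res\mathbf{C}_2=Q\llbracket B_2\rrbracket$ (and $\partial T=0$ in the relevant region, guaranteed by the $L^\infty$ height bound \eqref{e:Linfty-estimate}): for a.e.\ $y\in B_\rho$ the slice $\langle T,\mathbf{p}_{\pi_0},y\rangle$ is a $0$-dimensional current whose integer multiplicities sum to $Q$, hence its mass equals $Q+2k(y)$, where $k(y)$ is the total multiplicity of the negatively oriented sheets. Splitting $\mathbf{M}(T\res\mathbf{C}_\rho)=\int_{\mathbf{C}_\rho}|\langle\vec T,\vec\pi_0\rangle|\,d\|T\|+\int_{\mathbf{C}_\rho}(1-|\langle\vec T,\vec\pi_0\rangle|)\,d\|T\|$, applying the coarea formula to the first term and bounding $1-|\langle\vec T,\vec\pi_0\rangle|\leq C|\mathbf{p}_T-\mathbf{p}_{\pi_0}|^2$ in the second, one obtains
\[
\mathbf{M}(T\res\mathbf{C}_\rho)=Q\omega_m\rho^m+2\,\|T\|(G^-_\rho)+O\!\big(\mathbf{E}^{no}(T,\mathbf{C}_2)\big)\,,
\]
so it is enough to bound the mass excess $\mathbf{M}(T\res\mathbf{C}_\rho)-Q\omega_m\rho^m$. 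For this I would fix a good radius $\rho\in(r,2)$ (Fubini in the radial variable, so that both $\mathbf{M}(\partial(T\res\mathbf{C}_\rho))$ and the $L^2$ height of the boundary slice are controlled), and compare $T\res\mathbf{C}_\rho$, via \eqref{eq: Omega Minimal}, with the competitor $Q\llbracket B_\rho\rrbracket+R_\rho$: here $R_\rho$ is the "vertical" filling of the $(m-1)$-cycle $\partial(T\res\mathbf{C}_\rho)-Q\llbracket\partial B_\rho\rrbracket$, which projects to $0$ under $\mathbf{p}_{\pi_0}$ and is supported near $\pi_0$ by \eqref{e:Linfty-estimate}, and $S$ is the corresponding vertical homotopy between $T\res\mathbf{C}_\rho$ and its projection. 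Then $\mathbf{M}(R_\rho)\lesssim\big(\int_{\mathbf{C}_2}\dist^2(\cdot,\pi_0)\,d\|T\|\big)^{1/2}$ and $\mathbf{M}(S)\lesssim\big(\int_{\mathbf{C}_2}\dist^2(\cdot,\pi_0)\,d\|T\|\big)^{1/2}$, while $\int_{\mathbf{C}_2}\dist^2(\cdot,\pi_0)\,d\|T\|$ is controlled by $C(\mathbf{E}^{no}(T,\mathbf{C}_2)+\Omega^2)$ by a Caccioppoli-type argument that uses $0\in\spt(T)$ (Assumption \ref{a:main-2}) together with the tilt estimate \eqref{e:tilt-estimate}; combining these with \eqref{eq: Omega Minimal} gives $\mathbf{M}(T\res\mathbf{C}_\rho)-Q\omega_m\rho^m\leq C(\mathbf{E}^{no}(T,\mathbf{C}_2)+\Omega^2)$, hence the claim. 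Corollary \ref{c:splitting-0} is available for the slicing/splitting steps if a multi-sheeted decomposition is needed.

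The step I expect to be the main obstacle is producing the error with the \emph{quadratic} power $\Omega^2$, rather than the weaker $\Omega^{1-\delta_3}$ that a careless use of the $L^\infty$ height bound \eqref{e:Linfty-estimate} would generate. This forces one to never feed \eqref{e:Linfty-estimate} directly into the final estimate: the filling current $R_\rho$ must be estimated through the $L^1$/$L^2$ height of the good boundary slice (not its $L^\infty$ height), the term $\Omega\,\mathbf{M}(S)$ in \eqref{eq: Omega Minimal} must be split by Young's inequality and the resulting quadratic-in-height piece absorbed — exactly the absorption device pointed out in \cite[Remark 1.10]{Spolaor_15} and used throughout Part \ref{pt:deg1} — and the height-versus-excess feedback must be closed using \eqref{e:tilt-estimate} in the smallness regime. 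Once this bookkeeping is set up, every remaining estimate is identical to the corresponding one in \cite{DMS}, with the role of $\mathbf{A}$ there played by $\Omega$ and $\Omega^{1-\delta_3}$ here.
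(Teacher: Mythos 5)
Your reductions in the first two steps are correct but, in effect, circular: since $\int_{\mathbf{C}_\rho}\langle\vec T,\vec\pi_0\rangle\,d\|T\|=Q\omega_m\rho^m$ for a.e.\ $\rho$ (push the form $dx^1\wedge\cdots\wedge dx^m$ forward under $\mathbf{p}_{\pi_0}$), the mass excess $\mathbf{M}(T\res\mathbf{C}_\rho)-Q\omega_m\rho^m$ \emph{is} the oriented excess up to normalization, so after the slicing identity you have reduced the proposition to itself, and everything rests on the comparison argument. That is where the genuine gap lies. With the competitor $Q\llbracket B_\rho\rrbracket+R_\rho$ and the vertical homotopy $S$, almost-minimality gives $\mathbf{M}(T\res\mathbf{C}_\rho)-Q\omega_m\rho^m\leq \mathbf{M}(R_\rho)+\Omega\,\mathbf{M}(S)$, and $\mathbf{M}(R_\rho)$ is of the order of the \emph{first} power of the $L^2$ height of the good slice, not its square; even granting your height claim this yields only $C(\mathbf{E}^{no}(T,\mathbf{C}_2)+\Omega^2)^{1/2}$, not \eqref{e:o<no}. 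This is not a bookkeeping issue that absorption can repair: for $T$ the multiplicity-one graph of $x\mapsto\epsilon x_1$ one has $\mathbf{E}^{no}\sim\epsilon^2$ and $\Omega=0$, while the "project-and-fill-vertically" competitor has mass exceeding $\mathbf{M}(T\res\mathbf{C}_\rho)$ by $\sim\epsilon$, so no such comparison can ever produce a quadratic bound (the Young/absorption device only addresses the $\Omega\,\mathbf{M}(S)$ term, which was never the problematic one). In addition, the intermediate claim $\int_{\mathbf{C}_2}\dist^2(\cdot,\pi_0)\,d\|T\|\leq C(\mathbf{E}^{no}(T,\mathbf{C}_2)+\Omega^2)$ is not a Caccioppoli-type estimate (Caccioppoli controls excess by height, not height by excess); it is a non-oriented height bound that is not available off the shelf and, without using $\Theta(T,0)=Q$ in an essential quantitative way, fails (e.g.\ two disjoint parallel planes).

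The paper's proof is of a different nature: it is a contradiction/compactness argument following \cite[Proposition 4.1]{DMS} and \cite[Theorem 16.1]{DLHMS}. Assuming the ratio in \eqref{e:o<no} blows up along a sequence, one uses the height bound of \cite{Spolaor_15} (when supports are not equibounded) and the strong Lipschitz approximation for $\Omega$-minimal currents \cite{DLSS1}, and then \emph{refines} that approximation; the competitors are built from the multivalued graph (i.e.\ with the tilt subtracted), which is exactly what detects the orientation cancellation at quadratic order and yields the contradiction. Any repair of your direct argument would have to import that graphical ingredient; the plane-plus-vertical-cone competitor cannot replace it.
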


The proof of this follows that of \cite[Proposition 4.1]{DMS} (see also \cite[Theorem 16.1]{DLHMS}), replacing the height bound \cite[Theorem A.1]{DLS16centermfld} with \cite[Theorem 1.4]{Spolaor_15} in the case in which the supports of the currents are not equibounded. Furthermore, instead of Almgren's strong Lipschitz approximation for area-minimizing integral currents, we invoke its variant \cite[Theorem 1.4]{DLSS1} for $\Omega$-minimal currents (see Definition 1.1 therein). One can then refine the approximation in the same way to deduce the desired contradiction, and conclude the proof. 

In the case $N = 1$, the conclusions of Theorem \ref{thm:main-estimate} are given by Allard's tilt excess estimate for varifolds with bounded generalized mean curvature (see \cite[Proposition 4.1]{DL-All}), together with \cite[Lemma 1.7]{Spolaor_15}. It suffices to verify that the generalized mean curvature of $T$ can be controlled uniformly by $\Omega$, which indeed is the case by the following reasoning. Recall that $T$ satisfies the first variation identity \eqref{eq: first variation} for any test vector field $\chi\in C_c^\infty(\Bbf_{6\sqrt{m}};\R^{m+n})$. Thus, after applying the Riesz Representation Theorem, we infer 
    \begin{align*}
        - \int \chi \cdot \vec{H}_T \, d\Vert T \Vert = T(d\omega \mres \chi) = \int \langle d\omega \mres \chi, \vec{T} \rangle \, d \Vert T \Vert \leq \Vert d\omega \Vert_{C^0} \int \vert \chi \vert \, d\Vert T \Vert. 
    \end{align*}
    Taking the supremum in the above, and recalling the definition of $L^\infty$ norm in terms of its dual $L^1$ norm (both with respect to the local Radon measure $\|T\|$), we deduce that the generalized mean curvature vector $\vec{H}_T$ of $T$ satisfies the estimate
    \begin{equation}\label{e:H-Linfty-est}
        \Vert \vec{H}_T \Vert_{L^\infty(\Bbf_{6\sqrt m},\|T\|)} \leq \Vert d\omega \Vert_{C^0(\Bbf_{6\sqrt{m}})} = \Omega \, .
    \end{equation}
Note that the error term in the latter is indeed quadratic in $\Omega$, unlike Theorem 1.5 therein. In addition, note that \cite[Lemma 1.7]{Spolaor_15} does not require any smallness on the tilt excess.

\begin{proposition}[$N=1$ case of Theorem \ref{thm:main-estimate}]\label{p:step1}
Theorem \ref{thm:main-estimate} holds when $N=1$, for any $Q$ and $N$.
\end{proposition}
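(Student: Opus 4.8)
The case $N=1$ is exactly the situation where the target plane configuration is a single plane $p_1+\pi_0$, and here $T$ — being a semicalibrated current — is an integral varifold with generalized mean curvature vector bounded in $L^\infty$ by $\Omega$, as recorded in \eqref{e:H-Linfty-est}. The plan is therefore to reduce both \eqref{e:tilt-estimate} and \eqref{e:Linfty-estimate} to Allard-type statements for such varifolds and to the already-established Lipschitz approximation estimates.

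First I would prove the tilt estimate \eqref{e:tilt-estimate}. By Proposition \ref{p:o<no}, it suffices to bound the non-oriented tilt excess $\Ebf^{no}(T,\Cbf_2)$ by $\bar C(E+\Omega^2)$. For this I invoke Allard's tilt-excess estimate for varifolds with bounded generalized mean curvature (\cite[Proposition 4.1]{DL-All}), applied on $\Cbf_2$ with the plane $\pi_0$: this controls $\Ebf^{no}(T,\Cbf_{r})$ for $r$ slightly smaller than $2$ by the $L^2$ distance of $T$ to (the support of) a minimizing plane plus the square of the mean curvature bound, i.e. by $C(E+\Omega^2)$, where the $L^2$-flatness term is bounded by $E$ since $\boldsymbol\pi = p_1+\pi_0$ is itself an admissible plane. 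Combining with Proposition \ref{p:o<no} (and a standard covering/iteration to pass from $\Cbf_2$ down to $\Cbf_r$, $1\le r<2$) gives \eqref{e:tilt-estimate}.

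Next I would prove the $L^\infty$ estimate \eqref{e:Linfty-estimate}. With $E\le 1$, the tilt estimate just proved plus the mass hypothesis $\|T\|(\Cbf_2)\le(Q+\tfrac12)\omega_m2^m$ and $(\mathbf p_{\pi_0})_\sharp T\res \Cbf_2 = Q\llbracket B_2\rrbracket$ put us in the setting of the strong Lipschitz approximation \cite[Theorem 1.4]{DLSS1} for $\Omega$-minimal currents: there is a $Q$-valued Lipschitz map $f\colon B_{r'}(\pi_0)\to\Acal_Q(\pi_0^\perp)$ whose graph coincides with $T$ outside a set of small measure, with Dirichlet energy and oscillation controlled by the tilt excess $\Ebf(T,\Cbf_{2},\pi_0)\le \bar C(E+\Omega^2)$. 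One then upgrades the $L^2$ bound on $f$ to an $L^\infty$ bound via \cite[Lemma 1.7]{Spolaor_15} (which needs no smallness of the tilt excess), obtaining $\|f\|_{C^0}\le \bar C(E^{1/2}+\Omega^{1-\delta_3})$; here the exponent $1-\delta_3$ on $\Omega$ rather than $1$ arises precisely because the height bound of \cite[Theorem 1.4]{Spolaor_15} / \cite[Lemma 1.7]{Spolaor_15} produces this subquadratic power in the $\Omega$-minimal setting (in contrast to the area-minimizing case). Finally, to control $\spt(T)$ on the non-graphical part, one uses the monotonicity formula for varifolds with bounded generalized mean curvature (applied with the bound \eqref{e:H-Linfty-est}) to rule out any piece of $\spt(T)\cap\Cbf_r$ lying at distance $>\bar C(E^{1/2}+\Omega^{1-\delta_3})$ from $\boldsymbol\pi$: such a piece would carry a definite amount of mass and thus would be detected by the tilt and mass estimates, a contradiction. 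This yields \eqref{e:Linfty-estimate}.

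\textbf{Main obstacle.} The routine parts are the covering/iteration to descend to $\Cbf_r$ and the bookkeeping of constants. The genuine point requiring care is tracking the correct power of $\Omega$: one must verify that Allard's estimate contributes only $\Omega^2$ to the tilt (quadratic, as the mean curvature enters the first variation quadratically after squaring), while the height/$L^\infty$ bound necessarily loses a power and gives only $\Omega^{1-\delta_3}$ — and to check that \cite[Lemma 1.7]{Spolaor_15} indeed provides this subquadratic exponent without any smallness hypothesis on the excess. The rest is a faithful adaptation of the argument of \cite[Proposition 4.1]{DMS} and the $N=1$ case discussion in \cite{DMS}, with \cite[Theorem A.1, Lemma 1.6, Lemma 1.7]{DLS16centermfld} replaced by their semicalibrated counterparts \cite[Theorem 1.4]{Spolaor_15}, \cite[Lemma 2.2]{Spolaor_15}, \cite[Lemma 1.7]{Spolaor_15} and \cite[Theorem 1.4]{DLSS1}.
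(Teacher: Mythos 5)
Your overall skeleton is the paper's: the paper also derives the mean curvature bound \eqref{e:H-Linfty-est} from the first variation identity and then obtains \eqref{e:tilt-estimate} from Allard's tilt excess estimate \cite[Proposition 4.1]{DL-All} for varifolds with bounded generalized mean curvature; your additional passage through the non-oriented excess and Proposition \ref{p:o<no} is consistent with this and harmless.

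The problematic part is your route to \eqref{e:Linfty-estimate}. First, \cite[Theorem 1.4]{DLSS1} requires the cylindrical excess to lie below a fixed threshold, whereas under Assumption \ref{a:height-main} you only know $E\leq 1$, so $\Ebf(T,\Cbf_2,\pi_0)\leq \bar C(E+\Omega^2)$ need not be small; this is fixable by a case distinction (when the excess is not small, a crude bound suffices), but you do not address it. Second, and more seriously, your concluding step --- ruling out pieces of $\spt(T)$ far from $\boldsymbol\pi$ because they ``carry a definite amount of mass'' detectable by the mass and tilt bounds --- only yields $\dist(p,\boldsymbol\pi)\lesssim E^{1/(m+2)}$, not the claimed $E^{1/2}$: the sharp $L^2$--$L^\infty$ height bound does not follow from that mass-comparison argument. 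The paper's proof avoids both issues by observing that \cite[Lemma 1.7]{Spolaor_15} is itself an $L^2$--$L^\infty$ height bound for the \emph{current} (not a statement about the Lipschitz approximation $f$), valid with \emph{no} smallness hypothesis on the tilt excess; applying it directly to $T$, together with \eqref{e:H-Linfty-est}, gives \eqref{e:Linfty-estimate} at once and makes the Lipschitz-approximation/monotonicity detour unnecessary. A minor point: the exponent $1-\delta_3$ is not forced at this stage (any error $\leq \Omega$ suffices, since $\Omega\leq 1$); it is dictated by the later induction step (cf.\ Remark \ref{r:error-loss}), so it should not be attributed to \cite[Lemma 1.7]{Spolaor_15}.
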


The remainder of this section is dedicated to the proof of Theorem \ref{thm:main-estimate}. First of all, observe that the results of \cite[Section 4.4]{DMS}, handling the proof of Theorem \ref{thm:main-estimate} in the case where the planes in $\boldsymbol\pi$ are well-separated, remain valid when $T$ is semicalibrated. Indeed, they are all either reliant on the following variant of \cite[Lemma 4.7]{DMS}, or are proven by analogous reasoning to it.

\begin{lemma}\label{l:simpler}
For every $1\leq \bar r<2$ there is a constant $\sigma_2 = \sigma_2 (Q,m,n,2-\bar r)>0$ with the following property. Let $T$ and $\omega
$ be as in Assumption \ref{a:height-main}, suppose that $p_1, \ldots, p_N\in \pi_0^\perp$ are distinct points, and let $\boldsymbol{\pi}:=\bigcup_i p_i+\pi_0$. Assume that $E$ is as in \eqref{e:L2-excess}, let $H:= \min \{|p_i-p_j|:i \neq j\}$ and suppose that 
\begin{equation}\label{e:scaling-not-broken}
E  \leq \sigma_2 \qquad \mbox{and} \qquad H \geq 1\, . 
\end{equation}
Then 
\begin{equation}\label{e:separated}
{\rm spt}\, (T) \cap \mathbf{C}_{\bar r} \subset \{q: \dist (q, \boldsymbol{\pi})\leq \textstyle{\frac{H}{4}}\},
\end{equation}
and, in particular, all the conclusions of Theorem \ref{thm:main-estimate} hold in $\Cbf_{\bar r}$. 
\end{lemma}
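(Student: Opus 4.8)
The plan is to follow the proof of the area-minimizing counterpart \cite[Lemma 4.7]{DMS} essentially verbatim, the only genuinely new ingredient being that the density lower bound for $T$ is obtained from the monotonicity formula for integral currents with bounded generalized mean curvature — applicable here thanks to the estimate $\|\vec{H}_T\|_{L^\infty(\Bbf_{6\sqrt m},\|T\|)} \leq \Omega$ recorded in \eqref{e:H-Linfty-est} — in place of the one for stationary (or area-minimizing) currents. The argument has two steps: first, smallness of the $L^2$ excess $E$ is used to confine $\spt(T)\cap\Cbf_{\bar r}$ to the pairwise disjoint $\tfrac{H}{4}$-neighbourhoods of the individual planes $p_i+\pi_0$, which is precisely \eqref{e:separated}; then, on each such neighbourhood, $T$ restricts to an integral current which — after translation by $-p_i$ and a rescaling taking $\Cbf_{\bar r}$ to $\Cbf_2$ — falls under Assumption \ref{a:height-main} with $N=1$, so that all the conclusions of Theorem \ref{thm:main-estimate} follow by applying Proposition \ref{p:step1} to each piece and summing. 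As in \cite{DMS}, a preliminary crude bound (using only the mass bound and the density lower bound, hence unchanged in the semicalibrated setting) confines $\spt(T)\cap\Cbf_{\bar r}$ to a bounded neighbourhood of $\pi_0$, which ensures that the monotonicity formula below is applied away from $\spt(\partial T)\subset\R^{m+n}\setminus\Bbf_{7\sqrt m}$.

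For the first step I argue by contradiction. Suppose $q\in\spt(T)\cap\Cbf_{\bar r}$ with $\dist(q,\boldsymbol\pi)>\tfrac H4$, and set $\rho:=\tfrac18\min\{1,2-\bar r\}\leq\tfrac18$. Since $\bar r+\rho<2$ we have $\Bbf_\rho(q)\subset\Cbf_2$, and since $H\geq 1$ every $p\in\Bbf_\rho(q)$ satisfies $\dist(p,\boldsymbol\pi)\geq\tfrac H4-\rho\geq\tfrac18$. Hence, using $\Theta(T,q)\geq 1$ together with the monotonicity of $r\mapsto e^{\Omega r}\,\omega_m^{-1}r^{-m}\|T\|(\Bbf_r(q))$ for a current with generalized mean curvature bounded by $\Omega\leq\bar\eps\leq 1$,
\[
E \;\geq\; \int_{\Bbf_\rho(q)}\dist^2(p,\boldsymbol\pi)\,d\|T\|(p)\;\geq\;\tfrac1{64}\,\|T\|(\Bbf_\rho(q))\;\geq\;\tfrac1{64}\,e^{-\Omega\rho}\,\omega_m\,\rho^m\;\geq\;c(m,\bar r)>0\,.
\]
Choosing $\sigma_2=\sigma_2(Q,m,n,2-\bar r)<c(m,\bar r)$ contradicts the hypothesis $E\leq\sigma_2$, so no such $q$ exists and \eqref{e:separated} holds. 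This is the only place the semicalibration enters, via the harmless factor $e^{\Omega\rho}\leq e$.

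For the second step, \eqref{e:separated} and $H\geq 1$ imply that the sets $\{\dist(\cdot,p_i+\pi_0)\leq\tfrac H4\}$ are pairwise disjoint and cover $\spt(T)\cap\Cbf_{\bar r}$, so $T\res\Cbf_{\bar r}=\sum_{i=1}^N T_i$ with $T_i:=T\res\big(\{\dist(\cdot,p_i+\pi_0)<\tfrac H3\}\cap\Cbf_{\bar r}\big)$ an integral current with $\partial T_i\res\Cbf_{\bar r}=0$; by the constancy theorem $(\mathbf p_{\pi_0})_\sharp T_i\res\Cbf_{\bar r}=Q_i\llbracket B_{\bar r}\rrbracket$ for integers $Q_i\geq 0$ with $\sum_i Q_i=Q$, and $T_i$ inherits the relevant mass bound from $T$. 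Translating by $-p_i$ and rescaling $\Cbf_{\bar r}$ to $\Cbf_2$, Proposition \ref{p:step1} applies to each $T_i$ and yields \eqref{e:tilt-estimate} and \eqref{e:Linfty-estimate} relative to $\pi_0$, with $E$ replaced by the contribution of $T_i$ and with the error exponents $\Omega^2$, $\Omega^{1-\delta_3}$ unchanged; summing over $i$ and undoing the scalings gives the conclusions of Theorem \ref{thm:main-estimate} for $T$ in every $\Cbf_r$, $1\leq r<\bar r$. There is no essential obstacle: the only point requiring mild care — handled exactly as in \cite{DMS} — is verifying that each slab current $T_i$ satisfies the hypotheses of Assumption \ref{a:height-main} (in particular the sheet count $Q_i$ and the mass bound) after localization to $\Cbf_{\bar r}$, and that the correct quadratic, resp.\ $\Omega^{1-\delta_3}$, dependence on the semicalibration is preserved — which it is, since these exponents are already built into the $N=1$ statement of Proposition \ref{p:step1}.
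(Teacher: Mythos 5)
Your proposal is correct and follows essentially the same route as the paper, which simply defers to the proof of \cite[Lemma 4.7]{DMS} and notes that the only point to check is the almost-monotonicity of mass ratios (which you obtain equivalently from the bounded generalized mean curvature estimate \eqref{e:H-Linfty-est}, while the paper cites it from almost-minimality via \cite[Proposition 2.1]{DLSS-uniqueness}). The contradiction argument for \eqref{e:separated} and the subsequent splitting into single-plane pieces handled by Proposition \ref{p:step1} are exactly the intended argument.
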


We defer the reader to \cite{DMS} for the proof of this, which remains completely unchanged in the setting herein, in light of the almost-monotonicity of mass ratios that holds for all almost-minimizing currents (see, for instance, \cite[Proposition 2.1]{DLSS-uniqueness}).

As in \cite{DMS}, we then prove the estimates \eqref{e:tilt-estimate} and \eqref{e:Linfty-estimate} separately. For the former, we need two approximate estimates on the oriented tilt-excess; \cite[Lemma 5.1]{DMS} and \cite[Proposition 5.2]{DMS}, but rewritten for a semicalibrated current in $\R^{m+n}$, in which case the all instances of $\Abf$ are replaced with $\Omega$. More precisely, the former estimate reads as follows.

\begin{lemma}\label{l:tilt-1}
For every pair of radii $1\leq r<R \leq 2$ there are constants $\bar{C}=\bar{C}(Q,m,n,R-r)>0$ and $\gamma = \gamma (Q,m,n)>0$ such that the following holds. Let $T$ and $\omega$ be as in Assumption \ref{a:height-main} and let $\boldsymbol\pi$, $E$, $H$ be as in Lemma \ref{l:simpler}. Then
\begin{equation}\label{e:tilt-est-1}
\mathbf{E} (T, \mathbf{C}_r)\leq \bar{C} (E+ \Omega^2) + \bar{C} \left(\frac{E}{H^2}\right)^\gamma \mathbf{E} (T, \mathbf{C}_R) + \bar{C}
\mathbf{E} (T, \mathbf{C}_R)^{1+\gamma}\, .
\end{equation}
\end{lemma}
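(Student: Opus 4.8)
The proof of Lemma~\ref{l:tilt-1} will follow the scheme of \cite[Lemma 5.1]{DMS}, with the single structural change that the first variation identity \eqref{eq: first variation} produces an extra term controlled by $\Omega$ rather than by the second fundamental form of an ambient manifold, and that this term is quadratic in $\Omega$ after an application of \eqref{e:H-Linfty-est} and Young's inequality. Concretely, the starting point is the stationarity-type inner-variation identity for the vector field $X = \varphi^2(\mathbf{p}_{\pi_0}(x))\,(x - \mathbf{p}_{\pi_0}(x))$, where $\varphi$ is a cutoff equal to $1$ on $B_r$ and supported in $B_R$. For an $\Omega$-minimal current this yields, after the usual manipulations (Cauchy--Schwarz, the coarea/layer-cake rewriting of the oriented tilt), the inequality
\begin{equation}\label{e:tilt-plan-1}
\mathbf{E}(T,\mathbf{C}_r) \leq \bar{C}\,\frac{1}{2\omega_m r^m}\int_{\mathbf{C}_R}\frac{\dist^2(x,\boldsymbol\pi)}{(R-r)^2}\,d\|T\| + \bar C\,\Big|\int \mathrm{div}_{\vec T} X\, d\|T\| - \mathbf{E}(T,\mathbf{C}_R)\text{-type terms}\Big| + \bar C\,\big|T(d\omega\mres X)\big|,
\end{equation}
and the key point is that $|T(d\omega\mres X)| \leq \Omega\int_{\mathbf{C}_R}|X|\,d\|T\| \leq \bar C\,\Omega^2 + \bar C\,\mathbf{E}(T,\mathbf{C}_R)$ (absorbing, or more precisely applying Young with the $\dist^2$ density after the height bound $|X|\lesssim\dist(x,\boldsymbol\pi)$ on $\spt(T)\cap\mathbf{C}_R$ from Lemma~\ref{l:simpler}/Proposition~\ref{p:step1}), which is where the quadratic scaling in $\Omega$ comes from.

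First I would reduce to the regime $H\geq 1$ and $E\leq\sigma_2$ as in the statement, so that Lemma~\ref{l:simpler} applies and the planes in $\boldsymbol\pi$ are genuinely well-separated relative to $\mathbf{C}_R$; this gives the crude height bound $\dist(x,\boldsymbol\pi)\le H/4$ on $\spt(T)\cap\mathbf{C}_{\bar r}$ and lets one localize near a single sheet $p_i+\pi_0$ at the scale of the argument. Then I would run the inner-variation computation of \cite[Lemma 5.1]{DMS} verbatim, keeping track of the new term $T(d\omega\mres X)$; the terms already present in \cite{DMS} produce exactly $\bar C(E + (E/H^2)^\gamma \mathbf{E}(T,\mathbf{C}_R) + \mathbf{E}(T,\mathbf{C}_R)^{1+\gamma})$ via the same interpolation between $L^2$ height and tilt (this is where the small exponent $\gamma=\gamma(Q,m,n)>0$ and the Poincaré-type inequality on the Lipschitz approximation of \cite[Theorem 1.4]{DLSS1} enter). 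The new term contributes $\bar C\,\Omega^2$ after the estimate in the previous paragraph. Combining the two gives \eqref{e:tilt-est-1}.

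The step I expect to be the main (though modest) obstacle is verifying that the extra semicalibration term really scales quadratically in $\Omega$ and not merely linearly: a naive bound $|T(d\omega\mres X)|\le\Omega\|X\|_{C^0}\|T\|(\mathbf{C}_R)$ would only give a linear-in-$\Omega$ error, which is insufficient. The resolution is exactly the absorption trick alluded to in the introduction (cf. \cite[Remark 1.10]{Spolaor_15}): one uses that $X$ vanishes to first order along $\boldsymbol\pi$, so $|X(x)|\le C\dist(x,\boldsymbol\pi)$ on $\spt(T)$ by the height bound of Proposition~\ref{p:step1} (the $N=1$ case), whence $|T(d\omega\mres X)|\le C\Omega\int_{\mathbf{C}_R}\dist(x,\boldsymbol\pi)\,d\|T\|\le C\Omega\,(E\,\|T\|(\mathbf{C}_R))^{1/2}\le C\Omega^2 + CE$ by Cauchy--Schwarz and Young. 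One must be slightly careful that the height bound being invoked there is only the already-established $N=1$ case (Proposition~\ref{p:step1}) applied sheet-by-sheet, which is legitimate because of the separation $H\ge 1$; this is precisely the order in which \cite{DMS} bootstraps, and nothing in the semicalibrated modification disturbs that ordering. The remaining bookkeeping — choice of cutoff, the precise form of the interpolation exponent $\gamma$, and the passage from the non-oriented to the oriented tilt via Proposition~\ref{p:o<no} — is identical to \cite{DMS} and I would simply cite it.
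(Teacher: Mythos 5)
Your proposal is correct and takes essentially the same route as the paper: the paper's proof likewise runs \cite[Lemma 5.1]{DMS} verbatim, with the new semicalibration error made quadratic in $\Omega$ exactly by the absorption/Young mechanism you describe (equivalently via $\|\vec H_T\|_{L^\infty}\leq \Omega$, i.e.\ \eqref{e:H-Linfty-est}, cf.\ \cite[Remark 1.10]{Spolaor_15}), and with Almgren's strong excess estimate replaced by its $\Omega$-minimal counterpart \cite[Theorem 4.1]{DLSS1}, whose small-excess threshold at the intermediate radius is ensured as you indicate using Lemma~\ref{l:simpler}. The only cosmetic slip is that your displayed field $\varphi^2\,\mathbf{p}_{\pi_0}^{\perp}(x)$ vanishes on $\pi_0$ rather than on $\boldsymbol{\pi}$, so one must use the sheet-shifted field $\varphi^2\big(\mathbf{p}_{\pi_0}^{\perp}(x)-p_i\big)$, which is what your localization near a single sheet implicitly provides.
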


Observe that the majority of the proof of \cite[Lemma 5.1]{DMS} is in fact written for currents with bounded generalized mean curvature with $\| \vec H_T \|_{L^\infty} \leq C\Abf$, where $\Abf$ is the second fundamental form of the ambient Riemannian manifold. Thus, armed instead with the estimate \eqref{e:H-Linfty-est} (which allows us to replace $\Abf^2$ by $\Omega^2$) and replacing the use of Almgren's strong excess estimate with \cite[Theorem 4.1]{DLSS1} we are able to follow the proof verbatim, obtaining the desired estimate \eqref{e:tilt-est-1}. Note that, analogously to \cite{DMS}, we may indeed apply \cite[Theorem 4.1]{DLSS1} since we may assume that the tilt excess $\Ebf(T, \Cbf_{r_2})$ falls below the threshold $\eps_{21}$ therein, for $r_2=\tfrac{2R+r}{3}$ as in the proof of \cite[Lemma 5.1]{DMS}. Indeed, the reasoning for this remains unchanged, given Lemma \ref{l:simpler} (in place of \cite[Lemma 4.7]{DMS}). 
The estimate of Lemma \ref{l:tilt-1} in turn yields the following bootstrapped estimate, under the assumption that $E$ is sufficiently small relative to the minimal separation of the planes in $\boldsymbol\pi$, which is the analogue of \cite[Proposition 5.2]{DMS}.

\begin{proposition}\label{p:tilt-2}
For every pair of scales $1\leq r<r_0 < 2$, there are constants $\bar{C}=\bar{C}(Q,m,n,N,r_0-r,2-r_0)>0$ and $\sigma_4=\sigma_4 (Q,m,n,N,r_0-r,2-r_0)>0$ with the following properties.
Let $T$ and $\omega$ be as in Assumption \ref{a:height-main} and let $\boldsymbol\pi$, $E$, $H$ be as in Lemma \ref{l:simpler}. If in addition we have
\begin{equation}\label{e:very-small}
E \leq \sigma_4 \min\{H^2, 1\} \, ,
\end{equation}
then
\begin{equation}\label{e:tilt-estimate-2}
\mathbf{E} (T, \mathbf{C}_r) \leq \bar{C} (E + \Omega^2) + \bar{C} \left(\frac{E}{H^2}\right) \mathbf{E} (T, \mathbf{C}_{r_0})\, .
\end{equation}
\end{proposition}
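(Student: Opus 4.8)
The plan is to follow the bootstrap scheme of \cite[Proposition 5.2]{DMS}, deducing Proposition \ref{p:tilt-2} from Lemma \ref{l:tilt-1} by iterating the latter across a geometric sequence of radii. First I would fix a dyadic sequence of scales $r = \rho_\infty < \cdots < \rho_1 < \rho_0 = r_0$ with $\rho_j - \rho_{j+1} \sim 2^{-j}(r_0 - r)$, and apply Lemma \ref{l:tilt-1} with the pair $(\rho_{j+1}, \rho_j)$ in place of $(r, R)$. This gives, writing $a_j := \mathbf{E}(T, \Cbf_{\rho_j})$, a recursive inequality of the shape
\[
a_{j+1} \leq \bar{C}_j\left(E + \Omega^2\right) + \bar{C}_j\left(\frac{E}{H^2}\right)^\gamma a_j + \bar{C}_j\, a_j^{1+\gamma},
\]
where the constants $\bar{C}_j$ degenerate like $2^{\beta j}$ for some dimensional $\beta$ as the annuli shrink. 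The point is that under the smallness hypothesis \eqref{e:very-small} with $\sigma_4$ chosen small, the term $\bar{C}_j a_j^{1+\gamma}$ can be absorbed — one shows inductively that $a_j$ stays comparably small, say $a_j \leq \sigma_2$ with $\sigma_2$ as in Lemma \ref{l:simpler}, so the superlinear term is dominated by $a_j$ times a small factor — and the geometric growth of $\bar{C}_j$ is beaten by the geometric smallness coming from iterating the coefficient $\bar{C}_j(E/H^2)^\gamma$, which is $\leq \bar{C}_j \sigma_4^\gamma$. Summing the resulting telescoping estimate yields $a_\infty = \mathbf{E}(T,\Cbf_r) \leq \bar{C}(E + \Omega^2) + \bar{C}(E/H^2) \mathbf{E}(T,\Cbf_{r_0})$, where the final linear-in-$(E/H^2)$ (rather than $\gamma$-power) dependence arises because after the first step the repeated application improves the exponent on $E/H^2$ from $\gamma$ up to $1$, exactly as in \cite{DMS}; alternatively one may keep the $\gamma$-power throughout and then upgrade at the end using $E/H^2 \leq \sigma_4 \leq 1$.

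The only genuinely new feature relative to \cite[Proposition 5.2]{DMS} is the presence of $\Omega^2$ in place of $\Abf^2$ (or $\Abf^2$-type curvature terms) in the error of Lemma \ref{l:tilt-1}. Since $\Omega = \|d\omega\|_{C^0(\Bbf_{6\sqrt m})}$ scales quadratically under dilation (cf. \eqref{e:domega-balls}) exactly as the second fundamental form of an ambient manifold does, the $\Omega^2$ term behaves identically to $\Abf^2$ under the rescaling inherent in the iteration: at each step the contribution is $\bar C_j \Omega^2$, and summing the geometric series $\sum_j \bar C_j$ — which converges because, although $\bar C_j$ grows, the annuli $\rho_j \setminus \rho_{j+1}$ are controlled and one truncates the iteration once $a_j$ is below threshold, or more simply one runs finitely many steps and closes with Lemma \ref{l:simpler} — produces a single $\bar{C}\Omega^2$ in the final estimate. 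Hence no structural change to the argument of \cite{DMS} is required; one simply carries the extra additive constant $\Omega^2$ through, noting it never multiplies any excess quantity and therefore causes no loss in the iteration.

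The main obstacle, and the only point requiring care, is bookkeeping the dependence of the constants $\bar C_j$ and $\gamma$ on the geometry so that the iteration genuinely closes: one must verify that $\gamma = \gamma(Q,m,n)$ from Lemma \ref{l:tilt-1} is uniform across all scales in $[r, r_0]$ (it is, since it depends only on $Q,m,n$), and that $\bar C_j = \bar C(Q,m,n,\rho_j - \rho_{j+1})$ grows at most polynomially in $2^j$, so that choosing $\sigma_4$ small enough forces $\sum_j \bar C_j (\sigma_4/H^2)^{\gamma}$ to be summable and the induction hypothesis $a_j \leq \sigma_2$ to be preserved. Granting this, the proof is a routine finite (or convergent infinite) iteration identical to the area-minimizing case, and we may simply refer the reader to \cite[Proof of Proposition 5.2]{DMS} for the remaining details, with the replacements: Almgren's strong Lipschitz/excess estimates by \cite[Theorem 4.1]{DLSS1}, the mean curvature bound by \eqref{e:H-Linfty-est}, and \cite[Lemma 4.7]{DMS} by Lemma \ref{l:simpler}.
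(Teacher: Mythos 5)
Your overall strategy --- iterating Lemma \ref{l:tilt-1} along a finite chain of radii between $r$ and $r_0$ and absorbing the superlinear term --- is the right one in outline, and you are correct that the $\Omega^2$ error is purely additive, scales like $\Abf^2$, and causes no structural change. The genuine gap is at the very first step: your induction hypothesis ``$a_j\leq\sigma_2$'' has no base case. At $j=0$ the quantity $a_0=\mathbf{E}(T,\Cbf_{r_0})$ is controlled only by the mass bound of Assumption \ref{a:height-main}, which gives $a_0\leq C(Q,m)$ of order one, not smallness; the hypothesis \eqref{e:very-small} makes the \emph{$L^2$ height excess} $E$ small, and converting that into smallness of the \emph{tilt excess} is precisely the content of the proposition (and of Lemma \ref{l:tilt-1} modulo its superlinear term), so it cannot be assumed. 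Consequently $\bar C_0\,a_0^{1+\gamma}$ in the first application of Lemma \ref{l:tilt-1} is an uncontrolled $O(1)$ error and the iteration never starts. This initial smallness must come from elsewhere: the case $H\geq 1$ is dispatched outright by Lemma \ref{l:simpler}, and in the remaining case one needs the $N=1$ Allard estimate of Proposition \ref{p:step1} together with the splitting results of \cite[Section 4.4]{DMS} (and the standard reduction that the conclusion is trivial when $\Omega$ is bounded below, so that compactness to an area-minimizing limit is available). It is telling that Proposition \ref{p:step1} plays no role anywhere in your sketch, whereas the paper lists it as one of the three essential ingredients.

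Two further points on the bookkeeping. First, your ``alternative'' of keeping the single power $(E/H^2)^\gamma$ and ``upgrading at the end using $E/H^2\leq\sigma_4\leq 1$'' is backwards: $x^\gamma\geq x$ for $x\in(0,1]$ and $\gamma\in(0,1)$, so the linear coefficient genuinely requires accumulating $K\geq 1/\gamma$ multiplicative factors. Second, even granting initial smallness, absorbing $\bar C_j a_j^{1+\gamma}$ as ``$a_j$ times a small factor'' makes the per-step coefficient of $a_j$ at least a fixed constant (the small factor is $a_j^\gamma$, bounded by a threshold, not by $(E/H^2)^\gamma$), so the product over finitely many steps is bounded below independently of $E/H^2$ and cannot by itself produce the factor $E/H^2$ in \eqref{e:tilt-estimate-2}. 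The correct bookkeeping propagates a decomposition $a_j\leq C_j(E+\Omega^2)+\delta_j\,\mathbf{E}(T,\Cbf_{r_0})$, showing that the contributions of the superlinear term to $\delta_{j+1}$ are either reabsorbed into the additive part or carry a factor that, after $K$ steps with $K\gamma\geq 1$, is dominated by $E/H^2$. A finite, equally spaced chain of $K$ radii (with uniformly bounded constants $\bar C_j$) suffices; the infinite dyadic chain with degenerating constants is both unnecessary and harder to control.
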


Observe that the proof of Proposition \ref{p:tilt-2} remains unchanged, given Lemma \ref{l:tilt-1}, Proposition \ref{p:step1} and the semicalibrated analogues of the results from \cite[Section 4.4]{DMS} (recall the discussion above regarding the latter). 

\subsection{Tilt excess estimate}
Given Proposition \ref{p:tilt-2} and the combinatorial lemmas of \cite[Section 4.3]{DMS}, the tilt excess estimate \eqref{e:tilt-estimate} of Theorem \ref{thm:main-estimate} follows exactly as in \cite[Section 5.3]{DMS}. 

\subsection{$L^2-L^\infty$ height bound}
Following \cite{DMS}, the proof of the $L^\infty$-bound \eqref{e:Linfty-estimate} is proven by induction on $N$, relying on the validity of the tilt excess estimate \eqref{e:tilt-estimate} that we have just established.

\begin{proposition}\label{p:induction}
    Let $N\geq 2$ and suppose that \eqref{e:Linfty-estimate} holds for any $N' \leq N - 1$ and any $Q' \leq Q$. Then it holds for $N$ and $Q$.
\end{proposition}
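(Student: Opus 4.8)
The plan is to follow the inductive scheme of \cite[Section 6]{DMS} for the $L^\infty$-estimate, making the same substitutions that have been used throughout this part: every appearance of $\Abf$ in the error terms is replaced by $\Omega$ in height-type bounds and by $\Omega^{1-\delta_3}$ in the final $L^\infty$-bound, and every invocation of Almgren's Lipschitz approximation is replaced by its $\Omega$-minimal counterpart \cite[Theorem 4.1]{DLSS1}. The base case $N=1$ is Proposition \ref{p:step1}, so we may assume \eqref{e:Linfty-estimate} for all $N'\le N-1$ and all $Q'\le Q$ and prove it for $(N,Q)$.

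First I would set up the argument by contradiction exactly as in \cite{DMS}: if the claimed bound fails, there is a sequence $T_j$, $\omega_j$ satisfying Assumption \ref{a:height-main} with $E_j\to 0$, collections $\boldsymbol\pi_j = \bigcup_i (p_i^j + \pi_0)$ of $N$ parallel planes, and points $q_j\in \spt(T_j)\cap \Cbf_r$ with $\dist(q_j,\boldsymbol\pi_j) \geq M_j (E_j^{1/2} + \Omega_j^{1-\delta_3})$ for $M_j\to\infty$. One splits into two regimes according to the minimal separation $H_j := \min_{i\ne i'} |p_i^j - p_{i'}^j|$ of the planes, rescaled appropriately. If the planes do not collapse onto each other (i.e. $H_j$ stays bounded below after the natural rescaling), then Lemma \ref{l:simpler}, which we already have, together with the splitting Corollary \ref{c:splitting-0} localizes $T_j$ near each individual plane $p_i^j+\pi_0$, reducing to the case $N=1$ handled by Proposition \ref{p:step1} — a contradiction. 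If instead two or more planes collapse, one rescales so that the collapsing cluster of planes becomes order-1 separated; in the rescaled picture the tilt excess estimate \eqref{e:tilt-estimate} (already established) controls the new tilt excess, and the currents converge (after the $\Omega_j\to 0$ ensures the mean curvature bound \eqref{e:H-Linfty-est} degenerates) to a Dir-minimizing $Q'$-valued graph, or more precisely one applies the inductive hypothesis with strictly fewer planes $N'<N$ (and $Q'\le Q$) to the limit configuration, again deriving a contradiction with the blow-up of $M_j$.

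The key analytic inputs are: (i) the tilt excess estimate \eqref{e:tilt-estimate} of Theorem \ref{thm:main-estimate}, which at this point in the argument is already proven; (ii) the mean curvature bound \eqref{e:H-Linfty-est}, giving $\|\vec H_{T_j}\|_{L^\infty}\le\Omega_j$, which lets us pass to varifold limits with vanishing generalized mean curvature and invoke monotonicity of mass ratios \cite[Proposition 2.1]{DLSS-uniqueness}; (iii) the strong Lipschitz approximation \cite[Theorem 4.1]{DLSS1} for $\Omega$-minimal currents, replacing Almgren's; and (iv) the combinatorial/covering lemmas of \cite[Section 4.3]{DMS}, which are purely geometric and carry over verbatim. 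The assumption $E_j\le 1$ is used exactly as in \cite{DMS} to guarantee that after rescaling the relevant excesses fall below the thresholds $\varepsilon_{21}$, $\varepsilon_{23}$ needed to apply \cite[Theorem 4.1, Theorem 3.1]{DLSS1}; here one must additionally track that the rescaled $\Omega$, which picks up a favourable power under the parabolic rescaling by \eqref{e:domega-balls}, remains small relative to the rescaled excess, so that the semicalibration error never dominates — this is where the exponent $1-\delta_3$ (rather than $1$) in the statement is forced, precisely as flagged in the fourth bullet of the introduction.

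The main obstacle is the collapsing-planes regime: one must choose the rescaling factor so that the \emph{smallest} gap among a maximal collapsing sub-cluster becomes comparable to $1$ while simultaneously ensuring that (a) the rescaled $L^2$-excess $E$ stays below $\sigma_4\min\{H^2,1\}$ so that Proposition \ref{p:tilt-2} applies with the correct dependence on $H$, and (b) the contradiction point $q_j$ remains at unit-order distance from the limiting configuration so that the inductive hypothesis with $N'<N$ actually produces a contradiction. This bookkeeping — iterating the rescaling through nested clusters and verifying at each stage that the semicalibration error term $\Omega^{1-\delta_3}$ scales subcritically relative to both $E^{1/2}$ and the cluster gap — is exactly the content of \cite[Section 6]{DMS}, and the only genuinely new point is confirming that the error term $\Omega^{1-\delta_3}$ (in place of $\Abf$) is compatible with every step; this follows from \eqref{e:domega-balls} and the fact that $\delta_3<\delta_2$, so that $\Omega^{2-2\delta_3}$ is superquadratic in the scaling parameter and hence always absorbable. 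I would therefore present the proof as: ``The argument is verbatim that of \cite[Section 6]{DMS}, with the substitutions described above; the only point requiring verification is the scaling of the $\Omega^{1-\delta_3}$ error, which we now check,'' followed by the short scaling computation.
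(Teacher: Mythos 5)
Your proposal diverges from the paper's route and, as written, has a genuine gap at exactly the point where the semicalibrated setting differs from the area-minimizing one. The paper does not prove Proposition \ref{p:induction} by a compactness/contradiction argument at a single scale; it first proves the quantitative decay Lemma \ref{l:decay} (new planes $\boldsymbol{\pi}'$ at scale $2\rho$ whose positions drift by at most $C(E+\Omega^{2-2\delta_3})^{1/2}$, with geometric decay of the $L^2$ distance), and then obtains the $L^\infty$ bound by iterating this lemma across scales and invoking the combinatorial lemmas of \cite[Section 4.3]{DMS}, exactly as in \cite[Section 6]{DMS}. It is this iteration, summing the geometric series of plane drifts, that produces the sharp factor $E^{1/2}+\Omega^{1-\delta_3}$ in \eqref{e:Linfty-estimate}; your blow-up set-up with $\dist(q_j,\boldsymbol{\pi}_j)\geq M_j(E_j^{1/2}+\Omega_j^{1-\delta_3})$ and $M_j\to\infty$ does not explain how a soft limit (a Dir-minimizer, or ``the limit configuration'') yields a contradiction against this \emph{quantitative} normalization, nor how the inductive hypothesis is applied to the currents $T_j$ themselves rather than to a limit object.

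The more serious issue is your claim that, thanks to the scaling \eqref{e:domega-balls} and $\delta_3<\delta_2$, the rescaled $\Omega$ ``remains small relative to the rescaled excess, so that the semicalibration error never dominates.'' This cannot be arranged: the cylindrical excess $\Ebf$ may be arbitrarily small (or decay arbitrarily fast) compared with $\Omega^{2}$, so the hypothesis $\Omega\leq \eps_{23}\Ebf^{1/2}$ of the harmonic approximation \cite[Theorem 3.1]{DLSS1} can simply fail, and then no Dir-minimizing comparison (hence no limiting harmonic blow-up in your collapsed regime) is available. This is precisely why the paper's proof of Lemma \ref{l:decay} splits into Case 1 ($\Omega^2\leq\eps_{23}\Ebf$, harmonic approximation and H\"older decay of the Dir-minimizer) and Case 2 ($\Omega^2>\eps_{23}\Ebf$, where one uses only the Lipschitz approximation, the Poincar\'e inequality for $Q$-valued maps, and the bound $\int|Df|^2\leq C\eps_{23}^{-2}\sigma_5^{2\delta_3}\Omega^{2-2\delta_3}$ to choose the new planes through the average point $Y$), and it is Case 2 that forces the loss from $\Omega^2$ to $\Omega^{2-2\delta_3}$ (Remark \ref{r:error-loss}). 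Your argument has no mechanism for this regime, so the collapsed case of your dichotomy does not go through as described. To repair it you would essentially have to reintroduce the two-regime decay lemma, at which point you are back to the paper's proof.
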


The key to the proof of Proposition \ref{p:induction} is the following adaptation of \cite[Lemma 6.2]{DMS} to the semicalibrated setting.

\begin{lemma}\label{l:decay}
There are constants $\rho_0 = \rho_0 (m,n,Q)>0$ and $C = C(Q,m,n)>0$ such that, for every fixed $0<\rho\leq \rho_0$, there are
$\sigma_5 = \sigma_5 (Q,m,n,N, \rho) \in (0,1]$ and $0 < \beta_0 = \beta_0(Q,m) < 1$ such that the following holds. 
Assume $T$, $E$, and $\boldsymbol{\pi}$ are as in Theorem \ref{thm:main-estimate} with $P= \{p_1, \ldots, p_N\}$ and that
\begin{equation}\label{e:very-small-again}
E + \Omega^{2 - 2 \delta_3} \leq \sigma_5.
\end{equation} 
Then there is another set of points $P':= \{q_1, \ldots, q_{N'}\}$ with $N'\leq Q$ such that:
\begin{itemize}
\item[(A)] $\dist (q_i, P) \leq C (E+ \Omega^{2-2\delta_3})^{1/2}$ for each $i$;
\item[(B)] If we set $\boldsymbol{\pi}':= \bigcup (q_i+\pi_0)$, then
\begin{equation}\label{e:decay-estimate}
\int_{\mathbf{C}_{2\rho}} \dist^2 (x, \boldsymbol{\pi}')\, d\|T\| (x) 
\leq \rho^{m+2\beta_0} (E+ \Omega^{2-2\delta_3}).
\end{equation}
\end{itemize}
\end{lemma}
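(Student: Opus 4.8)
The plan is to adapt the proof of \cite[Lemma~6.2]{DMS}, the only genuine novelty being the bookkeeping of the semicalibration error, which forces a dichotomy according to the size of $\Omega$ relative to the tilt excess. Write $\tilde E := E + \Omega^{2-2\delta_3}$; since $\Omega \leq 1$ we have $\Omega^2 \leq \Omega^{2-2\delta_3} \leq \tilde E$, and by \eqref{e:tilt-estimate} the tilt excess $\Ebf(T,\Cbf_{7/4},\pi_0)$ is bounded by $\bar C\tilde E$, so the Lipschitz approximation machinery of \cite{DLSS1} is available in $\Cbf_{7/4}$ once $\sigma_5$ is small. In every case the new points $P'$ are obtained by translating each plane $p_i + \pi_0$ of $\boldsymbol{\pi}$ vertically so that it passes through the $\|T\|$-average of the corresponding piece of $T$ over $\Cbf_{1/8}$ — after first regrouping those $p_i$ lying within $C\tilde E^{1/2}$ of one another, exactly as in the combinatorial scheme of \cite[Section~4.3]{DMS} — so that conclusion (A) follows directly from a mean-value estimate against $E = \int_{\Cbf_2}\dist^2(\cdot,\boldsymbol{\pi})\,d\|T\|$, the shift being $O(\tilde E^{1/2})$, and $N' \leq Q$ because $(\mathbf{p}_{\pi_0})_\sharp T \res \Cbf_2 = Q\llbracket B_2\rrbracket$. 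As in \cite{DMS}, well-separated configurations are reduced via the splitting Corollary~\ref{c:splitting-0} and Lemma~\ref{l:simpler} to pieces lying over a single plane, which are handled by the blow-up described below.

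\emph{Case 1: $\Omega^2 \leq \eps_{23}\,\Ebf(T,\Cbf_1)$.} This is the regime in which the harmonic approximation \cite[Theorem~3.1]{DLSS1} applies — here one must insist on the quantitative smallness $\Omega^2 \leq \eps_{23}\Ebf(T,\Cbf_1)$, in place of the far weaker $\Abf^3 \leq \Ebf(T,\Cbf_1)$ of the area-minimizing setting. Since the semicalibration enters the first variation only through $T(d\omega\res X)$, an error of size $O(\Omega^2) = o(\Ebf(T,\Cbf_1))$, rescaling the strong Lipschitz approximations by the square root of the (height, equivalently tilt) excess and passing to the limit — precisely as in the compactness argument of Section~\ref{ss:compactness}, carried out in the cylindrical setting of \cite[Section~6]{DMS} — produces an honest $\Acal_Q(\pi_0^\perp)$-valued Dir-minimizer $w$. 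From this point the argument of \cite[Lemma~6.2]{DMS} applies with no change: the decay \eqref{e:decay-estimate} with exponent $2\beta_0$, $\beta_0 = \beta_0(Q,m) \in (0,1)$, valid for $\rho \leq \rho_0(Q,m,n)$, comes from the decay of $\Acal_Q$-valued Dir-minimizers about a point, whose frequency is bounded below by a constant depending only on $Q$ and $m$, while the leftover $\Omega^2$ contributions are $o(\tilde E)$ and are absorbed.

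\emph{Case 2: $\Omega^2 > \eps_{23}\,\Ebf(T,\Cbf_1)$.} Now the tilt excess is tiny — $\Ebf(T,\Cbf_1) < \eps_{23}^{-1}\Omega^2 \ll \Omega^{2-2\delta_3} = \tilde E$ — so $T$ is far flatter than ``$\Omega$-flat'', and \cite[Theorem~3.1]{DLSS1} is no longer available; but it is not needed. With $\boldsymbol{\pi}'$ chosen as above, on the good set $\dist^2(\cdot,\boldsymbol{\pi}')$ is controlled by the oscillation of the Lipschitz approximation from \cite[Theorem~1.4]{DLSS1}, so that the usual decay estimates for that approximation (its average solving an elliptic system with $O(\Omega)$ forcing, its average-free part having a Dir-minimizing blow-up since the semicalibration forcing there is quadratic) give
\[
\int_{\Cbf_{2\rho}\cap(\text{good set})} \dist^2(x,\boldsymbol{\pi}')\, d\|T\|(x) \leq C\rho^{m+2}\,\Ebf(T,\Cbf_1) + C\rho^m\Omega^2 \leq C\rho^m\Omega^2\, ,
\]
while the bad set, whose measure is a superlinear power of $\tilde E$ and on which $\dist(\cdot,\boldsymbol{\pi}') \leq C\tilde E^{1/(2m)}$ by the height bound \cite[Theorem~1.5]{Spolaor_15}, contributes at most $C\tilde E^{1+\gamma_1}$ for some $\gamma_1 = \gamma_1(Q,m,n) > 0$. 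Writing $\Omega^2 = \Omega^{2\delta_3}\Omega^{2-2\delta_3} \leq \Omega^{2\delta_3}\tilde E$, this yields $\int_{\Cbf_{2\rho}} \dist^2(x,\boldsymbol{\pi}')\, d\|T\| \leq C\bigl(\Omega^{2\delta_3}\rho^m + \tilde E^{\gamma_1}\bigr)\tilde E \leq \rho^{m+2\beta_0}\tilde E$, the final inequality holding as soon as $\sigma_5$ — which is permitted to depend on $\rho$ — is small enough that $\Omega^{2\delta_3} \leq \tfrac12\rho^{2\beta_0}$ and $\tilde E^{\gamma_1} \leq \tfrac12\rho^{m+2\beta_0}$. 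This is exactly where the subquadratic scaling $\Omega^{2-2\delta_3}$ is indispensable: had $\tilde E$ been the merely quadratic $E + \Omega^2$, the term $C\rho^m\Omega^2$ would be of the same order as $\rho^m\tilde E$ and would leave no room for the gain $\rho^{2\beta_0}$.

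The step I expect to be the main obstacle is the interaction between the semicalibration and the blow-up in Case~1: one needs the blow-up limit to be a \emph{genuine} Dir-minimizer, and — in contrast to the area-minimizing case — this is guaranteed only under the hypothesis $\Omega^2 \leq \eps_{23}\Ebf(T,\Cbf_1)$ required to invoke \cite[Theorem~3.1]{DLSS1}; this is precisely what makes Case~2 (where that hypothesis fails) unavoidable and forces the stronger subquadratic scaling exploited there. The remaining ingredients — the combinatorial organization of planes at intermediate separation, the identification of $P'$ and the treatment of $N' < N$ through coincident sheets, and the decay estimates for $\Acal_Q$-valued Dir-minimizers about a point — are all as in \cite{DMS} and carry over verbatim.
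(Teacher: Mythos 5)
Your overall architecture coincides with the paper's: the tilt-excess estimate \eqref{e:tilt-estimate} to enable the Lipschitz approximation of \cite{DLSS1}, the dichotomy according to whether $\Omega^2\leq \eps_{23}\,\Ebf(T,\Cbf_1)$ or not, harmonic approximation plus H\"older decay of $\Acal_Q$-valued Dir-minimizers in the first case, and in the second case the absorption $\Omega^2=\Omega^{2\delta_3}\Omega^{2-2\delta_3}$ together with the freedom to choose $\sigma_5=\sigma_5(\rho)$; your explanation of why the subquadratic power $\Omega^{2-2\delta_3}$ is indispensable is exactly Remark \ref{r:error-loss}. The genuine gap is in your construction of $P'$. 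You fix $P'$ \emph{in every case} as the unit-scale $\|T\|$-averages of the pieces of $T$ over $\Cbf_{1/8}$, after merging those $p_i$ lying within $C(E+\Omega^{2-2\delta_3})^{1/2}$ of one another. One plane per such cluster cannot yield conclusion (B): if two sheets of $T$ persist at mutual separation comparable to (but a definite fraction of) the merging threshold all the way down to scale $\rho$ --- e.g.\ $Q=2$ and $T$ two parallel graphs at distance $\sim (E+\Omega^{2-2\delta_3})^{1/2}$, with $p_1,p_2$ both near their midpoint --- then for the merged plane $\int_{\Cbf_{2\rho}}\dist^2(x,\boldsymbol{\pi}')\,d\|T\|\gtrsim \rho^m (E+\Omega^{2-2\delta_3})$, which misses the crucial gain $\rho^{2\beta_0}$, while a correct (unmerged) choice makes the left-hand side essentially vanish. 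More generally, a fixed-scale average cannot locate the sheets at scale $\rho$ to the accuracy $\rho^{\beta_0}(E+\Omega^{2-2\delta_3})^{1/2}$ that (B) requires; indeed your later appeal in Case 1 to ``decay of Dir-minimizers about a point'' is in tension with your own definition of $P'$. The paper instead takes, in Case 1, $q_i:=g_i(0)$, the distinct values at the origin of a selection of the Dir-minimizer $g$ furnished by \cite[Theorem 3.1]{DLSS1}, so that the interior H\"older estimate for $g$ about $0$, plus the $L^2$ error $\eta_1\Ebf$ and the bad-set error $C\sigma_5^{\beta}(E+\Omega^2)$, gives (B) after choosing $\eta_1$ and $\sigma_5$ in terms of $\rho$; in Case 2 the $q_i$ are the points of the $Q$-point $Y$ produced by the Poincar\'e inequality, for which $\int_{B_{1/4}}\Gcal(f,Y)\leq C\eps_{23}^{-2}\sigma_5^{2\delta_3}\Omega^{2-2\delta_3}$ --- the extra positive power of $\sigma_5$ being precisely what makes any implicit merging harmless there.

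A second, more minor, defect is your good-set estimate in Case 2: the term $C\rho^{m+2}\Ebf(T,\Cbf_1)$ is justified by asserting that the average-free part of the Lipschitz approximation ``has a Dir-minimizing blow-up'', but that is exactly what is unavailable when $\Omega^2>\eps_{23}\Ebf(T,\Cbf_1)$ --- it is the raison d'\^etre of Case 2. Fortunately no $\rho$-decay is needed in this regime: from \eqref{e:Dir-energy-f} and $\Ebf(T,\Cbf_1)\leq \eps_{23}^{-1}\Omega^2$ one gets $\int_{B_{1/4}}|Df|^2\leq C\eps_{23}^{-2}\sigma_5^{2\delta_3}\Omega^{2-2\delta_3}$, and the crude Poincar\'e bound, with no gain in $\rho$ at all, already closes the argument, since in Case 2 the entire factor $\rho^{m+2\beta_0}$ is purchased from the smallness of $\sigma_5(\rho)$ --- which is in fact how both your final absorption and the paper's proof conclude. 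If you replace your definition of $P'$ by the two case-adapted choices above, the rest of your outline goes through.
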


Given Lemma \ref{l:decay}, Proposition \ref{p:step1}, the combinatorial lemma \cite[Lemma 4.5]{DMS} and \cite[Lemma 4.9]{DMS}, the proof of Proposition \ref{p:induction} follows exactly by exactly the same reasoning as that in \cite{DMS}, so we omit this concluding argument here. Observe that the semicalibrated analogue of \cite[Lemma 4.9]{DMS} follows by making the same minor modifications to the proof as those in Lemma \ref{l:simpler}; see the discussion there for more details. 

The main difference in the proof of Lemma \ref{l:decay} relative to its counterpart in the area-minimizing setting is the application of \cite[Theorem 3.1]{DLSS1}, which yields a nearby harmonic approximation for a given strong Lipschitz approximation to $T$ as given by \cite[Theorem 1.4]{DLSS1}. To apply the former, instead of checking the hypothesis $\Abf \leq \Ebf^{1/4 + \bar{\delta}}$ (as is done in the area-minimizing case), we need to make sure that $\Omega \leq \eps_{23} \Ebf^{1/2}$, for the geometric constant $\eps_{23}$ therein (with $\eta_1$ fixed appropriately). We thus need to amend the case analysis within the proof of Lemma \ref{l:decay} accordingly, and so we provide an outline of the proof here, for the benefit of the reader.

\begin{proof}[Proof of Lemma \ref{l:decay}]
As in \cite[Lemma 6.2]{DMS}, we divide the proof in two cases, only here it will be based on the relative sizes of $\Ebf := \Ebf(T,\Cbf_1)$ and $\Omega^2$. Note that now we have the validity of the tilt-excess estimate \eqref{e:tilt-estimate}, i.e. $\Ebf \leq C(E + \Omega^{2})$. Thus, for $\sigma_5$ small enough (depending on $Q,m,n$), we may assume that $\Ebf < \eps_{21}$, where $\eps_{21}$ is as in \cite[Theorem 1.4]{DLSS1}, allowing us to obtain a map $f: B_{1/4}(0,\pi_0) \to \Acal_Q(\pi_0^\perp)$ satisfying
\begin{itemize}
\item[(i)] ${\rm Lip}\, (f) \leq C \mathbf{E}^{\beta} \leq C(E+\Omega^2)^{\beta}$;
\item[(ii)] There is a closed set $K\subset B_{1/4}$ with $\Hcal^m(K) \leq \frac{1}{2}\Hcal^m(B_{1/4})$ such that $\mathbf{G}_f \res (K\times \mathbb R^n)=T\res (K\times \mathbb R^n)$ and 
\[
\|T\| ((B_{1/4}\setminus K)\times \mathbb R^n) \leq C (\mathbf{E}+\Omega^2)^{1+\beta} \leq C(E+\Omega^2)^{1+\beta} \, ;
\]
\end{itemize}
where $C = C(Q,m,n)$, $\beta=\beta(Q,m,n) \in (0, \tfrac{1}{2m})$ and we have used \eqref{e:tilt-estimate} to obtain the estimates in terms of $E$. This in turn yields
\begin{align}
    \int_{B_{1/4}} |Df|^2 &\leq C \int_K |Df|^2 + C(\Ebf+\Omega^2)^{1+\beta} \label{e:Dir-energy-f} \\
    &\leq C \Ebf + C(E+\Omega^2)^{1+\beta} \leq C(E + \Omega^2)\, .
\end{align}

Fix $\rho_0 \in (0,\tfrac{1}{8})$ to be determined at the end of Case 1 below, and fix $\rho \in (0,\rho_0]$ arbitrarily. Fix $\eta_1$, also to be determined at the end of Case 1 (dependent on $Q,m,n,\rho$). Let $\eps_{23}$ denote the parameter of \cite[Theorem 3.1]{DLSS1}, applied in $\Cbf_1$ with this $\eta_1$ and the map $f$ above taken to be the $E^{\beta}$-approximation therein. Note that, in particular, $\eps_{23}$ depends on $\eta_1$ and thus on $\rho$. We may take $\sigma_5$ even smaller such that that $\Ebf < \eps_{23}$ (now additionally depending on $\rho$).

\emph{Case 1: $\Omega^2 \leq \eps_{23}(\rho) \Ebf$.}

Applying \cite[Theorem 3.1]{DLSS1} as mentioned above, we obtain a Dir-minimizer $g:B_{1/4}(0,\pi_0)\to \Acal_Q(\pi_0^\perp)$ satisfying
\begin{align*}
    \int_{B_{1/4}} \Gcal(f,g)^2 &\leq \eta_1 \omega_m \Ebf; \\
    \int_{B_{1/4}} |Dg|^2 &\leq C(E+\Omega)^2\, .
\end{align*}

We then proceed as in \cite[Proof of Lemma 6.2, Case 1]{DMS}, letting $q_i:=g_i(0)$, where $g_i$ are the distinct functions in a selection for $g$ (possibly with multiplicities) and propagating the decay coming from the $\alpha$-H\"{o}lder regularity of this Dir-minimizer $g$ to $T$. This choice of $q_i$ satisfy the desired estimate (A), and we further arrive at the final estimate
\[
    \int_{\Cbf_{2\rho}} \dist^2(x,\boldsymbol\pi')\, d\|T\|(x) \leq C\sigma_5^{\beta}(E+\Omega^2) + C\eta_1(E+\Omega^2) + C\rho^{m+2\alpha}(E+\Omega^2)\, ,
\]
where $C=C(Q,m,n)$. Note that $\alpha=\alpha(Q,m)$ and let $\beta_0 = \frac{\alpha}{2}$. Now choose $\rho_0 \leq (3C)^{\frac{1}{\alpha}}$ and $\eta_1 \leq \frac{\rho^{m+2\beta_0}}{3C}$. Given this choice of $\eta_1$, we may now further take $\sigma_5 \leq \left(\frac{\rho^{m+2\beta_0}}{3C}\right)^{\frac{1}{\beta}}$. This yields the estimate (B), therefore completing the proof in this case. Note that we are now fixing this choice of $\eta_1(\rho)$, and therefore also fixing $\eps_{23}(\rho)$, throughout this proof.

\emph{Case 2: $\Omega^2 > \eps_{23}(\rho)\Ebf$}

In this case, we use the estimate \eqref{e:Dir-energy-f} combined with the assumption, to obtain
\begin{equation*}
        \int_{B_{1/4}} \vert Df \vert^2 \leq C\Ebf + C(\Ebf + \Omega^2)^{1 + \beta} \leq C(\Ebf + \Omega^{2 + 2\beta}) \leq C\eps_{23}^{-2}\sigma_5^{2\delta_3}\Omega^{2-2\delta_3}\,.
\end{equation*}
Combining this with the Poincaré inequality for $Q$-valued functions, and H\"older's inequality, we infer 
\begin{equation}
    \int_{B_{1/4}} \mathcal{G}(f, Y) \leq C\eps_{23}^{-2}\sigma_5^{2\delta_3}\Omega^{2-2\delta_3}\, ,\label{e:Poincare-f}
\end{equation}
for some point $Y = \sum_i Q_i \llbracket q_i \rrbracket \in \Acal_Q$, where the $q_i$ are distinct. Setting $\boldsymbol{\pi}':= \bigcup_i (q_i+\pi_0)$ and combining \eqref{e:Poincare-f} with (ii) and \cite[Lemma 4.8]{DMS} (which, as previously remarked, remains valid here, since merely almost-monotonicity of mass ratios in place of monotonicity suffices) we thus have
\[
\int_{\mathbf{C}_{2\rho}} \dist^2 (x, \boldsymbol{\pi}')\, d\|T\| (x) 
\leq C \eps_{23}^{-2}\sigma_5^{2\delta_3}\Omega^{2-2\delta_3} + \|T\| ((B_{1/4}\setminus K)\times \mathbb R^n)
\leq C\eps_{23}^{-2}\sigma_5^{2\delta_3}\Omega^{2-2\delta_3} ,
\]
where $K\subset B_{1/4}$ is the closed set over which $T$ over which $T$ is graphical, as in \cite[Theorem 1.4]{DLSS1}. In particular, for $\beta_0$ fixed as in Case 1, we may choose $\sigma_5\leq \left(\frac{\eps_{23}^2\rho^{m+2\beta_0}}{C}\right)^{\frac{1}{2\delta_3}}$.
\[
\int_{\mathbf{C}_{2\rho}} \dist^2 (x, \boldsymbol{\pi}')\, d\|T\| (x) \leq \rho^{m+2\beta_0} \Omega^{2-2\delta_3}\, .
\]
This proves conclusion (B) of the lemma, in this case. Given \eqref{e:Poincare-f}, the proof of (A) in this regime follows in the same way as in \cite[Lemma 6.2]{DMS}. 
\end{proof}

\begin{remark}\label{r:error-loss}
    Note that Lemma \ref{l:decay} is the reason behind the fact that we have $\Omega^{2-2\delta_3}$ in our error estimates in Theorem \ref{thm:main-estimate}, and thus throughout the majority of Part 3, rather than $\Omega^2$. Indeed, notice that Case 2 in the proof above requires the tilt excess of $T$ to be sufficiently small relative to the relevant power of $\Omega$; this cannot be ensured if such a power is quadratic. However, at the cost of decreasing this power slightly, we obtain the desired conclusion.
\end{remark}

\section{Graphical approximations}\label{s:graphs}
Given Theorem \ref{thm:main-estimate}, the graphical approximation results of \cite[Section 8]{DMS} relative to a balanced cone follow immediately, after merely replacing any application of Almgren's strong Lipschitz approximation \cite[Theorem 1.4]{DLS14Lp} with its semicalibrated variant \cite[Theorem 1.4]{DLSS1}. We provide the main conclusions here, for clarity.

Let us begin by recalling the notions of \emph{Morgan angles} and \emph{$M$-balanced cones}, which are key for the results in this section.

\begin{definition}\label{d:frank}
Given two $m$-dimensional linear subspaces $\alpha, \beta$ of $\mathbb R^{m+n}$ whose intersection has dimension $m-2$, we consider the two positive eigenvalues $\lambda_1 \leq \lambda_2$ of the quadratic form $Q_1: \alpha \to \R$ given by $Q_1(v) := \dist^2(v,\beta)$. The \textit{Morgan angles} of the pair $\alpha$ and $\beta$ are the numbers $\theta_i (\alpha, \beta) := \arcsin \sqrt{\lambda_i}$ for $i=1,2$.

Let $M\geq 1$, $N\in \N$. We say that $\Sbf = \alpha_1\cup\cdots\cup\alpha_N\in \Cscr(Q)$ is \emph{$M$-balanced} if for every $i\neq j$, the inequality
\begin{equation}\label{e:balanced}
\theta_2 (\alpha_i, \alpha_j) \leq M \theta_1 (\alpha_i, \alpha_j)
\end{equation}
holds for the two Morgan angles of the pair $\alpha_i, \alpha_j$.
\end{definition}
\begin{remark}
    As communicated to us by F. Morgan, the notion of Morgan angles dates back to Cimille \cite{Cimille}, cf. \cite[4.1]{MorganGMT} for further details. 
\end{remark}
Furthermore, for $\Sbf = \alpha_1\cup\cdots\cup\alpha_N\in \Cscr(Q)$, recall the following notation for the minimal separation between the planes in $\Sbf$:
\begin{equation}\label{e:min-sep}
    \boldsymbol{\sigma}(\Sbf) := \min_{1 \leq i<j \leq N}\dist(\alpha_i\cap \Bbf_1,\alpha_j\cap\Bbf_1)\, .
\end{equation}
We additionally recall the layering subdivision of \cite[Lemma 8.3]{DMS}. More precisely, we apply \cite[Lemma 8.3]{DMS} with a parameter $\bar{\delta}$, to be fixed in Assumption \ref{a:refined} below, in place of $\delta$ therein. This yields a family of sub-cones $\mathbf{S} = \mathbf{S}_0 \supsetneq \mathbf{S}_1 \supsetneq \cdots \supsetneq \mathbf{S}_{\kappa}$ where $\mathbf{S}_k$ consists of the union of the planes $\alpha_i$ with $i\in I (k)$ for the set of indices $I(k)$ therein. We then distinguish two cases:
\begin{itemize}
\item[(a)] if $\max_{i<j\in I (\kappa)} \dist (\alpha_i \cap \Bbf_1, \alpha_j \cap \Bbf_1) < \bar \delta$, we define an additional cone $\mathbf{S}_{\kappa+1}$ consisting of a single plane, given by the smallest index in $I (\kappa)$ and we set $\bar\kappa := \kappa+1$ and $I(\bar\kappa):= \{\min I(\kappa)\}$;
\item[(b)] otherwise, we select no smaller cone and set $\bar \kappa := \kappa$.
\end{itemize}

In this section, we work with the following underlying assumption.

\begin{assumption}\label{a:refined}
Suppose $T$ and $\omega$ are as in Assumption \ref{a:main} and $\|T\| (\Bbf_4) \leq 4^m (Q+\frac{1}{2}) \omega_m$. Suppose $\mathbf{S}=\alpha_1\cup \cdots \cup \alpha_N \in \mathscr{C} (Q)\setminus \mathscr{P}$ is $M$-balanced, where $M\geq 1$ is a given fixed constant. 

Firstly, we denote by $\delta^*$ the minimum of the parameters $\delta$ needed to ensure that the semicalibrated versions (in ambient space $\R^{m+n}$) of \cite[Lemma 8.5, Proposition 8.6, Lemma 8.7, Proposition 8.8]{DMS} are applicable to all the cones $\mathbf{S}_k${, $k\in \{0,1,\dotsc,\bar{\kappa}\}$;} note that all the $\Sbf_k$ are $M$-balanced by construction and that therefore $\delta^* = \delta^*(m,n,Q,M)>0$. Subsequently, we fix a parameter $\tau = \tau(m,n,Q,M)>0$ smaller than $c \delta^*$ for the small constant $c = c(m,n,Q)>0$ determined by \cite[Lemma 8.13]{DMS} (with $\bar n = n$ and $\Sigma = \R^{m+n}$); note that this constant remains completely unchanged in the setting herein. 

We then fix the parameter $\bar\delta$ {smaller than $c \tau$ for this same constant $c$}; so $\bar\delta = \bar\delta(m,n,Q,M)>0$. In particular, $\bar\delta \leq \delta^*$. Finally, $\varepsilon = \varepsilon(m,n,Q,\delta^*,\bar\delta,\tau)>0$ is determined in {Proposition \ref{p:first-blow-up} below, and will be smaller than both {$c \bar\delta$} for the same parameter $c$ above, and the parameter $\eps$ of \cite[Lemma 8.7]{DMS}. Note that $\eps$ is implicitly additionally dependent on the two parameters $\sigma,\varsigma$ of Proposition \ref{p:first-blow-up}, which are fixed arbitrarily.} We assume that $\{\Theta (T,\cdot) \geq Q\} \cap \Bbf_\varepsilon (0) \neq \emptyset$ and suppose that
\begin{equation}\label{e:smallness-alg}
\mathbb{E} (T, \mathbf{S}, \Bbf_4) + \Omega^{2-2\delta_3} \leq \varepsilon^2 \boldsymbol{\sigma} (\Sbf)^2\, ,
\end{equation}
where $\mathbb{E}(T,\Sbf,\Bbf_4)$ is defined as in Definition \ref{def:L2_height_excess}.
\end{assumption}

\subsection{Whitney decomposition}
We recall here the main aspects of the Whitney decomposition of \cite[Section 8.5.1]{DMS} and the associated notation. We refer the reader therein for more details, including figures illustrating the decomposition and associated regions in the ambient space. 

Let $\Sbf=\alpha_1\cup\cdots\cup\alpha_N\in \Cscr(S)$. Let $L_0$ be the closed cube in $V=V(\Sbf)$ with side-length $\frac{2}{\sqrt{m-2}}$ centered at $0$ and let $R$ be the rotationally invariant (around $V$) region given by
\begin{equation}\label{e:region-R}
R :=\{p: \mathbf{p}_V (p) \in L_0\, \text{ and }\, 0<|\mathbf{p}_{V^\perp} (p)|\leq 1\}\, .
\end{equation}
We recall here that we are assuming $m\geq 3$ (cf. Assumption \ref{a:main}); note that this is the only reason why such a restriction is necessary. 

For every $\ell\in \mathbb N$ denote by $\mathcal{G}_\ell$ the collection of $(m-2)$-dimensional cubes in the spine $V$ obtained by subdividing $L_0$ into $2^{\ell (m-2)}$ cubes of side-length $\frac{2^{1-\ell}}{\sqrt{m-2}}$, and we let $\mathcal{G} = \bigcup_\ell \mathcal{G}_\ell$. We write $L$ for a cube in $\mathcal{G}$, so $L\in \mathcal{G}_\ell$ for some $\ell\in \N$. When we want to emphasize the dependence of the integer $\ell$ on $L$ we will write $\ell (L)$. We use the standard terminology \emph{parent}, \emph{child}, \emph{ancestor}, \emph{descendant} to describe relations of cubes; see \cite{DMS} for details. For every $L\in \mathcal{G}_\ell$ we let 
\[
R (L) := \{p: \mathbf{p}_V (p)\in L \quad \mbox{and} \quad 2^{-\ell-1} \leq |\mathbf{p}_{V^\perp} (p)|\leq 2^{-\ell}\}\, .
\]
For each $L\in \mathcal{G}_\ell$ we let $y_L\in V$ be its center and denote by $\Bbf (L)$ the ball $\mathbf{B}_{2^{2-\ell(L)}} (y_L)$ {(in $\R^{m+n}$)}
and by $\mathbf{B}^h (L)$ the set $\Bbf (L) \setminus B_{\rho_*2^{-\ell(L)}} (V)$, where $\rho_*$ is as in \cite[Lemma 8.7]{DMS}. We identify three mutually dijoint subfamilies of cubes in $\Gcal$; \emph{outer cubes}, \emph{central cubes} and \emph{inner cubes}, defined precisely in \cite[Definition 8.12]{DMS}. These families of cubes will be denoted by $\mathcal{G}^o$, $\mathcal{G}^c$, and $\mathcal{G}^{in}$, respectively. By construction, any cube $L\in \mathcal{G}$ is either an outer cube, or a central cube, or an inner cube, or a descendant of inner cube. We in turn define three subregions of $R$:
\begin{itemize}
    \item The \emph{outer region}, denoted $R^o$, is the union of $R (L)$ for $L$ varying over elements of $\mathcal{G}^o$.
    \item The \emph{central region}, denoted $R^c$, is the union of $R (L)$ for $L$ varying over elements of $\mathcal{G}^c$.
    \item Finally, the \emph{inner region}, denoted $R^{in}$, is the union of $R(L)$ for $L$ ranging over the elements of $\mathcal{G}$ which are neither outer nor central cubes.
\end{itemize}

For every $i\in \{1, \ldots, N\}$ we further define 
\[
R^o_i := \bigcup_{L\in \mathcal{G}^o} L_i \equiv \alpha_i\cap\bigcup_{L\in \mathcal{G}^o} R (L)\, 
\]
and let $Q_i := Q_{L_0, i}$. 

We refer the reader to \cite[Lemma 8.10, Lemma 8.13]{DMS} for key properties about the Whitney cubes and the conical excess of $T$ associated to each of them; the conclusions remain unchanged herein. 

Observe that the results of \cite[Section 8.5.4]{DMS} remain valid here also, with all instances of $\Abf$ replaced by $\Omega^{1-\delta_3}$ in the estimates, given the conclusions of Theorem \ref{thm:main-estimate}. In particular, note that the choice of $a(Q,m)$ is determined by \cite[Lemma 8.7]{DMS}, with the proof and the constant $\rho_*(Q,m)$ remaining unchanged herein; namely, $a(Q,m) = \tfrac{\rho_*}{4}$. Indeed, the compactness procedure therein still yields a limiting area-minimizing current in this setting, since $\Omega_k$ converges to zero along the sequence, and thus we may proceed to exploit the monotonicity of mass ratios in the same way. Since such a compactness will be exploited numerous times in the following sections, we elaborate on it in the following remark, which we will refer back to.

\begin{remark} \label{r:contradictio-argument-limit-area-min}
    We make a note on a procedure that will be often used in the rest of the article. When proving some statements, for instance the \textit{crude splitting} lemma building up to Proposition \ref{p:first-blow-up} above and Proposition \ref{p:balancing-2} below, we argue by contradiction. In particular, we consider a sequence of currents $T_k$, semicalibrated by forms $\omega_k$, and extract a converging subsequence limiting to a certain $T_\infty$, (a valid procedure under our standing hypothesis, e.g. mass bounds on the $T_k$'s). As written, we have no hope to obtain further information on the limit $T_\infty$. However, in all the statement we will prove, we will also have the corresponding $\Omega_k$ converging to zero, whence allowing us to deduce that $T_\infty$ is area-minimizing, and proceed as in the relevant proofs of \cite{DMS}. 
\end{remark}

The final conclusion of the results in \cite[Section 8]{DMS}, rewritten for a semicalibrated current $T$ is the following, which is the analogue of \cite[Proposition 8.18 \& Proposition 8.19]{DMS}.

\begin{proposition}[Coherent outer approximation and final blowup]\label{p:first-blow-up}
Let $T$, $\omega$ and $\mathbf{S}=\alpha_1\cup\cdots\cup\alpha_N$ be as in Assumption \ref{a:refined}. Then, for every $\sigma, \varsigma>0$ there are constants $C = C(m,n,Q,\delta^*, \tau, \bar\delta)>0$ and $\varepsilon = \varepsilon(m,n,Q,\delta^*, \tau, \bar\delta, \sigma, \varsigma)>0$ such that the following properties hold.

\begin{itemize}
    \item[(i)] $R\setminus B_\sigma (V)$ is contained in the outer region $R^o$.
    \item[(ii)] There are Lipschitz multi-valued maps $u_i : R^o_i \to \mathcal{A}_{Q_i} (\alpha_i^\perp)$ and closed subsets $\bar{K}_i (L)\subset L_i$ satisfying 
    \[
        T_{L,i} \res \mathbf{p}_{\alpha_i}^{-1} (\bar{K}_i(L))= \mathbf{G}_{u_i}\res \mathbf{p}_{\alpha_i}^{-1} (\bar{K}_i (L)) \qquad \forall L\in \mathcal{G}^o\, ,
    \]
    as well as the estimates \cite[Proposition 8.18, (8.49)-(8.51)]{DMS}, and
    \begin{equation}\label{e:first-Dir-control}
        \int_{R_i} |Du_i|^2 \leq C \sigma^{-2} \hat{\mathbf{E}} (T, \mathbf{S}, \Bbf_4) + C \Omega^{2-2\delta_3} \, ,
    \end{equation}
    for $R_i := (R\setminus B_\sigma(V))\cap \alpha_i$.
    \item[(iii)] If additionally $\Omega^{2-2\delta_3} \leq \varepsilon^2 \hat{\mathbf{E}}(T, \mathbf{S}, \Bbf_4)$ and we set $v_i := \hat{\mathbf{E}} (T, \mathbf{S}, \Bbf_4)^{-1/2} u_i$, then there is a map $w_i: R_i \to \mathcal{A}_{Q_i} (\alpha_i^\perp)$ which is Dir-minimizing and such that 
    \begin{equation}\label{e:first-blowup}
    d_{W^{1,2}} (v_i, w_i) \leq \varsigma\, ,
    \end{equation}
    where $d_{W^{1,2}}$ is the $W^{1,2}$ distance between $Q$-valued maps; see for instance \cite{DLS_MAMS} for a definition. 
\end{itemize}
\end{proposition}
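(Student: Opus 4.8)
\textbf{Proof strategy for Proposition \ref{p:first-blow-up}.} The plan is to transcribe the argument of \cite[Proposition 8.18 \& Proposition 8.19]{DMS} into the semicalibrated setting, using the tilt-excess and $L^\infty$ estimates of Theorem \ref{thm:main-estimate} (together with the graphical approximation results of \cite[Section 8]{DMS} rewritten as outlined above) as the only inputs that need to be re-verified. Concretely, for part (i) one argues exactly as in \cite{DMS}: under Assumption \ref{a:refined}, the smallness \eqref{e:smallness-alg} forces every cube $L \in \mathcal{G}$ with $R(L)\cap(R\setminus B_\sigma(V))\neq\emptyset$ to be an outer cube. This is a purely combinatorial/geometric consequence of \cite[Lemma 8.10, Lemma 8.13]{DMS} (whose statements are unchanged here, by Remark \ref{r:contradictio-argument-limit-area-min} and the fact that $\Omega_k\to 0$ along any blow-up sequence), so nothing new is required. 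For part (ii), one invokes Corollary \ref{c:splitting-0} on each outer cube $L$ to split $T$ over $L$ into the pieces $T_{L,i}$, then applies the semicalibrated Lipschitz approximation \cite[Theorem 1.4]{DLSS1} (in place of \cite[Theorem 1.4]{DLS14Lp}) on each piece to produce the maps $u_i$ and the closed sets $\bar K_i(L)$, and patches them coherently across neighbouring cubes exactly as in \cite[Section 8.5.4]{DMS}. The estimates \cite[(8.49)--(8.51)]{DMS} follow verbatim, while the new Dirichlet bound \eqref{e:first-Dir-control} is obtained by summing the tilt-excess estimate \eqref{e:tilt-estimate} of Theorem \ref{thm:main-estimate} over the outer cubes covering $R\setminus B_\sigma(V)$; the only change from \cite{DMS} is that the term $\|\Abf\|^2$ in the ambient-curvature error is replaced by $\Omega^{2-2\delta_3}$ (the loss of $2\delta_3$ in the exponent being dictated by \eqref{e:Linfty-estimate} and Remark \ref{r:error-loss}), and the scaling factor $\sigma^{-2}$ arises from summing the radii of the cubes touching $\partial B_\sigma(V)$, just as in \cite{DMS}.

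For part (iii), one rescales by $\hat{\mathbf E}(T,\mathbf S,\Bbf_4)^{-1/2}$ and runs the blow-up/compactness argument of \cite[Proposition 8.19]{DMS}. The rescaled maps $v_i = \hat{\mathbf E}(T,\mathbf S,\Bbf_4)^{-1/2}u_i$ are uniformly bounded in $W^{1,2}(R_i)$ thanks to \eqref{e:first-Dir-control} and the extra hypothesis $\Omega^{2-2\delta_3}\leq \varepsilon^2\hat{\mathbf E}(T,\mathbf S,\Bbf_4)$, which guarantees that the $\Omega^{2-2\delta_3}$ contribution to the Dirichlet energy is infinitesimal relative to $\hat{\mathbf E}(T,\mathbf S,\Bbf_4)$. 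Passing to a subsequential weak limit $w_i$, one shows $w_i$ is Dir-minimizing: this is where the semicalibration enters, but only harmlessly. Indeed, for the competitor comparison one uses that $T_k$ is $\Omega_k$-minimal with $\Omega_k\to 0$ (Remark \ref{r:contradictio-argument-limit-area-min}), so the first-variation error $T_k(d\omega_k\mres X)$ contributes a term of order $\Omega_k$ times a mass, which after the $\hat{\mathbf E}^{-1}$ rescaling is controlled by $\Omega_k^{2-2\delta_3}\hat{\mathbf E}_k^{-1}\to 0$ under the standing hypothesis; the remaining argument — strong $W^{1,2}_{\mathrm{loc}}$ convergence, lower semicontinuity, and the minimality of the limit — is identical to \cite{DMS}. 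The strong convergence then yields \eqref{e:first-blowup} for $k$ large, i.e. for $\varepsilon$ small depending on $\varsigma$ (and on $\sigma$, through \eqref{e:first-Dir-control}).

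The main obstacle — and it is a mild one — is bookkeeping the error exponents consistently: one must check that every place where \cite{DMS} uses $\|\Abf\|^2$ can be replaced by $\Omega^{2-2\delta_3}$ without breaking the scaling balance in the iteration that produces the outer approximation, in particular that the subquadratic power $2-2\delta_3$ (rather than $2$) still suffices for the cube-by-cube summation in \eqref{e:first-Dir-control} and for the smallness threshold \eqref{e:smallness-alg}. Since $\delta_3 = \delta_2/2 < 1/2$ is fixed and the only role of the exponent is to make $\Omega^{2-2\delta_3}$ dominate linearly-in-scale quantities while remaining negligible against $\hat{\mathbf E}$, this is exactly the same trade-off already validated in Lemma \ref{l:decay} and Theorem \ref{thm:main-estimate}, so no genuinely new difficulty arises. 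We therefore omit the detailed transcription and refer the reader to \cite[Section 8]{DMS} for the argument, noting only the substitutions described above.
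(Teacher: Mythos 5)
Your proposal is correct and follows essentially the same route as the paper: both proofs are a transcription of \cite[Section 8]{DMS}, with the Lipschitz approximation replaced by its semicalibrated variant \cite[Theorem 1.4]{DLSS1}, every occurrence of $\Abf$ replaced by $\Omega^{1-\delta_3}$ (hence $\Abf^2$ by $\Omega^{2-2\delta_3}$), and the compactness steps justified by the fact that $\Omega_k\to 0$ along any contradiction sequence. The one point where you deviate is conclusion (iii): the paper's proof there consists precisely of invoking the semicalibrated harmonic approximation \cite[Theorem 3.1]{DLSS1} in place of \cite[Theorem 2.6]{DLS14Lp}, the additional hypothesis $\Omega^{2-2\delta_3}\leq \varepsilon^2\hat{\Ebf}(T,\Sbf,\Bbf_4)$ together with the tilt-excess estimate of Theorem \ref{thm:main-estimate} being exactly what puts one in the regime $\Omega\lesssim \eps_{23}\,\Ebf^{1/2}$ required by that theorem. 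You instead re-run the blow-up by hand and establish Dir-minimality of the limit through a direct competitor comparison using $\Omega_k$-minimality; your scaling bookkeeping is right (since $\Omega\leq 1$, the error ratio $\Omega_k\hat{\Ebf}_k^{-1/2}\leq(\Omega_k^{2-2\delta_3}\hat{\Ebf}_k^{-1})^{1/2}\to 0$), but be aware that this route amounts to re-proving the content of \cite[Theorem 3.1]{DLSS1} inline — the transfer of a Dir-competitor to a current competitor, with the Taylor expansion of the mass of a multigraph and the boundary gluing at well-chosen radii — rather than a step that is "identical to \cite{DMS}", since \cite{DMS} itself delegates exactly this step to the harmonic approximation theorem. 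Citing \cite[Theorem 3.1]{DLSS1} directly, as the paper does, is both shorter and avoids redoing that gluing argument.
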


Note that in order to obtain the conclusion (iii) of Proposition \ref{p:first-blow-up}, we apply \cite[Theorem 3.1]{DLSS1} in place of \cite[Theorem 2.6]{DLS14Lp}, which is used to establish this conclusion when $T$ is area-minimizing. 

\section{Cone balancing}\label{s:balancing}
In this section, we observe that all of the results of \cite[Section 9]{DMS} remain valid in the case when $T$ is semicalibrated, with all instances of $\Abf$ replaced with $\Omega^{1- \delta_3}$. This is again due to an application of the height bound from Lemma \ref{l:decay} when proving Proposition \ref{p:balancing} below assuming Proposition \ref{p:balancing-2}. For the convenience of the reader, and since it will be useful for the succeeding sections, we provide the statements here. First of all, we recall the minimal separation $\boldsymbol\sigma(\Sbf)$ (see \eqref{e:min-sep}) between the planes within a given cone $\Sbf = \alpha_1\cup\cdots\cup \alpha_N$ as introduced in the preceding section, as well as the maximal separation
\begin{align*}
    \boldsymbol{\mu}(\Sbf) &:= \max_{1\leq i<j \leq N}\dist(\alpha_i\cap \Bbf_1,\alpha_j\cap\Bbf_1) \, .
\end{align*}

\begin{proposition}[Cone balancing]\label{p:balancing}
Assume that $T$ and $\omega$ are as in Assumption \ref{a:main}, $\mathbf{S} = \alpha_1\cup \cdots \cup \alpha_N \in \Cscr(Q)$. Then, there are constants $M = M(Q,m,n)>0$ and $\varepsilon_0 = \varepsilon_0(Q,m,n)>0$ with the following property. Assume that
\begin{equation}\label{e:smallexcess}
    \Omega^{2-2\delta_3} \leq \varepsilon_0^2 \mathbb{E} (T, \mathbf{S}, \Bbf_1) \leq \eps_0^4 \Ebf^p(T,\Bbf_1)\, .
\end{equation}
 Then there is a subset $\{i_1, \ldots , i_k\}\subset \{1, \ldots , N\}$ with $k\geq 2$ such that, upon setting $\mathbf{S}' = \alpha_{i_1} \cup \cdots \cup \alpha_{i_k}$, the following holds:
\begin{itemize}
\item[(a)] $\mathbf{S}'$ is $M$-balanced;
\item[(b)] $\mathbb{E} (T, \mathbf{S}', \Bbf_1) \leq M \mathbb{E} (T, \mathbf{S}, \Bbf_1)$;
\item[(c)] $\dist^2(\Sbf\cap \Bbf_1,\Sbf^\prime\cap\Bbf_1) \leq M\mathbb{E}(T,\Sbf,\Bbf_1)$;
\item[(d)] $M^{-1} \Ebf^p (T, \Bbf_1) \leq \boldsymbol{\mu} (\Sbf)^2 =
\boldsymbol{\mu} (\Sbf^\prime)^2\leq M \Ebf^p (T, \Bbf_1)$.
\end{itemize}
\end{proposition}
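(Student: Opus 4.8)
\textbf{Plan of proof for Proposition \ref{p:balancing} (Cone balancing).}

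The plan is to reduce this statement to its area-minimizing counterpart \cite[Proposition 9.1 (or analogous)]{DMS} by means of a compactness/contradiction argument, exactly along the lines of Remark \ref{r:contradictio-argument-limit-area-min}, together with the height bound of Lemma \ref{l:decay}. The key observation that makes this work is the hypothesis \eqref{e:smallexcess}: the quantity $\Omega^{2-2\delta_3}$ is \emph{infinitesimal} relative to $\mathbb{E}(T,\mathbf{S},\Bbf_1)$ and $\Ebf^p(T,\Bbf_1)$, so that in any blow-up limit the semicalibration error is negligible and the limiting current is area-minimizing. The main obstacle, and the reason this is not completely immediate, is the bookkeeping needed to verify that the intermediate quantitative ingredients used in the \cite{DMS} proof (the height bound and the tilt-excess estimate relative to unions of parallel planes) hold here with $\Omega^{1-\delta_3}$ (resp.\ $\Omega^{2-2\delta_3}$) in place of $\Abf$ (resp.\ $\Abf^2$); but these are precisely the contents of Theorem \ref{thm:main-estimate} and Lemma \ref{l:decay} above.

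First, I would set up the proof of Proposition \ref{p:balancing} assuming Proposition \ref{p:balancing-2} (the ``crude'' balancing step, yet to be stated), following verbatim the structure of the corresponding argument in \cite[Section 9]{DMS}. The role of Proposition \ref{p:balancing-2} is to select, via an iterative/combinatorial procedure over subcones, a subcone $\mathbf{S}'$ of $\mathbf{S}$ that is comparable in excess to $\mathbf{S}$ and whose Morgan angles are quantitatively comparable (i.e.\ $M$-balanced), while controlling the distance $\dist^2(\Sbf\cap\Bbf_1,\Sbf'\cap\Bbf_1)$ by $M\,\mathbb{E}(T,\Sbf,\Bbf_1)$. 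Conclusion (d) — the two-sided comparison of $\boldsymbol{\mu}(\Sbf)^2$ with the planar excess $\Ebf^p(T,\Bbf_1)$ — follows from the definitions together with the triangle inequality for the conical and planar excess and the fact that $\mathbf{S}'$ has at least two planes, using that a single-plane competitor to $T$ in $\Bbf_1$ must have distance $\gtrsim \boldsymbol{\mu}(\Sbf)$ from the plane farthest from the spine: this is where $\Ebf^p(T,\Bbf_1)$ enters as a lower bound.

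The key step where the semicalibrated error term must be handled is the proof of Proposition \ref{p:balancing-2} itself, which proceeds by contradiction. Here one takes a sequence $T_k$ semicalibrated by $\omega_k$, together with cones $\mathbf{S}_k$, violating the desired estimate with constants $M_k\to\infty$ (or $\eps_{0,k}\to 0$); one normalizes by the conical excess and extracts a limit. By \eqref{e:smallexcess} we have $\Omega_k^{2-2\delta_3}/\mathbb{E}(T_k,\mathbf{S}_k,\Bbf_1)\to 0$, so $\Omega_k\to 0$, and hence — as in Remark \ref{r:contradictio-argument-limit-area-min} — the limit $T_\infty$ is area-minimizing; moreover the normalized maps $v_i$ from the graphical approximation of Proposition \ref{p:first-blow-up}(iii) converge to a Dir-minimizer, using \eqref{e:first-blowup}. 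From here the argument is identical to \cite{DMS}: the contradiction is obtained from the classification of homogeneous Dir-minimizers and the non-concentration estimates at the spine. The one genuinely new point to check is that all the a priori estimates feeding into this compactness — the height bound \eqref{e:Linfty-estimate}, the tilt excess bound \eqref{e:tilt-estimate}, and the Dir-energy control \eqref{e:first-Dir-control} — hold with the $\Omega^{1-\delta_3}$/$\Omega^{2-2\delta_3}$ error scalings; this is exactly what Sections \ref{s:height-bd} and \ref{s:graphs} established, and since $\Omega_k^{2-2\delta_3}=o(\mathbb{E}(T_k,\mathbf{S}_k,\Bbf_1))$ these errors vanish in the limit and do not affect the contradiction. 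I expect the verification that the \cite{DMS} combinatorial selection procedure is unaffected by these modified error terms (rather than any deep new analytic input) to be the main bookkeeping obstacle.
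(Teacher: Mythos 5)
Your overall route is the paper's: reduce Proposition \ref{p:balancing} to the crude statement Proposition \ref{p:balancing-2}, then prove the latter by contradiction and compactness, observing that the hypotheses force $\Omega_k \to 0$ so that the limit object is area-minimizing (non-collapsed case) or Dir-minimizing (collapsed case), exactly in the spirit of Remark \ref{r:contradictio-argument-limit-area-min}, with the $\Omega^{1-\delta_3}$-type error bookkeeping supplied by Sections \ref{s:height-bd}--\ref{s:graphs}.

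There is, however, a concrete flaw in the step where you extract the Dir-minimizing limit: you invoke Proposition \ref{p:first-blow-up}(iii) (and, in passing, the non-concentration estimates at the spine) inside the proof of the balancing statement. Both of these are established under Assumption \ref{a:refined} (respectively Assumption \ref{a:HS}), which \emph{presupposes} that the cone is $M$-balanced, and every constant appearing there ($\delta^*$, $\tau$, $\bar\delta$, $\eps$, $C$) depends on $M$. Along your contradiction sequence the balancing constants degenerate by assumption, so no uniform $M$ is available and these results cannot be applied; using them to prove balancing is circular. The paper instead relies on the \emph{crude} approximation (the analogue of \cite[Proposition 8.6]{DMS}, with $\Abf$ replaced by the $\Omega$-errors), whose hypotheses involve only smallness of $\mathbb{E}(T,\Sbf,\Bbf_1)+\Omega^{2-2\delta_3}$ relative to $\boldsymbol{\sigma}(\Sbf)^2$ and require no balancing; Dir-minimality of the blow-up then comes from the harmonic approximation \cite[Theorem 3.1]{DLSS1} applied along the sequence. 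Relatedly, the contradiction in the limit is not obtained from a ``classification of homogeneous Dir-minimizers'' nor from the Simon estimates (which appear only in Section \ref{s:nonconc} and play no role in balancing): in the non-collapsed case it comes from Morgan's theorem on area-minimizing unions of planes (\cite{Morgan}, cf. \cite[Lemma 7.5]{DMS}), and in the collapsed case from the balancing result for Dir-minimizers \cite[Proposition 7.6]{DMS}. Finally, a minor bookkeeping correction: the combinatorial selection of the subfamily $\{i_1,\dots,i_k\}$ is performed in the derivation of Proposition \ref{p:balancing} from Proposition \ref{p:balancing-2} (by induction on $N$, using the crude approximation to compare excesses), whereas Proposition \ref{p:balancing-2} itself only asserts that smallness of the excess relative to $\boldsymbol{\sigma}(\Sbf)$ forces $C$-balancedness of the given cone.
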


The proof of Proposition \ref{p:balancing} reduces to the validity of the following proposition, by exactly the same reasoning as that in \cite[Section 9]{DMS}, given that the crude approximation \cite[Proposition 8.6]{DMS} remains valid in this setting with $\Abf$ replaced by $\Omega^{2-2\delta_2}$.

\begin{proposition}\label{p:balancing-2}
Assume that 
$T$, $\omega$, and $\mathbf{S}$ are as in Proposition \ref{p:balancing}. Then there are constants $C = C(m,n,N)$ and $\varepsilon = \varepsilon (m,n,N)$ with the following property. If we additionally have that
\begin{equation}\label{e:evensmallerexcess}
    N\geq 2 \quad \text{and} \quad {\Omega^{2-2\delta_3} + \mathbb{E} (T, \mathbf{S}, \Bbf_1) \leq \varepsilon^2 \boldsymbol{\sigma}(\Sbf)^2\,} ,
\end{equation} 
then $\mathbf{S}$ is $C$-balanced.
\end{proposition}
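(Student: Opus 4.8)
\textbf{Proof proposal for Proposition \ref{p:balancing-2}.}

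The plan is to argue by contradiction and compactness, mirroring the strategy of \cite[Section 9]{DMS}, and to exploit the fact pointed out in Remark \ref{r:contradictio-argument-limit-area-min}: namely, that once we impose a smallness condition forcing $\Omega_k \to 0$, any weak limit of the sequence of semicalibrated currents is area-minimizing, so the area-minimizing argument applies to the limit verbatim. Concretely, suppose the statement fails; then for each $k$ there is a semicalibrated current $T_k$ (semicalibrated by $\omega_k$) with $\|T_k\|(\Bbf_4) \le 4^m(Q+\tfrac12)\omega_m$, a cone $\Sbf_k = \alpha_1^k \cup \cdots \cup \alpha_N^k \in \Cscr(Q)$ with $N \ge 2$, satisfying
\[
\Omega_k^{2-2\delta_3} + \mathbb{E}(T_k, \Sbf_k, \Bbf_1) \le \tfrac{1}{k}\,\boldsymbol\sigma(\Sbf_k)^2,
\]
but such that $\Sbf_k$ is \emph{not} $C$-balanced for any fixed $C$ — i.e., the ratio $\theta_2(\alpha_i^k,\alpha_j^k)/\theta_1(\alpha_i^k,\alpha_j^k)$ diverges for some pair $i<j$. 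First I would rescale so that $\boldsymbol\sigma(\Sbf_k) = 1$ (possible by the scaling invariance of all quantities involved, after translating the contact point $\{\Theta(T_k,\cdot)\ge Q\}\cap\Bbf_{\eps}$ appropriately), so that $\Omega_k \to 0$ and $\mathbb{E}(T_k,\Sbf_k,\Bbf_1)\to 0$ while the planes in $\Sbf_k$ stay uniformly separated. Up to a subsequence and a rotation, $\Sbf_k \to \Sbf_\infty \in \Cscr(Q)$ with $\boldsymbol\sigma(\Sbf_\infty)=1$, and $T_k \toweakstar T_\infty$, which by Remark \ref{r:contradictio-argument-limit-area-min} is area-minimizing, with $\mathbb{E}(T_\infty,\Sbf_\infty,\Bbf_1)=0$, hence $\spt(T_\infty)\subset \Sbf_\infty$ and $T_\infty$ has density $\ge Q$ at the spine.

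The heart of the argument is then to run the crude splitting / crude approximation machinery: invoking the semicalibrated version of \cite[Proposition 8.6]{DMS} (valid here with $\Abf$ replaced by $\Omega^{1-\delta_3}$, as noted in the excerpt), each plane $\alpha_i^k$ carries a Lipschitz multivalued graph $u_i^k$ over the outer region with small Dirichlet energy controlled by $\hat{\Ebf}(T_k,\Sbf_k,\Bbf_4) + \Omega_k^{2-2\delta_3}$, and the normalized maps converge to Dir-minimizers $w_i$ on the limiting outer region. The key point — exactly as in \cite[Section 9]{DMS} — is that the first-order expansion of $T_k$ near the spine is governed by these harmonic sheets, and the \emph{balancing} condition is a statement about the geometry of the limiting configuration: the obstruction to $M$-balancedness would force the limiting blow-up profile $\sum_i w_i$ to violate the homogeneity/stationarity constraints (no average-free part concentrating in a "thin" direction near the spine, a consequence of the Dir-minimality and the density lower bound at $V(\Sbf_\infty)$). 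One shows the Morgan-angle ratios of $\Sbf_k$ are controlled by quantities that pass to the limit continuously along the outer region (distances between the sheets at comparable scales), and in the limit these are finite because $\Sbf_\infty$ has strictly separated planes; a contradiction with the assumed divergence of the ratio then gives a uniform bound $M = M(m,n,N)$.

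I expect the main obstacle to be the careful bookkeeping of error terms: unlike in the area-minimizing case, every variational/first-variation estimate used in the crude approximation picks up a $T_k(d\omega_k \mres X)$ term, so one must verify that the relevant estimates degrade only to the power $\Omega_k^{2-2\delta_3}$ (rather than $\Omega_k^2$) — this is precisely the loss flagged in Remark \ref{r:error-loss} and traced to Case 2 of Lemma \ref{l:decay}. Since \eqref{e:evensmallerexcess} controls $\Omega_k^{2-2\delta_3}$ by $\eps^2\boldsymbol\sigma(\Sbf_k)^2$, this is exactly the right normalization to make all such error terms infinitesimal after rescaling, so the compactness goes through; but one must check at each invocation of \cite[Section 8]{DMS} that the hypotheses (e.g. the smallness thresholds $\delta^*$, $\tau$, $\bar\delta$ in Assumption \ref{a:refined}) are met with this weaker power of $\Omega$. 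Once the area-minimizing limit is isolated via Remark \ref{r:contradictio-argument-limit-area-min}, the remaining geometric argument is identical to \cite[Section 9]{DMS}, so I would cite it rather than reproduce it, highlighting only the two places where the semicalibration error enters.
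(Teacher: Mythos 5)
There is a genuine gap, and it sits at the very first step of your argument: you cannot ``rescale so that $\boldsymbol{\sigma}(\Sbf_k)=1$''. The cones in $\Cscr(Q)$ are dilation-invariant about their spine, and $\boldsymbol{\sigma}(\Sbf)=\min_{i<j}\dist(\alpha_i\cap\Bbf_1,\alpha_j\cap\Bbf_1)$ is an angular, scale-invariant quantity bounded above by a dimensional constant; the rescalings $(\iota_{x,r})_\sharp$ do not change it. Consequently the dichotomy between the \emph{non-collapsed} case ($\liminf_k\boldsymbol{\sigma}(\Sbf_k)>0$) and the \emph{collapsed} case ($\boldsymbol{\sigma}(\Sbf_k)\to 0$) cannot be normalized away, and your proposal silently discards the collapsed case, which is the heart of the matter. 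There the limit current is merely supported near a single plane, so no contradiction is available at the level of currents; one must instead normalize by the plane separation, pass to the graphical (outer) approximations, extract a limiting Dir-minimizer, and invoke the balancing statement for Dir-minimizers, \cite[Proposition 7.6]{DMS} — and the argument of \cite[Section 9]{DMS}, which the paper follows, additionally proceeds by induction on the number of planes after first treating the regime in which $\boldsymbol{\mu}(\Sbf)$ and $\boldsymbol{\sigma}(\Sbf)$ are comparable (\cite[Proposition 9.3]{DMS}). None of this structure appears in your outline.

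Even in the non-collapsed case your concluding step does not close. Uniform separation of the planes bounds only the \emph{larger} Morgan angle $\theta_2(\alpha_i^k,\alpha_j^k)$ from below; failure of balancedness means $\theta_2/\theta_1\to\infty$, which is perfectly compatible with $\boldsymbol{\sigma}(\Sbf_k)\geq c>0$ (take $\theta_1^k\to 0$ with $\theta_2^k$ bounded below), so the claim that ``the Morgan-angle ratios pass to the limit continuously and are finite because $\Sbf_\infty$ has strictly separated planes'' is false and yields no contradiction. What actually produces the contradiction in this case is the classification of area-minimizing pairs of planes: since $\Omega_k\to 0$, the limit $T_\infty$ is area-minimizing (this is the only place Remark \ref{r:contradictio-argument-limit-area-min} enters), it is supported in $\Sbf_\infty$ with density $\geq Q$ along the spine, and the degenerating angle ratio contradicts the main result of \cite{Morgan} (cf.\ \cite[Lemma 7.5]{DMS}). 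Your instinct about the error bookkeeping — that all first-variation errors degrade only to $\Omega^{2-2\delta_3}$ and that \eqref{e:evensmallerexcess} is the right smallness to run the compactness — is correct, but it is peripheral; the missing ingredients are the collapsed/non-collapsed case analysis with the Dir-minimizer linearization and induction, and Morgan's theorem in the non-collapsed case.
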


To prove Proposition \ref{p:balancing-2}, one argues as in \cite[Proof of Proposition 9.2]{DMS}, namely one first proves a version of the proposition when $\boldsymbol{\mu}(\Sbf)$ and $\boldsymbol{\sigma}(\Sbf)$ are comparable (see \cite[Proposition 9.3]{DMS}) and arguing later by induction. The argument follows by contradiction, assuming that the balancing assumption is failing along a sequence of semicalibrated currents $T_k$ with associated semicalibration forms $\omega_k$ and cones $\Sbf_k$. Note that we have $\Omega_k^{2-2\delta_3} \leq \eps_k^2 \boldsymbol\sigma(\Sbf_k)$ with $\eps_k \downarrow 0$. Thus, we still obtain a limiting current $T_\infty$ that is area-minimizing (see Remark \ref{r:contradictio-argument-limit-area-min} for a similar argument) and supported in some $\Sbf_\infty \in \Cscr(Q)\setminus \Pscr$ in the non-collapsed case, i.e. when up to subsequence we have $\lim_{k\to \infty}\boldsymbol\sigma(\Sbf_k) > 0$. In light of the main result in \cite{Morgan} (see also \cite[Lemma 7.5]{DMS}), this contradicts the failure of balancing. Similarly, in the collapsed case $\lim_{k\to \infty}\boldsymbol\sigma(\Sbf_k) = 0$, the compactness argument in \cite{DMS} still yields a limiting Dir-minimizer (the domain of which depends on the case analysis therein), and  thus \cite[Proposition 7.6]{DMS} may be applied to contradict the failure of balancing. 

\section{Estimates at the spine}\label{s:nonconc}

This section is dedicated to the nonconcentration estimates for $T$ near the spine of $\Sbf$, much analogous to those in \cite[Section 11]{DMS}, but appearing first in a multiplicity one setting in the seminal work \cite{Simon_cylindrical} of Simon. Our underlying assumption throughout this section will be the following.

\begin{assumption}\label{a:HS}
Suppose $T$ and $\omega$ are as in Assumption \ref{a:main} and $\|T\| (\Bbf_4) \leq 4^m (Q+\frac{1}{2}) \omega_m$. Suppose $\mathbf{S}=\alpha_1\cup \cdots \cup \alpha_N$ is a cone in $\mathscr{C} (Q)\setminus \mathscr{P}$ which is $M$-balanced, where $M\geq 1$ is a given fixed constant, and let $V$ denote the spine of $\mathbf{S}$. For a sufficiently small constant $\eps = \eps(Q,m,n,M)$ smaller than the {$\eps$-}threshold  in Assumption \ref{a:refined}, whose choice will be fixed by the statements of Theorem \ref{t:HS}, Corollary \ref{c:HS-patch}, and Proposition \ref{p:HS-3} below, suppose that
\begin{equation}\label{e:smallness-alg-2}
\mathbb{E} (T, \mathbf{S}, \Bbf_4) + \Omega^{2-2\delta_3} \leq \varepsilon^2 \boldsymbol{\sigma} (\Sbf)^2\, .
\end{equation}
\end{assumption}

\begin{theorem}[Simon's error and gradient estimates]\label{t:HS}
Assume $T$, $\omega$, and $\mathbf{S}$ are as in Assumption \ref{a:HS}, suppose that $\Theta (T, 0) \geq Q$, and set $r= \frac{1}{3\sqrt{m-2}}$. Then there is a constant $C=C(Q,m,n,M)>0$ and a choice of $\varepsilon = \varepsilon(Q,m,n,M)>0$ in Assumption \ref{a:HS} sufficiently small such that
\begin{align}
\int_{\Bbf_r} \frac{|q^\perp|^2}{|q|^{m+2}}\, d\|T\| (q)
&\leq C (\Omega^{2-2\delta_3} + {\hat{\Ebf} (T, \mathbf{S}, \Bbf_4))}\, \label{e:HS}\\
\int_{\Bbf_r} |\mathbf{p}_V\circ \mathbf{p}_{\vec{T}}^\perp|^2\, d\|T\| &\leq C (\Omega^{2-2\delta_3} + {\hat{\Ebf} (T, \mathbf{S}, \Bbf_4))}\, .\label{e:HS-additional}
\end{align}
\end{theorem}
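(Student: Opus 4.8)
\textbf{Proof plan for Theorem \ref{t:HS}.}

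The plan is to follow the classical scheme of Simon \cite{Simon_cylindrical}, as adapted in \cite[Section 11]{DMS}, which derives both \eqref{e:HS} and \eqref{e:HS-additional} simultaneously from a single integral identity obtained by inserting into the first variation of $T$ a vector field built from the position vector relative to the spine $V$. Concretely, one tests the first variation identity \eqref{eq: first variation} with a vector field of the form $X(q) = \zeta(q)^2\, \mathbf{p}_{V^\perp}(q)$ for a suitable radial cutoff $\zeta$ supported in $\Bbf_4$ and equal to $1$ on $\Bbf_r$ (or rather a one-parameter family of such cutoffs, followed by a limiting argument). Since $\mathbf{p}_{V^\perp}$ is the gradient of $\tfrac12 |\mathbf{p}_{V^\perp}(q)|^2$ and $V$ is a linear subspace, the divergence $\divergence_{\vec T}X$ expands into a main term proportional to $|\mathbf{p}_V\circ \mathbf{p}_{\vec T}^\perp|^2$ (since $\divergence_{\vec T}\mathbf{p}_{V^\perp} = \dim V - |\mathbf{p}_V\circ\mathbf{p}_{\vec T}^\perp|^2 + (\text{terms}) $, exploiting that $\mathbf{p}_{V}+\mathbf{p}_{V^\perp} = \Id$) plus a term involving $\zeta D\zeta$ which, via the monotonicity formula, feeds into the integral $\int |q^\perp|^2 |q|^{-m-2}$ on the left-hand side of \eqref{e:HS}. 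The semicalibration enters only through the right-hand side of \eqref{eq: first variation}, contributing the extra term $T(d\omega\mres X)$, which we estimate by $\Omega\, \||X|\|_{L^1(\|T\|)} \le C\Omega$ using \eqref{e:H-Linfty-est} and the mass bound in Assumption \ref{a:HS}; combined with the smallness \eqref{e:smallness-alg-2} and an absorption of the quadratic-in-$\Omega$ loss (cf. \cite[Remark 1.10]{Spolaor_15} and the fifth bullet of the introduction), this produces the error $\Omega^{2-2\delta_3}$ rather than $\Omega^2$. The remaining non-error contributions are controlled by the conical height excess $\hat\Ebf(T,\Sbf,\Bbf_4)$ via the graphical approximations of Proposition \ref{p:first-blow-up} in the outer region and via Theorem \ref{thm:main-estimate} and the Whitney/layering decomposition near the spine.

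First I would set up the cutoff test vector field and carefully expand $\delta T(X)$, separating the ``monotonicity'' piece (which will be handled by integrating the classical almost-monotonicity formula for $\Omega$-minimal currents — here one must use the version with a higher-order error, e.g. \cite[Proposition 2.1]{DLSS-uniqueness}, rather than exact monotonicity) from the ``spine tilt'' piece. Second, I would bound the boundary/cutoff terms: those supported in the annular region where $D\zeta \neq 0$ are estimated directly by $\hat\Ebf(T,\Sbf,\Bbf_4)$ using the $L^\infty$ height bound \eqref{e:Linfty-estimate} of Theorem \ref{thm:main-estimate} and the tilt-excess bound \eqref{e:tilt-estimate}, picking up the $\Omega^{2-2\delta_3}$ error as above. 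Third, I would handle the contributions localized near $V$: here one invokes the hypothesis $\{\Theta(T,\cdot)\ge Q\}\cap\Bbf_\eps(0)\ne\emptyset$ together with $\Theta(T,0)\ge Q$ and the layering/Whitney decomposition recalled in Section \ref{s:graphs}, using the coherent outer approximation of Proposition \ref{p:first-blow-up} to convert the geometric integrals into $W^{1,2}$-energies of the multivalued graphical approximations, which are in turn controlled by $\hat\Ebf(T,\Sbf,\Bbf_4)+C\Omega^{2-2\delta_3}$ via \eqref{e:first-Dir-control}. Finally, once the main identity is established with all errors bounded by $C(\Omega^{2-2\delta_3}+\hat\Ebf(T,\Sbf,\Bbf_4))$, both \eqref{e:HS} and \eqref{e:HS-additional} are read off: \eqref{e:HS-additional} is literally the main ``spine tilt'' term, and \eqref{e:HS} follows by combining it with the monotonicity piece.

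The main obstacle I anticipate is the bookkeeping of the error terms coming from the semicalibration so as to land on the exponent $2-2\delta_3$. Unlike the area-minimizing case, every time we use the first variation we incur a term that is a priori only linear in $\Omega$, and closing the estimate requires the absorption trick of \cite[Remark 1.10]{Spolaor_15}: one must arrange the inequalities so that a term $C\Omega\cdot(\text{quantity})^{1/2}$ can be split, via Young's inequality, into $\tfrac12(\text{quantity}) + C\Omega^2$, and then — crucially — verify that the ``quantity'' being absorbed is itself controlled by $\hat\Ebf(T,\Sbf,\Bbf_4)$ up to a factor that can be made small using \eqref{e:smallness-alg-2}; the subquadratic scaling \eqref{e:domega-scaling} (with $\delta_3\in(0,\delta_2)$, here $\delta_3=\delta_2/2$) is exactly what makes this work uniformly across the Whitney scales. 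A secondary difficulty is that the monotonicity formula is not exact, so the identification of the $\int |q^\perp|^2|q|^{-m-2}$ term requires tracking the extra curvature error through the integration against the cutoff family; this is routine given \eqref{e:H-Linfty-est} but must be done with care near the spine where the density is exactly $Q$.
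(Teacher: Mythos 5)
Your overall scheme is the same as the paper's: the classical Simon argument, i.e.\ the almost-monotonicity formula at $0$ (using $\Theta(T,0)\geq Q$) producing the weighted height integral on the left of \eqref{e:HS}, the first variation tested with $\chi(|q|)^2\,\mathbf{p}_{V^\perp}(q)$ to handle the mass-difference and to produce the spine-tilt term of \eqref{e:HS-additional}, and the graphical approximations of Section \ref{s:graphs} to bound the remaining terms by $\hat{\Ebf}(T,\Sbf,\Bbf_4)$. However, two points in your error bookkeeping are wrong as stated. First, you cannot estimate the semicalibration contribution by $\Omega\,\|X\|_{L^1(\|T\|)}\leq C\Omega$: under Assumption \ref{a:HS} one has $\Omega<1$, so $\Omega>\Omega^{2-2\delta_3}$, and no smallness hypothesis converts a linear-in-$\Omega$ error into the claimed right-hand side. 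The correct treatment, which is what the paper does, exploits that only the normal part of $X$ pairs with the mean curvature (equivalently $\langle d\omega\mres X,\vec T\rangle=\langle d\omega, X^\perp\wedge\vec T\rangle$) together with \eqref{e:H-Linfty-est}, and applies Young's inequality to split each first-variation error as $\delta\int_{\Bbf_R}|q^\perp|^2|q|^{-m-2}\,d\|T\|+C(\delta)\Omega^2$. The absorbed quantity is therefore \emph{not} something ``controlled by $\hat\Ebf$ up to a small factor'' — it is the unknown weighted height integral itself, and it lives on a ball $\Bbf_R$ strictly larger than the ball $\Bbf_r$ on the left, so closing the estimate requires an iteration/absorption lemma (the paper invokes \cite[Lemma 4.3]{HanLin}); this is the genuinely delicate step your plan glosses over.

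Second, your explanation of where the exponent $2-2\delta_3$ comes from is misplaced. The assumption \eqref{e:domega-scaling} belongs to the coarse blow-up analysis of Part \ref{pt:sing-deg} and plays no role here, and the first-variation errors, once handled as above, are genuinely quadratic in $\Omega$ (which is \emph{better} than $\Omega^{2-2\delta_3}$). The loss to $\Omega^{2-2\delta_3}$ enters only through the estimates quoted from Theorem \ref{thm:main-estimate} and the Section \ref{s:graphs} analogues of \cite[Section 8]{DMS}, where every occurrence of $\Abf$ is replaced by $\Omega^{1-\delta_3}$; that replacement in turn traces back to the harmonic-approximation case analysis in Lemma \ref{l:decay} (see Remark \ref{r:error-loss}). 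With these two corrections your outline matches the paper's proof.
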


 This is the analogue of \cite[Theorem 11.2]{DMS}. However, note that in order to get the quadratic $\Omega$ error improvement in the estimates \eqref{e:HS} and \eqref{e:HS-additional}, we crucially exploit the nature of the error in the first variation of $T$, and we recall the generalized mean curvature estimate \eqref{e:H-Linfty-est}. This is very much analogous to the observation made in \cite[Remark 1.10]{Spolaor_15} in the case when $T$ is area-minimizing, but since the main term on the right-hand side of \eqref{e:HS} and \eqref{e:HS-additional} is in terms of the conical excess of $T$ rather than the tilt excess, we provide a sketch proof, for the purpose of clarity.
 
\begin{proof}
    For any $0 < s < \rho \leq 4$, we may test the first variation identity \eqref{eq: first variation} for $T$ with the radial vector field $\eta\big(\tfrac{\vert p \vert}{\tau}\big) p$, where $\tau \in [s, \rho]$ and $\eta$ is a smooth cut-off function that we take to converge to the characteristic function on $[0,1]$, which yields the classical monotonicity formula estimate (see e.g. \cite[Theorem 3.8]{DL-survey-JDG}). Now for any $0< r < R <4$, let $\chi\in C^\infty([0, \infty); \mathbb R)$ be monotone non-increasing with $\chi \equiv 1$ on $[0,r]$ and $\chi \equiv 0$ on $[{R}, \infty)$. Taking $s\downarrow 0$, exploiting the hypothesis $\Theta(T,0) \geq Q$, multiplying by $\rho^m$ and differentiating in $\rho$, then multiplying by $\chi (\rho)^2$ and integrating over $\rho \in[0,R]$, we obtain the estimate
\begin{align}
    \int_{\Bbf_r} \frac{|q^\perp|^2}{|q|^{m+2}}\, d\|T\| (q)
    &\leq C\left[\int \chi^2 (|q|)\, d\|T\| (q) - \sum_i Q_i \int_{\alpha_i} \chi^2 (|q|)\, d\mathcal{H}^m (q)\right]\nonumber\\
    &\qquad + {C \int_{\Bbf_{R}} \frac{|\Gamma (|q|)q^\perp\cdot \vec{H}_T (q)|}{|q|^m}\,  d\|T\| (q)}\, ,\label{e:monot-10}
\end{align}
where $C=C(m,r)>0$ and $\Gamma$ is a suitable locally bounded function with $\|\Gamma\|_{L^\infty} \leq \bar C (R^m-r^m)$ for $\bar C = \bar C(m)$. See \cite{DMS} for a more detailed computation. Combining this with the elementary identity $ab \leq \tfrac{\delta a^2}{2} + \tfrac{b^2}{2\delta}$ for $\delta>0$ (to be determined) and the estimate \eqref{e:H-Linfty-est}, the last term on the right-hand side can then be estimated as follows:
    \begin{align}
        \int_{\mathbf{B}_{R}} \vert \Gamma(|q|) q^\perp \cdot \vec{H}_T(q) \vert \frac{1}{\vert q \vert^m} \, d\Vert T \Vert(q) &\leq \frac{\delta}{2} \int_{\mathbf{B}_{R}} \frac{|{q^\perp}|^2}{\vert q \vert^{m+2}} \, d\Vert T \Vert(q) \notag \\
        &\qquad+ C(\delta,m) r^m \|\vec H_T\|_{L^\infty}^2 \int_{\mathbf{B}_{R}}\frac{1}{\vert q \vert^{m-2}} \, d\Vert T \Vert(q) \notag \\
        &\leq \frac{\delta}{2} \int_{\mathbf{B}_{R}} \frac{|q^\perp|^2}{\vert q \vert^{m+2}} \, d\Vert T \Vert(q) + C(\delta,m) \Omega^2\, .\label{e:monotonicity-error-est}
    \end{align}
Inserting this into \eqref{e:monot-10}, we obtain
\begin{align}
    \int_{\Bbf_r} \frac{|q^\perp|^2}{|q|^{m+2}}\, d\|T\| (q)
    &\leq C\underbrace{\left[\int \chi^2 (|q|)\, d\|T\| (q) - \sum_i Q_i \int_{\alpha_i} \chi^2 (|q|)\, d\mathcal{H}^m (q)\right]}_{(I)} \notag \\
    &\qquad+ C\delta \int_{\mathbf{B}_{R}} \frac{|q^\perp|^2}{\vert q \vert^{m+2}} \, d\Vert T \Vert(q) + C(\delta,m) \Omega^2\, .\label{e:HS-first-est}
\end{align}
Note that this is of the form \cite[Proof of Theorem 11.2, (11.15)]{DMS} but with $\Abf^2$ replaced with $\Omega^2$, and the extra term on the right-hand side, which we will eventually absorb into the left-hand side, but due to the domain being larger, this is slightly delicate and needs to be done at the end. The remainder of the proof therein deals with estimating the term (I) above. Observe that the estimate \cite[(11.24)]{DMS} may in fact be more precisely written as
\begin{align}
\int \chi^2(|q|)\, d\|T\|&(q) - \sum_i Q_i \int_{\alpha_i} \chi^2(|q|)\, d\Hcal^m(q) \nonumber \\
&\leq C \int {\rm div}_{\vec{T}} X (q)\, d\|T\| (q) + C \int_{\Bbf_{R}} |x^\perp|^2 d\|T\| + \int \chi (|q|) x \cdot \nabla_{V^\perp} \chi (|q|)\, d\|S\|(q) \notag \\
&\qquad - \int \chi (|q|)\, \mathbf{p}_{\vec{T}} (x)\cdot \nabla_{V^\perp} \chi (|q|)\, d\|T\| (q)
\, ,\label{e:first-variation-5}
\end{align}
for the vector field $X (q) = \chi (|q|)^2 \mathbf{p}_{V}^\perp (q)$ and $S=\spt(\Sbf)$. Thus, in order to follow the arguments of \cite{DMS} verbatim, it merely remains to verify that
\begin{equation}\label{e:first-variation-2}
\int \diverg_{\vec{T}} X (q)\, d\|T\| (q) \leq \tilde\delta\int_{\Bbf_{R}} \frac{|q^\perp|^2}{|q|^{m+2}}\, d\|T\|(q) + C(\tilde\delta) \Omega^2\, ,
\end{equation}
for $\tilde\delta>0$ small enough such that, after combining \eqref{e:first-variation-5} with \eqref{e:HS-first-est}, the first term on the right-hand side of \eqref{e:first-variation-5} may be reabsorbed into the left-hand side of \eqref{e:HS-first-est}.

To see that \eqref{e:first-variation-2} holds, we use the first variation of $T$ to write
\begin{equation}\label{e:first-var-HS}
    \int {\rm div}_{\vec{T}}\, X \, d\|T\| = - \int X^\perp \cdot \vec{H}_T \, d\|T\| \, ,
\end{equation}
and argue as in \eqref{e:monotonicity-error-est} to obtain the estimate
\begin{equation}\label{e:first-var-error-2-HS}
    \int |X^\perp\cdot \vec{H}_T | \, d\|T\| \leq \tilde\delta \int_{\Bbf_{R}} \frac{|q^\perp|^2}{|q|^{m+2}}\, d\|T\|(q) + C(\tilde\delta) \|\vec H_T \|_{L^\infty}^2 \int_{\Bbf_{R}} |q|^{m+2} \, d\|T\|(q)\, ,
\end{equation}
from which \eqref{e:first-variation-2} follows immediately. We may then use the estimates corresponding to those in \cite[Sections 11.1.2-11.1.4]{DMS}, for which we must use the semicalibrated analogues of the relevant results in \cite[Section 8]{DMS} (with $\Sigma = \R^{m+n}$ and all occurrences of $\Abf$ replaced with $\Omega^{1-\delta_3}$). The majority of such results were omitted in Section \ref{s:graphs} herein for brevity, but nevertheless hold true.

In summary, when combined with \eqref{e:HS-first-est}, we arrive at the final estimate
\begin{align*}
    \int_{\Bbf_r} \frac{|q^\perp|^2}{|q|^{m+2}}\, d\|T\| (q)
    &\leq C\hat{\Ebf}(T,\Sbf,\Bbf_4) + C(\delta,\tilde\delta,m)\Omega^{2-2\delta_3} \\
    &\qquad+ C(\delta +\tilde\delta) \int_{\mathbf{B}_{R}} \frac{|q^\perp|^2}{\vert q \vert^{m+2}} \, d\Vert T \Vert(q)\, .
\end{align*}
Invoking, for instance, \cite[Lemma 4.3]{HanLin}, for $\delta,\tilde\delta$ sufficiently small (depending on $Q,m,n$), we may indeed absorb the final term on the right-hand side above into the left-hand side, concluding the proof.
\end{proof}
\begin{remark}
    In the proof of Theorem \ref{t:HS}, one can alternatively notice that the following identity holds, for any vector field $X\in C_c^\infty(\R^{m+n}\setminus \spt\partial T;\R^{m+n})$:
    \begin{equation} \label{eq: estimate bracket}
        \langle d\omega \mres X, \vec{T} \rangle = \langle d\omega, X \wedge \vec{T} \rangle = \langle d\omega, X^\perp \wedge \vec{T} \rangle, 
    \end{equation}
    where the first equality follows by definition of restriction, and where perpendicularity is with respect to the approximate tangent space of $T$. In particular, this can be used in place of the estimate \eqref{e:H-Linfty-est} when establishing \eqref{e:monotonicity-error-est} and \eqref{e:first-variation-2} above.
\end{remark}

From Theorem \ref{t:HS}, we further deduce the following, which corresponds to \cite[Corollary 11.3]{DMS}.
\begin{corollary}[Simon's non-concentration estimate]\label{c:HS-patch}
Assume $T$, $\omega$, $\mathbf{S}$ and $r$ are as in Theorem \ref{t:HS}. Then, there is a choice of $\varepsilon = \varepsilon(Q,m,n,M)$ in Assumption \ref{a:HS}, possibly smaller than that in Theorem \ref{t:HS}, such that for every $\kappa\in (0,m+2)$, 
\begin{equation}\label{e:HS-patch}
\int_{\Bbf_r} \frac{\dist^2 (q, \mathbf{S})}{|q|^{m+2-\kappa}}\, d\|T\| (q)
\leq C_\kappa (\Omega^{2-2\delta_3} + {\hat{\Ebf} (T, \mathbf{S}, \Bbf_4))}\, ,
\end{equation}
where here $C_\kappa = C_\kappa(Q,m,n,M,\kappa)$. 
\end{corollary}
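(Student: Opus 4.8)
The statement to prove is Corollary \ref{c:HS-patch}, Simon's non-concentration estimate, which upgrades the weighted estimate \eqref{e:HS} of Theorem \ref{t:HS} to one with an improved weight exponent $|q|^{m+2-\kappa}$ in place of $|q|^{m+2}$, at the cost of replacing $|q^\perp|^2$ by $\dist^2(q,\mathbf{S})$.

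The plan is to follow the strategy of \cite[Corollary 11.3]{DMS} essentially verbatim, since the only difference between the semicalibrated and area-minimizing settings is the presence of the generalized mean curvature error term, which by \eqref{e:H-Linfty-est} is controlled by $\Omega$ and can be absorbed into the final estimate at the cost of the $\Omega^{2-2\delta_3}$ (rather than $\Omega^2$) error appearing throughout Part \ref{pt:deg1}. First I would observe that $\dist^2(q,\mathbf{S}) \leq C|q|^2$ trivially on $\Bbf_r$, and that away from the spine $V$ the quantity $\dist^2(q,\mathbf{S})$ is comparable (up to the balancing constant $M$) to $|q^\perp|^2$ once one passes to a suitable graphical region over one of the planes $\alpha_i$; near $V$ one exploits the height bound. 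The key mechanism is a dyadic decomposition of $\Bbf_r$ into annuli $A_j := \Bbf_{2^{-j}r}\setminus \Bbf_{2^{-j-1}r}$: on each annulus the weight $|q|^{-(m+2-\kappa)}$ is comparable to $(2^{-j}r)^{-(m+2-\kappa)}$, so that
\[
\int_{\Bbf_r} \frac{\dist^2(q,\mathbf{S})}{|q|^{m+2-\kappa}}\, d\|T\|(q) \leq C\sum_{j\geq 0} (2^{-j}r)^{\kappa-m-2} \int_{A_j} \dist^2(q,\mathbf{S})\, d\|T\|(q).
\]
The point is then to bound $\int_{A_j}\dist^2(q,\mathbf{S})\, d\|T\|$ by $C(2^{-j}r)^{m+2}\big(\Omega^{2-2\delta_3} + \hat{\Ebf}(T,\mathbf{S},\Bbf_4)\big)$; this is precisely a rescaled version of the one-sided conical excess bound that follows from Theorem \ref{t:HS} (more precisely, one rescales $T$ at scale $2^{-j}r$ and applies \eqref{e:HS} together with the standard conversion between $\int |q^\perp|^2/|q|^{m+2}$ over an annulus and the conical $L^2$ excess on that annulus, using that $\Theta(T,0)\geq Q$ so the rescaled currents still satisfy the hypotheses). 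Summing the resulting geometric series $\sum_j (2^{-j})^{\kappa}$, which converges precisely because $\kappa>0$, gives the claimed bound with $C_\kappa$ blowing up like $(1-2^{-\kappa})^{-1}$ as $\kappa\downarrow 0$.

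To make the middle step rigorous one needs, for each dyadic scale, that the smallness hypothesis \eqref{e:smallness-alg-2} is preserved under rescaling at that scale — this uses that the excess $\hat{\Ebf}(T_{0,2^{-j}r},\mathbf{S},\Bbf_4) \leq C\hat{\Ebf}(T,\mathbf{S},\Bbf_{4})$ by the monotonicity-type control (with the higher-order error term from \cite[Proposition 2.1]{DLSS-uniqueness}, which is harmless since it is controlled by a fixed power of $\Omega$), together with the fact that $T_{0,2^{-j}r}$ is $2^{-j}r\,\Omega$-minimal so that $\Omega$ also shrinks under rescaling. One also needs the comparability $\dist^2(q,\mathbf{S}) \sim |q^\perp|^2$ used above: for $q$ in the outer region this follows from the balanced cone structure and the graphical approximations of Proposition \ref{p:first-blow-up}, while for $q$ in the inner/central region near the spine the contribution is itself controlled directly by \eqref{e:HS-additional} (the term $\int |\mathbf{p}_V\circ\mathbf{p}_{\vec T}^\perp|^2$) and the height bound of Lemma \ref{l:decay}.

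The main obstacle — though it is more bookkeeping than genuine difficulty — is handling the region near the spine $V$, where $\dist^2(q,\mathbf{S})$ is genuinely different from $|q^\perp|^2$ and where the graphical parameterizations degenerate; here one must combine both conclusions \eqref{e:HS} and \eqref{e:HS-additional} of Theorem \ref{t:HS} and invoke the height estimates from Section \ref{s:graphs}. Since all of these ingredients have already been established in the semicalibrated setting in the preceding sections, with every appearance of $\Abf$ replaced by $\Omega^{1-\delta_3}$ (and correspondingly $\Abf^2$ by $\Omega^{2-2\delta_3}$), the argument of \cite[Corollary 11.3]{DMS} goes through with no essential change, and I would simply refer the reader there for the remaining details of the dyadic summation and the spine analysis.
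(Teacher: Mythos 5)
There is a genuine gap in your middle step, and it is the heart of the matter rather than bookkeeping. Your per-annulus claim $\int_{A_j}\dist^2(q,\mathbf{S})\,d\|T\|\leq C(2^{-j}r)^{m+2}\big(\Omega^{2-2\delta_3}+\hat{\Ebf}(T,\mathbf{S},\Bbf_4)\big)$ does not follow from Theorem \ref{t:HS} by rescaling, for two reasons. First, there is no ``standard conversion'' between $\int |q^\perp|^2/|q|^{m+2}$ and the conical $L^2$ excess on an annulus: $q^\perp$ is the component of the position vector normal to the approximate tangent plane $\vec T(q)$ (it measures deviation from radial invariance), whereas $\dist(q,\mathbf{S})$ measures distance to the fixed cone; passing from the former to the latter is precisely the nontrivial content of Simon's argument, and cannot be done pointwise or scale-by-scale. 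Second, to apply Theorem \ref{t:HS} (or \eqref{e:HS}) to the rescaled current $T_{0,2^{-j}r}$ with the same cone $\mathbf{S}$ you would need the smallness hypothesis \eqref{e:smallness-alg-2} at scale $2^{-j}r$, i.e. $\hat{\Ebf}(T_{0,2^{-j}r},\mathbf{S},\Bbf_4)+\Omega_j^{2-2\delta_3}\leq\eps^2\boldsymbol{\sigma}(\mathbf{S})^2$. The conical excess is \emph{not} almost-monotone under rescaling (only mass ratios are, via \cite[Proposition 2.1]{DLSS-uniqueness}), so your assertion that the hypothesis is preserved is unjustified; indeed, ruling out concentration of the conical excess at small scales is exactly what the corollary is supposed to prove, so the argument as written is circular. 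Note also that your uniform-in-$j$ per-annulus bound is formally \emph{stronger} than the conclusion \eqref{e:HS-patch}, which only gives $\int_{A_j}\dist^2(q,\mathbf{S})\,d\|T\|\lesssim (2^{-j}r)^{m+2-\kappa}$; this is another sign that the intermediate step cannot be a soft consequence of what precedes it. The appeals to \eqref{e:HS-additional} and to the height bound near the spine are likewise tilt/graphical statements and do not supply the missing comparison.

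The paper's route is different and avoids all of this: Corollary \ref{c:HS-patch} is obtained by combining \eqref{e:HS} with the purely variational Lemma \ref{l:variation-test-2}, which states \eqref{e:HS-20}, i.e. it bounds $\int_{\Bbf_1}\dist^2(q,\mathbf{S})/|q|^{m+2-\kappa}\,d\|T\|$ directly by $C_\kappa\int_{\Bbf_1}|q^\perp|^2/|q|^{m+2}\,d\|T\|$ plus $C_\kappa(\hat{\Ebf}(T,\mathbf{S},\Bbf_4)+\Omega^2)$. That lemma is proved, following \cite[Lemma 11.6]{DMS}, by testing the first variation identity \eqref{e:first-var-HS} with the vector field $X(q)=\dist^2(q,\mathbf{S})\,(\max\{r,|q|\}^{-m-2+\kappa}-1)_+\,q$: since $\mathbf{S}$ is a cone, $\dist^2(\cdot,\mathbf{S})$ is radially $2$-homogeneous, so $\diverg_{\vec T}X$ produces the weighted quantity on the left of \eqref{e:HS-20} together with error terms controlled by $|q^\perp|^2/|q|^{m+2}$ (via Cauchy--Schwarz) and, in the semicalibrated case, by the mean curvature term, which is absorbed using \eqref{e:H-Linfty-est} exactly as in \eqref{e:first-variation-21} and \eqref{e:first-var-error-2-HS}. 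If you want to salvage your approach you would have to prove such a first-variation comparison anyway; the dyadic decomposition and rescaling add nothing and the summation over annuli is not where the difficulty lies.
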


Corollary \ref{c:HS-patch} follows immediately from the following lemma, combined with Theorem \ref{t:HS}.

\begin{lemma}\label{l:variation-test-2}
Let $T$, $\omega$ and $\Sbf$ be as in Assumption \ref{a:HS} with $\mathbf{B}_1 \subset \Omega$ and $\Theta (T, 0)\geq Q$. Then we may choose $\eps$ sufficiently small in Assumption \ref{a:HS} such that for each $\kappa \in (0,m+2)$ we have
\begin{equation}\label{e:HS-20}
\int_{\Bbf_1} \frac{\dist ^2(q, \mathbf{S})}{|q|^{m+2-\kappa}}\, d\|T\| (q)
\leq C_\kappa \int_{\Bbf_1} \frac{|q^\perp|^2}{|q|^{m+2}}\, d\|T\| (q)+ C_\kappa (\hat{\mathbf{E}} (T, \mathbf{S}, \Bbf_4) + \Omega^2)\, ,
\end{equation}
for some constant $C_\kappa=C_\kappa(Q,m,n,M,\kappa)>0$.
\end{lemma}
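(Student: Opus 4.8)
The plan is to follow the proof of the corresponding lemma in \cite{DMS} (the semicalibrated analogue of Simon's non-concentration comparison), adapting it in the two ways that recur throughout this part: replacing the ambient second fundamental form bound by the mean curvature estimate $\|\vec{H}_T\|_{L^\infty(\Bbf_{6\sqrt m},\|T\|)}\leq \Omega$ from \eqref{e:H-Linfty-est} (equivalently exploiting the identity \eqref{eq: estimate bracket}), and replacing Almgren's strong Lipschitz approximation by its $\Omega$-minimal variant \cite[Theorem 1.4]{DLSS1}. First I would dyadically decompose $\Bbf_1 = \bigcup_{j\geq 0} A_j$ with $A_j := \Bbf_{2^{-j}}\setminus\Bbf_{2^{-j-1}}$. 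On the part of $\spt(T)\cap A_j$ lying at distance at least $\sigma 2^{-j}$ from the spine $V(\mathbf{S})$, the $L^\infty$ height bound of Theorem \ref{thm:main-estimate} (applied at scale $2^{-j}$, the smallness being inherited at all scales $\leq 4$ from \eqref{e:smallness-alg-2} and almost-monotonicity of mass ratios, cf. \cite[Proposition 2.1]{DLSS-uniqueness}) together with the graphical parametrizations of Section \ref{s:graphs} exhibits $T$ as a multivalued Lipschitz graph over $\mathbf{S}$ with small Lipschitz constant. There one has the elementary pointwise inequality, writing $\alpha=\alpha_{i(q)}$ for the plane of $\mathbf{S}$ nearest to $q$ and decomposing $q = \mathbf{p}_{\vec T(q)}(q) + q^\perp$,
\[
\dist^2(q,\mathbf{S}) = |\mathbf{p}_\alpha^\perp(q)|^2 \leq 2|q^\perp|^2 + C\,|\vec T(q)-\vec\alpha|^2\,|q|^2\, .
\]

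The next step is to integrate this against the subcritical weight $|q|^{-(m+2-\kappa)}$. Since $|q|\leq 1$ on $\Bbf_1$, the first term is absorbed directly into the $\int_{\Bbf_1}|q^\perp|^2|q|^{-(m+2)}\,d\|T\|$ appearing on the right-hand side of \eqref{e:HS-20}, while the second term produces $\sum_{j\geq0} 2^{-j\kappa}\big(2^{jm}\int_{A_j}|\vec T-\vec\alpha_{i(q)}|^2 d\|T\|\big)$, a geometric series (convergent because $\kappa>0$) whose general term is comparable to the scale-invariant tilt excess of $T$ relative to $\mathbf{S}$ in $\Bbf_{2^{-j}}$. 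I would then bound this scale-wise tilt excess by the conical $L^2$ excess at a comparable scale via a Caccioppoli-type inequality relative to the cone $\mathbf{S}$ — obtained by testing the first variation identity \eqref{eq: first variation} with a vector field adapted to $\mathbf{S}$, exactly as in the area-minimizing case, the only new feature being that the resulting curvature error is quadratic in $\Omega$ thanks to \eqref{e:H-Linfty-est}/\eqref{eq: estimate bracket} and the absorption trick of \cite[Remark 1.10]{Spolaor_15}. Finally, the hypotheses $\Theta(T,0)\geq Q$ and $\|T\|(\Bbf_4)\leq 4^m(Q+\tfrac12)\omega_m$, combined with almost-monotonicity, prevent the conical excess $\hat{\Ebf}(T,\mathbf{S},\Bbf_{2^{-j}})$ from blowing up as $j\to\infty$, so that it is dominated by $C(\hat{\Ebf}(T,\mathbf{S},\Bbf_4)+\Omega^2)$ uniformly in $j$; this is precisely the mechanism underlying Theorem \ref{t:HS}, which I would invoke for this input. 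Summation then yields the desired bound by $C_\kappa(\int_{\Bbf_1}|q^\perp|^2|q|^{-(m+2)}d\|T\| + \hat{\Ebf}(T,\mathbf{S},\Bbf_4) + \Omega^2)$.

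The remaining pieces are the region $B_{\sigma 2^{-j}}(V(\mathbf{S}))\cap A_j$ near the spine, together with the non-graphical exceptional set, where the pointwise inequality above is unavailable. Here I would argue crudely: the $\|T\|$-measure of this region at scale $2^{-j}$ is at most $C\sigma^{m-2}2^{-jm}$ plus a term bounded by the conical excess, and on it $\dist(q,\mathbf{S})\leq|q|$, so its contribution to \eqref{e:HS-20} is controlled using the inner-region estimates of Section \ref{s:graphs} and the spine non-concentration \eqref{e:HS} of Theorem \ref{t:HS}; it is then absorbed into the right-hand side of \eqref{e:HS-20} by first fixing the $\kappa$-dependent constants and then taking $\sigma$ (hence $\varepsilon$ in Assumption \ref{a:HS}) sufficiently small. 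I expect this near-spine/exceptional-set contribution — and, intertwined with it, verifying that the scale-wise conical excesses really do sum to the single scale-$4$ quantity rather than accumulating — to be the main obstacle: it is exactly here that the density hypothesis $\Theta(T,0)\geq Q$ is indispensable, since it is what forces $\|T\|$ not to concentrate near $V(\mathbf{S})$. Once Lemma \ref{l:variation-test-2} is established in this form, Corollary \ref{c:HS-patch} follows by feeding Theorem \ref{t:HS} into its right-hand side, as indicated.
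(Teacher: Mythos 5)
Your route is genuinely different from the paper's, and as written it has two gaps that I do not see how to close without effectively re-proving the statement. The paper's proof is a short first-variation argument in the spirit of Simon and of \cite[Lemma 11.6]{DMS}: one tests \eqref{eq: first variation} with the vector field $X(q)=\dist^2(q,\mathbf{S})\,(\max\{r,|q|\}^{-m-2+\kappa}-1)_+\,q$, whose tangential divergence produces the weighted integral on the left of \eqref{e:HS-20} up to terms controlled by $\int_{\Bbf_1}|q^\perp|^2|q|^{-m-2}\,d\|T\|$ and $\hat{\Ebf}(T,\mathbf{S},\Bbf_4)$; the only new point in the semicalibrated setting is that the right-hand side $-\int X^\perp\cdot\vec H_T\,d\|T\|$ is absorbed via Young's inequality and \eqref{e:H-Linfty-est}, yielding the quadratic error $C(\delta)\Omega^2$. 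Your scheme instead decomposes into dyadic annuli and compares $T$ graphically with $\mathbf{S}$ scale by scale. The first gap is the summation step: you need the scale-invariant conical (or tilt) excess of $T$ relative to $\mathbf{S}$ on $A_j$ to be bounded by $C(\hat{\Ebf}(T,\mathbf{S},\Bbf_4)+\Omega^2)$ \emph{uniformly in $j$}. Theorem \ref{t:HS} does not give this: its conclusions \eqref{e:HS}--\eqref{e:HS-additional} control the radial deviation $\int|q^\perp|^2|q|^{-m-2}$ and the $V$-component of the tilt, not the conical $L^2$ excess at small scales, and there is no monotonicity for the latter. A uniform small-scale excess bound is essentially the content of Corollary \ref{c:HS-patch}, which is deduced \emph{from} the present lemma, so invoking "the mechanism underlying Theorem \ref{t:HS}" here is circular.

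The second gap is the near-spine region. Your crude estimate bounds the contribution of $B_{\sigma 2^{-j}}(V)\cap A_j$ by (mass)$\times\sigma^2|q|^{-m+\kappa}$, which after summation gives a term of size $C_\kappa\,\sigma^{p}$ for some fixed power $p$ --- a constant depending only on $\sigma$ and $\kappa$, not on $T$. This cannot be "absorbed into the right-hand side of \eqref{e:HS-20}": under Assumption \ref{a:HS} the quantities $\hat{\Ebf}(T,\mathbf{S},\Bbf_4)$, $\Omega^2$ and (by \eqref{e:HS}) the radial integral can all be arbitrarily small, so no fixed additive constant, however small, is admissible; choosing $\sigma$ or $\eps$ small does not help because $\sigma^p$ does not multiply the excess. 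Showing that the mass near the spine weighted by $\dist^2(\cdot,\mathbf{S})$ contributes only at the order of the excess is precisely the non-concentration difficulty, and it is exactly what the weighted first-variation test field achieves in one stroke. I would therefore redo the proof along the paper's lines: follow \cite[Lemma 11.6]{DMS} verbatim with the vector field above, and verify the single new estimate \eqref{e:first-variation-21} by writing $\int\mathrm{div}_{\vec T}X\,d\|T\|=-\int X^\perp\cdot\vec H_T\,d\|T\|$ and absorbing with \eqref{e:H-Linfty-est} (or \eqref{eq: estimate bracket}).
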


\begin{proof}[Proof of Lemma \ref{l:variation-test-2}]
    Fix $\kappa \in (0,m+2)$. As in the proof of Theorem \ref{t:HS}, we aim to follow the reasoning of the area-minimizing counterpart \cite[Lemma 11.6]{DMS} of this lemma, which we may indeed do, provided that we verify
\begin{equation}\label{e:first-variation-21}
    \int {\rm div}_{\vec{T}} X\, \, d\|T\| \leq \delta \int_{\Bbf_{1}} \frac{\dist^2(q,\Sbf)}{|q|^{m+2-\kappa}} \, d\|T\|(q) + C(\delta) \Omega^2  \, ,
\end{equation}
for the vector field
\[
    X (q) := \dist^2 (q,\mathbf{S}) (\max \{r, |q|\}^{-m-2+\kappa} - 1)_+ q\, ,
\]
supported in $\Bbf_1$ (and constant on $\Bbf_r$). To see the validity of \eqref{e:first-variation-21}, we simply exploit the first variation identity \eqref{e:first-var-HS} and an estimate analogous to \eqref{e:first-var-error-2-HS} to obtain
\[
    \int |X^\perp\cdot\vec H_T|\, d\|T\| \leq \delta \int_{\Bbf_1 } \frac{\dist^2(q,\Sbf)}{|q|^{m+2-\kappa}} \, d\|T\|(q) + C(\delta) \Omega^2 \int_{\Bbf_1} \dist^2(q,\Sbf) |q^\perp|^2\, d\|T\|(q) \, .
\]
\end{proof}

Finally we have the following shifted estimates around a given point of high multiplicity (cf. \cite[Proposition 11.4]{DMS}).

\begin{proposition}[Simon's shift inequality]\label{p:HS-3}
Assume $T$, $\omega$, and $\mathbf{S}$ are as in Assumption \ref{a:HS} and in addition $\{\Theta (T, \cdot)\geq Q\}\cap \Bbf_\varepsilon (0) \neq \emptyset$. Then there is a radius $r=r(Q,m,n)$ and a choice of $\varepsilon = \varepsilon(Q,m,n,M)$ in Assumption \ref{a:HS}, possibly smaller than those in Theorem \ref{t:HS} and Corollary \ref{c:HS-patch}, such that for each $\kappa\in (0,m+2)$, there are constants $\bar{C}_\kappa=\bar{C}_\kappa(Q,m,n,M,\kappa)>0$ and $C=C(Q,m,n,M)$ such that the following holds. If $q_0\in \Bbf_r (0)$ and $\Theta (T, q_0)\geq Q$, then
\begin{align}
\int_{\Bbf_{4r} (q_0)} \frac{\dist^2 (q, q_0 + \mathbf{S})}{|q-q_0|^{m+2-\kappa}}\, d\|T\| (q) &\leq
\bar{C}_\kappa (\Omega^{2-2\delta_3} + {\hat{\Ebf} (T, \mathbf{S}, \Bbf_4))}\, .\label{e:HS-3} \\
|\mathbf{p}_{\alpha_1}^\perp (q_0)|^2 + \boldsymbol{\mu} (\mathbf{S})^2 |\mathbf{p}_{V^\perp\cap \alpha_1} (q_0)|^2 &\leq C (\Omega^{2-2\delta_3} + {\hat{\Ebf} (T, \mathbf{S}, \Bbf_4))}\label{e:HS-4}\, .
\end{align}
\end{proposition}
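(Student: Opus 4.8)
\textbf{Proof proposal for Proposition \ref{p:HS-3}.}

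The plan is to deduce the shifted estimates \eqref{e:HS-3} and \eqref{e:HS-4} from the centered estimates of Theorem \ref{t:HS} and Corollary \ref{c:HS-patch} by re-running Simon's argument at the point $q_0$ in place of $0$. First I would observe that since $q_0 \in \Bbf_r(0)$ with $r = r(Q,m,n)$ small and $\Theta(T,q_0) \geq Q$, the translated current $(\tau_{-q_0})_\sharp T$ satisfies the hypotheses of Assumption \ref{a:HS} relative to the \emph{translated} cone $q_0 + \mathbf{S}$ in a slightly smaller ball (say $\Bbf_{4r}(q_0) \subset \Bbf_1$), with $\mathbb{E}(T, q_0 + \mathbf{S}, \Bbf_{4r}(q_0)) \leq C\,\mathbb{E}(T,\mathbf{S},\Bbf_4)$, up to adjusting constants; this comparison between the centered and shifted excess is elementary once one controls $\dist(q_0, V)$ and $|\mathbf{p}_{\mathbf{S}}^\perp(q_0)|$, which is precisely the content of \eqref{e:HS-4}. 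So the two parts are interlinked: \eqref{e:HS-4} must be established first, or at least simultaneously.

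For \eqref{e:HS-4}, I would follow \cite[Section 11]{DMS}: use the fact that $\Theta(T,q_0)\geq Q$ together with the centered non-concentration estimate \eqref{e:HS-patch} to control how far $q_0$ can sit from each plane $\alpha_i$ and from the spine $V$. Concretely, testing the first variation of $T$ with a vector field adapted to the pair $(0,q_0)$ — analogously to the vector field $X(q) = \chi(|q|)^2 \mathbf{p}_V^\perp(q)$ in the proof of Theorem \ref{t:HS} — and exploiting the monotonicity of mass ratios at both $0$ and $q_0$ (valid here up to the higher-order error from \cite[Proposition 2.1]{DLSS-uniqueness}, since $T$ is almost-minimizing), yields $|\mathbf{p}_{\alpha_1}^\perp(q_0)|^2 + \boldsymbol{\mu}(\mathbf{S})^2|\mathbf{p}_{V^\perp\cap\alpha_1}(q_0)|^2 \leq C(\Omega^{2-2\delta_3} + \hat{\Ebf}(T,\mathbf{S},\Bbf_4))$. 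The only semicalibrated-specific modification, exactly as in the proof of Theorem \ref{t:HS} above, is that the first variation error $\int X^\perp \cdot \vec{H}_T\, d\|T\|$ is estimated via \eqref{e:H-Linfty-est} (or the equivalent observation \eqref{eq: estimate bracket}), producing a quadratic $\Omega^2$ contribution that, together with the $\Omega^{2-2\delta_3}$ loss coming from the height bound of Lemma \ref{l:decay} when it is invoked via the graphical approximations of Section \ref{s:graphs}, gives the stated $\Omega^{2-2\delta_3}$ error.

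For \eqref{e:HS-3}, once \eqref{e:HS-4} is in hand, I would simply apply Theorem \ref{t:HS} and Corollary \ref{c:HS-patch} to $(\tau_{-q_0})_\sharp T$ and $q_0 + \mathbf{S}$: this cone is still $M$-balanced with the same $\boldsymbol{\sigma}$ and $\boldsymbol{\mu}$, and the smallness hypothesis \eqref{e:smallness-alg-2} is preserved (with a worse constant) in $\Bbf_{4r}(q_0)$ because of the excess-comparison above plus \eqref{e:HS-4}. This gives $\int_{\Bbf_{4r}(q_0)} \dist^2(q, q_0+\mathbf{S})|q-q_0|^{-m-2+\kappa}\,d\|T\| \leq \bar{C}_\kappa(\Omega^{2-2\delta_3} + \hat{\Ebf}(T, q_0+\mathbf{S}, \Bbf_{4r}(q_0))) \leq \bar{C}_\kappa(\Omega^{2-2\delta_3} + \hat{\Ebf}(T,\mathbf{S},\Bbf_4))$, as claimed. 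The main obstacle I anticipate is the circularity in the constants: the shift inequality \eqref{e:HS-4} is needed to verify that the translated configuration still satisfies the smallness assumption required to even apply Theorem \ref{t:HS} at $q_0$, so one has to be careful to first prove \eqref{e:HS-4} using only the \emph{centered} estimates (Theorem \ref{t:HS} and Corollary \ref{c:HS-patch} at the origin) before bootstrapping to the shifted estimate \eqref{e:HS-3}; this is handled in \cite{DMS} by choosing $r$ small enough that $q_0$ lies deep inside the region where the centered estimates are valid, and by an absorption argument analogous to the one invoking \cite[Lemma 4.3]{HanLin} at the end of the proof of Theorem \ref{t:HS}. Since none of these steps introduce any new semicalibrated phenomena beyond the now-familiar replacement of $\Abf$-errors by $\Omega^{1-\delta_3}$-errors and the mean-curvature bound \eqref{e:H-Linfty-est}, I would simply refer the reader to \cite[Proposition 11.4]{DMS} for the remaining details.
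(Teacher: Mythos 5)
Your proposal is correct and follows essentially the same route as the paper, which simply defers to \cite[Proposition 11.4]{DMS} after noting that the graphical approximation estimates carry $\Omega^{1-\delta_3}$ in place of $\Abf$; your reconstruction of that argument (establish \eqref{e:HS-4} first from the centered estimates and the first-variation/mean-curvature bound \eqref{e:H-Linfty-est}, then translate to $q_0$ and reapply Theorem \ref{t:HS} and Corollary \ref{c:HS-patch}) matches the intended proof, including the correct identification of the only semicalibrated-specific modifications.
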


The proof of Proposition \ref{p:HS-3} is entirely analogous to that of \cite[Proposition 11.4]{DMS}, again recalling that the variant of \cite[Proposition 8.14]{DMS} herein has $\Omega^{1-\delta_3}$ in place of each instance of $\Abf$.

\section{Final blow-up and conclusion}\label{s:blowup}
We are now in a position to outline how to conclude the validity of Theorem \ref{c:decay}, given everything in the preceding sections of this part.

We begin with the following weaker conical excess decay result (see \cite[Theorem 10.2]{DMS}), whose validity implies that of Theorem \ref{c:decay}.
\begin{theorem}[Weak Excess Decay Theorem]\label{t:weak-decay}
    Fix $Q,m,n$ as before, and let $M\geq 1$ be as in Proposition \ref{p:balancing}. Fix also $\varsigma_1>0$. Then, there are constants $\varepsilon_1 = \varepsilon_1(Q,m,n,\varsigma_1)\in (0,1/2]$, $r^1_1 = r^1_1(Q,m,n,\varsigma_1)\in (0,1/2]$ and $r^2_1 = r^2_1(Q,m,n,\varsigma_1)\in (0,1/2]$, such that the following holds. Suppose that
    \begin{enumerate}
        \item [\textnormal{(i)}] $T$ and $\Sigma$ are as in Assumption \ref{a:main};
        \item [\textnormal{(ii)}] $\|T\|(\Bbf_1)\leq (Q+\frac{1}{2})\omega_m$;
        \item [\textnormal{(iii)}] There is $\Sbf\in \mathscr{C}(Q)$ which is $M$-balanced, such that
        \begin{equation}\label{e:smallness-weak}
        \mathbb{E}(T,\Sbf,\Bbf_1)\leq \eps_1^2\boldsymbol{\sigma}(\Sbf)^2
        \end{equation}
        and
        \begin{equation}\label{e:no-gaps-weak}
            \Bbf_{\eps_1}(\xi)\cap \{p:\Theta(T,p){\geq\ } Q\}\neq\emptyset \qquad \forall\xi\in V(\Sbf)\cap \Bbf_{1/2}\, ;
        \end{equation}
        \item [\textnormal{(iv)}] $\Omega^{2-2\delta_3} \leq \eps_1^2 \mathbb{E} (T, \tilde{\Sbf}, \Bbf_1)$ for every $\tilde{\Sbf} \in \mathscr{C} (Q)$.
    \end{enumerate}
    Then, there is $\Sbf^\prime\in \mathscr{C}(Q)\setminus\mathscr{P}$ such that for some $i\in\{1,2\}$ we have
    \begin{equation}\label{e:weak-decay}
    \mathbb{E}(T,\Sbf^\prime,\Bbf_{r_1^i}) \leq \varsigma_1\mathbb{E}(T,\Sbf,\Bbf_1)\, .
    \end{equation}
\end{theorem}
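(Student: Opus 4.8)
The plan is to prove Theorem~\ref{t:weak-decay} by a contradiction-and-compactness argument along the blow-up scheme of \cite[Section~13]{DMS}; granting it, Theorem~\ref{c:decay} follows by the iteration of \cite[Section~10]{DMS}, in which one repeatedly applies the weak decay at a fixed scale and at each step restores the $M$-balanced and non-degeneracy hypotheses via the cone-balancing Proposition~\ref{p:balancing}, and upgrades conclusion~(a) to~(b)--(d) using Theorem~\ref{thm:main-estimate} and Corollary~\ref{c:splitting-0}; hypothesis~(iv) of Theorem~\ref{c:decay}, asking $\Omega^{2-2\delta_3}\le\varepsilon_0^2\,\mathbb{E}(T,\Sbf',\Bbf_1)$ for \emph{every} $\Sbf'\in\mathscr{C}(Q)$, is what keeps the semicalibration error infinitesimal relative to the conical excess of all competitor cones throughout, so that step is unchanged from \emph{loc. cit.} For Theorem~\ref{t:weak-decay} itself I would fix $Q,m,n$, the constant $M$ of Proposition~\ref{p:balancing}, and $\varsigma_1>0$, and suppose the theorem fails for some candidate radii $r_1^1,r_1^2$ (to be specified at the end). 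This produces $\varepsilon_k\downarrow0$, currents $T_k$ semicalibrated by $\omega_k$ as in Assumption~\ref{a:main}, and $M$-balanced cones $\Sbf_k\in\mathscr{C}(Q)$ obeying (i)--(iv) with $\varepsilon_1=\varepsilon_k$, for which \eqref{e:weak-decay} holds for no $\Sbf'\in\mathscr{C}(Q)\setminus\mathscr{P}$ at either scale. Writing $\hat E_k:=\mathbb{E}(T_k,\Sbf_k,\Bbf_1)$, \eqref{e:smallness-weak} gives $\hat E_k\le\varepsilon_k^2\to0$ and (iv) with $\tilde\Sbf=\Sbf_k$ gives $\Omega_k^{2-2\delta_3}\le\varepsilon_k^2\hat E_k$, so $\Omega_k\to0$ and $\Omega_k^{2-2\delta_3}=o(\hat E_k)$.

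Next I would perform the blow-up, separating the non-collapsed case $\liminf_k\boldsymbol\sigma(\Sbf_k)>0$ and the collapsed case $\boldsymbol\sigma(\Sbf_k)\to0$. In the non-collapsed case, up to subsequences and rotations $\Sbf_k\to\Sbf_\infty\in\mathscr{C}(Q)\setminus\mathscr{P}$ with spine $V_\infty$, and since $\Omega_k\to0$ and $\|T_k\|(\Bbf_1)\le(Q+\tfrac12)\omega_m$, the weak limit $T_\infty$ is area-minimizing with $\spt(T_\infty)\subset\Sbf_\infty$ (Remark~\ref{r:contradictio-argument-limit-area-min} and monotonicity of mass ratios). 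I would then apply the coherent outer approximation of Proposition~\ref{p:first-blow-up}: on $R_i\setminus B_\sigma(V_\infty)$ there are multivalued Lipschitz maps $u_i^k$ over which $T_k$ is graphical off a small set, and the normalized maps $v_i^k:=\hat E_k^{-1/2}u_i^k$ — admissible precisely because $\Omega_k^{2-2\delta_3}=o(\hat E_k)$ — converge strongly in $W^{1,2}$ to a Dir-minimizer $w=(w_i)$ on $R\setminus B_\sigma(V_\infty)$ by part~(iii) of that proposition. Sending $\sigma\downarrow0$ along a diagonal subsequence and using the spine nonconcentration estimates of Section~\ref{s:nonconc} (Theorem~\ref{t:HS}, Corollary~\ref{c:HS-patch}, Proposition~\ref{p:HS-3}), whose errors carry the negligible factor $\Omega_k^{2-2\delta_3}=o(\hat E_k)$, I would obtain a globally $W^{1,2}$ blow-up $w$ that is Dir-minimizing, invariant along $V_\infty$, and average-free; the no-gaps condition \eqref{e:no-gaps-weak} together with the shift estimate \eqref{e:HS-4} prevents $w$ from degenerating to a single affine sheet sitting off the spine, so $w$ genuinely lies in the class of $(m-2)$-invariant Dir-minimizing blow-ups of \cite[Section~13]{DMS}. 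In the collapsed case I would first rescale so that the minimal separation of the rescaled cones is of unit order — this is where hypothesis~(iv) for \emph{arbitrary} $\tilde\Sbf$ is used, keeping $\Omega^{2-2\delta_3}$ infinitesimal relative to the excess of the rescaled cone — after which either $\boldsymbol\sigma$ stabilizes (returning to the previous situation with possibly fewer planes) or the cones collapse to a single plane and the normalized graphs converge to a Dir-minimizer on a disk; in every subcase the limit is a $V$-invariant average-free Dir-minimizer as in \emph{loc. cit.}

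To conclude, the structure theory for $V$-invariant Dir-minimizing multifunctions — the two-dimensional reduction and the classification of \cite[Proposition~7.6]{DMS} and \cite[Section~13]{DMS} — furnishes universal radii $r_1^1,r_1^2\in(0,\tfrac12]$ (the two scales accounting for the collapsed/non-collapsed dichotomy and for whether the limit retains its full multiplicity) and, for at least one $i\in\{1,2\}$, a cone $\bar\Sbf\in\mathscr{C}(Q)\setminus\mathscr{P}$ whose conical excess relative to $w$ at scale $r_1^i$ is at most $\tfrac{\varsigma_1}{4}$ times that of $\Sbf_\infty$ relative to $w$ at scale $1$. Transferring this back to $T_k$ for $k$ large — via the strong $W^{1,2}\cap L^2$ convergence of $v_i^k$, the Dirichlet bound \eqref{e:first-Dir-control}, the control of the reverse excess $\hat{\Ebf}(\Sbf',T_k,\cdot)$ afforded by the graphical picture together with the spine shift estimates of Section~\ref{s:nonconc}, and Corollary~\ref{c:splitting-0} to relate planar and conical excess — one produces $\Sbf'\in\mathscr{C}(Q)\setminus\mathscr{P}$ with $\mathbb{E}(T_k,\Sbf',\Bbf_{r_1^i})\le\varsigma_1\hat E_k$, contradicting the choice of the sequence.

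The step I expect to be the main obstacle is the blow-up bookkeeping: one must verify, in \emph{every} branch of the collapsed/non-collapsed analysis and after each rescaling, that the appropriate power of $\Omega$ stays infinitesimal relative to the (rescaled) excess — the reason hypothesis~(iv) is imposed for all $\tilde\Sbf\in\mathscr{C}(Q)$ — and that each estimate of \cite{DMS} that relied on area-minimization either persists with a harmless $\Omega^{1-\delta_3}$ or $\Omega^{2-2\delta_3}$ error or passes to the area-minimizing limit as in Remark~\ref{r:contradictio-argument-limit-area-min}. A closely related delicate point is the spine analysis (Proposition~\ref{p:HS-3}), needed to exclude that the blow-up concentrates on $V_\infty$; this is precisely what forces $w$ to be a genuine $(m-2)$-invariant Dir-minimizer and hence to enjoy the decay used in the final step.
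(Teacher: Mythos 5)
Your overall skeleton---contradiction, compactness, coherent outer approximations, a Dir-minimizing blow-up controlled at the spine by the estimates of Section \ref{s:nonconc}, and transfer of the Dir-minimizer decay back to $T_k$---is the route the paper takes: Theorem \ref{t:weak-decay} is reduced to two decay statements, Proposition \ref{p:decay-collapsed} and Proposition \ref{p:decay-noncollapsed}, each proved by exactly this kind of blow-up, and the two radii $r_1^1,r_1^2$ do account for the two cases. However, your case division is set up on the wrong quantity. The collapsed/non-collapsed dichotomy is governed by the \emph{maximal} separation $\boldsymbol{\mu}(\Sbf)$, i.e.\ by whether the cone is close to a single plane, not by the minimal separation $\boldsymbol{\sigma}(\Sbf)$: hypothesis \eqref{e:smallness-weak} already forces $\mathbb{E}(T,\Sbf,\Bbf_1)\le\eps_1^2\boldsymbol{\sigma}(\Sbf)^2$, and an $M$-balanced cone can have $\boldsymbol{\sigma}(\Sbf_k)\to0$ while $\boldsymbol{\mu}(\Sbf_k)$ stays bounded below (two planes merging while a third stays away). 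In the paper that situation belongs to the non-collapsed branch and is absorbed by the layering subdivision built into the coherent outer approximation; in your scheme it lands in the ``collapsed'' branch, where your proposed remedy---``rescale so that the minimal separation of the rescaled cones is of unit order''---does not do anything: the planes of $\Sbf$ all contain the spine through the origin, so dilations preserve all their mutual angles and hence leave $\boldsymbol{\sigma}$ and $\boldsymbol{\mu}$ unchanged. The correct device in the genuinely collapsed case ($\boldsymbol{\mu}(\Sbf_k)\to 0$) is the \emph{transverse} coherent approximation: write each plane of $\Sbf_k$ as the graph of a linear map $A_i^{k}$ over a fixed reference plane $\alpha_1$, reparameterize $u_i^{k}$ over $\alpha_1$, subtract $A_i^{k}$, and normalize by $\mathbb{E}(T_k,\Sbf_k,\Bbf_1)^{1/2}$; the limit is then a Dir-minimizer on a disk in $\alpha_1$.

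Two smaller points. First, the blow-up $w$ is \emph{not} invariant along $V_\infty$, nor does it need to be: what the no-gaps hypothesis and the shift estimates of Proposition \ref{p:HS-3} provide is a dense set of spine points at which $w$ satisfies the non-concentration conditions required by the linear decay theorem for Dir-minimizers (\cite[Theorem 12.2]{DMS}); the decay of $w$ toward a superposition of linear maps---which must then be subtracted and recombined into the new cone $\Sbf'$---is a theorem about that class of maps, not a consequence of cylindrical invariance. Second, when transferring the decay back to $T_k$ you must also control the reverse excess $\hat{\Ebf}(\Sbf',T_k,\Bbf_{r_1^i})$; this is where the graphicality of $T_k$ over a large portion of each plane from Proposition \ref{p:first-blow-up}(ii) and the cut-off at distance $a\,r$ from the spine in Definition \ref{def:L2_height_excess} enter. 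With the dichotomy corrected and the collapsed case run through the transverse approximations, your argument coincides with the paper's.
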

To prove that if \ref{t:weak-decay} holds, then Theorem \ref{c:decay} holds, one may proceed exactly as in \cite[Section 10]{DMS}. Indeed, the argument remains unchanged, after replacing $\Abf$ with $\Omega^{1-\delta_3}$ everywhere.

It thus remains to demonstrate that Theorem \ref{t:weak-decay} holds. With this in mind, we show decay at one of two possible radii as stated therein by considering two possible cases for the cone $\Sbf\in \Cscr(Q)$; collapsed and non-collapsed.

\begin{proposition}[Collapsed decay]\label{p:decay-collapsed}
For every $Q, m, n$ and $\varsigma_1>0$ there are positive constants $\varepsilon_c = \eps_c(Q,m,n,\varsigma_1)\leq 1/2$ and $r_c = r_c(Q,m,n,\varsigma_1) \leq 1/2$ with the following property. Assume that 
\begin{itemize}
\item[(i)] $T$ and $\omega$ are as in Assumption \ref{a:main}, and $\|T\| (\Bbf_1) \leq \omega_m (Q+\frac{1}{2})$;
\item[(ii)] There is a cone $\mathbf{S}\in \mathscr{C} (Q)$ which is $M$-balanced (with $M$ as in Proposition \ref{p:balancing}), such that \eqref{e:smallness-weak} and \eqref{e:no-gaps-weak} hold with $\varepsilon_c$ in place of $\varepsilon_1$, and in addition
$\boldsymbol{\mu} (\mathbf{S}) \leq \varepsilon_c$;
\item [(iii)] $\Omega^{2-2\delta_3}\leq \eps_c^2\mathbb{E}(T,\tilde{\Sbf},\Bbf_1)$ for every $\tilde{\Sbf}\in \mathscr{C}(Q)$.
\end{itemize}
Then, there is another cone $\mathbf{S}'\in \mathscr{C} (Q) \setminus \mathscr{P}$ such that 
\begin{equation}\label{e:decay-collapsed}
\mathbb{E} (T, \mathbf{S}', \Bbf_{r_c})
\leq \varsigma_1 \mathbb{E} (T, \mathbf{S}, \Bbf_1)\, .
\end{equation}
\end{proposition}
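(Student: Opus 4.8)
The plan is to prove Proposition~\ref{p:decay-collapsed} by a contradiction and compactness argument, exactly paralleling the collapsed case in \cite[Section 13]{DMS}, but tracking the semicalibration error term $\Omega_k$ throughout and exploiting the fact that it is forced to vanish in the limit by hypothesis (iii). Suppose the statement fails: then for some fixed $\varsigma_1>0$ there is a sequence of semicalibrated currents $T_k$, with semicalibrating forms $\omega_k$ and $\Omega_k := \|d\omega_k\|_{C^0(\Bbf_{6\sqrt m})}$, together with $M$-balanced cones $\Sbf_k \in \Cscr(Q)$ satisfying (i)--(iii) with $\eps_c$ replaced by $\eps_k \downarrow 0$, such that $\mathbb{E}(T_k,\Sbf',\Bbf_{r_c}) > \varsigma_1 \mathbb{E}(T_k,\Sbf,\Bbf_1)$ for \emph{every} $\Sbf'\in \Cscr(Q)\setminus\Pscr$ and every $r_c\le 1/2$ (one runs the argument for each candidate $r_c$ and takes a diagonal subsequence, or more cleanly fixes $r_c$ at the end). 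Write $E_k := \mathbb{E}(T_k,\Sbf_k,\Bbf_1)$ and normalise. By (iii), $\Omega_k^{2-2\delta_3} \le \eps_k^2 \mathbb{E}(T_k,\tilde\Sbf,\Bbf_1)$ for all $\tilde\Sbf$, so in particular $\Omega_k^{2-2\delta_3} \le \eps_k^2 E_k$, hence $\Omega_k^{2-2\delta_3}/E_k \to 0$ and also $\Omega_k \to 0$ (using the mass bound to keep $E_k$ bounded). Up to subsequences, $\boldsymbol\mu(\Sbf_k)\to 0$ (the collapsed regime), and by balancing together with \eqref{e:min-sep}, $\boldsymbol\sigma(\Sbf_k)$ and $\boldsymbol\mu(\Sbf_k)$ are comparable up to the constant $M$, so $\boldsymbol\sigma(\Sbf_k)\to 0$ as well.

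Next I would set up the blow-up. After an orthogonal change of coordinates we may assume the spines $V(\Sbf_k)$ all equal a fixed $(m-2)$-plane $V_0$ and that the planes of $\Sbf_k$ all converge to a common $m$-plane $\pi_0\supset V_0$. Since $\Omega_k\to 0$, the currents $T_k$ converge (as currents, and as varifolds, using Remark~\ref{r:contradictio-argument-limit-area-min} and the mass bound (i)) to an area-minimizing integral current $T_\infty$ with $\|T_\infty\|(\Bbf_1)\le (Q+\tfrac12)\omega_m$; by \eqref{e:smallness-weak} and $\boldsymbol\sigma(\Sbf_k)\to 0$, $\spt(T_\infty)\subset \pi_0$, and the no-gaps condition \eqref{e:no-gaps-weak} forces $T_\infty = Q\llbracket\pi_0\rrbracket$ (density $Q$ on $V_0\cap\Bbf_{1/2}$ plus flatness and the constancy theorem). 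The relevant blow-up object is then the rescaled normal displacement: using the coherent outer approximation of Proposition~\ref{p:first-blow-up} applied to $T_k$ relative to $\Sbf_k$ (whose hypotheses hold for $k$ large by \eqref{e:smallness-weak}, \eqref{e:no-gaps-weak} and $\Omega_k^{2-2\delta_3}\le \eps_k^2 \hat{\Ebf}(T_k,\Sbf_k,\Bbf_4)$, itself a consequence of (iii)), one obtains multivalued maps $u_{k,i}$ on the outer region, normalises by $E_k^{1/2}$, and extracts a $W^{1,2}$ limit. Crucially, the Dir-minimality of the limit is furnished by the $W^{1,2}$-proximity estimate \eqref{e:first-blowup} in Proposition~\ref{p:first-blow-up}(iii), which is exactly where we use $\Omega_k^{2-2\delta_3}/E_k \to 0$ (i.e. $\Omega_k^{2-2\delta_3}\le \eps^2 \hat{\Ebf}(T_k,\Sbf_k,\Bbf_4)$ for $k$ large). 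The non-concentration estimates of Section~\ref{s:nonconc} (Theorem~\ref{t:HS}, Corollary~\ref{c:HS-patch}, Proposition~\ref{p:HS-3}), with error terms now of size $\Omega_k^{2-2\delta_3}+\hat{\Ebf}(T_k,\Sbf_k,\Bbf_4)$, guarantee that the blow-up limit $w=\{w_i\}$ has no concentration near $V_0$, is $V_0$-invariant in the appropriate averaged sense, and that the spine and plane parameters of $\Sbf_k$ are controlled by $E_k^{1/2}$ — so that after renormalisation the limiting cone data is well-defined and the limit $w$ is a genuine (possibly $V_0$-invariant, average-free) Dir-minimizer on the union of half-spaces whose boundary data matches across $V_0$, precisely as in \cite[Section 13]{DMS}.

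The conclusion is then the standard blow-up-decay dichotomy: either the $W^{1,2}$-normalised limit $w$ is nontrivial, in which case the regularity and decay theory of Dir-minimizers (homogeneity of tangent functions, frequency $\ge 1$, dimension reduction on the spine) produces, at a suitable small radius $r_c$, a cone $\Sbf_\infty'\in\Cscr(Q)\setminus\Pscr$ with $\mathbb{E}(\cdot,\Sbf_\infty',\Bbf_{r_c})$ strictly smaller than a fixed fraction of the unit-scale excess; transferring this back to $T_k$ via the graphical approximation and the reverse inequalities (the two-sided excess $\mathbb{E}$ controlling and controlled by the graphical $L^2$ norm up to the vanishing errors) contradicts the standing assumption $\mathbb{E}(T_k,\Sbf',\Bbf_{r_c})>\varsigma_1 E_k$ for $k$ large. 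Or $w\equiv Q\llbracket\boldsymbol\eta\circ w\rrbracket$ is ``trivial'' in the average-free sense, i.e. all the normalised sheet separations degenerate; but then one argues that a \emph{different} cone $\Sbf_k'$ — obtained by collapsing some of the planes of $\Sbf_k$ together, using the layering subdivision recalled before Assumption~\ref{a:refined} and the cone balancing Proposition~\ref{p:balancing} — has unit-scale excess $\mathbb{E}(T_k,\Sbf_k',\Bbf_1) = o(E_k)$, again contradicting (iii) (since (iii) lower-bounds $\mathbb{E}(T_k,\tilde\Sbf,\Bbf_1)$ uniformly over $\tilde\Sbf$ by $\Omega_k^{2-2\delta_3}/\eps_k^2$, which would then have to be $o(E_k)$, forcing a renormalisation at a lower level — this is the usual ``no infinite descent'' bookkeeping). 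I expect the main obstacle to be the second horn of this dichotomy, i.e.\ ruling out the degenerate/collapsed-limit case cleanly: one must carefully manage the interplay between the minimal separation $\boldsymbol\sigma(\Sbf_k)$, the excess $E_k$, and the error $\Omega_k^{2-2\delta_3}$, because the semicalibration hypothesis (iii) is quantified over \emph{all} cones $\tilde\Sbf$ and it is exactly this uniformity (absent in the bare area-minimizing argument) that one leans on to terminate the descent — here the stronger subquadratic scaling $\Omega^{2-2\delta_3}$ (rather than $\Omega^2$), flagged in Remark~\ref{r:error-loss}, is what makes the bookkeeping close, since it gives room to absorb the error below the excess at every stage of the iteration. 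Everything else is a line-by-line transcription of \cite[Section 13]{DMS} with $\Abf \rightsquigarrow \Omega^{1-\delta_3}$ and the limiting-current-is-area-minimizing observation of Remark~\ref{r:contradictio-argument-limit-area-min}.
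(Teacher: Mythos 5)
Your skeleton — contradiction and compactness with $\Omega_k^{2-2\delta_3}/\mathbb{E}(T_k,\Sbf_k,\Bbf_1)\to 0$ and $\mathbb{E}(T_k,\Sbf_k,\Bbf_1)/\boldsymbol{\sigma}(\Sbf_k)^2\to 0$ as in \eqref{e:blow-up-1}, the limit current being area-minimizing as in Remark \ref{r:contradictio-argument-limit-area-min}, blow-up to Dir-minimizers, decay of the limit, transfer back — is the same as the paper's. But in the collapsed regime the blow-up object you describe is not the right one: since $\boldsymbol{\mu}(\Sbf_k)\to 0$ the planes of $\Sbf_k$ coalesce, so one cannot simply normalize the coherent outer approximations $u^k_i$ of Proposition \ref{p:first-blow-up} on their separate (merging) domains, and the limit is certainly not ``a Dir-minimizer on a union of half-spaces with matching boundary data across $V_0$'' — the cones here are unions of full planes, and in the collapsed case the limit lives on full disks in the single plane $\alpha_1$. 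The paper first passes to the \emph{transverse coherent approximations} of \cite[Proposition 13.4]{DMS}: every sheet is reparameterized as a graph over $\alpha_1$ and the linear maps $A^k_i$ parameterizing the planes $\alpha^k_i$ are subtracted, and it is $w^k_i=v^k_i\ominus A^k_i$, normalized by $E_k^{1/2}$, that converges to $Q_i$-valued Dir-minimizers. This device is the entire point of the collapsed case and appears in your outline only through a blanket deferral to ``line-by-line transcription.''

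The genuine gap is the second horn of your dichotomy. Producing a cone $\Sbf'_k$ with $\mathbb{E}(T_k,\Sbf'_k,\Bbf_1)=o(E_k)$ does not ``contradict (iii)'': hypothesis (iii) is only an upper bound on $\Omega_k^{2-2\delta_3}$ in terms of the excess of every cone — it gives no lower bound on $\Omega_k$ (it is vacuous when $\omega_k$ is closed, yet the statement must still hold then) — so a cone of smaller excess contradicts nothing, and the ``no infinite descent'' renormalisation you gesture at is not an argument. In the actual proof no dichotomy occurs: nontriviality of the blow-up is one of the conclusions of the analogue of \cite[Proposition 13.8]{DMS}, secured by Simon's non-concentration estimates of Section \ref{s:nonconc} together with the normalization by the \emph{two-sided} excess (forward plus reverse), which prevents the excess from concentrating near the spine or escaping the graphical region; and the new cone is automatically in $\Cscr(Q)\setminus\Pscr$ because it is obtained by adding corrections of size $O(E_k^{1/2})$ to the distinct planes $\alpha^k_i$, whose mutual separation satisfies $\boldsymbol{\sigma}(\Sbf_k)\geq \eps_k^{-1}E_k^{1/2}\gg E_k^{1/2}$ by \eqref{e:smallness-weak}, so non-planarity of $\Sbf'$ never relies on the blow-up being non-degenerate in the averaged sense you worry about. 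The decay itself is \cite[Theorem 12.2]{DMS} applied to the limiting Dir-minimizers, transferred back as in \cite[Section 13.5]{DMS}; your first horn is essentially this, but as written your proof is incomplete because it hinges on an unproved nontriviality claim whose fallback is invalid.
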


\begin{proposition}[Non-collapsed decay]\label{p:decay-noncollapsed}
For every $Q, m, n$, $\varepsilon^\star_c >0$ and $\varsigma_1>0$, there are positive constants $\varepsilon_{nc} = \eps_{nc}(Q,m,n,\varepsilon^\star_c,\varsigma_1)\leq 1/2$ and $r_{nc} = r_{nc}(Q,m,n,\varepsilon^\star_c,\varsigma_1) \leq \frac{1}{2}$ with the following property. Assume that 
\begin{itemize}
\item[(i)] $T$ and $\omega$ are as in Assumption \ref{a:main} and $\|T\| (\Bbf_1) \leq \omega_m (Q+\frac{1}{2})$;
\item[(ii)] There is $\mathbf{S}\in \mathscr{C} (Q)$ which is $M$-balanced (with $M$ as in Proposition \ref{p:balancing}), such that 
\eqref{e:smallness-weak} and \eqref{e:no-gaps-weak} hold with $\varepsilon_{nc}$ in place of $\varepsilon_1$ and in addition
$\boldsymbol{\mu} (\mathbf{S}) \geq \varepsilon^\star_c$;
\item [(iii)] $\Omega^{2-2\delta_3}\leq \eps^2_{nc}\mathbb{E}(T,\tilde{\Sbf},\Bbf_1)$ for every $\tilde{\Sbf}\in \mathscr{C}(Q)$.
\end{itemize}
Then, there is another cone $\mathbf{S}'\in \mathscr{C} (Q) \setminus \mathscr{P}$ such that 
\begin{equation}\label{e:decay-noncollapsed}
\mathbb{E} (T, \mathbf{S}', \Bbf_{r_{nc}})
\leq \varsigma_1 \mathbb{E} (T, \mathbf{S}, \Bbf_1)\, .
\end{equation}
\end{proposition}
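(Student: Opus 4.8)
The plan is to mirror the ``non-collapsed'' final blow-up of \cite[Section~13]{DMS}, replacing every occurrence of the second fundamental form bound $\Abf$ by $\Omega^{1-\delta_3}$ and invoking throughout the semicalibrated analogues of the auxiliary results of Sections~\ref{s:height-bd}--\ref{s:nonconc}. Fix $\varsigma_1$ and $\eps^\star_c$, and let $r_{nc}\in(0,\tfrac12]$ be a small radius to be chosen at the very end. Arguing by contradiction, suppose there are currents $T_j$ semicalibrated by forms $\omega_j$, together with $M$-balanced cones $\Sbf_j=\alpha_1^j\cup\cdots\cup\alpha_{N}^j$ with $\boldsymbol{\mu}(\Sbf_j)\geq\eps^\star_c$, satisfying (i)--(iii) with $\eps_{nc}$ replaced by $1/j$, but for which \emph{no} $\Sbf'\in\Cscr(Q)\setminus\Pscr$ realizes \eqref{e:decay-noncollapsed} at scale $r_{nc}$. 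Write $E_j:=\mathbb{E}(T_j,\Sbf_j,\Bbf_1)$ and $\Omega_j:=\|d\omega_j\|_{C^0(\Bbf_{6\sqrt m})}$. By \eqref{e:smallness-weak} (with $1/j$ in place of $\eps_1$) and the boundedness of $\boldsymbol{\sigma}(\Sbf_j)$ we get $E_j\to0$, while hypothesis (iii) with $\tilde\Sbf=\Sbf_j$ gives $\Omega_j^{2-2\delta_3}\leq j^{-2}E_j=o(E_j)$; in particular $\Omega_j\to0$. Up to rotations and a subsequence, $\Sbf_j\to\Sbf_\infty\in\Cscr(Q)\setminus\Pscr$ with $\boldsymbol{\mu}(\Sbf_\infty)\geq\eps^\star_c$, and, because $\Omega_j\to0$, $T_j\rightharpoonup T_\infty$ with $T_\infty$ area-minimizing (cf.\ Remark~\ref{r:contradictio-argument-limit-area-min}); combining the density hypothesis \eqref{e:no-gaps-weak}, upper semicontinuity of density, monotonicity of mass ratios and the mass bound (ii) forces $T_\infty=\sum_i Q_i\llbracket\alpha_i^\infty\rrbracket$ with $Q_i\geq1$ and $\sum_iQ_i=Q$.

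Next I would run the graphical blow-up. Since $E_j+\Omega_j^{2-2\delta_3}\leq 2j^{-2}\boldsymbol{\sigma}(\Sbf_j)^2$, Assumption~\ref{a:refined} holds for $j$ large, so the coherent outer approximation of Proposition~\ref{p:first-blow-up} applies (after the layering subdivision of Section~\ref{s:graphs}, which absorbs any clustering of the planes $\alpha_i^j$ into one another): it yields Lipschitz multi-valued maps $u_i^j$ over the outer regions of $\alpha_i^j$ whose graphs agree with $T_j$ off a small set and satisfy $\int_{R_i}|Du_i^j|^2\leq C\sigma^{-2}\hat{\Ebf}(T_j,\Sbf_j,\Bbf_4)+C\Omega_j^{2-2\delta_3}\leq C_\sigma E_j$. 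Normalizing $v_i^j:=E_j^{-1/2}u_i^j$ and using $\Omega_j^{2-2\delta_3}=o(E_j)$ in Proposition~\ref{p:first-blow-up}(iii), a subsequence of $(v_i^j)$ converges in $W^{1,2}_{\loc}$ to a Dir-minimizer $w_i$ on $R_i\subset\alpha_i^\infty$. The Simon-type non-concentration estimates of Section~\ref{s:nonconc} --- Theorem~\ref{t:HS}, Corollary~\ref{c:HS-patch} and Proposition~\ref{p:HS-3} in their semicalibrated form, where the error $\Omega^{2-2\delta_3}$ is of lower order by (iii) --- provide, after renormalization, uniform control of $T_j$ near the spine $V(\Sbf_j)$; letting $\sigma\downarrow0$ they show that $w=(w_1,\dots,w_N)$ extends to a Dir-minimizer over the whole cone region, is invariant under translations along $V(\Sbf_\infty)$, and --- using also the reverse-excess term in $\mathbb{E}$, which is likewise controlled --- is radially $1$-homogeneous, the spine being pinned to first order by the shift inequality \eqref{e:HS-4}.

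The conclusion is then the classification step. A $1$-homogeneous, $V(\Sbf_\infty)$-translation-invariant $\Acal_{Q_i}$-valued Dir-minimizer over $\alpha_i^\infty$ depends only on the two variables orthogonal to $V(\Sbf_\infty)$ inside $\alpha_i^\infty$, and on the $2$-dimensional cross-section Almgren's classification of $1$-homogeneous $\Acal_{Q_i}$-valued Dir-minimizers forces it to be a superposition of linear maps; equivalently, $w$ is generated, to first order, by a cone $\bar\Sbf\in\Cscr(Q)\setminus\Pscr$ obtained by perturbing the planes of $\Sbf_\infty$ while keeping the spine $V(\Sbf_\infty)$ fixed. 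Transplanting this first-order perturbation back to $\Sbf_j$ produces cones $\Sbf_j'\in\Cscr(Q)\setminus\Pscr$ with $\mathbb{E}(T_j,\Sbf_j',\Bbf_{r_{nc}})\leq C\,r_{nc}^{2\vartheta}E_j+o(E_j)$ for a geometric exponent $\vartheta\in(0,1]$ coming from the quadratic-or-better deviation of $T_j$ from $\Sbf_j'$ once the affine blow-up has been removed. Choosing $r_{nc}$ so small that $C\,r_{nc}^{2\vartheta}<\tfrac{\varsigma_1}{2}$ then gives $\mathbb{E}(T_j,\Sbf_j',\Bbf_{r_{nc}})<\varsigma_1E_j$ for $j$ large, contradicting the standing assumption; once $r_{nc}$ is fixed, the sought $\eps_{nc}$ is extracted from this compactness argument in the usual way.

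The main obstacle is Step~2: verifying that the semicalibrated Simon estimates at the spine --- now carrying the weaker $\Omega^{2-2\delta_3}$ error rather than $\Omega^2$ --- are still strong enough both to rule out loss of mass or Dirichlet energy into $V(\Sbf_j)$ in the blow-up limit and to upgrade $w$ to a homogeneous, spine-invariant Jacobi field amenable to the classification of Step~3. Since hypothesis (iii) guarantees $\Omega_j^{2-2\delta_3}=o(E_j)$, all these errors are negligible relative to the blow-up normalization $E_j$, so once the estimates of Sections~\ref{s:height-bd}--\ref{s:nonconc} are in hand the argument proceeds exactly as in \cite{DMS}; the remaining delicacy is purely the book-keeping of these lower-order terms through the layering subdivision and the spine patching.
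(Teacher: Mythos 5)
Your overall skeleton --- contradiction/compactness, the coherent outer approximations of Proposition \ref{p:first-blow-up}, normalization by $\mathbb{E}(T_j,\Sbf_j,\Bbf_1)^{1/2}$, and appeal to the Simon estimates of Section \ref{s:nonconc} with the $\Omega^{2-2\delta_3}$ errors rendered negligible by hypothesis (iii) --- matches the paper's outline. However, there is a genuine gap at the core of your Steps 2--3: you assert that the blow-up limit $w$ is radially $1$-homogeneous and invariant under translations along $V(\Sbf_\infty)$, and then classify it via Almgren's classification of $1$-homogeneous two-dimensional Dir-minimizers. Nothing forces this. The normalized deviation of $T_j$ from $\Sbf_j$ converges to a general multi-valued Dir-minimizing ``Jacobi field'' on the planes of $\Sbf_\infty$, which in general contains harmonics of all orders; the no-gap condition \eqref{e:no-gaps-weak} and the shift inequality \eqref{e:HS-4} only control its behaviour at spine points (this is the content of the conclusions of \cite[Proposition 13.8]{DMS}), they do not make it homogeneous or spine-invariant. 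Since your construction of the new cones $\Sbf_j'$ and the decay $\mathbb{E}(T_j,\Sbf_j',\Bbf_{r_{nc}})\leq C r_{nc}^{2\vartheta}E_j+o(E_j)$ rest entirely on that classification, the argument as written does not close.

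What replaces it --- and what the paper does, following \cite[Sections 12--13]{DMS} --- is a linear decay theorem for the \emph{non-homogeneous} blow-up: one applies \cite[Theorem 12.2]{DMS} to the limits $\bar u_i$ of the normalized approximations $\bar u^j_i$, which, after subtracting an appropriate superposition of linear maps (this subtraction is precisely what generates the tilted cone $\Sbf'$, with the spine adjustment constrained by \eqref{e:HS-4}), yields $L^2$ decay at a fixed smaller scale with a definite exponent. The role of Theorem \ref{t:HS}, Corollary \ref{c:HS-patch} and Proposition \ref{p:HS-3} is then to upgrade the convergence $\bar u^j_i\to\bar u_i$ from $W^{1,2}_{\loc}$ away from $V$ to strong $L^2$ up to the spine (no concentration of the normalized excess on $B_\sigma(V)$ as $\sigma\downarrow 0$), so that the decay of the limit can be transferred back to $\hat{\Ebf}(T_j,\Sbf_j',\Bbf_{r_{nc}})$; one must also separately bound the reverse one-sided excess $\hat{\Ebf}(\Sbf_j',T_j,\Bbf_{r_{nc}})$ entering $\mathbb{E}$, which your sketch does not address. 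With these ingredients in place of your homogeneity/classification step, the contradiction argument concludes as you describe.
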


Since the proofs of Proposition \ref{p:decay-collapsed} and Proposition \ref{p:decay-noncollapsed} exploit the results of Sections \ref{s:height-bd}-\ref{s:nonconc} in the same way as their counterparts in \cite{DMS}, we will merely provide a brief outline of the argument here.

We will argue by contradiction in both the collapsed and the non-collapsed case. 
Namely, we suppose that we have a sequence of currents $T_k$, corresponding semicalibrations $\omega_k$ and cones $\Sbf_k\in \Cscr(Q)$ satisfying the hypotheses of either Proposition \ref{p:decay-collapsed} with $\eps_c^{(k)} = \tfrac{1}{k} \to 0$ or Proposition \ref{p:decay-noncollapsed} for \emph{some fixed} $\eps_c^\star>0$ but $\eps_{nc}^{(k)} = \tfrac{1}{k}\to 0$, but such that the respective decay conclusions \eqref{e:decay-collapsed}, \eqref{e:decay-noncollapsed} fail for any possible choice of radii $r_c,r_{nc}$.

In particular,
\begin{equation}\label{e:blow-up-1}
\left(\frac{\Omega^{2-2\delta_3}}{\mathbb{E} (T_k, \mathbf{S}_k, \Bbf_1)} + \frac{\mathbb{E} (T_k, \mathbf{S}_k, \Bbf_1)}{\boldsymbol{\sigma} (\mathbf{S}_k)^2}\right)\rightarrow 0 \, . 
\end{equation}

For the non-collapsed case, we recall the coherent outer approximations $u_i^{k}$ of Proposition \ref{p:first-blow-up}. Meanwhile, in the collapsed case, we introduce \emph{transverse coherent approximations} as in \cite[Proposition 13.4]{DMS} as follows. Up to extracting a subsequence, we may write $\Sbf_k=\alpha_1\cup\cdots\cup\alpha_N^{k}$ for $\alpha_1$ and $N$ fixed, and write each plane $\alpha_i^{k}$ as the graph of a linear map $A_i^{k}:\alpha_1 \to \alpha_1^\perp$. We may in addition reparameterize $u_i^{k}$ over $\alpha_1$ to obtain a map $v_i^{k} : \widetilde{R}^o_1 \to \mathcal{A}_{Q_i^k} (\alpha_1^\perp)$ whose graph coincides with that of $u_i^{k}$ over $\mathbf{p}_{\alpha_1}^{-1} (\widetilde{R}_1^o)$, where $\widetilde{R}^o_1$ is a suitable graphicality region, defined rigorously in \cite[Proposition 13.4]{DMS}. The transverse coherent approximations are then defined to be the collection of maps
\[
    w_i^{k} := v_i^{k} \ominus A_i^{k}: \tilde{R}_1^o \to \Acal_{Q_i^k}(\alpha_1^\perp)\, , \qquad i=1,\dots, N\, ,
\]
where we use the notational shorthand $g\ominus f$ for the multivalued map $ \sum_i \llbracket g_i - f \rrbracket$. Observe that the estimates of \cite[Proposition 13.4]{DMS}, with $\Abf_k$ replaced by $\Omega_k^{1-\delta_3}$, are valid for $A_i^{k}$, $v_i^{k}$ and $w_i^{k}$. Furthermore, the nonconcentration estimates \cite[Proposition 13.7]{DMS} hold, again with $\Omega_k^{1-\delta_3}$ in place of $\Abf_k$ in the errors.

We in turn define the normalizations
\begin{align*}
\bar{u}^k_i &:= \frac{u^k_i}{\sqrt{\mathbb{E} (T_k, \mathbf{S}_k, \Bbf_1)}}\\
\bar{w}^k_i &:= \frac{w^k_i}{\sqrt{\mathbb{E} (T_k, \mathbf{S}_k, \Bbf_1)}}\, ,
\end{align*}
on $(\Bbf_{\bar r} \cap \alpha^k_i) \setminus B_{1/k} (V)$ and $(\Bbf_{\bar r} \cap \alpha_1)\setminus B_{1/k} (V)$ respectively, for $\bar r := \tfrac{r}{4}$ with $r=r(Q,m,n)$ as in Proposition \ref{p:HS-3} (which we may assume is contained in the domains of definition for $u_i^k$ and $w_i^k$). Arguing as in \cite{DMS}, we may then assume that, up to subsequence, $\bar{u}^k_i$ and $\bar{w}^k_i$ converge strongly in $W^{1,2}$ locally away from $V$ and strongly in $L^2$ on the entirety of $B_r$ to $W^{1,2}$ maps $\bar{u}_i$ and $\bar{w}_i$ that are Dir-minimizing on $(\Bbf_{\bar r} \cap \alpha^k_i) \setminus V$ and $(\Bbf_{\bar r} \cap \alpha_1)\setminus V$, and since $V$ has capacity zero, they are in fact Dir-minimizing on $(\Bbf_{\bar r} \cap \alpha^k_i)$ and $(\Bbf_{\bar r} \cap \alpha_1)$ respectively. Moreover, the conclusions of \cite[Proposition 13.8]{DMS} are satisfied. Exploiting \cite[Theorem 12.2]{DMS} and proceeding as in \cite[Section 13.5]{DMS} to propagate this decay (up to subtracting an appropriate superposition of linear maps), we arrive at the desired contradiction.

\part{Failure of monotonicity for semicalibrated intrinsic planar frequency}\label{pt:counterex}

The aim of this last part is to provide a simple example of a semicalibrated current with good decay properties towards a flat tangent plane, but that does \emph{not} exhibit an almost monotone planar frequency function as introduced in \cite{KW1}. This therefore illustrates a difference between the present approach to Theorem \ref{t:main} when compared to trying to adapt the corresponding one in \cite{KW1}, which we have been unable to adapt to the semicalibrated setting. En route to this, we will point out some differences between the setting herein and the area-minimizing one. 

\section{Planar Frequency Function}\label{s:planarfreq}
We begin by introducing the analogue of the intrinsic planar frequency function of \cite{KW1} for a semicalibrated current $T$. Let $z \in \mathbb{R}^{n + m}$, let $\pi \subset \R^{m+n}$ be an $m$-dimensional plane and let $\rho_0 > 0$. 

We will henceforth work under the following assumption.

\begin{assumption}\label{a:counterex}
   Let $\rho_0>0$. Suppose that $T$ is a semicalibrated rectifiable current in $\mathbf{C}_{\rho_0}(z, \pi)$ satisfying 
\begin{equation} \label{eq: hyp 0}
    \partial T \mres \mathbf{C}_{\rho_0}(z, \pi) = 0, \quad \text{and} \quad \sup_{p \in \spt T \cap \mathbf{C}_{\rho_0}(z, \pi)} \dist(p, \pi + z) < \infty.
\end{equation}
\end{assumption}

Let $\phi \colon [0, \infty) \to [0,1]$ be the monotone Lipschitz cutoff function defined in \eqref{e:def_phi}. For $r \in (0, \rho_0]$, we can define the \textit{intrinsic $L^2$ height} of $T$ at scale $r$ around $z$ with respect to the plane $\pi$ to be
\begin{equation} \label{eq: intrinsic height}
    H_{T, \pi, z}(r) := \frac{2}{r^{m-1}} \int_{\mathbf{C}_r(z, \pi) \setminus \overline{\mathbf{C}_{r/2}(z, \pi)} } \dist^2(p, \pi + z) \frac{\vert \nabla_{\vec T} |\mathbf{p}_{\pi}(p-z)| \vert^2}{|\mathbf{p}_{\pi}(p-z)|}  \, d\Vert T \Vert(p)\, ,  
\end{equation}
Note that $\eqref{eq: intrinsic height}$ can be rewritten as 
\begin{equation}\label{e:H}
     H_{T, \pi, z}(r) = - \frac{1}{r^{m-1}} \int \dist^2(p, \pi + z) \frac{\vert \nabla_{\vec T} |\mathbf{p}_{\pi}(p-z)| \vert^2}{ |\mathbf{p}_{\pi}(p-z)|} \phi^\prime\left(\frac{|\mathbf{p}_{\pi}(p-z)|}{r}\right) \, d\Vert T \Vert(p)\,. 
\end{equation}

Furthermore, we can introduce the \textit{intrinsic Dirichlet energy} of $T$ at scale $r$ around $z$ with respect to the plane $\pi$ as
\begin{equation} \label{eq: intrinsic Dir}
    D_{T, \pi, z}(r) := \frac{1}{2r^{m-2}}  \int \vert \mathbf{p}_{T}(p) - \mathbf{p}_{\pi} (p) \vert^2 \phi\left(\frac{|\mathbf{p}_{\pi}(p-z)|}{r}\right) \, d \Vert T \Vert(p)\,. 
\end{equation}
Note that $D_{T,\pi,z}(r)$ may be considered as a regularization of the non-oriented tilt excess of $T$ in the cylinder $\Cbf_r(z,\pi)$, cf. \eqref{e:nonoriented}. Pertinent to our setting, we define the corresponding intrinsic semicalibrated term
\begin{equation}
    L_{T,\pi,z}(r):= \frac{1}{2r^{m-2}}\int T(d \omega \mres \pbf_{\pi^\perp}(p))\phi\left(\frac{|\mathbf{p}_{\pi}(p-z)|}{r}\right) \, d \Vert T \Vert(p)
\end{equation}
and
\begin{equation}
    \Gamma_{T,\pi,z}(r) := D_{T,\pi,z}(r) + L_{T,\pi,z}(r).
\end{equation}
Notice that the additional term $T(d \omega \mres \pbf_{\pi^\perp}(p))$ above arises from testing the first variation of $T$ with the variation vector field  $X(p)=\phi\left(\tfrac{|\mathbf{p}_{\pi}(p-z)|}{r}\right)\pbf_{\pi^\perp}(p)$; see \cite[Section 7]{Spolaor_15}. In analogy with \cite[Section 3]{KW1}, whenever $ H_{T, \pi, z}(r) > 0$, we may define the \textit{intrinsic planar frequency function} $N_{T, \pi, z}(r)$ of $T$ at $Z$ relative to $P$ by 
\begin{equation} \label{equation: semicalibrated planar frequency}
    N_{T, \pi, z}(r) = \frac{\Gamma_{T, \pi, z}(r)}{H_{T, \pi, z}(r)}\, . 
\end{equation}
Note that when $T$ is calibrated (and in particular area-minimizing), i.e. when $d\omega \equiv 0$, the above frequency indeed reduces to the one introduced in \cite{KW1} for area-minimizing currents. In \cite{KW1}, it is shown that the intrinsic planar frequency associated to area-minimizing integral currents is almost-monotone under a suitable decay hypothesis (see Section \ref{ss:KW} below for a more precise statement), thus laying the foundation for a more refined analysis of the singular set of area-minimizers. 

The main result of this part is the following.
\begin{theorem}\label{t:counterex}
    There exists a smooth, radially symmetric function $f: B_1 \subset \pi \equiv \mathbb{R}^m \times \{0\} \to \R$ such that the associated semicalibrated current $\Gbf_f$ has the property
    \[
        N_{T, \pi, 0}(r) \rightarrow + \infty \qquad \text{ as $r \downarrow 0$.}
    \]
\end{theorem}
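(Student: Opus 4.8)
The plan is to construct $f$ explicitly as a radial function whose graph is a semicalibrated current (for an appropriate, possibly low-regularity, calibrating form) and for which the intrinsic Dirichlet energy $D_{T,\pi,0}(r)$ and height $H_{T,\pi,0}(r)$ can be computed, or at least estimated, essentially exactly. Since the graph of \emph{any} $C^{1,\alpha}$ single-valued function is semicalibrated (this is remarked in the introduction), the constraint is mild: we need only produce a smooth radial $f$ on $B_1$ with $f(0)=0$, $Df(0)=0$, and with a carefully chosen oscillatory behavior as $|x|\to 0$. The idea, mirroring the area-minimizing planar frequency heuristics in \cite{KW1}, is that for a graph the quantities $D$, $H$ reduce (to leading order, with $L$ a lower-order correction since $\|d\omega\|_{C^0}$ is controlled and enters quadratically in the relevant scaling) to weighted integrals of $|Df|^2$ and $|f|^2$ respectively over annuli $\{r/2<|x|<r\}$. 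Concretely, $D_{T,\pi,0}(r)\approx r^{-(m-2)}\int |Df(x)|^2\phi(|x|/r)\,dx$ and $H_{T,\pi,0}(r)\approx r^{-(m-1)}\int |f(x)|^2\,(\text{annular weight})\,dx$, so the frequency behaves like $r^2 \fint_{A_r}|Df|^2 \big/ \fint_{A_r}|f|^2$ on the annulus $A_r$ of radius $\sim r$.

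First I would choose the profile. Take $f(x) = g(|x|)$ where $g(t)$ interpolates, on a sequence of dyadic annuli $t\in (2^{-k-1},2^{-k})$, between being very flat (tiny Dirichlet energy relative to height) on most annuli and having a sharp ``wrinkle'' of controlled height but large gradient on a sparse subsequence of annuli $k=k_j$. For instance, on the annulus at scale $t\sim 2^{-k_j}$ let $g$ execute an oscillation of amplitude $a_j = 2^{-k_j \mu}$ (so height$^2$ scales like $a_j^2$) but with many oscillations $n_j\to\infty$, forcing $\int |g'|^2$ over that annulus to be of order $n_j^2 a_j^2 2^{-k_j(m-2)}$ while $\int |g|^2$ stays of order $a_j^2 2^{-k_j m}$. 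Then at radius $r\sim 2^{-k_j}$ the frequency is $\gtrsim r^2 \cdot n_j^2 / 1 = n_j^2 \to\infty$. Between the wrinkled scales one makes $g$ genuinely tiny (e.g. decaying faster than any polynomial, or simply zero outside the wrinkles after smoothing) so that the height $H_{T,\pi,0}(r)$ on the wrinkled annuli is \emph{not} swamped by contributions from neighboring scales — this is precisely where the annular localization in the definitions \eqref{e:H}, \eqref{eq: intrinsic Dir} (the cutoff $\phi$ supported in $[0,1]$, equal to $1$ on $[0,1/2]$, so $H$ only sees the annulus $[r/2,r]$) is essential and makes the construction clean. Smoothness of $f$ at the origin is arranged by taking $k_j$ sparse enough (e.g. $k_{j+1}\gg k_j$) and amplitudes decaying rapidly, so all derivatives of $g$ vanish at $t=0$.

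Next I would verify the semicalibration property and control the error term $L_{T,\pi,0}(r)$. Following \cite[Lemma 1.1]{DLSS-uniqueness} / the standard construction, the graph of a $C^{1,\alpha}$ function is semicalibrated by an explicit $m$-form $\omega$ built from the graph's tangent planes; one checks $\|d\omega\|_{C^0}$ on $\Bbf_r$ is $O(\|D^2 f\|_{C^0(\Bbf_r)} + \|Df\|_{C^0(\Bbf_r)}^2)$ or similar, hence controlled on each annulus. In the frequency, $L_{T,\pi,0}(r)$ carries a factor $\|d\omega\|_{C^0}$ times first-order quantities, and by Cauchy–Schwarz $|L_{T,\pi,0}(r)|\lesssim \|d\omega\|_{C^0(\Bbf_r)}\, r\, D_{T,\pi,0}(r)^{1/2} H_{T,\pi,0}(r)^{1/2}$ (compare the estimates \eqref{e:firstvar5}, \eqref{e:L-bound}); since we can make $\|d\omega\|_{C^0}$ on the wrinkled annulus as small as we like by shrinking amplitudes (it does not affect the \emph{ratio} blow-up, which comes from $n_j$), $\Gamma_{T,\pi,0}(r) = D_{T,\pi,0}(r)(1+o(1))$ there. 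Thus $N_{T,\pi,0}(r)\sim D_{T,\pi,0}(r)/H_{T,\pi,0}(r)\to\infty$ along $r=2^{-k_j}$. Finally, to upgrade ``along a subsequence'' to ``$\to+\infty$ as $r\downarrow 0$'' I would either accept that the statement only needs $\limsup=\infty$ (re-reading, the theorem says $N\to+\infty$, so a full limit is wanted) and therefore \emph{design the wrinkles to be present at every dyadic scale}, with $n_k\to\infty$ monotonically but amplitudes $a_k\to 0$ fast enough to keep $f$ smooth — then on \emph{every} annulus at scale $r$ the frequency is $\gtrsim n_{k(r)}^2\to\infty$, giving the genuine limit. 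The main obstacle I anticipate is the bookkeeping to ensure simultaneously (a) $f\in C^\infty$ up to and including the origin with all derivatives vanishing there, (b) $n_k\to\infty$ fast enough that $N_{T,\pi,0}(r)\to\infty$, yet (c) the cross-scale ``bleed'' of the height integral (the part of $\phi(|x|/r)$ supported where $|x|\in[r/2,r]$ may overlap two adjacent dyadic annuli) does not let a low-gradient neighboring annulus dominate $H$ while a high-gradient one dominates $D$ at a mismatched radius — this is handled by making the gradient and height profiles vary slowly in $\log$-scale (constant on each dyadic annulus up to smoothing) so that $D$ and $H$ are comparable to their annular values for \emph{all} $r$ in that annulus, not just dyadic $r$. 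Everything else is a routine, if slightly tedious, computation with radial functions and the explicit weights in \eqref{e:H}, \eqref{eq: intrinsic Dir}.
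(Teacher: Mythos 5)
Your route is genuinely different from the paper's, and much heavier than it needs to be. The paper does not use any oscillation: it simply takes the single monotone radial profile $f(x)=e^{-1/|x|^2}$, whose graph is semicalibrated by the explicit form $\omega(X_1,\dots,X_m)=\det(X_1,\dots,X_m,\nu)$ built from the unit normal, and then computes $D_{T,\pi,0}$, $H_{T,\pi,0}$, $L_{T,\pi,0}$ in closed form via the area formula for radial graphs. The blow-up $N_{T,\pi,0}(\rho)\gtrsim \rho^{-2}$ then falls out of the asymptotics of the incomplete gamma function: the conceptual point is that the planar frequency records the order of vanishing, so \emph{infinite order of vanishing at the origin} already forces $N\to+\infty$, with no need to inflate the Dirichlet-to-height ratio by wrinkling. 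Your construction (oscillations of count $n_k\to\infty$ at every dyadic scale, with amplitudes decaying fast enough for smoothness) is plausible and the heuristic $N(2^{-k})\gtrsim n_k^2$ is correct, but it trades a two-line explicit computation for a delicate gluing argument in which smoothness at $0$, strict positivity of $H$ at every radius, and the cross-scale interaction between the supports of $\phi$ and $\phi'$ all have to be tracked by hand. What your approach buys, in principle, is flexibility (one could prescribe the rate of divergence); what the paper's buys is that the verification is essentially a calculus exercise.

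The one genuine soft spot in your argument is the treatment of $L_{T,\pi,0}$. You assert $\Gamma_{T,\pi,0}(r)=D_{T,\pi,0}(r)(1+o(1))$ on the strength of a Cauchy--Schwarz bound of the form $|L|\lesssim \|d\omega\|_{C^0}\,r\,D^{1/2}H^{1/2}$ together with the smallness of $\|d\omega\|_{C^0}$ on small balls. For a graph, however, $\|d\omega\|_{C^0(\Bbf_r)}$ is of the order of the mean curvature, i.e.\ $\sim a_k(n_k2^k)^2$ on your wrinkled annulus, and plugging this into any such bound returns a quantity of the \emph{same} order as $D$, not $o(D)$. Indeed $L$ is never negligible relative to $D$ for a graph: writing $L$ as in \eqref{e:L-graph} and integrating by parts against the cutoff shows $L=-\tfrac12 D+O\big((DH)^{1/2}\big)$ up to higher-order corrections in $|\nabla f|$, so $\Gamma=D+L\simeq\tfrac12 D+O\big((DH)^{1/2}\big)$ rather than $D(1+o(1))$. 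The conclusion $N=\Gamma/H\to\infty$ survives, since $(DH)^{1/2}/H=(D/H)^{1/2}\ll D/H$, but you need this integration-by-parts identity (or the paper's direct sign analysis of the mean curvature term) rather than the absorption-by-smallness argument you propose; as written, that step does not close.
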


Namely, we provide the construction of an example that not only violates almost-monotonicity of the intrinsic planar frequency function in the semicalibrated setting, but allows for it to diverge to $+\infty$ as the scale goes to zero. Before we proceed with the construction, let us provide a more detailed heuristic explanation, together with a comparison with what happens in the area-minimizing setting.

\subsection{Comparison with Proposition \ref{p:coarse=fine}}
One may consider the statement of Proposition \ref{p:coarse=fine} as a conclusion that a posteriori, one does not require center manifolds to take blow ups at points $x\in \Ffrak_Q(T)$ with singularity degree $\Irm(T,x) \in [1,2-\delta_2)$ (and indeed, this is the case in the work \cite{KW1} when the planar frequency is in $[1,2)$). This suggests that one may possibly use a planar frequency function like $N_{T,\pi,z}$ (in place of a frequency relative to center manifolds) to analyze such points. This is \emph{not inconsistent} with the validity of Theorem \ref{t:counterex}. Indeed, in the latter, the smooth submanifold $\graph(f)$ \emph{coincides} with the center manifold associated to $T=\Gbf_f$ locally around the origin. On the other hand, our choice of $f$ will have \emph{infinite order of vanishing} at the origin, which corresponds to blow-up of the planar frequency function there. Note, however, that $0$ is not a flat singular point of $\Gbf_f$ in this case; in fact $\Gbf_f$ has no singularities. For the same reason, no single-sheeted example will be inconsistent with the validity of Proposition \ref{p:coarse=fine}. However, since it is not clear how to meaningfully restrict the notion of intrinsic planar frequency to such a specific scenario in order to improve its properties, we do not pursue this any further.

\subsection{Comparison with \cite{KW1}}\label{ss:KW}
The area-minimizing hypothesis is crucially used in \cite{KW1} to control the error terms arising when differentiating $H_{T, \pi, z}$, and $D_{T, \pi, z}$, which in turn produce errors for the radial derivative of $N_{T,\pi,z}$. More precisely, the area-minimizing property of $T$ is exploited therein to infer the following bounds, cf. Lemma 3.9 and Lemma 3.11 of \textit{loc. cit.}, 
\begin{align} \label{e: monotonicity of planar height}
    &\left\vert H_{T, \pi, z}^\prime(r) + 2 r^{- m} \int \left\vert \nabla^{\perp} |\mathbf{p}_{\pi}(p-z)| \right\vert^2 |\mathbf{p}_{\pi}(p-z)| \phi'\left(\frac{|\mathbf{p}_{\pi}(p-z)|}{r}\right) \, d\Vert T \Vert(p) \right\vert \\
    &\qquad\qquad\leq C \eta^{2\gamma} r^{2 \alpha \gamma - 1} H_{T, \pi, z}(r)\, ,\notag
\end{align}
where $\nabla^\perp \equiv \nabla_{T}^\perp$, and 
\begin{align}
\begin{split} \label{e: monotonicity of planar Dirichlet}
    & \left\vert D_{T, \pi, z}^\prime(r) +  2 r^{- m} \int \frac{\big\vert \pbf_{\pi}^\perp(\nabla_{T} |\mathbf{p}_{\pi}(p-z)|) \big\vert^2}{\big\vert \nabla_T |\mathbf{p}_{\pi}(p-z)| \big\vert^2} |\mathbf{p}_{\pi}(p-z)| \phi'\left(\frac{|\mathbf{p}_{\pi}(p-z)|}{r}\right) \, d\Vert T \Vert(p) \right\vert \\
    & \qquad \qquad \qquad \leq \frac{C}{r}D_{T,\pi,z}(r)^{\gamma}\left((m - 1)D_{T, \pi, z}(r) + r D_{T, \pi, z}^\prime(r)\right), 
    \end{split}
\end{align}
for some $\gamma=\gamma(Q,m,n)>0$ and $C=C(Q,m,n)>0$, whenever $T$ as in Assumption \ref{a:counterex} satisfies the mass ratio bounds 
\[
    \Theta(T,z) \geq Q, \qquad \|T\|(\Cbf_{7\rho_0/4}(z,\pi)) \leq (Q+\delta) \omega_m \left(\frac{7\rho_0}{4}\right)^m\, ,
\]
for some $\delta=\delta(Q,m,n) > 0$ and the additional planar decay hypothesis
\begin{equation}\label{e:KW-decay}
    \frac{1}{\omega_m (7s /4)^{m+2}}\int_{\Cbf_{7s /4}(z,\pi)} \dist^2(p, \pi +z) \, d\|T\|(p) \leq \eta^2 \left(\frac{s}{\rho_0}\right)^{2\alpha} \qquad \forall s \in [\sigma_0,\rho_0]\, ,
\end{equation}
for some $\sigma_0\in (0,\rho_0)$, some $\eta_0=\eta_0(Q,m,n)>0$ and $\eta\in (0,\eta_0]$.

The estimates \eqref{e: monotonicity of planar height} and \eqref{e: monotonicity of planar Dirichlet}, together with the variational identities for $H_{T,\pi,z}$, $D_{T,\pi,z}$, in turn can be used to prove the almost-monotonicity 
\[
    N_{T,\pi,z}(r) \leq e^{C\eta^\gamma (s/\rho_0)^{\alpha\gamma}} N_{T,\pi,z}(s)\qquad \forall \sigma_0 \leq r < s \leq \rho_0\,,
\]
of the intrinsic planar frequency function in the area-minimizing case, provided that $H_{T,\pi,z}(\tau) > 0$ on $[r,s]$. The first challenge when trying to adapt this argument to the semicalibrated setting is that the terms $H_{T, \pi, z}^\prime$ and $D_{T, \pi, z}^\prime$ will now contain extra errors depending on the semicalibration $d\omega$ (due the fact that the first variation of a semicalibrated current has a non-vanishing right-hand side). Moreover, one must additionally consider the behavior of $L_{T, \pi, z}^\prime$ when controlling the variational error terms. Complications arise when one tries to bound all of these error terms by powers of the intrinsic Dirichlet energy and $L^2$ height, as in \eqref{e: monotonicity of planar height} and \eqref{e: monotonicity of planar Dirichlet}. 

\section{Proof of Theorem \ref{t:counterex}}\label{s:counterex}
We are now in a position to construct a counterexample to the almost-monotonicity of the intrinsic planar frequency $N_{T,\pi,0}$ associated to a semicalibrated current $T=\Gbf_f$ associated to the graph of a smooth radially symmetric function $f$ relative to the plane $\pi\equiv \R^m\times \{0\}\subset \R^{m+1}$, as claimed in Theorem \ref{t:counterex}. 

To this end, consider $f \in C^{\infty}(B_1)$, where $B_1 \subset \R^m \times \{0\} \subset \R^{m+1}$. Consider then the graph of $f$ as a submanifold of $\mathbb{R}^{m + 1}$: 
\begin{equation*}
    \graph(f) = \{(x, f(x)); \; x \in B_1\}. 
\end{equation*}
We claim that $\graph(f)$ is a semicalibrated submanifold of $\mathbb{R}^{m +1}$. Indeed, consider the unit normal to $\graph(f)$ given by 
\begin{equation*}
    \nu_x = \frac{1}{\sqrt{1 + \vert \nabla f(x) \vert^2}}(- \nabla f(x), 1), 
\end{equation*}
and define the $m$-form $\omega(X_1, \ldots, X_m) = \det(X_1, \ldots, X_m, \nu)$ for vectors $X_1, \ldots, X_m$. It is then straightforward to check that $\vert \omega(Z_1, \ldots, Z_m) \vert \leq 1$ for unit vectors $Z_1, \ldots, Z_m$. Furthermore, we have that $\omega(Y_1, \ldots, Y_m) = 1$, if $Y_i$ are orthonormal vectors belonging to $T_{(x, f(x))} \graph(f)$, so that the submanifold $\graph(f) \subset \R^{m+1}$ is semicalibrated. In particular, the current $T=\Gbf_f$ associated to it (see e.g. \cite{DLS14Lp,DLS_multiple_valued}) is also semicalibrated. From this, it is clear that not every semicalibrated current can have an almost-monotone planar frequency function. We will however provide a natural example to illustrate the blow-up of planar frequency. We recall the quantities $D_{T, \pi, 0}(r)$, $ H_{T, \pi, 0}(r)$, and $L_{T,\pi,z}(r)$ introduced in the preceding section for this particular choice of $T$, with $\pi \equiv \mathbb{R}^m\times \{0\}$. 

\subsection{Intrinsic quantities for a graph}
We start by unpacking the definitions of intrinsic Dirichlet energy, $L^2$ height, and the semicalibrated term in the case where $T=\Gbf_f$ for $f$ as above; we will define $f$ later. Let us begin with the energy $D_{T, \pi, 0}(r)$. The projection matrix associated with $\pi$ is given by 
\begin{equation*}
    \pbf_\pi = \begin{pmatrix}
        \Id_{m \times m} & 0 \\
        0 & 0 
    \end{pmatrix}, 
\end{equation*}
while the projection matrix for the tangent space $T_{(x, f(x))} \graph(f)$ is 
\begin{align*}
    \pbf_{T_{(x, f(x))} \graph(f)} & = \Id_{(m + 1) \times (m + 1)} - \nu_x \otimes \nu_x 
\end{align*} 
In particular, we can compute 
\begin{align*}
\frac{1}{2} \vert \pbf_{T_{(x, f(x))} \graph(f)} - \pbf_\pi \vert^2 = 1- (\pbf_{T_{(x, f(x))} \graph(f)} : \pbf_\pi) = \frac{\vert \nabla f \vert^2}{1 + \vert \nabla f \vert^2}, 
\end{align*}
where $A:B$ denotes the Hilbert-Schmidt inner product between matrices $A,B$, so that 
\begin{align*}
    D_{T, \pi, 0}(r) & = r^{2 - m} \int \phi\left(\frac{|x|}{r}\right) \frac{\vert \nabla f(x) \vert^2}{1 + \vert \nabla f(x) \vert^2} \, d \Vert T\Vert(x,f(x)) \,. 
\end{align*}
Letting $\phi$ converge to the characteristic function of the unit interval from below, and recalling the area formula for a graph, we obtain
\begin{align}\label{e:D-graph}     
    D_{T, \pi, 0}(r) = r^{2 - m} \int_{B_r} \frac{\vert \nabla f \vert^2}{\sqrt{1 + \vert \nabla f \vert^2}} \; d\mathcal{L}^m. 
\end{align}
We can now turn to the height $H_{T, \pi, 0}(r)$. Write 
\begin{align*}
     H_{T, \pi, 0}(r) = - r^{1 - m} \int \vert f(x)\vert^2 \frac{ \vert \nabla_T |x| \vert^2}{|x|} \phi'\left(\frac{|x|}{r}\right) \, d \Vert T \Vert(x,f(x)),
\end{align*}
and, after recalling $\nabla r = \frac{(x, 0)}{|x|},$ we can compute 
\begin{align*}
    \vert \nabla_{T} |x| \vert^2 = \vert \pbf_{T_{(x, f(x))} \graph(f)} (\nabla |x|) \vert^2 = \frac{1 + \vert \nabla_\theta f \vert^2}{1 + \vert \nabla f \vert^2},  
\end{align*}
where $\nabla_\theta$ denotes the angular part of the gradient. Thus, after letting $\phi$ converge to the characteristic function of the unit interval, we infer 
\begin{align}\label{e:H-graph}
     H_{T, \pi, 0}(r) = r^{1 - m} \int_{\partial B_\rho(0)} \vert f \vert^2 \frac{1 + \vert \nabla_\theta f \vert^2}{\sqrt{1 + \vert \nabla f \vert^2}} \, d \mathcal{H}^{m - 1}. 
\end{align}
Finally, we rewrite the definition of the semicalibrated term 
\begin{equation*}
    L_{T,\pi,0}(r) = \frac{1}{2r^{m-2}}\int \langle d\omega \mres (0,y), \vec T \rangle  \phi\left(\frac{|x|}{r}\right) \; d\Vert T\Vert(x,y),
\end{equation*}
where we write $\R^{m+1}\ni p=(x,y)\in \pi\times \pi^\perp$. Letting $\phi$ converge to the characteristic function of the unit interval and again using the area formula, we obtain  
\begin{equation}
      L_{T,\pi,0}(r) = \frac{1}{2r^{m-2}}\int_{B_r} \langle d\omega \mres (0, \ldots, 0, f), \vec{T} \rangle \sqrt{1 + \vert \nabla f \vert^2} \; d\mathcal{L}^m\, , 
\end{equation}
where $\vec{T}(x)$ is the $m$-vector $\tau_1 \wedge \ldots \wedge \tau_m$, for $\{\tau_i\}_i$ a basis of the tangent space $T_{(x, f(x))} \graph(f)$. Note that here we have also used that in this particular setting, $\pbf_{\pi^\perp}(x,y) = (0, \ldots, 0, f(x))$. Note that $$\tau_i = \frac{e_i + (\nabla f \cdot e_i) e_{m + 1}}{\sqrt{1 + (\nabla f \cdot e_i)^2}},$$ for $i \in \{1, \ldots, m\}$, where $\{e_i\}_i$ is a basis of $\mathbb{R}^{m + 1}$. The $(m +1)$-form $d\omega$ is given by
\begin{align*}
    d\omega = (-1)^{m}\divergence\left( \frac{\nabla f}{\sqrt{1 + \vert \nabla f \vert^2}}\right) dx_1 \wedge dx_2 \wedge \ldots \wedge dx_{m +1}\, .
\end{align*}
Note that one can alternatively use Cartan's magic formula to arrive at the same final expression for $d\omega$. Then, 
\begin{align*}
     (0, \ldots, 0, f) \wedge \vec{T} = (f e_{m + 1}) \wedge \tau_1 \wedge \ldots \wedge \tau_m = \frac{(- 1)^m f}{\Pi_{i = 1}^{m}\sqrt{1 + (\nabla f \cdot e_i)^2}} e_1 \wedge e_2 \wedge \ldots \wedge e_{m + 1}, 
\end{align*}
so that 
\begin{align*}
    \langle d\omega \mres (0, \ldots, 0, f), \vec{T} \rangle & =\divergence\left( \frac{\nabla f}{\sqrt{1 + \vert \nabla f \vert^2}}\right) \frac{f}{\Pi_{i = 1}^{m}\sqrt{1 + (\nabla f \cdot e_i)^2}}
\end{align*}
Thus, the semicalibrated term is 
\begin{equation}\label{e:L-graph}
    L_{T,\pi,0}(r) = \frac{1}{2r^{m-2}} \int_{B_r} \divergence\left( \frac{\nabla f}{\sqrt{1 + \vert \nabla f \vert^2}}\right) \frac{f}{\Pi_{i = 1}^{m}\sqrt{1 + (\nabla f \cdot e_i)^2}} \sqrt{1 + \vert \nabla f \vert^2} \; d\mathcal{L}^m\, . 
\end{equation}

\subsection{Definition of $f$}
We are now in a position to define our radially symmetric function. Let $f$ given by 
\[
    f(x) = \begin{cases}
            e^{-1/\vert x \vert^2} & x \neq 0 \\
            0 & x=0\, .
            \end{cases}
\]
Note that $\nabla f(0) = 0$ and that $f$ is indeed radially symmetric, so that introducing polar coordinates we can write (abusing notation) $f(r) = e^{-1/r^2}$, for $r \geq 0$. By the argument at the beginning of this section, the current $T=\Gbf_f$ associated to the graph of this function is semicalibrated. Furthermore, note that the hypothesis of \cite[Theorem 3.4]{KW1} are satisfied. More precisely, let $\rho_0 > 0$, and consider $T\res \mathbf{C}_{7\rho_0/4}(0, \pi)$ for $\pi=\R^m\times\{0\}\subset\R^{m+1}$. In particular, $\Theta(T, 0) = 1$, and the almost-monotonicity of mass ratios (see e.g. \cite[Proposition 2.1]{DLSS-uniqueness}) guarantees that $\Vert T \Vert(\mathbf{C}_{7\rho_0/4}(0, \pi)) \leq (1 + \delta)\omega_m (7\rho_0/4)^m$ for $\rho_0$ sufficiently small, where $\delta$ is the parameter of \cite[Theorem 3.4]{KW1}. In addition, thanks to the exponential decay of $f$ towards 0, there exist $\eta > 0$, $\sigma_0 \in (0, \rho_0)$, and $\alpha \in (0, 1)$ such that the decay hypothesis \eqref{e:KW-decay} holds about the origin, namely
\begin{equation*}
    \frac{1}{\omega_m (7s/4)^{m + 2}} \int_{\mathbf{C}_{7s/4}(0, \pi)} \dist^2(p, \pi) \, d\Vert T \Vert(p) \leq \eta^2 \left( \frac{s}{\rho_0} \right)^{2 \alpha}, 
\end{equation*}
for all $\rho \in [\sigma_0, \rho_0]$.
Consider now the planar frequency function $N_{T,\pi,0}$ and use \eqref{e:D-graph} and \eqref{e:H-graph} to write the energy and the height for this particular function:
\begin{equation*}
    D_{T, \pi, 0}(\rho) = \rho^{2 - m} \omega_{m - 1} \int_{0}^{\rho} \frac{4e^{-2/r^2} r^{-6}}{\sqrt{1 + 4e^{-2/r^2} r^{-6}} } r^{m - 1} \, dr\, ,
\end{equation*}
and 
\begin{equation*}
     H_{T, \pi, 0}(\rho) =  \omega_{m - 1} \frac{e^{-2/\rho^2}}{\sqrt{1 + 4e^{-2/\rho^2} \rho^{-6}}}\, . 
\end{equation*}
Note that $H_{T, \pi, 0}(\rho) > 0$ for all $\rho > 0$, implying that $N_{T, \pi, 0}$ is always well-defined. Thus, for any $\rho>0$ sufficiently small, the classical planar frequency function can be estimated from below as follows: 
\begin{align*}
     \frac{D_{T, \pi, 0}(\rho)}{H_{T, \pi, 0}(\rho)} & = e^{2/\rho^2}\sqrt{1 + 4e^{-2/\rho^2} \rho^{-6}} \rho^{2 - m} \int_{0}^{\rho} \frac{4e^{-2/r^2} r^{-6}}{\sqrt{1 + 4e^{-2/r^2} r^{-6}} } r^{m - 1} \, dr \\
     & = \rho^{-1 - m} e^{1/\rho^2} \sqrt{e^{2/\rho^2} \rho^{6} + 4}  \int_{0}^{\rho} \frac{4e^{-1/r^2} r^{-3}}{\sqrt{4 + e^{2/r^2} r^{6}} } r^{m - 1} \, dr \\
    & \geq C \rho^{2 - m} e^{2/\rho^2} \int_{0}^{\rho} e^{- 2/r^2}r^{m - 7} \, dr \\
    & \geq C \rho^{2 - m} e^{2/\rho^2} 2^{m/2 - 4} \Gamma\left(3 - \frac{m}{2}, \frac{2}{\rho^2}\right), 
\end{align*}
where $\Gamma(s, x)$ is the incomplete gamma function 
\begin{equation*}
    \Gamma(s, x) = \int_{x}^{\infty} t^{s - 1} e^{- t} \; dt.
\end{equation*}
Recalling now the asymptotic $\Gamma(s, x)x^{- s + 1}e^{x} \rightarrow 1$, as $x \rightarrow \infty$, we deduce that 
\begin{align*}
    N_{T, \pi, 0}(\rho) \geq \frac{C}{\rho^2} \left( \Gamma\left(3 - \frac{m}{2}, \frac{2}{\rho^2}\right) e^{2/\rho^2} \left( \frac{2}{\rho^2}\right)^{1 + m/2 - 3}\right) =: \frac{1}{\rho^2} \eta(\rho), 
\end{align*}
where $\eta(\rho) \rightarrow 1$ as $\rho \rightarrow 0$, which yields
\[
    \frac{D_{T, \pi, 0}(\rho)}{H_{T, \pi, 0}(\rho)} \rightarrow \infty \qquad \text{as $\rho \rightarrow 0$}\, ,
\]
as desired. We now wish to compute the semicalibrated term in the intrinsic planar frequency, namely $ L_{T,\pi,0}(\rho)/H_{T, \pi, 0}(\rho)$. We record the minimal surface equation for a radial function on $\mathbb{R}^{m}$:
\begin{equation*}
    \divergence\left( \frac{\nabla f}{\sqrt{1 + \vert \nabla f \vert^2}}\right) = \frac{1}{\sqrt{1 + (f^\prime)^2}} \left( \frac{f^{\prime \prime}}{1 + (f^\prime)^2} + \frac{m-1}{r}f^\prime \right). 
\end{equation*}
Thus, we can estimate
\begin{align*}
    \frac{L_{T,P,0}(\rho)}{H_{T, P, 0}(\rho)} &= \frac{e^{1/\rho^2}}{\rho^{m+1}} \sqrt{e^{2/\rho^2} \rho^{6} + 4} \int_{0}^{\rho} \frac{r^{m - 1}e^{- 2/r^2}\sqrt{1+(f'(r))^2}}{\Pi_{i = 1}^{m}\sqrt{1 + (\nabla f \cdot e_i)^2}} \left[\frac{2(m-1)}{r^4} + \frac{4 - 6r^2}{r^6 + 4 e^{-2/r^2}} \right]   \, dr \\
     & \quad \geq \frac{C e^{1/\rho^2}}{\rho^{m+1}} \sqrt{e^{2/\rho^2} \rho^{6} + 4} \int_{0}^{\rho} r^{m - 1}e^{- 2/r^2} \left[\frac{2(m-1)}{r^4} + \frac{4 - 6r^2}{r^6 + 4 e^{-2/r^2}} \right] \, dr.  
\end{align*}
Splitting then the square bracket and analyzing the two integrands separately via the incomplete Gamma function again, one deduces that $L_{T,\pi,0}(\rho)/H_{T, \pi, 0}(\rho)$ also diverges as $\rho \rightarrow 0$.

\bibliographystyle{amsalpha} 
\bibliography{references} 

\providecommand{\bysame}{\leavevmode\hbox to3em{\hrulefill}\thinspace}
\providecommand{\MR}{\relax\ifhmode\unskip\space\fi MR }
\providecommand{\MRhref}[2]{%
  \href{http://www.ams.org/mathscinet-getitem?mr=#1}{#2}
}
\providecommand{\href}[2]{#2}
\begin{thebibliography}{DLDPHM23}

\bibitem[Alm00]{Almgren_regularity}
Frederick~J Almgren, \emph{Almgren's big regularity paper: Q-valued functions
  minimizing dirichlet's integral and the regularity of area-minimizing
  rectifiable currents up to codimension 2}, vol.~1, World scientific, 2000.

\bibitem[Bel14]{Bellettini}
Costante Bellettini, \emph{Uniqueness of tangent cones to positive-(p, p)
  integral cycles}, Duke Mathematical Journal \textbf{163} (2014), no.~4,
  705--732.

\bibitem[BR12]{BellettiniRiviere}
Costante Bellettini and Tristan Rivi\`ere, \emph{The regularity of special
  {L}egendrian integral cycles}, Ann. Sc. Norm. Super. Pisa Cl. Sci. (5)
  \textbf{11} (2012), no.~1, 61--142. \MR{2953045}

\bibitem[CR23]{CaniatoRivière}
Riccardo Caniato and Tristan Rivi{\`e}re, \emph{The unique tangent cone
  property for weakly holomorphic maps into projective algebraic varieties},
  Duke Mathematical Journal \textbf{172} (2023), no.~13, 2471--2536.

\bibitem[CS]{CS}
Gianmarco Caldini and Anna Skorobogatova, \emph{Forthcoming}.

\bibitem[DIW21]{DoWaIo}
Aleksander Doan, Eleny-Nicoleta Ionel, and Thomas Walpuski, \emph{The
  gopakumar-vafa finiteness conjecture}, 2021.

\bibitem[DL16]{DL-survey-JDG}
Camillo De~Lellis, \emph{The size of the singular set of area-minimizing
  currents}, Surveys in differential geometry 2016. {A}dvances in geometry and
  mathematical physics, Surv. Differ. Geom., vol.~21, Int. Press, Somerville,
  MA, 2016, pp.~1--83. \MR{3525093}

\bibitem[DL18]{DL-All}
\bysame, \emph{Allard's interior regularity theorem: an invitation to
  stationary varifolds}, Nonlinear analysis in geometry and applied
  mathematics. {P}art 2, Harv. Univ. Cent. Math. Sci. Appl. Ser. Math., vol.~2,
  Int. Press, Somerville, MA, 2018, pp.~23--49.

\bibitem[DLDPHM23]{DLDPHM}
Camillo De~Lellis, Guido De~Philippis, Jonas Hirsch, and Annalisa Massaccesi,
  \emph{On the boundary behavior of mass-minimizing integral currents}, Mem.
  Amer. Math. Soc. \textbf{291} (2023), no.~1446, v+166. \MR{4672045}

\bibitem[DLF23]{DLF}
Camillo De~Lellis and Ian Fleschler, \emph{An elementary rectifiability lemma
  and some applications}, 2023.

\bibitem[DLHMS20]{DLHMS}
Camillo De~Lellis, Jonas Hirsch, Andrea Marchese, and Salvatore Stuvard,
  \emph{Regularity of area minimizing currents mod p}, Geometric and Functional
  Analysis \textbf{30} (2020), no.~5, 1224--1336.

\bibitem[DLMS23]{DMS}
Camillo De~Lellis, Paul Minter, and Anna Skorobogatova, \emph{The fine
  structure of the singular set of area-minimizing integral currents {III}:
  Frequency 1 flat singular points and {$\mathcal{H}^{m-2}$}-a.e. uniqueness of
  tangent cones}, arXiv preprint (2023).

\bibitem[DLMSV18]{DLMSV}
Camillo De~Lellis, Andrea Marchese, Emanuele Spadaro, and Daniele Valtorta,
  \emph{Rectifiability and upper {M}inkowski bounds for singularities of
  harmonic {$Q$}-valued maps}, Comment. Math. Helv. \textbf{93} (2018), no.~4,
  737--779. \MR{3880226}

\bibitem[DLS11]{DLS_MAMS}
Camillo De~Lellis and Emanuele Spadaro, \emph{$q$-valued functions revisited},
  Mem. Amer. Math. Soc. \textbf{211} (2011), no.~991, vi+79.

\bibitem[DLS14]{DLS14Lp}
\bysame, \emph{Regularity of area minimizing currents i: gradient $l^p$
  estimates}, Geometric and Functional Analysis \textbf{24} (2014), no.~6,
  1831--1884.

\bibitem[DLS15]{DLS_multiple_valued}
\bysame, \emph{Multiple valued functions and integral currents}, Ann. Sc. Norm.
  Super. Pisa Cl. Sci. (5) \textbf{14} (2015), no.~4, 1239--1269.

\bibitem[DLS16a]{DLS16centermfld}
\bysame, \emph{Regularity of area minimizing currents ii: center manifold},
  annals of Mathematics (2016), 499--575.

\bibitem[DLS16b]{DLS16blowup}
\bysame, \emph{Regularity of area minimizing currents iii: blow-up}, annals of
  Mathematics (2016), 577--617.

\bibitem[DLS23a]{DLSk1}
Camillo De~Lellis and Anna Skorobogatova, \emph{The fine structure of the
  singular set of area-minimizing integral currents i: the singularity degree
  of flat singular points}, arXiv preprint (2023).

\bibitem[DLS23b]{DLSk2}
\bysame, \emph{The fine structure of the singular set of area-minimizing
  integral currents ii: rectifiability of flat singular points with singularity
  degree larger than $1$}, to appear in Commentarii Mathematici Helvetici
  (2023).

\bibitem[DLSS17a]{DLSS2}
Camillo De~Lellis, Emanuele Spadaro, and Luca Spolaor, \emph{Regularity theory
  for 2-dimensional almost minimal currents ii: branched center manifold},
  Annals of PDE \textbf{3} (2017), 1--85.

\bibitem[DLSS17b]{DLSS-uniqueness}
\bysame, \emph{Uniqueness of tangent cones for two-dimensional
  almost-minimizing currents}, Communications on Pure and Applied Mathematics
  \textbf{70} (2017), no.~7, 1402--1421.

\bibitem[DLSS18]{DLSS1}
\bysame, \emph{Regularity theory for 2-dimensional almost minimal currents i:
  Lipschitz approximation}, Transactions of the American Mathematical Society
  \textbf{370} (2018), no.~3, 1783--1801.

\bibitem[DLSS20]{DLSS3}
\bysame, \emph{Regularity theory for $2 $-dimensional almost minimal currents
  iii: Blowup}, Journal of Differential Geometry \textbf{116} (2020), no.~1,
  125--185.

\bibitem[DW21]{DoWa}
Aleksander Doan and Thomas Walpuski, \emph{Castelnuovo's bound and rigidity in
  almost complex geometry}, Adv. Math. \textbf{379} (2021), Paper No. 107550,
  23. \MR{4199270}

\bibitem[GPT99]{Gu1}
J.~Gutowski, G~Papadopoulos, and P.~K. Townsend, \emph{Supersymmetry and
  generalized calibrations}, Phys. Rev. D (3) \textbf{60(10):106006, 11}
  (1999).

\bibitem[Gra06]{Gr}
Mariana Grana, \emph{Flux compactifications in string theory: a comprehensive
  review}, Phys. Rep. \textbf{423(3)} (2006), 91–158.

\bibitem[GS23]{GS}
Max Goering and Anna Skorobogatova, \emph{Flat interior singularities for area
  almost-minimizing currents}, 2023.

\bibitem[Gut01]{Gu2}
Jan. Gutowski, \emph{Generalized calibrations}, In Progress in string theory
  and M-theory (Carg`ese, 1999) \textbf{volume 564 of NATO Sci. Ser. C Math.
  Phys. Sci.} (2001), 343–346.

\bibitem[HL82]{HaLa}
Reese Harvey and H.~Blaine Lawson, Jr., \emph{Calibrated geometries}, Acta
  Math. \textbf{148} (1982), 47--157. \MR{666108}

\bibitem[HL11]{HanLin}
Q.~Han and F.~Lin, \emph{Elliptic partial differential equations}, Courant
  lecture notes in mathematics, Courant Institute of Mathematical Sciences, New
  York University, 2011.

\bibitem[Jor75]{Cimille}
Camille Jordan, \emph{Essai sur la g\'eom\'etrie \`a{} {$n$} dimensions}, Bull.
  Soc. Math. France \textbf{3} (1875), 103--174. \MR{1503705}

\bibitem[Joy07]{Jo}
Dominic~D. Joyce, \emph{Riemannian holonomy groups and calibrated geometry},
  Oxford Graduate Texts in Mathematics, vol.~12, Oxford University Press,
  Oxford, 2007. \MR{2292510}

\bibitem[KW]{KW3}
Brian Krummel and Neshan Wickramasekera, \emph{Analysis of singularities of
  area minimising currents: higher order decay estimates at branch points and
  rectifiability of the singular set}, In preparation.

\bibitem[KW17]{KW}
Brian Krummel and Neshan Wickramasekera, \emph{Fine properties of branch point
  singularities: Dirichlet energy minimizing multi-valued functions}, arXiv
  preprint arXiv:1711.06222 (2017).

\bibitem[KW23a]{KW2}
Brian Krummel and Neshan Wickramasekera, \emph{Analysis of singularities of
  area minimizing currents: a uniform height bound, estimates away from branch
  points of rapid decay, and uniqueness of tangent cones}.

\bibitem[KW23b]{KW1}
\bysame, \emph{Analysis of singularities of area minimizing currents: planar
  frequency, branch points of rapid decay, and weak locally uniform
  approximation}.

\bibitem[Liu21]{Liu}
Zhenhua Liu, \emph{On a conjecture of almgren: area-minimizing surfaces with
  fractal singularities}, arXiv preprint (2021).

\bibitem[Mor82]{Morgan}
Frank Morgan, \emph{On the singular structure of two-dimensional area
  minimizing surfaces in $\mathbb{R}^n$}, Mathematische Annalen \textbf{261}
  (1982), 101--110.

\bibitem[Mor16]{MorganGMT}
\bysame, \emph{Geometric measure theory}, fifth ed., Elsevier/Academic Press,
  Amsterdam, 2016, A beginner's guide, Illustrated by James F. Bredt.
  \MR{3497381}

\bibitem[MW24]{MW}
Paul Minter and Neshan Wickramasekera, \emph{A structure theory for stable
  codimension 1 integral varifolds with applications to area minimising
  hypersurfaces mod $p$}, Journal of the American Mathematical Society
  \textbf{37} (2024), no.~3, 861--927.

\bibitem[NV17]{NV_Annals}
Aaron Naber and Daniele Valtorta, \emph{Rectifiable-reifenberg and the
  regularity of stationary and minimizing harmonic maps}, Annals of Mathematics
  \textbf{185} (2017), no.~1, 131--227.

\bibitem[NV20]{NV_varifolds}
\bysame, \emph{The singular structure and regularity of stationary varifolds},
  Journal of the European Mathematical Society \textbf{22} (2020), no.~10,
  3305--3382.

\bibitem[PR10]{PumbergerRivière}
David Pumberger and Tristan Rivi{\`e}re, \emph{Uniqueness of tangent cones for
  semicalibrated integral $2 $-cycles}, Duke Math. J. \textbf{151} (2010),
  no.~1, 441--480.

\bibitem[RT09]{TianRivière}
Tristan Riviere and Gang Tian, \emph{The singular set of 1-1 integral
  currents}, Annals of mathematics (2009), 741--794.

\bibitem[Sim93]{Simon_cylindrical}
Leon Simon, \emph{Cylindrical tangent cones and the singular set of minimal
  submanifolds}, Journal of Differential Geometry \textbf{38} (1993), no.~3,
  585--652.

\bibitem[Sko24]{Sk21}
Anna Skorobogatova, \emph{An upper minkowski dimension estimate for the
  interior singular set of area minimizing currents}, Communications on Pure
  and Applied Mathematics \textbf{77} (2024), no.~2, 1509--1572.

\bibitem[Spo19]{Spolaor_15}
Luca Spolaor, \emph{Almgren's type regularity for semicalibrated currents},
  Advances in Mathematics \textbf{350} (2019), 747--815.

\bibitem[SYZ96]{SYZ}
Andrew Strominger, Shing-Tung Yau, and Eric Zaslow, \emph{Mirror symmetry is t
  -duality}, Nuclear Phys. B \textbf{479(1-2)} (1996), 243–259.

\bibitem[Tia00]{Ti}
Gang Tian, \emph{Gauge theory and calibrated geometry. {I}}, Ann. of Math. (2)
  \textbf{151} (2000), no.~1, 193--268. \MR{1745014}

\bibitem[Wic14]{W14_annals}
Neshan Wickramasekera, \emph{A general regularity theory for stable codimension
  1 integral varifolds}, Annals of mathematics (2014), 843--1007.

\end{thebibliography}

\end{document}